\numberwithin{equation}{section}
\newtheorem{theorem}{Theorem}[section]
\newtheorem{proposition}[theorem]{Proposition}
\newtheorem{lemma}[theorem]{Lemma}
\newtheorem{corollary}[theorem]{Corollary}
\theoremstyle{definition}
\newtheorem{definition}[theorem]{Definition}
\newtheorem{example}[theorem]{Example}
\newtheorem{remark}[theorem]{Remark}
\newcommand{\N}{{\mathbb{N}}}
\newcommand{\R}{{\mathbb{R}}}
\newcommand{\Z}{{\mathbb{Z}}}
\newcommand{\cor}{{\bf k}}
\renewcommand{\to}[1][]{\xrightarrow[]{#1}}
\newcommand{\from}[1][]{\xleftarrow[]{#1}}
\newcommand{\isoto}[1][]{\xrightarrow[#1]%
{{\raisebox{-.6ex}[0ex][-.6ex]{$\mspace{1mu}\sim\mspace{2mu}$}}}}
\newcommand{\isofrom}[1][]{\xleftarrow[#1]%
{{\raisebox{-.6ex}[0ex][-.6ex]{$\mspace{1mu}\sim\mspace{2mu}$}}}}
\newcommand{\RR}{\mathrm{R}}
\newcommand{\Hom}{\mathrm{Hom}}
\newcommand{\RHom}{\RR\mathrm{Hom}}
\newcommand{\Ext}{\mathrm{Ext}}
\renewcommand{\hom}{{\mathcal{H}om}}
\newcommand{\rhom}{{\RR\hom}}
\newcommand{\DD}{\mathrm{D}}
\newcommand{\tens}{\otimes}
\newcommand{\ltens}{\mathbin{\overset{\scriptscriptstyle\mathrm{L}}\tens}}
\newcommand{\End}{{\operatorname{End}}}
\newcommand{\Aut}{{\operatorname{Aut}}}
\newcommand{\sect}{\Gamma}
\newcommand{\rsect}{\mathrm{R}\Gamma}
\newcommand{\oim}[1]{{#1}_*}
\newcommand{\eim}[1]{{#1}_!}
\newcommand{\roim}[1]{\RR{#1}_*}
\newcommand{\reim}[1]{\RR{#1}_!}
\newcommand{\opb}[1]{#1^{-1}}
\newcommand{\epb}[1]{#1^{!}}
\newcommand{\eqdot}{\mathbin{:=}}
\newcommand{\cl}{\colon}
\newcommand{\scbul}{{\,\raise.4ex\hbox{$\scriptscriptstyle\bullet$}\,}}
\newcommand{\ol}{\overline}
\newcommand{\ul}{\underline}
\newcommand{\id}{\mathrm{id}}
\newcommand{\supp}{\operatorname{supp}}
\newcommand{\SSi}{\mathrm{SS}}
\newcommand{\Int}{\operatorname{Int}}
\newcommand{\Der}{\mathsf{D}}
\newcommand{\Derb}{\Der^{\mathrm{b}}}
\newcommand{\Mod}{\operatorname{Mod}}
\newcommand{\coker}{\operatorname{coker}}
\newcommand{\dT}{{\dot{T}}}
\newcommand{\mo}{\mathopen}
\newcommand{\mc}{\mathclose}
\newcommand{\corC}{\mathbb{C}}
\newcommand{\Rm}{\mathbb{R}^\times}
\newcommand{\Rp}{\mathbb{R}^+}
\newcommand{\RP}{\mathbb{RP}}
\newcommand{\Dersf}{\Der^{s,f}}
\newcommand{\rc}{{\mathrm{c}}}
\newcommand{\T}{\mathbf{T}}
\newcommand{\cer}{{\mathbb{S}^1}}
\newcommand{\sph}{{\mathbb{S}^2}}
\newcommand{\Mat}{\operatorname{Mat}}
\newcommand{\GL}{\operatorname{GL}}
\newcommand{\PAut}{\operatorname{Aut}^p}
\newcommand{\cecc}{\operatorname{C}}
\newcommand{\mon}{\operatorname{m}}
\newcommand{\im}{\operatorname{im}}
\newcommand{\catc}{\operatorname{\mathsf{C}}}
\newcommand{\demi}{\frac{1}{2}}
\newcommand{\sui}{\medskip\noindent}
\newcommand{\fib}{H}
\newcommand{\bb}{B}
\begin{document}

\begin{abstract}
  We prove Arnol'd's three cusps conjecture about the front of Legendrian curves
  in the projectivized cotangent bundle of the $2$-sphere.  We use the
  microlocal theory of sheaves of Kashiwara and Schapira and study the derived
  category of sheaves on the $2$-sphere with a given smooth Lagrangian
  microsupport.
\end{abstract}

\date{Mar 25, 2016 \\ The author is partially supported by the ANR project MICROLOCAL
(ANR-15CE40-0007-01).}
\title[The three cusps conjecture]{The three cusps conjecture}
\author{St{\'e}phane Guillermou}

\maketitle

\section{Introduction}

In this paper we use the microlocal theory of sheaves of Kashiwara and Schapira
to prove a conjecture of Arnol'd about the cusps of fronts of Legendrian curves
in the projectivized cotangent bundle of the sphere.

In~\cite{A96} Arnol'd states a theorem of M\"obius ``a closed smooth curve
sufficiently close to the projective line (in the projective plane) has at least
three points of inflection'' and conjectures that ``the three points of
flattening of an immersed curve are preserved so long as under the deformation
there does not arise a tangency of similarly oriented branches''.  This is a
statement about oriented curves in $\RP^2$.  Under the projective duality it is
turned into a statement about Legendrian curves in the projectivized cotangent
bundle of $\RP^2$: if $\{\Lambda_t\}_{t\in [0,1]}$ is a generic path in the
space of Legendrian knots in $PT^*\RP^2 = (T^*\RP^2 \setminus \RP^2)/\Rm$
such that $\Lambda_0$ is a fiber of the projection $\pi \cl PT^*\RP^2 \to
\RP^2$, then the front $\pi(\Lambda_t)$ has at least three cusps.  This
statement is given in~\cite{CP05}, where the authors prove a local version,
replacing $\RP^2$ by the plane $\R^2$, and also another similar conjecture
``the four cusps conjecture''.

Here we prove the conjecture when the base is the sphere $\sph$ (which implies
the case of $\RP^2$ since we can lift a deformation in $PT^*\RP^2$ to a
deformation in $PT^*\sph$). Actually this paper is mainly a study of
constructible sheaves in dimension $1$ and generically in dimension $2$.  The
idea is to use the category $\Derb_\Lambda(\cor_\sph)$ of sheaves with
microsupport $\Lambda$ (more precisely the simple sheaves in this category) and
a result of~\cite{GKS10} which says that two isotopic $\Lambda$, $\Lambda'$ have
equivalent associated categories.  We refer the reader to~\cite{STZ14} for
another study of the same categories where the authors compute the number of
simple sheaves with microsupport given by a Legendrian knot.

\smallskip

Before we explain the idea of the proof we recall some notions and results of
the microlocal theory of sheaves.  Let $M$ be a manifold of class $C^\infty$ and
let $\cor$ be a ring.  We denote by $\Derb(\cor_M)$ the derived category of
sheaves of $\cor$-modules over $M$.  The microsupport $\SSi(F)$ of an object $F$
of $\Derb(\cor_M)$ is a closed subset of the cotangent bundle $T^*M$, conic for
the action of $\Rp$ on $T^*M$ and coisotropic.  The easiest examples are given
by $F = \cor_N$ the constant sheaf on a submanifold of $M$ or $F = \cor_U$ the
constant sheaf on an open subset with smooth boundary. Then $\SSi(\cor_N)$ is
$T^*_NM$, the conormal bundle of $N$, and $\SSi(\cor_U)$ is, outside the
zero-section, the half-part of $T^*_{\partial U}M$ pointing in the outward
direction.  The microsupport is well-behaved with respect to the standard sheaf
operations.  It was introduced in~\cite{KS85} as a tool to analyze the local
structure of a sheaf and also to translate the geometric aspects of the theory
of $\mathcal{D}$-modules (systems of linear holomorphic PDE's) into a very
general framework of sheaves on real manifolds.

The link between the microsupport and the (local) symplectic structure of the
cotangent bundle is underlined in~\cite{KS90} but, in~\cite{T08}, Tamarkin gives
an application of the theory to global problems of symplectic geometry.
Building on his ideas the paper~\cite{GKS10} gives the following result.  Let
$\Phi \cl \dT^*M \times I \to \dT^*M$ be a homogeneous Hamiltonian isotopy,
where $\dT^*M = T^*M \setminus M$, $I$ is some interval containing $0$ and {\em
  homogeneous} means commuting with the multiplication by a {\em positive}
scalar in the fibers. Then there exists a unique $K \in \Derb(\cor_{M^2 \times
  I})$ such that $\dot\SSi(K)$ is the graph of $\Phi$, where $\dot\SSi(K) =
\SSi(K) \setminus (M^2\times I)$, and $K|_{M^2 \times \{0\}}$ is the constant
sheaf on the diagonal.  This theorem is a sheaf version of a result of Sikorav
in~\cite{S87} which says that the property of a Lagrangian submanifold of having
a generating function is stable by Hamiltonian isotopy.  It has the following
consequence.  We set $K_t = K|_{M^2 \times \{t\}}$. We can make $K_t$ act on
$\Derb(\cor_M)$ by composition $F \mapsto K_t \circ F = \reim{q_2}(K_t \otimes^L
\opb{q_1}F)$, where $q_1,q_2$ are the projections from $M^2$ to $M$.  Then we
have $\dot\SSi( K_t \circ F) = \Phi_t(\dot\SSi(F))$.  More precisely, for a
closed conic  subset $\Lambda \subset \dT^*M$ we denote by
$\Derb_\Lambda(\cor_M)$ the full subcategory of $\Derb(\cor_M)$ formed by the
$F$ such that $\dot \SSi(F) \subset \Lambda$.  Then $F \mapsto K_t \circ F$ is
in fact an equivalence of categories from $\Derb_\Lambda(\cor_M)$ to
$\Derb_{\Phi_t(\Lambda)}(\cor_M)$.

\medskip

Now we go back to the three cusps conjecture.  A Legendrian knot in $PT^*\sph$
pulls back to a conic Lagrangian submanifold of $\dT^*\sph$ which is stable by
the antipodal map $(x;\xi) \mapsto (x;-\xi)$.  A Legendrian isotopy of
$PT^*\sph$ pulls back to a homogeneous Hamiltonian isotopy of $\dT^*\sph$ which
is homogeneous for the action of $\Rm$ and not only $\Rp$.  Let $\Phi$ be such
an isotopy and let $x_0$ be a given point in $\sph$.  We set $\Lambda_0 =
T^*_{x_0}\sph$ and $\Lambda_t = \Phi_t(\Lambda_0)$.  Now the coefficient ring is
$\cor = \corC$.  It is not difficult to see that $\Derb_{\Lambda_0}(\corC_\sph)$
has countably many isomorphism classes of ``simple sheaves'' (see
Definition~\ref{def:simple_pure}).  We prove that, if the projection of
$\Lambda_t$ to the base is generic with only one cusp, then
$\Derb_{\Lambda_t}(\corC_M)$ has uncountably many isomorphism classes of simple
sheaves, which contradicts the equivalence mentioned above.

\smallskip

To see that $\Derb_{\Lambda_t}(\corC_\sph)$ has uncountably many objects we
modify a given object $F \in \Derb_{\Lambda_t}(\corC_\sph)$ by a Mayer-Vietoris
decomposition and give a criterion to check that we have built new objects.
More precisely, for a manifold $M$, two open subsets $U,V$ which cover $M$ and
$F \in \Derb(\corC_M)$, we have the Mayer-Vietoris distinguished triangle
$F_{U\cap V} \to[(a_U,a_V)] F_U \oplus F_V \to F \to[+1]$, where $a_U$ and $a_V$
are natural morphisms.  Let $\alpha$ be any automorphism of $F|_{U\cap V}$. We
replace $a_V$ by $\alpha$ in the triangle and define $F^\alpha_{U,V} \in
\Derb(\corC_M)$ as the cone of $(a_U,\alpha)$.  Here is an easy example:
$M=\R^2$ and $F = \corC_C$ where $C$ is a circle containing $0$. We set $U =
\R^2 \setminus \{0\}$ and we let $V$ be a small disc around $0$ such that $I = V
\cap C$ is an arc of $C$.  Then $F|_{U\cap V} = \corC_{I^+} \oplus \corC_{I^-}$
where $I^\pm$ are the two components of $I\setminus \{0\}$.  The morphisms $a_U$
and $a_V$ are both induced by $\id_{F|_{U\cap V}}$.  We keep $a_U$ unchanged and
change $a_V$ into $\alpha$ where $\alpha$ is the identity on $\corC_{I^+}$ and
the multiplication by a scalar $\lambda\not=0$ on $\corC_{I^-}$.  Then
$F^\alpha_{U,V}$ is the locally constant sheaf on $C$ of rank $1$ and monodromy
$\lambda$.

Our object $F^\alpha_{U,V}$ belongs to the subcategory $\Der(U,V;F)$ of
$\Derb(\corC_M)$ consisting of the objects $G$ such that there exist
isomorphisms $F|_U \isoto G|_U$ and $F|_V \isoto G|_V$.  For such a $G$ we can
define a class $\cecc(G)$ in a ``\v Cech space'' of the group of automorphisms
of $F$:
$$
H^1(U,V; \PAut(F)) = \Aut(F|_{U\cap V}) / \Aut(F|_U) \times \Aut(F|_V) .
$$
This class only depends on the isomorphism class of $G$ and we can see that
$\cecc(F^\alpha_{U,V}) = [\alpha]$, hence $\cecc$ is surjective. It remains to
see that $H^1(U,V; \PAut(F))$ is non trivial.  For this we define a ``monodromy
map'' $\mon_\gamma \cl H^1(U,V; \PAut(F)) \to \corC^\times$ along a path $\gamma
\cl [0,1] \to \SSi(F)$ and give a criterion to check that $\mon_\gamma$ is
surjective.

Let us explain the construction of $\mon_\gamma$.  The first point is to send
the presheaf of automorphisms $\PAut(F)$ to a more tractable sheaf.  We use the
bifunctor $\mu hom(F,G)$ which is a sheaf on $T^*M$ with support contained in
$\SSi(F) \cap \SSi(G)$ and which is a generalization of Sato's
microlocalization.  It is a refinement of the usual $\hom$ sheaf and comes with
the ``Sato's morphism''
$$
\rhom(F,G) \to \roim{\dot\pi_M{}}(\mu hom(F,G)) ,
$$
where $\dot\pi_M$ is the projection to the base.  When $\Lambda = \dot\SSi(F)$
is a smooth Lagrangian submanifold and $F$ is simple along $\Lambda$, we have a
canonical isomorphism $\mu hom(F,F)|_{\dT^*M} \simeq \corC_\Lambda$.  It follows
that $\PAut(F)$ has a natural morphism to
$\roim{\dot\pi_M{}}(\corC^\times_\Lambda)$.  We obtain a map $H^1(U,V; \PAut(F))
\to H^1(T^*U \cap \Lambda,T^*V \cap \Lambda; \corC^\times)$.  However, in the
case we are interested in, it sends $\cecc(G)$ to $0$: the curve
$\dot\pi_M(\Lambda)$ intersects $\sph \setminus U$, which will be a hypersurface
in our construction, twice with opposite signs. To obtain something non trivial
we restrict to a path in $\Lambda$ going only once through
$\opb{\dot\pi_M}(\sph \setminus U)$
and
we identify the ends.  We require a condition on the ends of our path so that we
can define a map $\mon_\gamma \cl H^1(U,V; \PAut(F)) \to
H^1(\cer;\corC^\times)$, where $\cer$ is the circle $\cer = [0,1]/(0\sim 1)$.
The condition is that $\gamma(0)$ and $\gamma(1)$ are ``$F$-linked points'': for
an open subset $U \subset M$, we say that $p_0$, $p_1 \in \Lambda \cap T^*U$ are
$F$-linked over $U$ if, for any automorphism $u$ of $F|_U$, we have $u^\mu_{p_0}
= u^\mu_{p_1}$, where $u^\mu$ is the section of $\mu hom(F,F)|_{\dT^*M} \simeq
\corC_\Lambda$ deduced from $u$ by Sato's morphism.  For such a path $\gamma$ we
can define our map $\mon_\gamma$ with value in $H^1(\cer;\corC^\times) \simeq
\corC^\times$.  We also give a criterion which implies that $\mon_\gamma$ is
surjective, hence that $\Der(U,V;F)$ has uncountably many isomorphism classes.
In particular this implies that the category of simple sheaves along $\Lambda$
has uncountably many isomorphism classes. Our criterion is roughly that the
situation looks like the above example of the constant sheaf on a circle in
$\R^2$.

\medskip

Now we describe the contents of the paper and say a bit more about $F$-linked
points.

In Section~\ref{sec:micshfth} we quickly recall some results of the microlocal
theory of sheaves.

In Section~\ref{sec:cohomdim2} we prove that the derived category of sheaves on
$\sph$ with constant cohomology sheaves has countably many objects and deduce
the same result for the category of simple sheaves on $\sph$ along a fiber
$T^*_{x_0}\sph$. This is in fact a general result about derived categories.  It
is probably already written but the author does not know a reference.

In Sections~\ref{sec:MV} and~\ref{sec:miclinkpt} we introduce the category
$\Der(U,V;F)$ and the monodromy map already described above.

In Section~\ref{sec:exmiclinkpt} we give two examples of $F$-linked points.  Let
$p_0 = (x_0;\xi_0)$ and $p_1 = (x_1;\xi_1) \in \Lambda$ be given.  We assume
that there is an embedding $i \cl \R \to M$ such that $i(0) = x_0$, $i(1) = x_1$
and $\opb{i}F$ has $\cor_I$ as direct summand where $I$ is an interval such that
$\ol{I} = [0,1]$. Then, under some genericity hypothesis, $p_0$ and $p_1$ are
$F$-linked.  In the other direction we assume that there is a map $q \cl M \to
\R$ such $p_0$ and $p_1$ are non-degenerate critical points of $q$ with respect
to $F$ in the sense that $\{(x;dq_x)\}$ meets transversally $\Lambda$ at $p_0$
and $p_1$.  We assume that $p_0$ and $p_1$ are the only critical points in this
sense on their level sets and that $\roim{q}F$ has a direct summand $\cor_I$
where $I$ is an interval with ends $q(p_0)$ and $q(p_1)$. Then, under some
genericity hypothesis, $p_0$ and $p_1$ are $F$-linked.  This last example is
similar to saying that two critical points of a Morse function are related by
the differential of the Morse-Barannikov complex.

In Section~\ref{sec:conshR} we apply Gabriel's Theorem and prove that the
constructible sheaves on $\R$ with coefficients in a field are direct sums of
constant sheaves on intervals.  This is false with integer coefficients as seen
with the example $\ker(m_2 \cl \Z_\R \to \Z_{[0,+\infty[})$, where $m_2 = 2 m_1$
and $m_1$ is the natural morphism $\Z_\R \to \Z_{[0,+\infty[}$.

In Section~\ref{sec:conshcercle} we extend the previous result to the circle:
the constructible sheaves on $\cer$ are sums of locally constant sheaves and
sheaves $\oim{e}\cor_I$ where $e \cl \R \to \cer$ is the natural quotient map
and $I$ is a bounded interval of $\R$.

In Section~\ref{sec:lccstshcercle} we introduce a group $H^i_{\alpha,r}(F)$ for
$F \in \Derb(\corC_\cer)$ and $(\alpha,r) \in \corC^\times\times\N^*$.  A
locally constant sheaf on $\cer$ is determined by its monodromy. Decomposing the
monodromy matrix into Jordan blocks, with eigenvalue $\alpha$ and size $r$, we
obtain a corresponding decomposition of our sheaf. We define $H^i_{\alpha,r}(F)$
so that its dimension is the multiplicity of the block $(\alpha,r)$ in $H^iF$.
Then we consider a morphism $u \cl \corC_I \to F$, where $I$ is an interval in
$\cer$, and describe in an easy case the link between $H^i_{\alpha,r}(F)$ and
$H^i_{\alpha,r}(\coker(u))$.

In Section~\ref{sec:shcyl} we consider a sheaf $F$ on $\cer \times \R$ with
finite dimensional stalks and whose microsupport is a Lagrangian submanifold in
generic position. For $t\in \R$ the restriction $F|_{\cer \times \{t\}}$ has a
decomposition into constant sheaves on intervals and locally constant sheaves.
For a very generic $t_0$ we can find an open interval $J$ around $t_0$ and a
diffeomorphism of $\cer \times J$ which sends $\SSi(F) \cap T^*(\cer \times J)$
to a product $\Lambda_0 \times J$. Then $F|_{\cer \times J}$ is of the form
$\opb{p}(G)$ for some $G\in \Derb(\corC_\cer)$ and $p$ the projection to $\cer$.
We consider the easiest singularity which can occur, namely the case where the
projection of $\SSi(F)$ to the base is a smooth curve in $\cer \times J$ which
is tangent at one point to $\cer \times \{t_0\}$.  Using the result on the
previous section, we explain very roughly how the decomposition of $F|_{\cer
  \times \{t\}}$ changes when $t$ crosses $t_0$.

In particular we can generalize the second example of $F$-linked points by
replacing the direct image $\roim{q}F$ with $R^iq_{\alpha,r}(F)$, where
$R^iq_{\alpha,r}(F)$ is defined so that its germs at $t$ compute
$H^i_{\alpha,r}(F|_{\cer \times \{t\}})$.  Adding an hypothesis on the
decomposition of $\roim{q}F$ we also prove the existence of a non trivial pair
of $F$-linked points.

In Section~\ref{sec:skyscrim} we apply the previous results to detect $F$-linked
points for $F \in \Derb(\cor_{\cer\times\R})$ when $\roim{q}(F)$ has a
skyscraper sheaf $\cor_{\{0\}}$ as direct summand and $\SSi(F)$ satisfies some
conditions.  Then we can apply the results on $\Der(U,V;F)$ and check that it
has uncountably many isomorphism classes. 

In Section~\ref{sec:proof} we prove the three cusps conjecture.  Since we
consider an isotopy of $\dT^*\sph$ which arises from an isotopy of $PT^*\sph$,
the equivalence from $\Derb_\Lambda(\cor_\sph)$ to
$\Derb_{\Phi_t(\Lambda)}(\cor_\sph)$ commutes with the duality.  Since
$\cor_{\{x_0\}}$ is self-dual, its image $F_t$ in
$\Derb_{\Phi_t(\Lambda)}(\cor_\sph)$ is also self-dual.  For any $q \cl \sph \to
\R$ we deduce that $\roim{q}(F_t)$ is self-dual.  We also have
$\rsect(\R;\roim{q}(F_t)) \simeq \corC$ and we deduce easily that, in the
decomposition of $\roim{q}(F_t)$ as a sum of $\cor_I[i]$'s, there is exactly one
interval $I$ which is not half-closed; moreover this interval is reduced to one
point.  Hence we can apply the results of Section~\ref{sec:skyscrim}.

\bigskip\noindent {\bf Acknowledgment.}  The starting point of the paper is
several discussions with Emmanuel Giroux and Emmanuel Ferrand. The author has
been very much stimulated by their interest for the question.  The author also
thanks Nicolas Vichery for pointing out that the structure of constructible
sheaves on $\R$ is an immediate application of Gabriel's Theorem, saving four
pages of pedestrian proof.

\section{Microlocal sheaf theory}
\label{sec:micshfth}

In this section, we quickly recall some definitions and results
from~\cite{KS90}, following its notations with the exception of slight
modifications.

Let $M$ be a manifold of class $C^\infty$.  We denote by $\pi_M \cl T^*M\to M$
the cotangent bundle of $M$.  If $N\subset M$ is a submanifold, we denote by
$T^*_NM$ its conormal bundle.  We identify $M$ with $T^*_MM$, the zero-section
of $T^*M$.  We set $\dT^*M = T^*M\setminus T^*_MM$ and we denote by
$\dot\pi_M\cl\dT^*M\to M$ the projection.  Let $f\cl M\to N$ be a morphism of
real manifolds. It induces morphisms on the cotangent bundles:
$$
T^*M \from[\; f_d\;] M\times_N T^*N \to[\;f_\pi\;] T^*N.
$$

In this paper the coefficient ring $\cor$ is assumed to be a field.  The results
of Section~\ref{sec:conshR} are false over $\Z$.  However the theory of
microsupport works for a commutative unital ring of finite global dimension.  We
denote by $\Mod(\cor_M)$ the category of sheaves of $\cor$-vector spaces on $M$.
We denote by $\Derb(\cor_M)$ the bounded derived category of $\Mod(\cor_M)$.

We use for short the term ``sheaf'' both for the objects of $\Mod(\cor_M)$ or
$\Derb(\cor_M)$, hoping the context will be clear enough.  In particular a
locally constant sheaf (or local system) in $\Derb(\cor_M)$ is an object $F$
such that any point of $M$ has a neighborhood $U$ such that $F|_U \simeq L_U$
for some $L\in \Derb(\cor)$.

If $M$ is real analytic we denote by $\Mod_\rc(\cor_M)$ the full subcategory of
$\Mod(\cor_M)$ of constructible sheaves, where a sheaf is constructible if there
exists a real analytic stratification of $M$ such that the restriction of the
sheaf to each stratum is constant of finite rank.  We let $\Derb_\rc(\cor_M)$ be
the full subcategory $\Derb(\cor_M)$ of complexes with constructible cohomology
sheaves.

We denote by $\cer$ the circle and by $\sph$ the $2$-sphere.  For a field $\cor$
we set $\cor^\times = \cor\setminus \{0\}$.  We let also $\Rp$ be the group of
positive numbers.

\subsubsection*{Microsupport}
We recall the definition of the microsupport (or singular support) $\SSi(F)$ of
$F\in \Derb(\cor_M)$, introduced by M.~Kashiwara and P.~Schapira in~\cite{KS82}
and~\cite{KS85}.

\begin{definition}{\rm (see~\cite[Definition~5.1.2]{KS90})}
Let $F\in \Derb(\cor_M)$ and let $p\in T^*M$. 
We say that $p\notin\SSi(F)$ if there exists an open neighborhood $U$ of $p$
such that, for any $x_0\in M$ and any real $C^1$-function $\phi$ on $M$
satisfying $d\phi(x_0)\in U$ and $\phi(x_0)=0$, we have
$(\rsect_{\{x;\phi(x)\geq0\}} (F))_{x_0}\simeq0$.

We set $\dot\SSi(F) = \SSi(F) \cap \dT^*M$.  If $\Lambda$ is a (closed conic)
subset of $\dT^*M$, we denote by $\Derb_\Lambda(\cor_M)$ the full subcategory of
$\Derb(\cor_M)$ consisting of the $F$ such that $\dot\SSi(F) \subset \Lambda$.
\end{definition}
In other words, $p\notin\SSi(F)$ if the sheaf $F$ has no cohomology 
supported by ``half-spaces'' whose conormals are contained in a 
neighborhood of $p$. The following properties are easy consequences of
the definition:
\begin{itemize}\renewcommand{\labelitemi}{-}
\item
$\SSi(F)$ is closed and $\R^+$-conic,
\item
$\SSi(F)\cap T^*_MM =\pi_M(\SSi(F))=\supp(F)$,
\item
the triangular inequality: if $F_1\to F_2\to F_3\to[+1]$ is a
distinguished triangle in  $\Derb(\cor_M)$, then 
$\SSi(F_2)\subset\SSi(F_1)\cup\SSi(F_3)$.
\end{itemize}

\begin{example}\label{ex:microsupport}
(i) Let $F\in \Derb(\cor_M)$. Then $\SSi(F) = \emptyset$ if and only if
$F\simeq 0$ and $\dot\SSi(F) = \emptyset$ if and only if the cohomology sheaves
$H^i(F)$ are local systems, for all $i\in \Z$.

\smallskip\noindent
(ii) If $N$ is a smooth closed submanifold of $M$ and $F=\cor_N$, then 
$\SSi(F)=T^*_NM$.

\smallskip\noindent
(iii) Let $\phi$ be $C^1$-function with $d\phi(x)\not=0$ when $\phi(x)=0$.
Let $U=\{x\in M;\phi(x)>0\}$ and let $Z=\{x\in M;\phi(x)\geq0\}$. 
Then 
\begin{align*}
\SSi(\cor_U) & =U\times_MT^*_MM
\cup \{(x;\lambda \, d\phi(x)); \; \phi(x)=0, \, \lambda\leq0\},\\
\SSi(\cor_Z) & =Z\times_MT^*_MM
\cup \{(x;\lambda \, d\phi(x));\; \phi(x)=0, \,  \lambda\geq0\}.
\end{align*}
(iv) Let $\lambda$ be a closed convex cone with vertex at $0$ in $E=\R^n$.
Then $\SSi(\cor_\lambda) \cap T^*_0E = \lambda^\circ$, the polar cone of
$\lambda$, that is,
\begin{equation}\label{eq:def_polar_cone}
\lambda^\circ = \{\xi\in E^*; \; \langle v,\xi \rangle \geq 0
 \text{ for all $v\in E\}$.}
\end{equation}
\end{example}

Let $M$ and $N$ be two manifolds. We denote by $q_i$ ($i=1,2$) the $i$-th
projection defined on $M\times N$ and by $p_i$ ($i=1,2$) the $i$-th projection
defined on $T^*(M\times N)\simeq T^*M\times T^*N$.

\begin{definition}
Let $f\cl M\to N$ be a morphism of manifolds and let $\Lambda\subset T^*N$
be a closed $\R^+$-conic subset. We say that $f$ is non-characteristic for
$\Lambda$ if $\opb{f_\pi}(\Lambda)\cap T^*_MN\subset M\times_NT^*_NN$.
\end{definition}

We denote by $\omega_M$ the dualizing complex on $M$.  Recall that $\omega_M$
is isomorphic to the orientation sheaf shifted by the dimension. We also use
the notation $\omega_{M/N}$ for the relative dualizing complex
$\omega_M\tens\opb{f}\omega_N^{\tens-1}$.  We have the duality functors
\begin{equation}\label{eq:dualfct}
\DD_M(\scbul)=\rhom(\scbul,\omega_M), \qquad
\DD'_M(\scbul)=\rhom(\scbul,\cor_M).
\end{equation}

For $\Lambda \subset T^*M$ we set $\Lambda^a = \{(x;-\xi)$; $(x;\xi) \in
\Lambda\}$.

\begin{theorem}[See {\cite[\S 5.4]{KS90}}]\label{th:opboim}
Let $f\cl M\to N$ be a morphism of manifolds, $F\in\Derb(\cor_M)$ and
$G\in\Derb(\cor_N)$.  Let $q_1\cl M\times N \to M$ and $q_2\cl M\times N \to N$
be the projections.
\begin{itemize}
\item [(i)] We have
\begin{gather*}
\SSi(\opb{q_1}F \ltens \opb{q_2}G) \subset \SSi(F)\times\SSi(G),  \\
\SSi(\rhom(\opb{q_1}F,\opb{q_2}G))  \subset \SSi(F)^a\times\SSi(G).
\end{gather*}
\item [(ii)] We assume that $f$ is proper on $\supp(F)$. Then
  $\SSi(\reim{f}F)\subset f_\pi\opb{f_d}\SSi(F)$, with equality if $f$ is a
  closed embedding.
\item [(iii)] We assume that $f$ is non-characteristic with respect to
  $\SSi(G)$. Then the natural morphism
  $\opb{f}G \tens \omega_{M/N}\to\epb{f}(G)$ is an isomorphism. Moreover
  $\SSi(\opb{f}G) \cup \SSi(\epb{f}G) \subset f_d\opb{f_\pi}\SSi(G)$.
\item [(iv)] We assume that $f$ is a submersion. Then
  $\SSi(F)\subset M\times_NT^*N$ if and only if, for any $j\in\Z$, the sheaves
  $H^j(F)$ are locally constant on the fibers of $f$.
\end{itemize}
\end{theorem}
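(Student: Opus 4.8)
All four assertions are due to Kashiwara and Schapira \cite[\S 5.4]{KS90}; the plan is to recall the architecture. Everything rests on two ingredients: the microlocal Morse‑theoretic propagation estimate of \cite{KS90} --- if $\phi$ is a $C^1$‑function proper on $\supp(F)$ and $d\phi(x)\notin\SSi(F)$ whenever $a\le\phi(x)<b$, then $\rsect_{\{\phi\ge a\}}(F)\isoto\rsect_{\{\phi\ge b\}}(F)$, together with its local refinements --- and the factorization of an arbitrary morphism $f\cl M\to N$ as $f=q_2\circ i_f$, with $i_f\cl M\hookrightarrow M\times N$, $x\mapsto(x,f(x))$, a closed embedding and $q_2\cl M\times N\to N$ the projection. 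For (i) one first notes that $q_1$ is a submersion, hence non‑characteristic for every closed conic subset, and the bookkeeping of (iii) below gives $\SSi(\opb{q_1}F)\subset\{(x,y;\xi,0);(x;\xi)\in\SSi(F)\}$ and $\SSi(\opb{q_2}G)\subset\{(x,y;0,\eta);(y;\eta)\in\SSi(G)\}$. These two sets meet the anti‑diagonal only along the zero‑section, so the fibrewise‑sum estimate for the microsupport of a tensor product applies with no further hypothesis and returns exactly $\SSi(F)\times\SSi(G)$; the bound for $\rhom$ is the same computation with the microsupport of the first argument flipped to $\SSi(F)^a$.

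For (ii): if $f$ is a closed embedding then $\reim{f}F=\roim{f}F$ is supported on $f(M)$, the relevant local cohomology is computed directly on $M$, and one checks $\SSi(\reim{f}F)=f_\pi\opb{f_d}\SSi(F)$ by hand. For the projection $q_2\cl M\times N\to N$, proper on $\supp(F)$, take $(y_0;\eta_0)\notin q_{2\pi}\opb{q_{2d}}\SSi(F)$, that is, $(x,y_0;0,\eta_0)\notin\SSi(F)$ for every $x$; choose $\psi$ on $N$ with $\psi(y_0)=0$ and $d\psi(y_0)=\eta_0$. Then $d(\psi\circ q_2)$ avoids $\SSi(F)$ along the compact fibre of $\supp(F)$ over $y_0$, and combining the propagation estimate applied fibrewise in $M$ with proper base change shows that the stalk $(\rsect_{\{\psi\ge0\}}\reim{q_2}F)_{y_0}$ vanishes, so $(y_0;\eta_0)\notin\SSi(\reim{q_2}F)$. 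The general $f$ then follows from the graph factorization and these two cases, the composed maps collapsing to $f_\pi\opb{f_d}$; the equality statement is the hand computation for a closed embedding.

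For (iii) and (iv): if $f$ is a submersion, $\omega_{M/N}$ is the relative orientation sheaf placed in the relative dimension, and $\opb{f}G\tens\omega_{M/N}\to\epb{f}G$ is an isomorphism because both sides are computed fibrewise; in a product chart $M=N\times\R^k$ the condition $\SSi(F)\subset M\times_NT^*N$ says exactly that $d(\pm t_j)\notin\SSi(F)$ for the fibre coordinates $t_j$, so applying the propagation estimate to each $\pm t_j$ along the fibres propagates local constancy of the cohomology sheaves, which gives (iv) together with $\SSi(\opb{f}G)\cup\SSi(\epb{f}G)\subset f_d\opb{f_\pi}\SSi(G)$. If $f=i$ is a closed embedding non‑characteristic for $\SSi(G)$ --- meaning $\SSi(G)$ avoids the nonzero conormal covectors of $i(M)$ --- this is precisely the hypothesis under which the microlocal cut‑off lemma of \cite{KS90} shows that the natural morphism $\opb{i}G\tens\omega_{M/N}\to\epb{i}G$ is an isomorphism and that $\SSi(\opb{i}G)\subset i_d\opb{i_\pi}\SSi(G)$. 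A general non‑characteristic $f$ splits through its graph into the always‑non‑characteristic projection $q_2$ and a closed embedding $i_f$ that inherits the non‑characteristic condition, and composing the two isomorphisms and the two estimates gives the assertion.

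The step I expect to be the real obstacle, should one want a self‑contained argument, is the closed‑embedding non‑characteristic case of (iii): both the isomorphism $\opb{i}G\tens\omega_{M/N}\simeq\epb{i}G$ and the accompanying microsupport bound depend on the microlocal cut‑off / Morse lemma machinery of \cite{KS90}, which is the genuinely microlocal input. Everything else is a formal manipulation of the six operations and the graph factorization, and is in particular insensitive to the running assumption that $\cor$ is a field.
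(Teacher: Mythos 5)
This statement is not proved in the paper at all: it is recalled verbatim from \S 5.4 of \cite{KS90}, so there is no paper-internal argument to compare yours with, and deferring the genuinely microlocal inputs (propagation/microlocal Morse lemma, the cut-off lemma for the non-characteristic case) to \cite{KS90} is exactly what the paper does. Your outline of the Kashiwara--Schapira architecture --- the propagation estimate, the graph factorization $f=q_2\circ i_f$, the direct check for closed embeddings, the fibrewise argument for proper projections, and the product-chart argument for (iv) --- is faithful to how the results are established there. One caveat on part (i): your reduction runs the logic backwards. The ``fibrewise-sum'' estimate $\SSi(F\ltens G)\subset \SSi(F)+\SSi(G)$ (Corollary~\ref{cor:opboim}~(i) of the paper) is itself deduced in \cite{KS90} \emph{from} the external-product bound of (i), by pulling back along the diagonal embedding $\delta\cl M\to M\times M$; the non-characteristicity of $\delta$ for $\SSi(F)\times\SSi(G)$ is precisely the hypothesis $\SSi(F)\cap\SSi(G)^a\subset T^*_MM$. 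So invoking that corollary to prove (i) would be circular if taken as a self-contained reduction. In \cite{KS90} the external bound is proved directly from the definition of the microsupport (a K\"unneth-type vanishing in a product neighborhood), and the tensor and $\rhom$ corollaries, as well as your derivations of (ii)--(iv), then follow in the order you describe.
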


\begin{corollary}\label{cor:opboim}
Let $F,G\in\Derb(\cor_M)$. 
\begin{itemize}
\item [(i)] We assume that $\SSi(F)\cap\SSi(G)^a\subset T^*_MM$. Then
  $\SSi(F\ltens G)\subset \SSi(F)+\SSi(G)$.
\item [(ii)] We assume that $\SSi(F)\cap\SSi(G)\subset T^*_MM$. Then \\
  $\SSi(\rhom(F,G))\subset \SSi(F)^a+\SSi(G)$.
\end{itemize}
\end{corollary}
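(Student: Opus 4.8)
The plan is to deduce Corollary~\ref{cor:opboim} from Theorem~\ref{th:opboim} by viewing the operations $\ltens$ and $\rhom$ on $M$ as pullbacks along the diagonal $\delta\cl M\to M\times M$ of the external operations on $M\times M$, and then applying the non-characteristic estimates of Theorem~\ref{th:opboim}(iii).

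First, for part (i), I would write $F\ltens G\simeq \opb{\delta}(\opb{q_1}F\ltens\opb{q_2}G)$, where $q_1,q_2\cl M\times M\to M$ are the projections. By Theorem~\ref{th:opboim}(i) we have $\SSi(\opb{q_1}F\ltens\opb{q_2}G)\subset\SSi(F)\times\SSi(G)$. The key computation is to check that, under the hypothesis $\SSi(F)\cap\SSi(G)^a\subset T^*_MM$, the diagonal embedding $\delta$ is non-characteristic for $\SSi(F)\times\SSi(G)$. Unwinding the definition: $\delta_\pi\cl M\times_{M\times M}T^*(M\times M)\to T^*M$ sends $((x,x);(\xi,\eta))$ to $(x;\xi+\eta)$, and $\delta_d$ is an isomorphism onto $T^*M$; the non-characteristic condition $\opb{(\delta_\pi)}(\SSi(F)\times\SSi(G))\cap T^*_M(M\times M)\subset M\times_{M\times M}T^*_{M\times M}(M\times M)$ translates, since $T^*_M(M\times M)$ at $x$ consists of covectors $(\xi,-\xi)$, into: if $(x;\xi)\in\SSi(F)$ and $(x;-\xi)\in\SSi(G)$ then $\xi=0$. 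This is exactly $\SSi(F)\cap\SSi(G)^a\subset T^*_MM$. Hence Theorem~\ref{th:opboim}(iii) applies (using also that $\opb{\delta}$ and $\epb{\delta}$ agree up to the invertible relative dualizing complex), giving $\SSi(F\ltens G)\subset\delta_d\opb{(\delta_\pi)}(\SSi(F)\times\SSi(G))$, and one reads off that $\delta_d\opb{(\delta_\pi)}(\SSi(F)\times\SSi(G))=\{(x;\xi+\eta)\mid (x;\xi)\in\SSi(F),\ (x;\eta)\in\SSi(G)\}=\SSi(F)+\SSi(G)$, using $\R^+$-conicity so the sum is again conic.

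For part (ii), the same strategy works with $\rhom$ in place of $\ltens$: one has $\rhom(F,G)\simeq\opb{\delta}\rhom(\opb{q_1}F,\opb{q_2}G)$, and Theorem~\ref{th:opboim}(i) gives $\SSi(\rhom(\opb{q_1}F,\opb{q_2}G))\subset\SSi(F)^a\times\SSi(G)$. Now the non-characteristic condition for $\delta$ with respect to $\SSi(F)^a\times\SSi(G)$ becomes: if $(x;-\xi)\in\SSi(F)^a$ (i.e. $(x;\xi)\in\SSi(F)$) and $(x;-\xi)\in\SSi(G)$ then $\xi=0$, which is precisely $\SSi(F)\cap\SSi(G)\subset T^*_MM$. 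Applying Theorem~\ref{th:opboim}(iii) again yields $\SSi(\rhom(F,G))\subset\delta_d\opb{(\delta_\pi)}(\SSi(F)^a\times\SSi(G))=\SSi(F)^a+\SSi(G)$.

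The only real subtlety, and the step I would be most careful about, is the bookkeeping with the maps $\delta_d$ and $\delta_\pi$: making sure the sign conventions in the identification $T^*(M\times M)\simeq T^*M\times T^*M$ and in $\Lambda^a$ are consistent, so that the hypothesis stated in the corollary matches exactly the non-characteristic condition coming out of the definition, and that the image $\delta_d\opb{(\delta_\pi)}(\Lambda_1\times\Lambda_2)$ really is the fiberwise Minkowski sum $\Lambda_1+\Lambda_2$. Everything else is a formal consequence of Theorem~\ref{th:opboim}; there is no genuinely hard analytic content, only the need to verify that the pointwise conditions transcribe correctly.
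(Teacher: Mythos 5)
Your argument is correct and is essentially the paper's (i.e.\ the standard) one: the corollary is stated here without proof as a quotation from~\cite{KS90}, where it is proved exactly as you do, by applying the external estimates of Theorem~\ref{th:opboim}(i) on $M\times M$ and then the non-characteristic pullback bound of Theorem~\ref{th:opboim}(iii) along the diagonal $\delta$, whose conormal $\{(\xi,-\xi)\}$ turns the hypotheses of (i) and (ii) into precisely the non-characteristicity conditions. The only blemishes are a sign slip in your transcription for (ii) (the condition reads: $(x;-\xi)\in\SSi(F)$ and $(x;-\xi)\in\SSi(G)$ imply $\xi=0$, i.e.\ $\SSi(F)\cap\SSi(G)\subset T^*_MM$, as you in fact conclude) and that $\rhom(F,G)\simeq\opb{\delta}\rhom(\opb{q_1}F,\opb{q_2}G)$ is not an unconditional isomorphism --- one should use the always-valid $\epb{\delta}\rhom(\opb{q_1}F,\epb{q_2}G)\simeq\rhom(F,G)$, or observe that under the non-characteristic hypothesis the dualizing twists cancel; this is exactly the $\opb{\delta}$-versus-$\epb{\delta}$ bookkeeping you flagged, and since the twist is a shifted locally constant rank-one object it does not affect the microsupport estimate.
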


\begin{corollary}\label{cor:opbeqv}
Let $I$ be a contractible manifold and let $p\cl M\times I\to M$ be the
projection.  If $F\in\Derb(\cor_{M\times I})$ satisfies
$\SSi(F)\subset T^*M\times T^*_II$, then $F\simeq\opb{p}\roim{p}F$.
\end{corollary}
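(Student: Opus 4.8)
This is classical (cf.~\cite{KS90}); the plan below derives it from the results recalled above together with the propagation of microsupport. By Theorem~\ref{th:opboim}(iv) applied to the submersion $p$, the hypothesis $\SSi(F)\subset T^*M\times T^*_II=(M\times I)\times_MT^*M$ amounts to saying that the cohomology sheaves $H^j(F)$ are locally constant along the fibers $\{x\}\times I$ of $p$. I would first prove the statement when $I$ is an open interval, say $I=\R$, and then reduce the general case to it by means of a smooth contraction of $I$.

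\emph{Case $I=\R$.} Let $\eta\cl\opb p\roim pF\to F$ be the counit of the adjunction $(\opb p,\roim p)$ and let $C$ be its cone; the goal is $C\simeq0$. By Theorem~\ref{th:opboim}(iii), $\SSi(\opb p\roim pF)\subset p_d\opb{p_\pi}\SSi(\roim pF)\subset T^*M\times T^*_\R\R$, so the triangular inequality gives $\SSi(C)\subset T^*M\times T^*_\R\R$. Next I would record the following: if $G\in\Derb(\cor_{M\times\R})$ satisfies $\SSi(G)\subset T^*M\times T^*_\R\R$ and $\opb{i_a}G\simeq0$ for some $a\in\R$ (where $i_a\cl M\to M\times\R$, $x\mapsto(x,a)$), then $G\simeq0$; indeed, setting $j_x\cl\R\to M\times\R$, $t\mapsto(x,t)$, Theorem~\ref{th:opboim}(iv) shows that $\opb{j_x}H^j(G)$ is a locally constant sheaf on the connected manifold $\R$ which vanishes at $a$ (because $(H^jG)_{(x,a)}=H^j(\opb{i_a}G)_x=0$), hence is $0$ for all $j$ and all $x$. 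Applied to $C$, this reduces the problem to checking that $\opb{i_0}\eta$ is an isomorphism; since $p\circ i_0=\id_M$ we have $\opb{i_0}\opb p\roim pF=\roim pF$, and $\opb{i_0}\eta$ is then a morphism $\roim pF\to\opb{i_0}F$ whose stalk at $x$ is the natural map $\varinjlim_{U\ni x}\rsect(U\times\R;F)\to F_{(x,0)}$. That this map is an isomorphism is precisely the statement that $F$ propagates across the level hypersurfaces $M\times\{a\}$: from $\pm dt\notin\SSi(F)$ one gets $(\rsect_{\{t\geq a\}}F)_y\simeq0$ and $(\rsect_{\{t\leq a\}}F)_y\simeq0$ for all $y\in M\times\{a\}$, and one concludes by the non-characteristic deformation lemma of~\cite[Prop.~2.7.2]{KS90}. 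This propagation input is the only step that uses more than the formal microsupport estimates recalled above, and it is where I would expect the real work to lie.

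\emph{General contractible $I$.} Fix $t_0\in I$ and $i_{t_0}\cl M\to M\times I$, $x\mapsto(x,t_0)$. Choose a smooth map $h\cl I\times\R\to I$ with $h(\cdot,s)=\id_I$ for $s\leq0$ and $h(\cdot,s)\equiv t_0$ for $s\geq1$ (possible since a contractible manifold is smoothly contractible). Set $H=\id_M\times h\cl M\times I\times\R\to M\times I$ and $\tilde F=\opb HF$. Since $\SSi(F)$ has vanishing component along $I$, the map $H$ is non-characteristic for $\SSi(F)$, so Theorem~\ref{th:opboim}(iii) gives $\SSi(\tilde F)\subset H_d\opb{H_\pi}\SSi(F)\subset T^*(M\times I)\times T^*_\R\R$. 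Applying the case already treated with $M\times I$ in place of $M$, we obtain $\tilde F\simeq\opb q\roim q\tilde F$, where $q\cl M\times I\times\R\to M\times I$ is the projection; in particular the restriction of $\tilde F$ to $M\times I\times\{s\}$ is independent of $s$. Its value at $s=-1$ is $F$, and at $s=2$ it is $\opb p\opb{i_{t_0}}F$ (because $H(x,t,2)=(x,t_0)$), so $F\simeq\opb p\opb{i_{t_0}}F$. Finally $\roim p\cor_{M\times I}\simeq\cor_M$ (because $U\times I$ is contractible for a small contractible chart $U$), so by the projection formula $\roim pF\simeq\roim p\opb p\opb{i_{t_0}}F\simeq\opb{i_{t_0}}F\ltens\roim p\cor_{M\times I}\simeq\opb{i_{t_0}}F$; hence $F\simeq\opb p\opb{i_{t_0}}F\simeq\opb p\roim pF$.
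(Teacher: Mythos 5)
Your overall skeleton (reduce to the counit being an isomorphism, use Theorem~\ref{th:opboim}~(iv) to turn the microsupport condition into local constancy along the fibers, treat $I=\R$ first and then bootstrap to a general contractible $I$ by pulling back along a smooth contraction) is exactly the standard route: the paper does not prove this corollary but quotes it from \cite{KS90} (it is Prop.~2.7.8 combined with \S5.4 there), and your contraction trick giving $F\simeq\opb{p}\opb{i_{t_0}}F$ is correct and in that spirit. However, the two places where you discharge the actual analytic content do not hold up as written.

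First, the step you yourself flag as ``where the real work lies'' is not covered by the tool you cite. The non-characteristic deformation lemma \cite[Prop.~2.7.2]{KS90} requires a family of open sets $U_s$ such that $\overline{U_t\setminus U_s}\cap\supp F$ is compact and such that $(\rsect_{X\setminus U_s}F)_y\simeq 0$ at the points of the ``new boundary''. Since $p$ is not assumed proper on $\supp F$, the natural choices fail: with $U_s=U\times(-g(s),g(s))$ inside $X=M\times\R$ the compactness condition forces $U$ relatively compact, but then the boundary contains corner points over $\partial U$, where $X\setminus U_s$ is not a half-space $\{t\geq a\}$ and the relevant conormal directions are horizontal, i.e.\ possibly in $\SSi(F)$, so the required vanishing is not guaranteed; working instead inside $U\times\R$, or with graph-like tubes $\{|t|<\phi_s(y)\}$, restores the boundary vanishing but destroys the compactness hypothesis. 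The true statement $(\roim{p}F)_x\isoto F_{(x,0)}$ is the homotopy-invariance result \cite[Prop.~2.7.8]{KS90}, whose proof uses proper base change for the projections $M\times[-n,n]\to M$ (where properness does hold) together with the constancy of the cohomology of $F$ on the fibers and a passage to the limit; some argument of this kind has to replace the appeal to Prop.~2.7.2.

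Second, the last step of the general case invokes a projection formula $\roim{p}\opb{p}G\simeq G\ltens\roim{p}\cor_{M\times I}$ with $G=\opb{i_{t_0}}F$ an arbitrary object (no constructibility or finiteness is assumed in the corollary). The projection formula holds for $\reim{}$, or for $\roim{}$ with locally constant coefficients of finite rank, but not for $\roim{}$ with arbitrary $G$; what you need is precisely the homotopy invariance $G\isoto\roim{p}\opb{p}G$ for contractible $I$, which is (a case of) the statement being proved, so this step is circular as written. It is true, and once the first gap is repaired by the argument of \cite[Prop.~2.7.8]{KS90} it comes for free, but it cannot be obtained from $\roim{p}\cor_{M\times I}\simeq\cor_M$ by a formal tensoring argument.
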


\subsubsection*{Microlocalization, simple sheaves}
We refer to~\cite{KS90} for the definition and the main properties of the
microlocalization.  We recall some properties of the bifunctor $\mu hom$
introduced in~\cite{KS90}, which is a generalization of the microlocalization.
For $F,G \in \Derb(\cor_M)$, $\mu hom(F,G)$ is an object of
$\Derb(\cor_{T^*M})$.  We have
\begin{align}
\label{eq:proj_muhom_oim}
\roim{\pi_M} \mu hom (F,G) &\simeq \rhom (F,G)  , \\
\label{eq:proj_muhom}
\reim{\pi_M} \mu hom (F,G) &\simeq 
\DD'(F) \tens G \qquad \text{if $F$ is constructible,} \\
\label{eq:suppmuhom}
\supp \mu hom (F,G) & \subset \SSi(F)\cap \SSi(G) .
\end{align}
If $F$ is constructible, the restriction from $T^*M$ to $\dT^*M$ gives Sato's
distinguished triangle
\begin{equation*}\label{eq:SatoDTmuhom}
\DD'(F) \ltens G  \to \rhom (F,G) 
\to \roim{\dot\pi_M{}} (\mu hom (F,G) |_{\dT^*M}) \to[+1].
\end{equation*}
By the following result we can see $\mu hom$ as a microlocal version of
$\rhom$. Let $p\in T^*M$ be a given point. By the triangular inequality the full
subcategory $N_p$ of $\Derb(\cor_M)$ formed by the $F$ such that $p\not\in
\SSi(F)$ is triangulated and we can set $\Derb(\cor_M;p) = \Derb(\cor_M)/N_p$
(see the more general Definition~6.1.1 of~\cite{KS90}).  The functor $\mu
hom(\cdot,\cdot)_p$ induces a bifunctor on $\Derb(\cor_M;p)$ and we have

\begin{theorem}[Theorem~6.1.2 of~\cite{KS90}]
\label{thm:germemuhom}
  For all $F,G \in \Derb(\cor_M)$, the morphism $\Hom_{\Derb(\cor_M;p)}(F,G) \to
  H^0(\mu hom(F,G))_p$ is an isomorphism.
\end{theorem}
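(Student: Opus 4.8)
The plan is to construct the morphism, check it factors through $\Derb(\cor_M;p)$ for formal reasons, and then prove bijectivity by realizing $\mu hom$ as Sato's microlocalization along the diagonal, computing the germ of the latter as a filtered colimit, and recognizing that colimit as the one computing $\Hom$ in the Verdier quotient $\Derb(\cor_M)/N_p$. For the construction: from the isomorphism $\roim{\pi_M}\mu hom(F,G)\simeq\rhom(F,G)$ and $\rsect(M;-)\circ\roim{\pi_M}=\rsect(T^*M;-)$ one gets $\Hom_{\Derb(\cor_M)}(F,G)\simeq H^0\rsect(T^*M;\mu hom(F,G))$, and composing with restriction to the germ at $p$ gives $\Hom_{\Derb(\cor_M)}(F,G)\to H^0(\mu hom(F,G))_p$. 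If $s\colon F'\to F$ has mapping cone $C$ with $p\notin\SSi(C)$, then $\supp\mu hom(C,G)\subset\SSi(C)\cap\SSi(G)\subset\SSi(C)$, so $\mu hom(C,G)_p\simeq 0$; applying $\mu hom(-,G)_p$ to the triangle $F'\to F\to C\to[+1]$ shows $\mu hom(F,G)_p\to\mu hom(F',G)_p$ is an isomorphism, and symmetrically for morphisms $G\to G'$ with cone avoiding $p$. Since $N_p$ is thick, the universal property of the Verdier quotient shows the morphism descends uniquely to $\Hom_{\Derb(\cor_M;p)}(F,G)$. This part is purely formal.

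For bijectivity, recall that, up to the standard identification of $T^*_{\Delta_M}(M\times M)$ with $T^*M$, one has $\mu hom(F,G)\simeq\mu_{\Delta_M}\rhom(\opb{q_2}F,\epb{q_1}G)$, where $\Delta_M\subset M\times M$ is the diagonal and $q_1,q_2$ are the projections. So it suffices to describe the germ $(\mu_N H)_p$ for a closed submanifold $N\subset X$, an object $H\in\Derb(\cor_X)$, and $p\in T^*_NX\setminus 0$. For this I would use the Sato-type inductive-limit description of sections of a microlocalization over conic open subsets of the conormal bundle: $(\mu_N H)_p$ is a filtered colimit of local-cohomology groups $H^j_Z(W;H)$, where $W$ runs over open neighbourhoods of the base point in $X$ and $Z\subset W$ over closed subsets whose Whitney normal cone along $N$ lies in the open half-space determined by $p$. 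Feeding in $H=\rhom(\opb{q_2}F,\epb{q_1}G)$ and using the adjunctions $(\opb{q_i},\roim{q_i})$, $(\reim{q_i},\epb{q_i})$, together with $\rhom(A_Z,B)\simeq\rsect_Z\rhom(A,B)$ and $\rsect_Z\circ\rsect_Z\simeq\rsect_Z$, each term $H^j_Z(W;H)$ rewrites as a $\Hom$-group in $\Derb(\cor_M)$ between $F$ and $G$ after a "cut-off along $Z$"; the normal-cone condition on $Z$ is precisely what forces that cut-off to modify $F$ (resp.\ $G$) only by an object with microsupport avoiding $p$.

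It then remains to verify a cofinality statement: the cut-off morphisms produced by the admissible pairs $(W,Z)$ are cofinal, for the purpose of computing the colimit, among all morphisms $F'\to F$ and $G\to G'$ whose cone lies in $N_p$, so that the colimit computing $(\mu hom(F,G))_p$ becomes the colimit over the fraction diagram computing $\Hom_{\Derb(\cor_M;p)}(F,G)$; one then checks that the comparison morphism of the first step induces exactly this identification. The cofinality is where the microlocal cut-off lemma (see \cite[\S5.2, \S6.1]{KS90}) enters: every object of $\Derb(\cor_M;p)$ admits a representative whose microsupport near $p$ lies in an arbitrarily narrow closed convex cone around $p$, realized by a $\gamma$-cut-off of the above type, and any two such representatives with sufficiently narrow cones have a common refinement of the same type. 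I expect the main obstacle to be precisely this last bookkeeping: transporting the normal-cone conditions on $Z\subset M\times M$ through $q_1,q_2$ and through the identification $T^*_{\Delta_M}(M\times M)\simeq T^*M$ into microsupport conditions near $p\in T^*M$, and checking that the two colimit systems are genuinely cofinal in one another rather than merely comparable.
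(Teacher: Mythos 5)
The paper does not actually prove this statement: it is quoted as Theorem~6.1.2 of~\cite{KS90}, so the only proof to compare yours with is the original one of Kashiwara--Schapira. Your outline follows that proof's architecture quite faithfully: the comparison map built from $\roim{\pi_M}\mu hom(F,G)\simeq\rhom(F,G)$, its descent to $\Derb(\cor_M;p)$ because $\mu hom(\cdot,\cdot)_p$ is unchanged by morphisms whose cone has microsupport avoiding $p$ (by the support estimate $\supp\mu hom\subset\SSi\cap\SSi$), the identification $\mu hom(F,G)\simeq\mu_{\Delta}\rhom(\opb{q_2}F,\epb{q_1}G)$, the description of germs of microlocalization as a filtered colimit of local cohomology groups indexed by sets satisfying a normal-cone condition, and the final identification of that colimit with $\Hom$ in the localized category via cut-off arguments. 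This is precisely the route of \cite{KS90}, \S4.4 and \S6.1; there is no divergence of method, and the formal half of your argument (construction of the map and its factorization through the Verdier quotient) is correct as written.

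As a proof, however, what you have is incomplete, and the gap is exactly where you say you expect trouble. The bijectivity is reduced to the cofinality claim of your last paragraph --- that the admissible pairs $(W,Z)$ yield morphisms $F'\to F$ and $G\to G'$ with cones in $N_p$ which are cofinal among all such morphisms, and conversely that every fraction representing an element of $\Hom_{\Derb(\cor_M;p)}(F,G)$ is dominated by such a cut-off --- and this claim is asserted, not proved. It is not bookkeeping: it is the analytic-geometric heart of the theorem, carried in \cite{KS90} by the refined microlocal cut-off results (\S5.2 and \S6.1) combined with the explicit germ computation for $\mu_\Delta$ in \S4.4, including the transport of the normal-cone conditions on $Z\subset M\times M$ through $q_1,q_2$ and the identification of $T^*_{\Delta}(M\times M)$ with $T^*M$ that you flag but do not perform. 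Until that step is carried out, your argument establishes only the existence of a natural comparison morphism $\Hom_{\Derb(\cor_M;p)}(F,G)\to H^0(\mu hom(F,G))_p$, not that it is an isomorphism.
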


Let $\Lambda$ be a closed conic Lagrangian submanifold of $\dT^*M$.  We recall
the definition of simple and pure sheaves along $\Lambda$ and give some of their
properties.  We first recall some notations from~\cite{KS90}.  For a function
$\varphi\cl M\to \R$ of class $C^\infty$ we define
\begin{equation}
\label{eq:def_Lambdaphi}
  \Lambda_\varphi = \{(x; d\varphi(x)); \; x\in M\}  .
\end{equation}
We notice that $\Lambda_\varphi$ is a closed Lagrangian submanifold of $T^*M$.
For a given point $p=(x;\xi) \in \Lambda \cap \Lambda_\varphi$ we have the
following Lagrangian subspaces of $T_p(T^*M)$
\begin{equation}\label{eq:def_lambdas_p}
\lambda_0(p) = T_p(T_x^*M), \qquad 
\lambda_\Lambda(p) = T_p\Lambda, \qquad
\lambda_\varphi(p) = T_p\Lambda_\varphi.
\end{equation}
We recall the definition of the inertia index (see for example \S A.3
in~\cite{KS90}).  Let $(E,\sigma)$ be a symplectic vector space and let
$\lambda_1, \lambda_2, \lambda_3$ be three Lagrangian subspaces of $E$. We
define a quadratic form $q$ on $\lambda_1\oplus \lambda_2\oplus \lambda_3$
by $q(x_1,x_2,x_3) = \sigma(x_1,x_2) + \sigma(x_2,x_3) + \sigma(x_3,x_1)$.
Then $\tau_E(\lambda_1, \lambda_2, \lambda_3)$ is defined as the signature
of $q$, that is, $p_+-p_-$, where $p_\pm$ is the number of $\pm 1$ in
a diagonal form of $q$.
We set
\begin{equation}\label{eq:def_tau}
\tau_{\varphi} = \tau_{p,\varphi} 
= \tau_{T_pT^*M}(\lambda_0(p), \lambda_\Lambda(p), \lambda_\varphi(p)).
\end{equation}

\begin{proposition}[Proposition~7.5.3 of~\cite{KS90}]
\label{prop:inv_microgerm}
Let $\varphi_0, \varphi_1\cl M\to \R$ be functions of class $C^\infty$, let
$p=(x;\xi) \in \Lambda$ and let $F\in \Derb(\cor_M)$ be such that
$\Omega \cap \dot\SSi(F)$ is contained in $\Lambda$, for some neighborhood
$\Omega$ of $\Lambda$.  We assume that $\Lambda$ and $\Lambda_{\varphi_i}$
intersect transversally at $p$, for $i=0,1$.  Then $(\rsect_{\{ \varphi_1 \geq
  0\}}(F))_x$ is isomorphic to $(\rsect_{\{ \varphi_0 \geq 0\}}(F))_x [\demi
(\tau_{\varphi_0} - \tau_{\varphi_1})]$.
\end{proposition}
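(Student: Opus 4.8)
The plan is to put $\Lambda$ and $F$ into a normal form by a quantized contact transformation, reduce to an explicit model computation, and transport the answer back, the whole dependence on $\varphi$ being carried by inertia indices. First I would note that we may assume $\varphi_i(x)=0$, that $d\varphi_i(x)=\xi$ (since $p=(x;\xi)\in\Lambda_{\varphi_i}$), and that $\lambda_{\varphi_i}(p)=T_p\Lambda_{\varphi_i}=\{(v;\mathrm{Hess}\,\varphi_i(x)\cdot v)\}$ depends only on the Hessian of $\varphi_i$ at $x$, so that the right-hand side $\demi(\tau_{\varphi_0}-\tau_{\varphi_1})$ depends only on the $2$-jets of the $\varphi_i$ at $x$. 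Next, every germ of smooth conic Lagrangian submanifold is equivalent, by a homogeneous symplectomorphism, to the germ at a point $p'$ of $\Lambda_1:=\dot\SSi(\cor_{\{x_1\geq0\}})\subset\dT^*\R^n$ — one matches tangent spaces at a point by a linear contactomorphism of the associated Legendrian and then straightens the resulting $1$-graph. Quantizing this symplectomorphism over a neighborhood of $p$ (the theory of quantized contact transformations, see \cite[Ch.~7]{KS90}) gives an equivalence $\Phi\cl\Derb(\cor_M;p)\isoto\Derb(\cor_{\R^n};p')$, and I would replace $F$ by $F'=\Phi(F)$, which satisfies $\dot\SSi(F')\subset\Lambda_1$ near $p'$.

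The second step is the model computation. Near a smooth point of a conic Lagrangian the microlocal category of sheaves microsupported on it is equivalent to $\Derb(\cor)$; for $\Lambda_1$ one may take $\cor_{\{x_1\geq0\}}$ as generator, so $F'\simeq\cor_{\{x_1\geq0\}}\tens L'$ in $\Derb(\cor_{\R^n};p')$ for a unique $L'\in\Derb(\cor)$, and, since $\rsect_{\{\psi\geq0\}}(\scbul)_0$ is computed near the ray through $p'$ from $\mu hom(\cor_{\{\psi\geq0\}},\scbul)$ via Theorem~\ref{thm:germemuhom}, the comparison of $\rsect_{\{\psi\geq0\}}(F')_0$ for two admissible test functions reduces to the model sheaf $\cor_{\{x_1\geq0\}}$. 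For $\psi$ with $\psi(0)=0$, with $d\psi(0)$ equal to the covector part of $p'$, and with $\Lambda_\psi\pitchfork\Lambda_1$ at $p'$, a direct computation (using the triangle $\rsect_{\{\psi\geq0\}}(\cor_{\{x_1\geq0\}})\to\cor_{\{x_1\geq0\}}\to\roim{j}\opb{j}\cor_{\{x_1\geq0\}}$ for the open complement $j\cl\{\psi<0\}\hookrightarrow\R^n$) gives $\rsect_{\{\psi\geq0\}}(\cor_{\{x_1\geq0\}})_0\simeq\cor[-a_-(\psi)]$, where $a_-(\psi)$ is the number of negative eigenvalues of the Hessian of $\psi|_{\{x_1=0\}}$ at $0$; and one checks that $-a_-(\psi)$ equals $\demi\tau_{p',\psi}$ up to an additive constant independent of $\psi$. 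So $\rsect_{\{\psi\geq0\}}(F')_0\simeq L'[\,c'+\demi\tau_{p',\psi}\,]$.

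It remains to transport this back through $\Phi$. A quantized contact transformation intertwines $\rsect_{\{\varphi\geq0\}}(F)_x$ with a local cohomology $\rsect_{\{\psi_\varphi\geq0\}}(F')_0$ only up to a shift by an inertia index built from the graph of the symplectomorphism and from $\lambda_0(p),\lambda_\Lambda(p),\lambda_\varphi(p)$ (compatibility of $\mu hom$ with quantized transformations, \cite[Ch.~7]{KS90}); combining this with the previous paragraph and the symplectic invariance $\tau_{p',\psi_\varphi}=\tau_{p,\varphi}$ yields $\rsect_{\{\varphi\geq0\}}(F)_x\simeq L'[\,c+\demi\tau_{p,\varphi}\,]$ with $c$ independent of $\varphi$, and taking the difference for $\varphi_0$ and $\varphi_1$ proves the statement. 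I expect the main obstacle to be precisely this index bookkeeping: one must verify that the shift produced by $\Phi$ together with the shift of the model computation assemble into $\demi\tau_{p,\varphi}$ modulo a $\varphi$-independent constant, which comes down to the cocycle (``Maslov'') identity $\tau(\lambda_1,\lambda_2,\lambda_3)-\tau(\lambda_1,\lambda_2,\lambda_4)+\tau(\lambda_1,\lambda_3,\lambda_4)-\tau(\lambda_2,\lambda_3,\lambda_4)=0$ (see \cite[\S A.3]{KS90}). A route avoiding quantized transformations keeps the identity and joins $\varphi_0$ to $\varphi_1$ by a generic path $\varphi_s$ with $\varphi_s(x)=0$, $d\varphi_s(x)=\xi$: off finitely many parameters one has $\Lambda\pitchfork\Lambda_{\varphi_s}$ at $p$, and there $\tau_{\varphi_s}$ is locally constant while $(\rsect_{\{\varphi_s\geq0\}}F)_x$ is independent of $s$ by the non-characteristic deformation lemma applied to $(y,s)\mapsto\varphi_s(y)$ on $M\times\R$ (using $d_y\varphi_s(x)=\xi\neq0$ and $\partial_s\varphi_s(x)=0$); at each crossing a local Morse-type computation would give a shift by $\pm1$ matching the jump of $\demi\tau_{\varphi_s}$ — but that crossing computation is again the hard point and needs the same index identity.
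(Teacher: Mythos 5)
The paper itself does not prove this proposition: it is quoted as \cite[Proposition~7.5.3]{KS90}, so your proposal has to be measured against the argument given there rather than against anything in this text. Your outline is in the same circle of ideas as Kashiwara--Schapira's: reduce by a homogeneous symplectomorphism and its quantization to the normal form in which $\Lambda$ is the half-conormal to $\{x_1=0\}$ and $F$ is microlocally $\cor_{\{x_1\geq0\}}\tens L'$ (Prop.~6.6.1 of \cite{KS90} after the contact transformation), compute the model germ, and convert indices. The model computation is correct: transversality of $\Lambda_\psi$ with the half-conormal at $p'$ means exactly that $\psi|_{\{x_1=0\}}$ is Morse at $0$, and the excision triangle gives $(\rsect_{\{\psi\geq0\}}\cor_{\{x_1\geq0\}})_0\simeq\cor[-a_-(\psi)]$.

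The genuine gap is the transport step, which is where all the content of the proposition sits. The equivalence $\Phi\cl\Derb(\cor_M;p)\isoto\Derb(\cor_{\R^n};p')$ does not act on the assignment $\varphi\mapsto(\rsect_{\{\varphi\geq0\}}F)_x$: one must rewrite the germ as $\mu hom(\cor_{\{\varphi\geq0\}},F)_p$ (note this is an object-level formula from \cite{KS90}, not the paper's Theorem~\ref{thm:germemuhom}, which only concerns $H^0$), use that $\Phi$ preserves $\mu hom$, and then identify $\Phi(\cor_{\{\varphi\geq0\}})$ at $p'$ with some $\cor_{\{\psi_\varphi\geq0\}}[d(\varphi)]$ and prove that $d(\varphi)-\demi\tau_{p,\varphi}$ is independent of $\varphi$. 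You cite ``compatibility of $\mu hom$ with quantized contact transformations'' for this, but that compatibility carries no shift information; the statement you actually need is the proposition itself applied to the simple sheaf $\cor_{\{\varphi\geq0\}}$ and its image under $\Phi$, so as written the argument is circular. In \cite{KS90} this is precisely what the microlocal kernel calculus of \S\S7.1--7.3 supplies: the quantizing kernel is chosen simple along the graph and the shift of the microlocal composition is computed explicitly (kernels with quadratic phases), the cocycle identity for $\tau$ assembling the pieces; none of that computation appears in your sketch, and your alternative path-and-wall-crossing route defers exactly the same local computation. Finally, the normalization $-a_-(\psi)=\demi\tau_{p',\psi}+\mathrm{const}$ must be checked against the sign conventions for $\tau$: with the opposite sign your argument outputs a shift of $\demi(\tau_{\varphi_1}-\tau_{\varphi_0})$ rather than $\demi(\tau_{\varphi_0}-\tau_{\varphi_1})$, so this piece of bookkeeping cannot be waved through.
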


\begin{definition}[Definition~7.5.4 of~\cite{KS90}]
\label{def:simple_pure}
In the situation of Proposition~\ref{prop:inv_microgerm} we say that $F$ is
pure at $p$ if $(\rsect_{\{ \varphi_0 \geq 0\}}(F))_x$ is concentrated in a
single degree, that is, $(\rsect_{\{ \varphi_0 \geq 0\}}(F))_x \simeq L[d]$,
for some $L\in \Mod(\cor)$ and $d\in \Z$. If moreover $L\simeq \cor$, we say
that $F$ is simple at $p$.

If $F$ is pure (resp. simple) at all points of $\Lambda$ we say that it is pure
(resp. simple) along $\Lambda$.

We denote by $\Dersf_\Lambda(\cor_M)$ the full subcategory of
$\Derb_\Lambda(\cor_M)$ formed by the $F$ such that $F$ is simple along
$\dot\SSi(F)$ and the stalks of $F$ at the points of $M \setminus
\dot\pi_M(\Lambda)$ are finite dimensional.
\end{definition}
We know from~\cite{KS90} that, if $\Lambda$ is connected and $F\in
\Derb_{\Lambda}(\cor_M)$ is pure at some $p\in \Lambda$, then $F$ is in fact
pure along $\Lambda$. Moreover the $L\in \Mod(\cor)$ in the above definition is
the same at every point.

Since $\cor$ is a field, we know that $F$ is pure along $\Lambda$ if and only if
$\mu hom(F,F)|_{\dT^*M}$ is concentrated in degree $0$ and $F$ is simple along
$\Lambda$ if and only if the natural morphism $\cor_\Lambda \to \mu
hom(F,F)|_{\dT^*M}$ induced by~\eqref{eq:proj_muhom_oim} and the section $\id_F$
of $\rhom(F,F)$ is an isomorphism:
\begin{equation}
\label{eq:carc_Fsimple}
\cor_\Lambda  \isoto \mu hom(F,F) .
\end{equation}
For coefficients in a field the property~\eqref{eq:carc_Fsimple} could be a
definition of a simple sheaf.  However, using Definition~\ref{def:simple_pure}
it is possible to give a meaning to the integer $d$.  In~\cite{KS90} we find a
notion of ``simple sheaf with shift $k$ at the point $p$'', where $k$ is an
half-integer (the shift in Proposition~\ref{prop:inv_microgerm} is a sum of
half-integers).  We do not recall the convention about fixing the shift. We only
give the following examples.

\begin{example}
\label{ex:shift}
In Example~\ref{ex:microsupport}~(iii) the sheaf $\cor_U$ has shift $-1/2$ and
the sheaf $\cor_Z$ has shift $1/2$ (see Example~7.5.5 of~\cite{KS90}).

For $i \in \N$ we let $\Lambda_i \subset \Lambda$ be the set of points such that
the rank of $d\pi_M|_\Lambda$ is $(\dim M -1 -i)$.  For a generic closed conic
Lagrangian connected submanifold $\Lambda$ in $\dT^*M$, $\Lambda_0$ is an open
dense subset of $\Lambda$ and, for a given simple sheaf $F \in
\Derb_\Lambda(\cor_M)$, the shift of $F$ at $p$ is locally constant on
$\Lambda_0$ and changes by $1$ when $p$ crosses $\Lambda_1$.
\end{example}

\section{Cohomological dimension $2$}
\label{sec:cohomdim2}

The following result is well-known (see for example~\cite[Ex. 13.22]{KS05}).
\begin{lemma}
\label{lem:compl_scinde}
Let $\catc$ be an abelian category and $X \in \Derb(\catc)$ a complex such that
$\Ext^k(H^iX , H^jX) \simeq 0$ for all $i,j \in \Z$ and all $k\geq 2$.  Then
$X$ is split, that is, there exists an isomorphism
$X \simeq \bigoplus_{i \in \Z} H^iX [-i]$ in $\Derb(\catc)$.
\end{lemma}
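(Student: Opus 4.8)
The plan is to build the splitting isomorphism by induction, peeling off one cohomology sheaf at a time, using the vanishing of $\Ext^k$ for $k\geq 2$ to kill the obstruction at each stage. First I would reduce to the case where $X$ is concentrated in degrees $[a,b]$ with $a\leq b$, and argue by induction on the amplitude $b-a$; if $b-a=0$ there is nothing to prove. For the inductive step, I would use the canonical truncation triangle
\begin{equation*}
\tau_{\leq b-1}X \to X \to H^bX[-b] \to[+1] .
\end{equation*}
The class of this triangle is an element of $\Hom_{\Derb(\catc)}(H^bX[-b], \tau_{\leq b-1}X[1])$, and I want to show it splits, i.e. that this morphism is zero (or at least that $X$ is isomorphic to the direct sum of the two outer terms). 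By the inductive hypothesis applied to $\tau_{\leq b-1}X$ — which has cohomology the $H^iX$ for $i\leq b-1$ and hence also satisfies the $\Ext$-vanishing hypothesis — we have $\tau_{\leq b-1}X \simeq \bigoplus_{i\leq b-1} H^iX[-i]$, so
\begin{equation*}
\Hom(H^bX[-b], \tau_{\leq b-1}X[1]) \simeq \bigoplus_{i\leq b-1} \Ext^{b-i+1}(H^bX, H^iX) .
\end{equation*}
Since $b-i+1 \geq 2$ for all $i\leq b-1$, every summand vanishes by hypothesis, so the connecting morphism is zero and the triangle splits: $X \simeq \tau_{\leq b-1}X \oplus H^bX[-b] \simeq \bigoplus_{i\in\Z} H^iX[-i]$.

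The one point that needs a little care — and which I would expect to be the main (though minor) obstacle — is the passage from "the connecting morphism $H^bX[-b]\to \tau_{\leq b-1}X[1]$ is zero" to "$X$ is isomorphic to the direct sum of the outer vertices of the triangle". This is the standard fact that in a triangulated category a distinguished triangle $A\to B\to C\to A[1]$ with zero connecting map $C\to A[1]$ forces $B\simeq A\oplus C$; it follows from the long exact sequence obtained by applying $\Hom(-,A)$ and $\Hom(C,-)$, or directly from the axioms (the identity of $C$ lifts through $B\to C$, giving a splitting). I would also double-check that $\tau_{\leq b-1}X$ genuinely satisfies the hypotheses of the lemma — this is immediate since its cohomology objects form a subset of $\{H^iX\}_{i\in\Z}$ — so the induction is legitimate. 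Everything else is formal manipulation with truncation functors and the isomorphism $\Hom_{\Derb(\catc)}(P[-m],Q[-n+1]) = \Ext^{m-n+1}_{\catc}(P,Q)$ when $P,Q$ are single objects placed in a single degree, together with the fact that this $\Ext$-group is computed in the underlying abelian category (valid since $\Derb(\catc)$ has the expected $\Hom$'s between shifts of objects of $\catc$).
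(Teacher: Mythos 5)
Your proof is correct. The paper gives no argument for this lemma --- it simply cites \cite[Ex.~13.22]{KS05} --- and your induction on the amplitude via the truncation triangle $\tau_{\leq b-1}X \to X \to H^bX[-b] \to[+1]$, with the connecting morphism killed because it lands in $\bigoplus_{i\leq b-1}\Ext^{b-i+1}(H^bX,H^iX)$ (all degrees $\geq 2$), followed by the standard splitting of a distinguished triangle whose connecting map vanishes, is exactly the argument that citation has in mind.
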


This applies in particular to constructible sheaves in dimension $1$.  Indeed,
if $M = \R$ or $M = \cer$ and $\cor$ is a field, we have $\Ext^k(F, G) \simeq 0$
for all $F,G \in \Mod_\rc(\cor_M)$ and for all $k\geq 2$.  We deduce
\begin{lemma}
\label{lem:faiscdim1_compl_scinde}
Let $M = \R$ or $M = \cer$ and let $\cor$ be a field.  Then, for all
$F \in \Derb_\rc(\cor_M)$ we have $F \simeq \bigoplus_{i \in \Z} H^iF [-i]$ in
$\Derb_\rc(\cor_M)$.
\end{lemma}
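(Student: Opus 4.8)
The statement follows from Lemma~\ref{lem:compl_scinde} once we verify the vanishing claim inserted just before it: if $M=\R$ or $M=\cer$ and $\cor$ is a field, then $\Ext^k(F,G)\simeq0$ for all $F,G\in\Mod_\rc(\cor_M)$ and all $k\geq2$. Indeed, granting this, for $F\in\Derb_\rc(\cor_M)$ the cohomology sheaves $H^iF$ lie in $\Mod_\rc(\cor_M)$, so $\Ext^k(H^iF,H^jF)\simeq0$ for all $i,j$ and all $k\geq2$, and Lemma~\ref{lem:compl_scinde} applied to the abelian category $\catc=\Mod_\rc(\cor_M)$ (or $\Mod(\cor_M)$) gives the splitting $F\simeq\bigoplus_i H^iF[-i]$. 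So the whole content is the $\Ext$-vanishing, i.e.\ that the category of constructible sheaves on a $1$-manifold has global dimension $\leq1$.

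To prove the vanishing, the plan is to reduce to a very explicit description of constructible sheaves on $\R$ and $\cer$. A constructible sheaf $F$ on $\R$ is constant of finite rank on each stratum of a finite stratification by points and open intervals; by an easy dévissage one reduces to the generators $\cor_I$ for $I$ an interval (the later Section~\ref{sec:conshR} even shows every such $F$ is a finite direct sum of $\cor_I[0]$'s, but for the present lemma I only need enough structure to compute $\Ext$). Since $\Ext^k$ commutes with finite direct sums in each variable, it suffices to check $\Ext^k(\cor_I,\cor_J)\simeq0$ for $k\geq2$ and $I,J$ intervals. Here one computes $\RHom(\cor_I,\cor_J)\simeq\rsect(\R;\rhom(\cor_I,\cor_J))$; using $\rhom(\cor_I,\cor_J)\simeq\rhom(\cor_I,\cor_\R)\ltens\cor_J$ and the standard identifications of $\DD'(\cor_I)=\rhom(\cor_I,\cor_\R)$ (which is again, up to a shift by $1$ on the closed part of the boundary, a sheaf of the form $\cor_{I'}$ concentrated in degrees $0$ and $1$), one finds that $\rhom(\cor_I,\cor_J)$ is represented by a complex concentrated in degrees $0$ and $1$, supported on an interval; taking $\rsect(\R;-)$ of such a complex on $\R$ (which has cohomological dimension $\leq1$ with respect to constructible sheaves, as $H^k(\R;\cor_K)=0$ for $k\geq2$ and any interval $K$) yields something concentrated in degrees $0$ and $1$, hence $\Ext^k=0$ for $k\geq2$. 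The case $M=\cer$ is handled by pulling back along $e\cl\R\to\cer$: either one checks directly that $\cer$ has constructible cohomological dimension $\leq1$ (it is covered by two intervals, Mayer--Vietoris), or one uses that any constructible sheaf on $\cer$ restricted to a suitable open arc is constructible on an interval and patches the local $\Ext$-vanishing via the Mayer--Vietoris long exact sequence relating $\Ext^k$ on $\cer$ to $\Ext^{k-1}$ and $\Ext^k$ on two arcs and their intersection.

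The main obstacle is purely bookkeeping rather than conceptual: keeping track of the half-open versus closed and open endpoints when computing $\DD'(\cor_I)$ and the resulting $\rhom(\cor_I,\cor_J)$, and making sure the global sections functor on $\R$ (resp.\ $\cer$) does not push cohomology up beyond degree $1$. For $\R$ this is immediate since $\R$ is contractible and constructible sheaves on it have finite ``injective-type'' resolutions of length $1$; for $\cer$ the one subtlety is the locally constant sheaves with nontrivial monodromy, for which $\RHom(L,L')\simeq\rsect(\cer;\rhom(L,L'))$ and $\rhom(L,L')$ is a local system, whose cohomology on $\cer$ is concentrated in degrees $0$ and $1$ by the circle's cohomological dimension. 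In all cases the upshot is the same: cohomological dimension $\leq1$ on a $1$-manifold forces $\Ext^k=0$ for $k\geq2$, and then Lemma~\ref{lem:compl_scinde} finishes the argument.
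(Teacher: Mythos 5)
Your reduction is exactly the paper's: Lemma~\ref{lem:compl_scinde} plus the claim that $\Ext^k(F,G)\simeq 0$ for $k\geq 2$ on $\R$ and $\cer$, which the paper simply asserts. The issue is with your proposed proof of that vanishing, which is the only part going beyond the paper, and it has two concrete problems. First, the identity $\rhom(\cor_I,\cor_J)\simeq\rhom(\cor_I,\cor_\R)\ltens\cor_J$ is false in general for constructible sheaves: take $I=J=\{0\}$, where $\DD'(\cor_{\{0\}})\ltens\cor_{\{0\}}\simeq\cor_{\{0\}}[-1]$ while $\rhom(\cor_{\{0\}},\cor_{\{0\}})\simeq\cor_{\{0\}}$ (the same failure occurs whenever $I$ and $J$ have a common closed endpoint). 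The defect of the natural morphism $\DD'F\ltens G\to\rhom(F,G)$ is precisely the term $\roim{\dot\pi_M}(\muhom(F,G)|_{\dT^*M})$ in Sato's triangle quoted in Section~\ref{sec:micshfth}, and it is nonzero exactly in these cases. Second, and more seriously for the logic, even with the correct local computation the inference ``$\rhom(\cor_I,\cor_J)$ has cohomology in degrees $0,1$, and $\rsect(M;-)$ has cohomological dimension $1$, hence $\Ext^k=0$ for $k\geq2$'' is a non sequitur: those two facts only bound $\Ext^k$ for $k\geq 3$, since $H^1\bigl(M;\mathcal{E}xt^1(F,G)\bigr)$ could a priori contribute to $\Ext^2$, which is exactly the degree Lemma~\ref{lem:compl_scinde} needs. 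The same soft spot appears in your Mayer--Vietoris patching on $\cer$: killing $\Ext^2_\cer$ requires not only $\Ext^2$-vanishing on the two arcs but also surjectivity of $\Ext^1_U\oplus\Ext^1_V\to\Ext^1_{U\cap V}$, which you do not address.

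The gap is repairable, and in a way that treats $\R$ and $\cer$ uniformly: check by the local (stalkwise) computation that $\mathcal{E}xt^q(F,G)\simeq 0$ for $q\geq 2$ and that $\mathcal{E}xt^1(F,G)$ is supported on the locally finite set of points where $F$ or $G$ fails to be locally constant (over a field, $\mathcal{E}xt^1$ between constant sheaves on an interval vanishes). A sheaf supported on such a discrete closed set is flabby, so $H^{\geq1}(M;\mathcal{E}xt^1(F,G))\simeq0$; combining this with $H^{\geq2}(M;\hom(F,G))\simeq0$ (cohomological dimension $1$) via the truncation triangle for $\rhom(F,G)$, or the hypercohomology spectral sequence, gives $\Ext^k(F,G)\simeq0$ for all $k\geq2$ on both $\R$ and $\cer$, with no separate gluing argument for the circle. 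Alternatively, one can compute $\RHom(\cor_I,\cor_J)$ directly from excision triangles (and $\RHom(L,L')$ for local systems on $\cer$, where your argument is fine) and check it is concentrated in degrees $0,1$; but that must be done honestly, not deduced from the degree count you gave.
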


In this section we consider a situation where $\Ext^2(H^iX , H^jX)$ is easy to
describe and $\Ext^k(H^iX , H^jX) \simeq 0$ for all $i,j \in \Z$ and $k\geq 3$.
We will use the following remark (see for
example~\cite[Ex.~13.20,~13.21]{KS05}).

For integers $a\leq b$ we denote by $\Der^{[a,b]}(\catc)$ the full subcategory
of $\Derb(\catc)$ formed by the complexes $X$ such that $H^iX \simeq 0$ for all
$i\not\in [a,b]$.  We have the truncation functors, for $n \in \Z$, $\tau_{\leq
  n} \cl \Derb(\catc) \to \Der^{]-\infty,n]}(\catc)$ and $\tau_{\geq n} \cl
\Derb(\catc) \to \Der^{[n,+\infty[}(\catc)$.  For any $X,Y \in \Derb(\catc)$ and
any isomorphism $u \cl X \isoto Y$ we have a natural isomorphism of
distinguished triangles
\begin{equation}
\label{eq:diag_ext2}
\vcenter{\xymatrix{
\tau_{\leq n} (X)  \ar[r] \ar[d]^{\tau_{\leq n}(u)}_\wr & X \ar[r] \ar[d]^u_\wr 
& \tau_{\geq n+1} (X) \ar[r]^{c(X)} \ar[d]^{\tau_{\geq n+1} (u)}_\wr 
& \tau_{\leq n} (X)[1] \ar[d]^{\tau_{\leq n}(u)[1]}_\wr \\
\tau_{\leq n} (Y)  \ar[r]  & Y \ar[r] 
& \tau_{\geq n+1} (Y)  \ar[r]^{c(Y)}  & \tau_{\leq n} (Y)[1] .
}}
\end{equation}
Let us set $A = \tau_{\geq n+1} (X)$ and $B = \tau_{\leq n} (X)[1]$.  It follows
from~\eqref{eq:diag_ext2} that $c(X)$ and $c(Y)$ give the same class in the
quotient $\Hom(A,B) / \Aut(A) \times \Aut(B)$, where $\Aut(A)$ is the
isomorphism group of $A$ and $\Aut(A) \times \Aut(B)$ acts on $\Hom(A,B)$ by
composition.

Conversely, let $\phi \in \Hom(A,B)$ be given and define $X_\phi$ such that
$X_\phi[1]$ is the cone of $\phi$.  Then $\tau_{\geq n+1} (X_\phi) \simeq A$,
$\tau_{\leq n} (X_\phi)[1] \simeq B$ and $c(X_\phi) = \phi$ through these
isomorphisms.  The object $X_\phi$ is determined by $\phi$ up to isomorphism.
Moreover, by the axiom of triangulated categories saying that we can extend a
square to a morphism of triangles, we see that $X_\phi \simeq X_\psi$ if $\phi$
and $\psi$ have the same image in $\Hom(A,B) / \Aut(A) \times \Aut(B)$.
It follows that the set of isomorphism classes of objects $Y$ such that
$\tau_{\geq n+1} (Y) \simeq A$ and $\tau_{\leq n} (Y)[1] \simeq B$ is in
bijection with $\Hom(A,B) / \Aut(A) \times \Aut(B)$.

\medskip

Let $\cor$ be a field and let $\catc$ be a $\cor$-linear abelian category.  By
this we mean that the $\Hom$ groups in $\catc$ are $\cor$-vector spaces and the
composition is $\cor$-bilinear.  Let $\Derb(\catc)$ be its bounded derived
category.  We assume that there exists $A\in \catc$ such that
\begin{equation}
\label{eq:hyp_A}
\left\{ \begin{aligned}
&\Hom(A,A) \simeq \Ext^2(A,A) \simeq \cor ,  \\
&\Ext^i(A,A) \simeq 0 \qquad \text{for $i\not=0,2$.}
\end{aligned} \right.
\end{equation}
We denote by $\Derb(\catc) \langle A \rangle$ the full subcategory
of $\Derb(\catc)$ formed by the complexes $X$ such that
$H^iX \simeq A^{d_i}$ for some $d_i \in \N$, for all $i\in \Z$.

The main result of this section is Proposition~\ref{prop:descr_DCA} which says
that $\Derb(\catc) \langle A \rangle$ contains countably many isomorphism
classes.  We first give some intermediate results.

\begin{lemma}
\label{lem:exuni_An}
  For each $n\in \N$ there exists a complex $A_n \in \Der^{[0,n]}(\catc)$ such
  that $H^i(A_n) \simeq A$ for all $i \in [0,n]$ and 
  \begin{equation}
    \label{eq:cohomAn}
    \RHom(A,A_n) \simeq \cor  \oplus \cor[-n-2] .
  \end{equation}
Moreover $A_n$ is unique up to isomorphism.
\end{lemma}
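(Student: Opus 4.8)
The plan is to argue by induction on $n$, proving existence and uniqueness simultaneously. For $n=0$ one takes $A_0=A$: since $\cor$ is a field, every bounded complex of $\cor$-vector spaces is isomorphic to the sum of its shifted cohomology (Lemma~\ref{lem:compl_scinde} applied to $\Mod(\cor)$), so by~\eqref{eq:hyp_A} we get $\RHom(A,A)\simeq\cor\oplus\cor[-2]$, which is~\eqref{eq:cohomAn} for $n=0$; and any object of $\Der^{[0,0]}(\catc)$ with $H^0\simeq A$ is isomorphic to $A$. Throughout the induction I will use the following dévissage remark: if $X\in\Der^{[0,b]}(\catc)$ has each cohomology object a finite direct sum of copies of $A$, then, filtering $X$ by its canonical truncations and using~\eqref{eq:hyp_A} on each graded piece, $\RHom(A,X)$ has cohomology concentrated in degrees $[0,b+2]$; in particular $\Ext^k(A,X)=0$ for $k>b+2$.

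For the existence step, assume $A_{n-1}$ is given. By~\eqref{eq:cohomAn} for $n-1$ we have $\Hom(A[-n],A_{n-1}[1])\cong\Hom(A,A_{n-1}[n+1])=H^{n+1}\RHom(A,A_{n-1})\simeq\cor$; choose a nonzero such morphism $c$ and let $A_n$ be the object, determined up to isomorphism by $c$, fitting in a distinguished triangle $A_{n-1}\to A_n\to A[-n]\to[c]A_{n-1}[1]$. The associated long exact cohomology sequence shows at once that $A_n\in\Der^{[0,n]}(\catc)$ and $H^iA_n\simeq A$ for all $i\in[0,n]$. Applying $\RHom(A,-)$ to the triangle and passing to cohomology, the connecting morphism $H^\bullet\RHom(A,A[-n])\to H^{\bullet+1}\RHom(A,A_{n-1})$ vanishes except in degree $n$, where it carries the generator $\id_A$ of $\Hom(A,A)$ to the generator $c$ of $\Hom(A,A_{n-1}[n+1])$ and so is an isomorphism. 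Reading off the long exact sequence, $\RHom(A,A_n)$ then has cohomology $\cor$ in degrees $0$ and $n+2$ and vanishes otherwise, hence $\RHom(A,A_n)\simeq\cor\oplus\cor[-n-2]$ by the splitting over the field $\cor$ again; this is~\eqref{eq:cohomAn} for $n$.

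For the uniqueness step, let $B$ be any object satisfying the conclusions of the lemma for $n$ and set $B'=\tau_{\leq n-1}(B)$, so that $B'\to B\to A[-n]\to[+1]$ is the canonical truncation triangle (using $H^nB\simeq A$). Applying $\RHom(A,-)$ to it and using $\RHom(A,B)\simeq\cor\oplus\cor[-n-2]$ together with $\RHom(A,A[-n])\simeq\cor[-n]\oplus\cor[-n-2]$, the long exact sequence pins down $H^k\RHom(A,B')$ for $k\leq n+1$: it is $\cor$ for $k=0$, it is $0$ for $1\leq k\leq n$, and the connecting map forces $H^{n+1}\RHom(A,B')\simeq\cor$. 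The dévissage remark applied to $B'\in\Der^{[0,n-1]}(\catc)$ kills the remaining possible cohomology, in degrees $\geq n+2$, so $\RHom(A,B')\simeq\cor\oplus\cor[-n-1]$, which is~\eqref{eq:cohomAn} for $n-1$. By the inductive uniqueness, $B'\simeq A_{n-1}$; applying the same reasoning to $B=A_n$ gives $\tau_{\leq n-1}(A_n)\simeq A_{n-1}$ as well. Thus $B$ and $A_n$ are both objects $Y$ with $\tau_{\leq n-1}(Y)\simeq A_{n-1}$ and $\tau_{\geq n}(Y)=H^n(Y)[-n]\simeq A[-n]$, hence, by the observation recalled earlier in this section, they are classified up to isomorphism by $\Hom(A[-n],A_{n-1}[1])/\bigl(\Aut(A[-n])\times\Aut(A_{n-1}[1])\bigr)$. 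Here $\Hom(A[-n],A_{n-1}[1])\cong\cor$ is one-dimensional and $\Aut(A[-n])$ contains the scalars $\cor^\times$ (because $\End(A)\simeq\cor$ is a field), so this quotient has at most two elements: the class of $0$, represented by $A_{n-1}\oplus A[-n]$, and the class of a nonzero morphism. But $\RHom(A,A_{n-1}\oplus A[-n])$ has nonzero cohomology in degree $n$, whereas $\RHom(A,B)$ and $\RHom(A,A_n)$ do not; so $B$ and $A_n$ lie in the same, second, class, and therefore $B\simeq A_n$.

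I expect the uniqueness step to be the main obstacle, and within it the identification $\RHom(A,\tau_{\leq n-1}B)\simeq\cor\oplus\cor[-n-1]$: the long exact sequence by itself only determines this complex in degrees $\leq n+1$, and one genuinely needs the dévissage vanishing $\Ext^k(A,X)=0$ for $k>b+2$ when $X\in\Der^{[0,b]}(\catc)$ has cohomology a direct sum of copies of $A$, in order to exclude spurious cohomology of $\RHom(A,B')$ in degrees $n+2$ and $n+3$. Everything else reduces to routine long exact sequence chases, using repeatedly that over the field $\cor$ a complex is isomorphic to the sum of its shifted cohomology objects.
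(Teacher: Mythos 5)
Your proof is correct and follows essentially the same route as the paper: the same inductive construction of $A_n$ as the cone of a nonzero morphism $A[-n]\to A_{n-1}[1]$, the same long-exact-sequence computation of $\RHom(A,A_n)$, and the same uniqueness argument via $\tau_{\leq n-1}B$, the dévissage vanishing, and the inductive hypothesis. The only cosmetic difference is that you finish uniqueness by invoking the classification of objects by their truncation extension class in $\Hom(A[-n],A_{n-1}[1])/\Aut\times\Aut$, whereas the paper argues directly that the connecting morphism is a nonzero multiple of $u_{n-1}$ — the same one-dimensionality argument in different packaging.
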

\begin{proof}
  We argue by induction on $n$.  For $n=0$ we must have $A_0 \simeq A$ since
  $A_0$ is concentrated in degree $0$ and $H^0(A_0) \simeq A$.  Conversely $A_0
  = A$ satisfies $\RHom(A,A_0) \simeq \cor \oplus \cor[-2]$ by the hypothesis on
  $A$. We prove the existence in~(i) and the unicity in~(ii).

  \sui (i) We assume the existence of $A_n$ for some $n\geq 0$.  By the
  hypothesis on $\RHom(A,A_n)$ there exists a non zero morphism $u_n \cl A[-n-1]
  \to A_n[1]$ which is unique up to multiplication by a scalar.  We define
  $A_{n+1}$ by the distinguished triangle
\begin{equation}
\label{eq:defAn}
A_{n+1} \to A[-n-1] \to[u_n] A_n[1] \to[+1] .
\end{equation}
The cohomology sequence gives $H^i(A_n) \isoto H^i(A_{n+1})$ for $i<n+1$,
$H^{n+1}(A_{n+1}) \isoto H^{n+1}( A[-n-1]) \simeq A$ and $H^{i}(A_{n+1}) \simeq
0$ for $i>n+1$.  Hence $A_{n+1} \in \Der^{[0,n+1]}(\catc)$ and $H^i(A_{n+1})
\simeq A$ for all $i \in [0,n+1]$.  Applying $\RHom(A,\cdot)$
to~\eqref{eq:defAn} we obtain the distinguished triangle
\begin{align*}
\RHom(A,A_{n+1}) \to (\cor&[-n-1] \oplus \cor[-n-3])  \\
& \to[u'_n] (\cor[1] \oplus \cor[-n-1])  \to[+1] ,
\end{align*}
where $u'_n$ is the composition with $u_n$.  Since $u_n \circ \id_{A[-n-1]} = u_n
\not=0$ we have $u'_n\not=0$.  Since $\cor$ is a field, the only possibility for
$u'_n \not=0$ is that $u'_n$ induces an isomorphism from $\cor[-n-1]$ to itself.
We deduce $\RHom(A,A_{n+1}) \simeq \cor \oplus \cor[-n-3]$, as required.

\sui (ii) We assume the unicity of $A_n$ for some $n\geq 0$.  Let us prove the
unicity of $A_{n+1}$. We assume that $A'$ satisfies the same properties as
$A_{n+1}$ and we set $A'' = \tau_{\leq n} A'$.  Then $A'' \in
\Der^{[0,n]}(\catc)$ and $H^i(A'') \simeq A$ for all $i \in [0,n]$.  This
implies $H^i\RHom(A,A'') \simeq 0$ for $i\geq n+3$.  We also have a
distinguished triangle
\begin{equation}
\label{eq:unicAn}
A' \to[w] A[-n-1] \to[v] A''[1] \to[+1] .
\end{equation}
Applying $\RHom(A,\cdot)$ we obtain the distinguished triangle
\begin{align*}
(\cor \oplus \cor[-n-3]) \to[w'] (\cor&[-n-1] \oplus \cor[-n-3])  \\
& \to[v'] \RHom(A,A''[1])  \to[+1] .
\end{align*}
If $w'=0$, then $\cor[-n-3]$ is a direct summand of $\RHom(A,A''[1])$,
contradicting $H^{n+4}\RHom(A,A'') \simeq 0$.  Hence $w'\not=0$ and it induces
an an isomorphism from $\cor[-n-3]$ to itself.  It follows that $\RHom(A,A'')
\simeq \cor \oplus \cor[-n-2]$.  Hence $A'' \simeq A_n$ by the unicity of $A_n$.

We have $v \not= 0$: otherwise $A[-n-1]$ is a direct summand of $A'$ and
$H^{n+1}\RHom(A,A') \not= 0$ which contradicts the hypothesis on $A'$.  It
follows that $v$ is a non zero multiple of $u_n$ and then that $A'' \simeq
A_{n+1}$.
\end{proof}

By the unicity part of Lemma~\ref{lem:exuni_An} we have isomorphisms $A_m[m-n]
\simeq \tau_{\geq n- m} A_n$ for all $m \leq n \in \N$.  However this
isomorphism is not canonical. Let $N\in \N$ be given.  For $n \leq N$ we set
$A_{N,n} = \tau_{\geq N-n} A_N[N-n]$.  The natural morphisms $\tau_{\geq k} \to
\tau_{\geq l}$ for $l\geq k$ give morphisms
\begin{equation}
\label{eq:defamn}
a_{m,n} \cl A_{N,n} \to  A_{N,m}[m-n]  \quad \text{for all $m\leq n \leq N$}
\end{equation}
with the property that $H^k(a_{m,n})$ is an isomorphism for all $k \geq n-m$.
We have the composition law
\begin{equation}
\label{eq:comp_amnp}
(a_{m,n}[n-p]) \circ a_{n,p} = a_{m,p}
\quad \text{for all $m\leq n \leq p \leq N$.}
\end{equation}
For $d,e \in \N$ and $m\leq n$, the functor $H^n$ gives a morphism
\begin{equation}
\label{eq:def_hn}
\begin{split}
\Hom(A_{N,n}^e, A_{N,m}^d[m-n]) &\to \Hom(H^n(A_{N,n}^e), H^m(A_{N,m}^d) ) \\
&\simeq \Hom(A^e, A^d)  \simeq \Hom(\cor^e ,\cor^d) 
\end{split}
\end{equation}
which has a splitting given by
\begin{equation}
\label{eq:splitting_hn}
\begin{split}
\alpha_{m,n}^{d,e} \cl \Hom(\cor^e ,\cor^d) & \to \Hom(A_{N,n}^e, A_{N,m}^d[m-n]) \\
u = (u_{ij}) &\mapsto \alpha_{m,n}^{d,e}(u) = (u_{ij} \cdot a_{m,n}),
\end{split}
\end{equation}
where we write $u\cl \cor^e \to \cor^d$ in matrix form.  The
morphism~\eqref{eq:def_hn} is of course compatible with the composition; the
morphism~\eqref{eq:splitting_hn} also by~\eqref{eq:comp_amnp}.

\begin{lemma}
\label{lem:act=upptriang}
Let $\{(n_i, d_i, s_i)\}_{i\in I}$ be a finite family of triples of integers
with $n_i, d_i \in \N\setminus \{0\}$ and $s_i \in \Z$.  We set $B =
\bigoplus_{i\in I} A_{n_i}^{d_i}[s_i]$ and we let $s$ be the maximal degree such
that $H^s(B)\not=0$. We let $J \subset I$ be the set of indices $i$ such that $s
= n_i-s_i$.  We order $J$ by $j \leq k$ if $n_{j} \leq n_{k}$.  Then $H^s(B)
\simeq \bigoplus_{j\in J} A^{d_j}$ and taking the cohomology in degree $s$ gives
a morphism
\begin{equation}
\label{eq:hn_surj2}
\Hom(B,B) \to \Hom(H^s(B),H^s(B))
\simeq \bigoplus_{j,k \in J} \Hom(\cor^{d_j},\cor^{d_k})
 \end{equation}
whose image contains all upper block triangular matrices (with respect to the
order on $J$).  Moreover the image of $\Aut(B)$ contains all invertible upper
block triangular matrices.
\end{lemma}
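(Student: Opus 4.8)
The plan is to unwind the definitions of $B$, $H^s(B)$, and the map in~\eqref{eq:hn_surj2}, and then to build the required endomorphisms block by block, using the morphisms $a_{m,n}$ of~\eqref{eq:defamn} as the basic building material. First I would note that since $H^k(A_{n_i})\simeq A$ for $k\in[0,n_i]$ and vanishes otherwise, we have $H^k(A_{n_i}^{d_i}[s_i])\simeq A^{d_i}$ exactly when $k+s_i\in[0,n_i]$, i.e. $k\in[-s_i,n_i-s_i]$; hence the top nonvanishing degree contributed by the $i$-th summand is $n_i-s_i$, and $s=\max_i(n_i-s_i)$. For $j\in J$ this top degree is attained, so $H^s(A_{n_j}^{d_j}[s_j])\simeq A^{d_j}$, giving $H^s(B)\simeq\bigoplus_{j\in J}A^{d_j}$ as claimed, and the identification $\Hom(H^s(B),H^s(B))\simeq\bigoplus_{j,k\in J}\Hom(\cor^{d_j},\cor^{d_k})$ via $\End(A)\simeq\cor$ from~\eqref{eq:hyp_A}.

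Next I would produce, for each ordered pair $j\le k$ in $J$ and each matrix $u\in\Hom(\cor^{d_j},\cor^{d_k})$, an endomorphism of $B$ whose degree-$s$ cohomology is $u$ placed in the $(j,k)$-block and zero elsewhere. Since $j\le k$ means $n_j\le n_k$, and since $n_j-s_j=n_k-s_k=s$, we get $s_k-s_j=n_k-n_j\ge 0$, so there is a shift $s_k-s_j=n_k-n_j=:r\ge 0$ relating the two summands. The key observation is that the isomorphisms $A_m[m-n]\simeq\tau_{\ge n-m}A_n$ from Lemma~\ref{lem:exuni_An}, made into the explicit morphisms $a_{m,n}\cl A_{N,n}\to A_{N,m}[m-n]$ in~\eqref{eq:defamn} (taking $N$ larger than all $n_i$, and identifying $A_{n_i}\simeq A_{N,n_i}$), provide a morphism $A_{n_k}\to A_{n_j}[n_j-n_k]=A_{n_j}[-r]$, inducing an isomorphism on $H^k$ for $k\ge r$, in particular in the top degree. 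Shifting by $s_k$ and precomposing/postcomposing with the coordinate inclusions and projections between $A_{n_j}^{d_j}[s_j]$, $A_{n_k}^{d_k}[s_k]$ and $B$, together with the splitting $\alpha^{d,e}_{m,n}$ of~\eqref{eq:splitting_hn}, turns $u=(u_{pq})$ into an endomorphism of $B$ with the prescribed behaviour on $H^s$: on the $(j,k)$-block it is $u$, and on all other blocks it is $0$ because the relevant $a_{m,n}$ either lands in a summand with strictly smaller top degree (so contributes $0$ to $H^s$) or, for $k<j$, does not exist as a degree-$0$ map at all. Summing these over all $j\le k$ shows the image of~\eqref{eq:hn_surj2} contains every upper block triangular matrix.

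For the statement about $\Aut(B)$, I would argue that an upper block triangular matrix that is invertible has invertible diagonal blocks, and conversely. Given an invertible upper block triangular matrix $M$ over $J$, decompose it as $M=M_{\mathrm{diag}}(I+N)$ where $M_{\mathrm{diag}}$ is block diagonal with invertible blocks and $N$ is strictly upper block triangular, hence nilpotent. By the previous paragraph there is $\psi\in\End(B)$ with $H^s(\psi)=N$; since $N$ is nilpotent and the filtration of $B$ by $\tau_{\ge\cdot}$ (equivalently, the ordering of $J$ by the $a_{m,n}$'s) is compatible, the endomorphism $\psi$ built above is itself nilpotent, so $\id_B+\psi\in\Aut(B)$ and $H^s(\id_B+\psi)=I+N$. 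For the diagonal part: each diagonal block acts on a single summand $A_{n_j}^{d_j}[s_j]$, and an invertible matrix $v\in\GL(d_j,\cor)$ gives via $\alpha^{d_j,d_j}_{n_j,n_j}(v)=v\cdot\id$ an automorphism of $A_{n_j}^{d_j}$ (its inverse being $\alpha(v^{-1})$, using that $a_{n_j,n_j}=\id$); assembling these gives an automorphism of $B$ realizing $M_{\mathrm{diag}}$. The product of the two automorphisms realizes $M$, so the image of $\Aut(B)$ contains every invertible upper block triangular matrix. The main obstacle I anticipate is bookkeeping: making sure the off-diagonal morphisms constructed from the $a_{m,n}$ genuinely vanish on $H^s(B)$ outside their designated block — this rests on the fact that a nonzero morphism between two of the summands that is nonzero on top-degree cohomology can only go from a longer complex $A_{n_k}$ to a shorter one $A_{n_j}$ (up to the relevant shift), which is exactly the content of the ordering $j\le k\Leftrightarrow n_j\le n_k$ — and on checking nilpotency of the assembled $\psi$, which follows because each elementary piece strictly decreases the $J$-index.
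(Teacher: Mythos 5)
Your proof is correct, and for the surjectivity statement it is essentially the paper's argument: identify $H^s(B)\simeq\bigoplus_{j\in J}A^{d_j}$, and realize each admissible block by the splitting $\alpha^{d,e}_{m,n}$ built from the truncation morphisms $a_{m,n}$ (after replacing the $A_{n_i}$ by the $A_{N,n_i}$), with everything outside the designated block, including the summands indexed by $I\setminus J$, set to zero or chosen arbitrarily. Where you diverge is the automorphism statement: the paper simply observes that $\alpha_0$ is multiplicative (by the composition law \eqref{eq:comp_amnp}), so for an invertible upper triangular $u$ the element $\alpha_0(u^{-1})$ is an inverse of $\alpha_0(u)$, giving the desired automorphism in one line; you instead factor $M=M_{\mathrm{diag}}(I+N)$ with $N$ strictly upper block triangular, realize $N$ by an endomorphism $\psi$ that is nilpotent because each elementary piece strictly decreases the value of $n_i$, and take $\phi_{\mathrm{diag}}\circ(\id_B+\psi)$. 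Both arguments work; the paper's is shorter because it reuses the compatibility of the splitting with composition, while yours is more hands-on and makes the unipotent structure explicit. One small bookkeeping point in your version: when $n_j=n_k$ (hence $s_j=s_k$) for distinct $j,k\in J$ the order on $J$ has ties, so a ``diagonal block'' acts on the merged summand $A_{n_j}^{d_j+d_k}[s_j]$ rather than on a single $A_{n_j}^{d_j}[s_j]$; since an invertible scalar matrix acting diagonally on such a summand is clearly an automorphism, this does not affect the argument.
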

\begin{proof}
  We set $N = \max_{i\in I} n_i$ and we replace $A_{n_i}$ by $A_{N,n_i}$.  We
  set $B_0 = \bigoplus_{i\in J} A_{N,n_i}^{d_i}[s_i]$ and
  $B_1 = \bigoplus_{i\in I\setminus J} A_{N,n_i}^{d_i}[s_i]$.

  \sui (i) Let $u = (u_{jk})$ be a triangular matrix in the right hand side
  of~\eqref{eq:hn_surj2}. Using the notation~\eqref{eq:splitting_hn} we define
  $\alpha_0(u) \cl B_0 \to B_0$ such that its $(j,k)$ component is
  $(\alpha_0(u))_{jk} = \alpha_{n_{k}, n_{j}}^{d_k,d_j}(u_{jk})$ (this makes
  sense since $n_{j} \leq n_{k}$).  We choose $b_1 \cl B_1 \to B_1$ arbitrarily
  and we set $b = \alpha_0(u) \oplus b_1$.  Then the image of $b$
  by~\eqref{eq:hn_surj2} is $u$, which proves the first assertion of the lemma.

  \sui(ii) Now we assume that the matrix $u$ in~(i) is invertible.  Its inverse
  is also an upper triangular matrix and we can define $\alpha_0(u^{-1}) \cl B_0
  \to B_0$.  Since~\eqref{eq:splitting_hn} is compatible with the composition,
  we have $\alpha_0(u^{-1}) \circ \alpha_0(u) = \alpha_0(\id) = \id$, which
  proves that $\alpha_0(u)$ is an isomorphism.  We choose $b_1 = \id_{B_1}$ and
  we set $b = \alpha_0(u) \oplus b_1$.  Then $b$ is an isomorphism and its image
  by~\eqref{eq:hn_surj2} is $u$, which proves the second assertion of the lemma.
\end{proof}

\begin{proposition}
\label{prop:descr_DCA}
Let $A\in \catc$ be an object satisfying~\eqref{eq:hyp_A}. Then, for any $B \in
\Derb(\catc) \langle A \rangle$ there exists a finite family of triples of
integers $\{(n_i, d_i, s_i)\}_{i\in I}$ with $n_i, d_i \in \N\setminus \{0\}$
and $s_i \in \Z$ such that $B = \bigoplus_{i\in I} A_{n_i}^{d_i}[s_i]$.  In
particular $\Derb(\catc) \langle A \rangle$ contains countably many isomorphism
classes.
\end{proposition}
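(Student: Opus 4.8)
The plan is to argue by induction on the cohomological amplitude of $B$, peeling off the top cohomology degree each time and using the structure results already established for the $A_n$'s. Let $B \in \Derb(\catc)\langle A\rangle$, so $H^iB \simeq A^{d_i}$ for finitely many $i$. Let $s$ be the top degree with $H^sB \neq 0$ and set $d = d_s$. The first step is to produce a morphism $A_k^{d}[s-k] \to B$ for a suitable $k \geq 0$ which is an isomorphism on $H^s$ and on all higher degrees (where both sides vanish). To do this I would consider $\RHom(A^d[s], B)$: the hypothesis~\eqref{eq:hyp_A} together with $B \in \Derb(\catc)\langle A\rangle$ shows that $\Ext^j(A, H^iB)$ is a finite-dimensional $\cor$-vector space vanishing for $j \notin \{0,2\}$, so by a spectral sequence / dévissage argument $\RHom(A^d[s], B)$ is a bounded complex of $\cor$-vector spaces; the identity-type map $A^d[s] \to \tau_{\geq s} B \simeq H^sB[-s] = A^d[-s]$ (after the shift, $\to H^sB$) lifts, possibly after correcting by an automorphism of $A^d$, to a morphism $f_0 \colon A^d[s] \to B$ inducing an isomorphism on $H^s$.

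The second step is to improve $f_0$ to a morphism from $A_k^d[s-k]$ for $k$ large. I would do this inductively: given a map $A_m^d[s-m] \to B$ which is an isomorphism on $H^i$ for $i \geq s-m$, look at its cone $C_m$; then $H^i C_m = 0$ for $i \geq s-m$, i.e.\ $C_m$ has strictly smaller top degree, and in particular $C_m \in \Derb(\catc)\langle A\rangle$. If $C_m \neq 0$, one can again find (as in step one) a nonzero map $A[-\ell] \to C_m$ for $\ell = s-m-1$ the new top degree minus... more precisely, using the defining triangle~\eqref{eq:defAn} of the $A_n$'s and the connecting morphism $A_m^d[s-m] \to A_{m+1}^d[s-m-1] \to[+1]$ coming from $a_{m,m+1}$, one extends $f_0$ over $A_{m+1}^d[s-m-1]$ exactly when the obstruction in $\Hom(A^d[s-m-1], B[1])$ dies, and by the computation~\eqref{eq:cohomAn} ($\RHom(A, A_n) \simeq \cor \oplus \cor[-n-2]$) and finiteness, after finitely many steps the obstruction is forced to vanish or we simply take $k$ at least as large as the amplitude of $B$. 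The cleanest route: take $k$ equal to the amplitude of $B$ (so that $s-k$ is below the bottom cohomology degree of $B$); then by the lifting criterion the map extends all the way, because the relevant $\Ext$-obstructions lie in degrees that are forced to be zero by the amplitude bound together with~\eqref{eq:cohomAn} and~\eqref{eq:hyp_A}.

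The third step is to split off this summand. Given $g \colon A_k^d[s-k] \to B$ inducing an isomorphism on $H^s$, complete to a triangle $A_k^d[s-k] \to[g] B \to C \to[+1]$. Then $C \in \Derb(\catc)\langle A\rangle$ with strictly smaller top cohomological degree, so by induction $C \simeq \bigoplus_{i\in I'} A_{n_i}^{d_i}[s_i]$. To conclude $B \simeq A_k^d[s-k] \oplus C$ it suffices to show the triangle splits, i.e.\ that the connecting map $\delta \colon C \to A_k^d[s-k+1]$ is zero. Here is where I expect the main obstacle: $\Hom(C, A_k^d[s-k+1])$ need not vanish outright, so one cannot kill $\delta$ for free. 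The remedy is Lemma~\ref{lem:act=upptriang}: the possibly-nonzero components of $\delta$ go from summands $A_{n_i}^{d_i}[s_i]$ of $C$ into $A_k^d[s-k+1]$, and by keeping careful track of degrees (the summands of $C$ all have top degree $< s$) one sees that each such component factors, after applying the splitting~\eqref{eq:splitting_hn}, through a map that can be absorbed by changing the identification $\tau_{\geq}$-decomposition of $B$ — equivalently, by the converse discussion following~\eqref{eq:diag_ext2}, the isomorphism class of $B$ depends only on the class of $\delta$ in $\Hom(C, A_k^d[s-k+1])/\Aut(C)\times\Aut(A_k^d[s-k+1])$, and Lemma~\ref{lem:act=upptriang} shows every such class is represented by $0$ once one checks that the relevant matrix of components is ``strictly upper triangular'' for degree reasons and hence can be normalized away by an automorphism of $C$ (built from the $\alpha_{m,n}^{d,e}$). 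Thus $B$ decomposes as claimed.

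Finally, the last sentence is immediate: isomorphism classes in $\Derb(\catc)\langle A\rangle$ are indexed by finite multisets of triples $(n_i, d_i, s_i) \in (\N\setminus\{0\})^2 \times \Z$, and there are only countably many such multisets. I would organize the induction on the integer $b(B) = (\text{top degree}) - (\text{bottom degree})$ of $B$, with the base case $b(B) = 0$ handled by $B \simeq H^sB[-s] \simeq A^{d}[-s] \simeq A_0^d[-s]$ using $A_0 = A$. The delicate point throughout, and the one deserving the most care in the writeup, is the normalization of the connecting morphism $\delta$ via Lemma~\ref{lem:act=upptriang} — making precise why its matrix of components is upper block triangular with respect to a suitable ordering of the summands of $C$, so that the automorphism-group action makes it trivial.
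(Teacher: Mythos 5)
Your Steps 1 and 2 are essentially sound: one can check (by applying $\RHom(-,A)$ to the triangles~\eqref{eq:defAn}) that $\RHom(A_k,A)\simeq \cor[k]\oplus\cor[-2]$, so once $k$ is at least the amplitude of $B$ all Postnikov obstructions vanish and a morphism $g$ from a shifted $A_k^d$ (top cohomology in degree $s$) inducing an isomorphism on $H^s$ does exist, with cone again in $\Derb(\catc)\langle A\rangle$ of smaller top degree. The genuine gap is Step 3: it is simply false that the resulting triangle can be split, i.e.\ that $B$ has the long tower as a direct summand, and no normalization of the connecting map $\delta$ by $\Aut(C)\times\Aut(A_k^d[\cdot])$ can fix this. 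Concretely, take $B=A[1]\oplus A_1$, so $s=1$, $d=1$, amplitude $2$, and your recipe uses $T=A_2[1]$ (the length-two tower with top cohomology in degree $1$). A morphism $g\cl T\to B$ which is an isomorphism on $H^1$ exists, e.g.\ the composite $A_2[1]\twoheadrightarrow(\tau_{\geq 1}A_2)[1]\simeq A_1\hookrightarrow B$ (using the unicity in Lemma~\ref{lem:exuni_An}), but for \emph{any} such $g$ the component into $A[1]$ vanishes because $\Hom(A_2[1],A[1])\simeq\Hom(A_2,A)\simeq 0$ (from $\RHom(A_2,A)\simeq\cor[2]\oplus\cor[-2]$); hence $H^{-1}(g)=0$ while $H^{-1}(T)\simeq A$, so the cone $C$ has $H^{-2}(C)\simeq A\neq 0$ and $B\not\simeq T\oplus C$ since $H^{-2}(B)=0$. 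In fact $B$ has no direct summand isomorphic to $A_2[1]$ at all: comparing cohomology objects, a complement $C_0$ would satisfy $A\simeq A\oplus H^i(C_0)$, forcing $H^i(C_0)=0$ (as $\End(A)\simeq\cor$) and hence $B\simeq A_2[1]$, which is absurd since $\bigoplus_j\Ext^j(A,B)$ has dimension $4$ and $\bigoplus_j\Ext^j(A,A_2[1])$ has dimension $2$. The correct summand here is the length-one tower $A_1$, not the amplitude-length one; your appeal to Lemma~\ref{lem:act=upptriang} cannot repair this, because that lemma only realizes invertible block upper triangular matrices acting on the \emph{top-degree} cohomology of an already decomposed object, whereas $\delta$ is the attaching class of a genuinely non-split extension and is not "strictly upper triangular" in any sense that the automorphism action can kill.

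This is precisely the difficulty the paper's argument is organized to avoid. Rather than guessing in advance which tower maps into $B$, the paper applies the induction hypothesis to the truncation $B'=\tau_{\leq n-1}B$, observes that $B$ is determined by the attaching map $v\cl A^d[-n]\to B'[1]$ up to the action of $\Aut(A^d)\times\Aut(B')$, identifies $\Hom(A^d[-n],B'[1])\simeq\Hom(\cor^d,\cor^e)$ via~\eqref{eq:cohomAn}, and uses Lemma~\ref{lem:act=upptriang} to realize all invertible upper triangular matrices by automorphisms of $B'$; the $\GL(\cor^d)\times\T_e$ normal form then writes $v$ as a sum of elementary nonzero maps $A[-n]\to A_{n_{j'}}[1+s_{j'}]$, whose cones are exactly the longer towers $A_{n_{j'}+1}[s_{j'}]$ by~\eqref{eq:defAn}. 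Thus the lengths of the towers are an \emph{output} of the normalization of the attaching map, not an input; any version of your "split off a maximal tower from the top" strategy would have to first prove that some $A_m[\cdot]$ with the correct, $B$-dependent $m$ is a direct summand, which is essentially the proposition itself.
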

\begin{proof}
  We prove the proposition by induction on $l(B) = b-a$ where $a$ and $b$ are
  the minimal and maximal $k$ such that $H^k(B) \not=0$.  If $l(B) = 0$, the
  result is obvious.

  Let $n\geq 1$ be given.  We assume the result is true for the $B'$ with $l(B')
  \leq n-1$.  Let $B \in \Derb(\catc) \langle A \rangle$ with $l(B) = n$ be
  given. Up to shifting $B$ we can assume that $B \in \Der^{[0,n]}(\catc)$.  We
  set $B' = \tau_{\leq n-1}B$ and we write $H^n(B) = A^d$ for some $d\in \N$.  We
  have a distinguished triangle
\begin{equation}
\label{eq:tddescrDCA}
B' \to B \to  A^d[-n] \to[v] B'[1] .
\end{equation}
Then $B$ is determined up to isomorphism by the class of $v$ in the quotient
$$
Q = \Hom(A^d[-n] , B'[1]) / \Aut(A^d) \times \Aut(B').
$$
By the induction hypothesis we can write $B' = \bigoplus_{i\in I}
A_{n_i}^{d_i}[s_i]$ with the notations of the proposition.  We have $n_i-s_i
\leq n-1$ for all $i\in I$ and we let $J \subset I$ be the set of indices $j$
such that $n_j-s_j = n-1$.  We set $e = \sum_{j\in J} d_j$.
By~\eqref{eq:cohomAn} we have
\begin{align*}
 \Hom(A^d[-n] , B'[1]) 
& \simeq  \Hom(A^d[-n] ,  \bigoplus_{j\in J} A_{n_j}^{d_j}[1+s_j]) \\
& \simeq  \Hom(A^d , \bigoplus_{j\in J} A^{d_j}) \\
& \simeq  \Hom(\cor^d , \cor^e) .
\end{align*}
Moreover $ \Aut(A^d) \simeq \GL(\cor^d)$ acts in the obvious way on $\cor^d$
and, by Lemma~\ref{lem:act=upptriang}, the action of any matrix in $\T_e$ is
realized by $\Aut(B')$, where $\T_e$ is the set of upper triangular matrices in
$\GL(\cor^e)$.  Hence we have a surjective map
$$
\Hom(\cor^d, \cor^e) / \GL(\cor^d) \times \T_e \twoheadrightarrow Q.
$$
Now any element in the left hand side is represented by a matrix $P$ such that
$P_{ij} = 0$ if $i-j \not = d-e$ and $P_{ij} = 0$ or $1$ else.  In other words
we can decompose the morphism $v$ in~\eqref{eq:tddescrDCA} as $v = \sum_{j' \in
  J'} v_{j'}$, where $J'$ is a subset of $J$ and $v_{j'}$ is a non zero morphism
$v_{j'} \cl A[-n] \to A_{n_{j'}}[1+s_{j'}]$.  Then $v_{j'}$ is a non zero
multiple of the morphism $u_{n_{j'}}$ in the triangle~\eqref{eq:defAn} and the
cone of $v_{j'}$ is $A_{n_{j'}+1}[1+s_{j'}]$.  It follows that
$$
B = \bigoplus_{j' \in J'} A_{n_{j'}+1}[s_{j'}] 
\;\oplus\;
 A^{d-|J'|}[-n] 
\;\oplus\;
\bigoplus_{i\in I \setminus J'} A_{n_i}^{d_i}[s_i]
$$
and the induction proceeds.
\end{proof}

\subsection*{Example: sheaves on the sphere}

We let $\sph$ be the sphere in dimension $2$ and we let $P$ be a point of
$\sph$.  We set $\Lambda = \dT^*_P \sph$.  We recall the notation
$\Dersf_\Lambda(\cor_M)$ of Definition~\ref{def:simple_pure}.  We also denote by
$\Derb_{lc}(\cor_\sph)$ the full subcategory of $\Derb(\cor_\sph)$ formed by the
$F$ such that $\dot\SSi(F) = \emptyset$.  If $F \in \Derb_{lc}(\cor_\sph)$, then
$H^iF$ is a locally constant sheaf, for any $i\in \Z$. On $\sph$ the locally
constant sheaves are constant and we obtain $H^iF \simeq \cor_\sph^{d_i}$ for
some integer $d_i$.  Hence $\Derb_{lc}(\cor_\sph) = \Derb(\cor_\sph)\langle
\cor_\sph \rangle$.

\begin{corollary}
\label{cor:fais_sphere0}
For any field $\cor$ the category $\Dersf_\Lambda(\cor_\sph)$ has countably many
isomorphism classes.
\end{corollary}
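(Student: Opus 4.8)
The strategy is to reduce the counting problem for $\Dersf_\Lambda(\cor_\sph)$ to the counting result of Proposition~\ref{prop:descr_DCA} applied to $\catc = \Mod(\cor_\sph)$ with $A = \cor_\sph$. The first step is to check that $A = \cor_\sph$ satisfies hypothesis~\eqref{eq:hyp_A}: we have $\RHom(\cor_\sph,\cor_\sph) \simeq \rsect(\sph;\cor_\sph) \simeq H^*(\sph;\cor)$, which by the classical computation of the cohomology of the $2$-sphere is $\cor$ in degrees $0$ and $2$ and $0$ otherwise. Hence $\Hom(A,A) \simeq \Ext^2(A,A) \simeq \cor$ and $\Ext^i(A,A) \simeq 0$ for $i \neq 0,2$, so~\eqref{eq:hyp_A} holds and Proposition~\ref{prop:descr_DCA} tells us that $\Derb_{lc}(\cor_\sph) = \Derb(\cor_\sph)\langle \cor_\sph\rangle$ has countably many isomorphism classes.

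The second step is to produce a "reduction" functor from $\Dersf_\Lambda(\cor_\sph)$ to $\Derb_{lc}(\cor_\sph)$ whose fibers are countable, or more precisely to show that the isomorphism class of $F \in \Dersf_\Lambda(\cor_\sph)$ is determined by finitely much extra data beyond the isomorphism class of some locally constant sheaf canonically attached to $F$. The natural candidate: let $j \cl \sph \setminus \{P\} \hookrightarrow \sph$ be the open inclusion. Since $\dot\SSi(F) \subset \Lambda = \dT^*_P\sph$, which lives over the single point $P$, the restriction $F|_{\sph \setminus \{P\}}$ has empty microsupport away from the zero section, hence lies in $\Derb_{lc}(\cor_{\sph\setminus\{P\}})$; as $\sph \setminus \{P\}$ is contractible (it is a plane) its locally constant cohomology sheaves are constant, so $F|_{\sph \setminus \{P\}} \simeq \bigoplus_i \cor_{\sph\setminus\{P\}}^{d_i}[-i]$ for finitely many integers $d_i$, and this collection $(d_i)$ is the discrete invariant. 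It remains to bound, for fixed $(d_i)$, the number of isomorphism classes of $F$. The idea is to recover $F$ from the triangle relating $F$, $j_!j^{-1}F$ (or $Rj_* j^{-1}F$) and a sheaf supported at $P$: concretely, one can use the open-closed decomposition triangle $F_{\sph\setminus\{P\}} \to F \to F_{\{P\}} \to[+1]$, where $F_{\{P\}} = \opb{i_P}F \otimes (\text{something})$ is a complex of finite-dimensional vector spaces placed at $P$ — finite-dimensional because $F$ is simple along $\Lambda$ (simplicity forces the microlocal germ to be $\cor$ in a single degree, and hence the stalk at $P$ and the nearby "jumps" are controlled). Then $F$ is determined up to isomorphism by the class of the connecting morphism $F_{\{P\}} \to F_{\sph\setminus\{P\}}[1]$ in a suitable quotient $\Hom/\Aut\times\Aut$, exactly as in the abstract Mayer–Vietoris / truncation bookkeeping recalled in Section~\ref{sec:cohomdim2}. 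Since both $F_{\{P\}}$ and $F_{\sph\setminus\{P\}}$ range over countably many isomorphism classes (the former being finite complexes of finite-dimensional vector spaces, the latter by Step~1 applied to $\Derb_{lc}$ of a contractible space, which is again $\Derb(\cor)\langle\cor\rangle$-like and countable) and for fixed such data the $\Hom$ group is a finite-dimensional $\cor$-vector space whose $\Aut\times\Aut$-orbits — by an argument parallel to Lemma~\ref{lem:act=upptriang} / Proposition~\ref{prop:descr_DCA} — form a countable (indeed finite) set, we conclude that $\Dersf_\Lambda(\cor_\sph)$ has countably many isomorphism classes.

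The main obstacle I anticipate is the precise control of the "local data at $P$," i.e. showing that simplicity along $\Lambda = \dT^*_P\sph$ forces $F$ to look, near $P$, like one of countably many standard models — equivalently, showing that $F_{\{P\}}$ (the sections/costalks of $F$ concentrated at $P$) has finite-dimensional cohomology and that the gluing data lives in a finite-dimensional $\Hom$ with countably many orbits. This is where one must genuinely use Definition~\ref{def:simple_pure} and the characterization~\eqref{eq:carc_Fsimple}: $\mu hom(F,F)|_{\dT^*M} \simeq \cor_\Lambda$ pins down the microlocal behavior along the fiber direction, and combined with the fact that $F|_{\sph\setminus\{P\}}$ is a sum of (shifted) constant sheaves, this should force $F$ itself to be built from finitely many "standard" pieces — constant sheaves on open and closed half-disks around $P$ and skyscrapers at $P$ — glued by finitely many extension classes. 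Once that finiteness is in place, the countability conclusion follows formally from the same orbit-counting mechanism as in Proposition~\ref{prop:descr_DCA}, and the corollary is proved. I would write this last step by invoking Proposition~\ref{prop:descr_DCA} as a black box for the locally constant part and then handling the single extra gluing triangle by hand.
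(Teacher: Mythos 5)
Your route is genuinely different from the paper's and is viable in outline. The paper does not use the excision triangle at $P$ at all: it applies the Sato triangle and its dual to $\mu hom(\cor_P,F)$ and $\mu hom(F,\cor_P)$ (both isomorphic to $\cor_\Lambda$ after a shift, by simplicity) to produce a morphism $\cor_P\to F$ or $F\to\cor_P$ that is an isomorphism along $\Lambda$; its cone $L$ is then locally constant on the \emph{whole} sphere, so $F$ is a cone of a morphism between $\cor_P$ and $L\in\Derb_{lc}(\cor_\sph)$, and the count reduces to $\Hom(L,\cor_P)/\Aut(L)\times\Aut(\cor_P)$, which is finite because $\Hom(L,\cor_P)\simeq\cor^J$ and $\Aut(L)$ already contains the diagonal torus $(\cor^\times)^I$. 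Your approach instead glues $F$ from $F_U$ ($U=\sph\setminus\{P\}$, a sum of shifted constant sheaves since $U$ is contractible and Lemma~\ref{lem:compl_scinde} applies) and the skyscraper $i_{P*}\opb{i_P}F$ along the excision triangle, and counts orbits of the connecting morphism. What the paper's microlocal step buys is precisely that the resulting orbit problem is trivial to solve; your version puts all the work into the two points you yourself flag.

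Those two points are real gaps as written. First, finite dimensionality of the stalk at $P$: this does follow from simplicity, but you must say how --- e.g.\ take any $\varphi$ with $d\varphi(P)\neq 0$ (transversality of $\Lambda_\varphi$ with $\Lambda=\dT^*_P\sph$ is automatic here), so $(\rsect_{\{\varphi\geq 0\}}F)_P\simeq\cor[d]$ by Definition~\ref{def:simple_pure}, and conclude by the excision triangle together with the fact that $\rsect$ of $F$ on a small punctured (half-)disk is finite dimensional because $F|_U$ is locally constant with finite stalks. Without such an argument the countability of the possible ``local data at $P$'' is unjustified (an infinite sum of skyscrapers at $P$ has the right microsupport but is not simple). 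Second, the assertion that $\Hom(i_{P*}\opb{i_P}F,\,F_U[1])$ modulo $\Aut\times\Aut$ is finite is \emph{not} ``parallel to Lemma~\ref{lem:act=upptriang}'' in any direct sense: here $\Aut(F_U)\simeq\prod_i\GL_{d_i}(\cor)$ and $\Aut(i_{P*}\opb{i_P}F)\simeq\prod_j\GL_{e_j}(\cor)$ act on a sum of matrix blocks (one block for $j=i$ and one for $j=i+1$, since $\RHom(\cor_P,\cor_U)\simeq\cor[-1]\oplus\cor[-2]$ on $\sph$), with the same $\GL$ factor acting on two adjacent blocks simultaneously. Finiteness of the orbit set is true, but it is a type-$A$ quiver (or hand-made normal form) statement that you must actually prove; a general finite-dimensional $\Hom$ space modulo a group action need not have countably many orbits. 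With these two steps supplied your argument gives the corollary; as it stands, they are the missing content, and the paper's Sato-triangle reduction is exactly the device that avoids having to prove them.
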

\begin{proof}
  We write $\cor_P$ instead of $\cor_{\{P\}}$ for short.  We first prove that
  any object of $\Dersf_\Lambda(\cor_\sph)$ is up to shift the cone of a
  morphism between $\cor_P$ and an object of $\Derb_{lc}(\cor_\sph)$.  We
  conclude with Proposition~\ref{prop:descr_DCA}.

  \sui (i) Since $\cor_P$ and $F$ are both simple along $\Lambda$, up to
  shifting $F$ we have $\mu hom(\cor_P,F)|_\Lambda \simeq \cor_\Lambda$ and
  $\mu hom(F, \cor_P)|_\Lambda \simeq \cor_\Lambda$.  Let
  $\dot\pi \cl \dT^*\sph \to \sph$ be the projection.  We have the Sato
  distinguished triangle
$$
\DD'\cor_P \otimes F \to \rhom(\cor_P,F) 
\to[u] \roim{\dot\pi} \mu hom(\cor_P,F)
\to[v]  \DD'\cor_P \otimes F[1] .
$$
Applying $\DD'(\cdot)$ and the isomorphisms, for constructible sheaves $A,B$,
\begin{gather*}
\DD'(\DD'A \otimes B) \simeq \rhom(B,A) , \\
\DD'(\mu hom(A,B))|_{\dT^*\sph}
 \simeq \mu hom(B,A)|_{\dT^*\sph} \otimes \opb{\dot\pi} \omega_\sph   
\end{gather*}
we find the dual Sato triangle
$$
\DD'F \otimes \cor_P \to  \rhom(F, \cor_P) 
\to[v'] \roim{\dot\pi} \mu hom(F,\cor_P) \to[u']
\DD'F \otimes \cor_P [1] .
$$
The third term for both triangles is
$$
\roim{\dot\pi} \mu hom(\cor_P,F)
 \simeq \roim{\dot\pi} \mu hom(F,\cor_P)
 \simeq \cor_P \oplus \cor_P[-1]
$$
and we have either $H^0(u) \not= 0$ or $H^0(v') \not= 0$.  This gives a morphism
either $a \cl \cor_P \to F$ or $b \cl F \to \cor_P$ which is an isomorphism
along $\Lambda$.  Hence we have a distinguished triangle $\cor_P \to[a] F \to L
\to[c] \cor_P[1]$ or $F \to[b] \cor_P \to[d] L \to[+1]$ where $\SSi(L) \subset
T^*_\sph\sph$, that is, $L \in \Derb_{lc}(\cor_\sph)$.  In both cases $F$ appears
as the cone of a morphism $c \cl L \to \cor_P[1]$ or $d \cl \cor_P \to L$.

\sui (ii) We have already noticed that $\Derb_{lc}(\cor_\sph) =
\Derb(\cor_\sph)\langle \cor_\sph \rangle$.  Let us denote by $A_n$ the object
constructed from $A=\cor_\sph$ in Lemma~\ref{lem:exuni_An}.  By
Proposition~\ref{prop:descr_DCA} any $L \in \Derb_{lc}(\cor_\sph)$ is a sum $L =
\bigoplus_{i\in I} A_{n_i}[s_i]$ for some $n_i\in \N\setminus \{0\}$ (here we
may have $n_i = n_j$) and $s_i \in \Z$. For $n\in \N$ and $s\in \Z$ the group
$\Hom(A_n[s],\cor_P)$ is $\cor$ if $s \in [0,n]$ and $0$ else.  Hence
$\Hom(L,\cor_P) \simeq \cor^{J}$ where $J$ is some subset of $I$.  The group
$\Aut(L)$ contains at least $(\cor^\times)^I$ acting diagonally.  Hence the
quotient $\Hom(L,\cor_P) / \Aut(L) \times \Aut(\cor_P)$ is finite.  It follows
that, for a given $L$, we have finitely many $F$ appearing as the cone of a
morphism $L \to \cor_P$.  The same holds for morphisms $\cor_P \to L$ and the
corollary is proved.
\end{proof}

\section{Mayer-Vietoris}
\label{sec:MV}

Let $M$ be a manifold and $U,V$ two open subsets such that $M=U\cup V$.  For a
given $F\in \Derb(\cor_M)$ we consider the objects $G$ of $\Derb(\cor_M)$ which
are isomorphic to $F$ over $U$ and over $V$ and we associate a ``\v Cech class''
$\cecc(G)$ to such $G$.  In the next section we will give a criterion which
insures that $\cecc(G)$ takes at least $|\cor^\times|$ values, implying that we
have at least $|\cor^\times|$ non isomorphic such $G$.

\begin{definition}
  We let $\Der(U,V;F)$ be the full subcategory of $\Derb(\cor_M)$ consisting of
  the objects $G$ such that there exist two isomorphisms
  $\alpha_U \cl F|_U \isoto G|_U$ and $\alpha_V \cl F|_V \isoto G|_V$.
\end{definition}

\begin{definition}\label{def:HUVF}
  We let $\PAut(F)$ be the presheaf $U \mapsto \sect(U; \PAut(F)) = \Aut(F|_U)$.
  Then $\sect(U; \PAut(F))$ is a (non-commutative) group for the composition.
  For two open subsets $U,V$ of $M$ we define an action of $\sect(U; \PAut(F))
  \times \sect(V; \PAut(F))$ on $\sect(U\cap V; \PAut(F))$ by $(\alpha,\beta)
  \cdot \gamma = \alpha|_{U\cap V} \circ \gamma \circ \beta^{-1}|_{U\cap V}$ and
  we define
\begin{equation}
\label{eq:defHUVF}
\begin{split}  
H^1&(U,V; \PAut(F)) \\
&= \sect(U\cap V; \PAut(F)) / \sect(U; \PAut(F)) \times \sect(V; \PAut(F)) .
\end{split}
\end{equation}
\end{definition}
We remark that the inverse map $\gamma \mapsto \gamma^{-1}$ induces an
isomorphism $H^1(U,V; \PAut(F)) \simeq H^1(V,U; \PAut(F))$.

\begin{lemma}\label{lem:classbiendef}
  Let $G \in \Derb(\cor_M)$ be given.  We assume that we have four isomorphisms
  $\alpha_U, \beta_U \cl F|_U \isoto G|_U$ and
  $\alpha_V, \beta_V \cl F|_V \isoto G|_V$.  Then
  $\alpha_U^{-1}|_{U\cap V} \circ \alpha_V|_{U\cap V}$ and
  $\beta_U^{-1}|_{U\cap V} \circ \beta_V|_{U\cap V}$ have the same image in
  $H^1(U,V; \PAut(F))$.
\end{lemma}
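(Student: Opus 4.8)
The plan is to exhibit explicit elements of $\sect(U;\PAut(F))$ and $\sect(V;\PAut(F))$ witnessing that the two elements of $\sect(U\cap V;\PAut(F))$ lie in the same orbit, and then to conclude by a direct composition of isomorphisms. Concretely, set
\[
\gamma = \alpha_U^{-1}|_{U\cap V} \circ \alpha_V|_{U\cap V},
\qquad
\gamma' = \beta_U^{-1}|_{U\cap V} \circ \beta_V|_{U\cap V},
\]
which are both automorphisms of $F|_{U\cap V}$, hence elements of $\sect(U\cap V;\PAut(F))$, and whose classes in $H^1(U,V;\PAut(F))$ are the objects of interest.

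Next I would produce the required group elements: put $a = \beta_U^{-1} \circ \alpha_U \in \Aut(F|_U) = \sect(U;\PAut(F))$ and $b = \beta_V^{-1} \circ \alpha_V \in \Aut(F|_V) = \sect(V;\PAut(F))$; these are automorphisms because $\alpha_U,\beta_U$ (resp.\ $\alpha_V,\beta_V$) are isomorphisms with the same source and target. Recalling that the action in Definition~\ref{def:HUVF} is $(a,b)\cdot\gamma = a|_{U\cap V}\circ\gamma\circ b^{-1}|_{U\cap V}$, I would then compute, using that restriction to $U\cap V$ commutes with composition,
\[
a|_{U\cap V}\circ\gamma\circ b^{-1}|_{U\cap V}
= \bigl(\beta_U^{-1}\circ\alpha_U\circ\alpha_U^{-1}\circ\alpha_V\circ\alpha_V^{-1}\circ\beta_V\bigr)\big|_{U\cap V}
= \beta_U^{-1}|_{U\cap V}\circ\beta_V|_{U\cap V}
= \gamma'.
\]
Hence $\gamma$ and $\gamma'$ differ by the action of $(a,b)$, so $[\gamma] = [\gamma']$ in $H^1(U,V;\PAut(F))$, which is exactly the assertion.

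There is no substantive obstacle here: the statement is a bookkeeping identity about composing isomorphisms, and the only point requiring (trivial) care is that restriction of morphisms to the open subset $U\cap V$ is functorial, so that $(\psi\circ\varphi)|_{U\cap V} = \psi|_{U\cap V}\circ\varphi|_{U\cap V}$ and $(\varphi^{-1})|_{U\cap V} = (\varphi|_{U\cap V})^{-1}$; this is built into the presheaf structure of $\PAut(F)$. I would present the argument in essentially the three displayed lines above, with a sentence identifying $a$ and $b$ as the elements realizing the orbit relation.
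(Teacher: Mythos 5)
Your proof is correct and is essentially identical to the paper's: the paper verifies the single identity $\beta_U^{-1}|_{U\cap V}\circ\beta_V|_{U\cap V} = (\beta_U^{-1}\circ\alpha_U,\ \beta_V^{-1}\circ\alpha_V)\cdot(\alpha_U^{-1}|_{U\cap V}\circ\alpha_V|_{U\cap V})$, which is exactly your computation with the same pair $(a,b)$.
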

\begin{proof}
This follows from  the equality in $\sect(U\cap V; \PAut(F))$
\begin{equation*}
\beta_U^{-1}|_{U\cap V} \circ \beta_V|_{U\cap V} 
= (\beta_U^{-1} \circ \alpha_U , \beta_V^{-1} \circ \alpha_V )
\cdot (\alpha_U^{-1}|_{U\cap V} \circ \alpha_V|_{U\cap V}) .
\end{equation*}
where $\cdot$ is the action introduced in Definition~\ref{def:HUVF}.
\end{proof}

\begin{definition}
\label{def:ceccG}
By Lemma~\ref{lem:classbiendef} it makes sense to define
\begin{equation}
\label{eq:defceccG}
\cecc(G) \eqdot [\alpha_U^{-1}|_{U\cap V} \circ \alpha_V|_{U\cap V}]
\quad \in \quad H^1(U,V; \PAut(F))
\end{equation}
for $G \in \Der(U,V;F)$ together with isomorphisms
$\alpha_U \cl F|_U \isoto G|_U$ and $\alpha_V \cl F|_V \isoto G|_V$. 
\end{definition}
Of course, if we have an isomorphism $\varphi \cl G \isoto G'$ in $\Der(U,V;F)$,
then $\cecc(G) = \cecc(G')$.  Indeed, setting $\alpha'_U = \varphi|_{U} \circ
\alpha_U$ and $\alpha'_V = \varphi|_{V} \circ \alpha_V$ we have
$(\alpha'_U)^{-1}|_{U\cap V} \circ \alpha'_V|_{U\cap V} = \alpha_U^{-1}|_{U\cap
  V} \circ \alpha_V|_{U\cap V}$.

Now, for a given $\alpha \in \sect(U\cap V; \PAut(F))$ we construct an object
$F_{U,V}^\alpha \in \Der(U,V;F)$ such that $\cecc(F_{U,V}^\alpha) = [\alpha]$.
We will use several times the isomorphisms
\begin{equation}
\label{eq:isoUcapV}
\Hom(F_{U\cap V}, F_U) \simeq \Hom(F_{U\cap V}, F_V) \simeq 
\Hom(F|_{U\cap V},F|_{U\cap V}).  
\end{equation}
We consider the Mayer-Vietoris triangle
\begin{equation}
\label{eq:MVtd}
F_{U\cap V} \to[\left(\begin{smallmatrix} a_U \\ a_V \end{smallmatrix}\right)]
F_U \oplus F_V \to[(b_U, -b_V)] F \to[+1],  
\end{equation}
where $a_U \in \Hom(F_{U\cap V}, F_U)$ is the natural morphism corresponding to
$\id_{F|_{U\cap V}}$ through~\eqref{eq:isoUcapV} and $a_V,b_U,b_V$ are defined
in the same way.

\begin{definition}
\label{def:modifMV}
Let $F\in \Derb(\cor_M)$ and let $U,V$ be a covering of $M$.  For a morphism
$\alpha \cl F|_{U\cap V} \to F|_{U\cap V}$ we define
$F_{U,V}^\alpha\in \Derb(\cor_M)$ as the cone of $(\begin{smallmatrix} a_U \\
  \alpha \end{smallmatrix})$, where we use the notations of~\eqref{eq:MVtd} and
the isomorphism~\eqref{eq:isoUcapV}.  Hence we have a distinguished triangle
\begin{equation}
\label{eq:MVtd2}
F_{U\cap V} \to[\left(\begin{smallmatrix} a_U \\ \alpha \end{smallmatrix}\right)]
F_U \oplus F_V \to[(b^\alpha_U, -b^\alpha_V)] F_{U,V}^\alpha \to[+1],  
\end{equation}
with $b^\alpha_U \in \Hom(F_U,F_{U,V}^\alpha)$ and $b^\alpha_V \in
\Hom(F_V,F_{U,V}^\alpha)$.  We remark that $F_{U,V}^\alpha$ is defined only up
to isomorphism.
\end{definition}

\begin{lemma}\label{lem:FUValpha}
  Let $\alpha \cl F|_{U\cap V} \isoto F|_{U\cap V}$ be an isomorphism.  Then
  $F_{U,V}^\alpha$ belongs to $\Der(U,V;F)$. More precisely the morphisms
  $b^\alpha_V|_V \cl F|_V \to F_{U,V}^\alpha|_V$ and $b^\alpha_U|_U \cl F|_U \to
  F_{U,V}^\alpha|_U$ are isomorphisms.  Moreover we have $\cecc(F_{U,V}^\alpha)
  = [\alpha^{-1}]$.
\end{lemma}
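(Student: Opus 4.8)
The plan is to restrict the defining triangle~\eqref{eq:MVtd2} successively to $U$, to $V$, and to $U\cap V$, and to read off the three assertions from the behaviour of the functor $({\scbul})_W=\cor_W\tens({\scbul})$ under restriction, together with one standard fact about triangulated categories: a commutative square completes to a morphism of distinguished triangles, and a morphism of distinguished triangles which is an isomorphism on two of the three terms is an isomorphism on the third.

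First I would record the elementary identifications. Writing $j\cl U\cap V\hookrightarrow M$ we have $F_{U\cap V}=\eim{j}\opb{j}F$, hence $F_{U\cap V}|_U\simeq(F|_U)_{U\cap V}$, $F_U|_U\simeq F|_U$, $F_V|_U\simeq(F|_U)_{U\cap V}$, and symmetrically over $V$. Through the isomorphisms~\eqref{eq:isoUcapV} the morphism $a_U$ corresponds to $\id_{F|_{U\cap V}}$, so that $a_U|_U$ is the adjunction morphism $(F|_U)_{U\cap V}\to F|_U$, while $a_U|_V=\id_{(F|_V)_{U\cap V}}$ and $a_U|_{U\cap V}=\id_{F|_{U\cap V}}$; likewise $\alpha$, regarded via~\eqref{eq:isoUcapV} as a morphism $F_{U\cap V}\to F_V$, restricts over $U$ to the automorphism $\eim{j'}(\alpha)$ of $(F|_U)_{U\cap V}$ (with $j'\cl U\cap V\hookrightarrow U$) and over $U\cap V$ to $\alpha$ itself. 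Thus restricting~\eqref{eq:MVtd2} to $U$ produces a distinguished triangle whose first morphism $(F|_U)_{U\cap V}\to F|_U\oplus(F|_U)_{U\cap V}$ has as its two components the adjunction morphism and the \emph{isomorphism} $\eim{j'}(\alpha)$; composing it with a suitable automorphism of $F|_U\oplus(F|_U)_{U\cap V}$ turns it into the inclusion $x\mapsto(0,x)$ of the second summand, whose cone is $F|_U$ with structural map the first projection. The triangulated fact quoted above then yields an isomorphism $F_{U,V}^\alpha|_U\isoto F|_U$ identifying $b^\alpha_U|_U$ with $\id_{F|_U}$; in particular $b^\alpha_U|_U$ is an isomorphism. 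The same computation over $V$, where it is now the first component $a_U|_V=\id_{(F|_V)_{U\cap V}}$ of the first morphism that is invertible, shows that $b^\alpha_V|_V$ is an isomorphism. Hence $F_{U,V}^\alpha\in\Der(U,V;F)$, with the distinguished isomorphisms $\alpha_U\eqdot b^\alpha_U|_U$ and $\alpha_V\eqdot b^\alpha_V|_V$.

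For the \v Cech class I would use that two consecutive morphisms in~\eqref{eq:MVtd2} compose to $0$, i.e.\ $b^\alpha_U\circ a_U=b^\alpha_V\circ\alpha$ as morphisms $F_{U\cap V}\to F_{U,V}^\alpha$. Restricting to $U\cap V$ and using $a_U|_{U\cap V}=\id$ and $\alpha|_{U\cap V}=\alpha$ gives $b^\alpha_U|_{U\cap V}=b^\alpha_V|_{U\cap V}\circ\alpha$, an equality of isomorphisms of $F|_{U\cap V}$ onto $F_{U,V}^\alpha|_{U\cap V}$ (they are the restrictions to $U\cap V$ of $\alpha_U$ and $\alpha_V$). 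Therefore $\alpha_U^{-1}|_{U\cap V}\circ\alpha_V|_{U\cap V}=(b^\alpha_U|_{U\cap V})^{-1}\circ b^\alpha_V|_{U\cap V}=\alpha^{-1}$, and Definition~\ref{def:ceccG} gives $\cecc(F_{U,V}^\alpha)=[\alpha^{-1}]$.

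The one point requiring care is the bookkeeping in the second and third paragraphs: identifying the restrictions of $a_U$ and of $\alpha$ correctly through~\eqref{eq:isoUcapV} (this is exactly what makes the restricted triangle split in the right way), and arranging the splitting isomorphism over $U$ so that it matches $b^\alpha_U|_U$ — and not the adjunction morphism — with the projection, hence with an isomorphism. Everything else is routine triangulated-category manipulation.
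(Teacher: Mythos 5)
Your proof is correct and follows essentially the same route as the paper: restrict the modified Mayer--Vietoris triangle~\eqref{eq:MVtd2} to $U$ and to $V$, use that one component of the first morphism is invertible ($\alpha|_U$ over $U$, $a_U|_V=\id$ over $V$) to deduce that the complementary component of the second morphism is an isomorphism, and read off $\cecc(F_{U,V}^\alpha)=[\alpha^{-1}]$ from the vanishing of consecutive compositions together with $a_U|_{U\cap V}=\id_{F|_{U\cap V}}$. The only cosmetic difference is that where the paper isolates the splitting step as Lemma~\ref{lem:scindage} (proved by an explicit matrix computation with a splitting of the first morphism), you reprove it inline by composing with a shear automorphism of the middle term to reduce to a split triangle and then invoking the standard two-out-of-three property for morphisms of distinguished triangles.
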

\begin{proof}
  (i) We first prove that $b^\alpha_V|_V$ is an isomorphism.  Through the
  natural identification $F_U|_V \simeq F_{U\cap V}|_V$ the morphism $a_U|_V$ is
  $\id_{F_{U\cap V}}|_V$. Hence the result follows from Lemma~\ref{lem:scindage}
  below applied to the restriction of~\eqref{eq:MVtd2} to $V$.

  \sui (ii) In the same way $\alpha|_U$ is an isomorphism from $F_{U\cap V}$ to
  $F_V|_{U\cap V} \simeq F_{U\cap V}$.  Hence $b^\alpha_U|_U$ is an isomorphism
  by Lemma~\ref{lem:scindage} again.

  \sui (iii) The composition of two consecutive morphisms in any triangle
  vanishes. Hence~\eqref{eq:MVtd2} gives $b^\alpha_U \circ a_U = b^\alpha_V
  \circ \alpha$.  Since $a_U|_{U\cap V} = \id_{F|_{U\cap V}}$ the last assertion
  follows.
\end{proof}

\begin{lemma}
\label{lem:scindage}
Let $\catc$ be an abelian category and let
\begin{equation*}
A
\to[\left(\begin{smallmatrix} u \\ v \end{smallmatrix}\right)]
B \oplus C \to[(b, c)] D \to[+1] 
\end{equation*}
be a distinguished triangle in $\Derb(\catc)$.  We assume that $u \cl A \to B$
is an isomorphism.  Then $c \cl C \to D$ is an isomorphism.
\end{lemma}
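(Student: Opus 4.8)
The plan is to compare the triangle of the statement with a split triangle built from $u$. Since $u\cl A\to B$ is an isomorphism, $A\to[u]B\to 0\to[+1]$ is a distinguished triangle, and so is $0\to C\to[\id_C]C\to[+1]$. A finite direct sum of distinguished triangles being distinguished, their sum is a distinguished triangle
\begin{equation*}
A\to[\left(\begin{smallmatrix} u\\ 0\end{smallmatrix}\right)] B\oplus C\to[(0,\id_C)] C\to[+1].
\end{equation*}
Write $\iota=\left(\begin{smallmatrix} u\\ 0\end{smallmatrix}\right)$ for its first map and $\kappa=\left(\begin{smallmatrix} u\\ v\end{smallmatrix}\right)$ for the first map of the triangle in the statement.

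First I would introduce the automorphism $\varphi=\left(\begin{smallmatrix}\id_B&0\\ v\opb{u}&\id_C\end{smallmatrix}\right)$ of $B\oplus C$. A one-line computation gives $\varphi\circ\iota=\kappa$, so $\id_A$ and $\varphi$ form a commutative square between the first two terms of the split triangle and those of the given one. Invoking the axiom of triangulated categories that extends such a square to a morphism of distinguished triangles, I obtain a morphism $\psi\cl C\to D$ making
\begin{equation*}
\vcenter{\xymatrix{
A \ar[r]^{\iota} \ar[d]_{\id_A} & B\oplus C \ar[r]^{(0,\id_C)} \ar[d]_{\varphi} & C \ar[r]^{+1} \ar[d]_{\psi} & A[1] \ar[d]^{\id_A[1]} \\
A \ar[r]^{\kappa} & B\oplus C \ar[r]^{(b,c)} & D \ar[r]^{+1} & A[1]
}}
\end{equation*}
commute. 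Since $\id_A$ and $\varphi$ are isomorphisms, the two-out-of-three principle for morphisms of distinguished triangles (a formal consequence of the axioms) shows that $\psi$ is an isomorphism.

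It remains to identify $\psi$ with $c$. Reading the middle square and using $\varphi\circ\iota=\kappa$, one gets $(0,\psi)=\psi\circ(0,\id_C)=(b,c)\circ\varphi=(b+c\,v\opb{u},\,c)$ as morphisms $B\oplus C\to D$; restricting to the direct summand $C$ yields $\psi=c$, so $c$ is an isomorphism. The argument is entirely formal; the only points needing a little care are the bookkeeping of the $2\times2$ matrix compositions, the final identification $\psi=c$, and recalling the two standard facts used — that distinguished triangles are stable under finite direct sums, and that a morphism of distinguished triangles two of whose vertical components are isomorphisms has its third component an isomorphism.
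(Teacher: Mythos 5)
Your proof is correct, and it takes a route different from the paper's. The paper argues entirely inside the given triangle: from $(b,c)\circ\left(\begin{smallmatrix} u \\ v \end{smallmatrix}\right)=0$ it deduces $b=-c\circ v\circ u^{-1}$, then uses that $w'=(u^{-1},0)$ is a retraction of the first map to split the triangle (existence of $d'=\left(\begin{smallmatrix} d_1 \\ d_2 \end{smallmatrix}\right)$ with $d\circ d'=\id_D$ and $\id_{B\oplus C}=w\circ w'+d'\circ d$), and finally extracts by matrix bookkeeping an explicit two-sided inverse of $c$ (in particular $d_2\circ c=\id_C$ and $c\circ(-v\circ u^{-1}\circ d_1+d_2)=\id_D$). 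You instead compare the given triangle with the model split triangle $A\to B\oplus C\to C$ via the shear automorphism $\left(\begin{smallmatrix}\id_B&0\\ v\circ u^{-1}&\id_C\end{smallmatrix}\right)$, complete the square using the axiom TR3, conclude by the standard criterion that a morphism of distinguished triangles whose other two components are isomorphisms is an isomorphism on the third component, and then identify the completed map with $c$ by reading the middle square — that last identification is the one step that needed care, and you did it correctly (it also recovers $b=-c\circ v\circ u^{-1}$). The trade-off: your argument leans on three standard facts (finite sums of distinguished triangles are distinguished, TR3, and the five-lemma-type isomorphism criterion) and is arguably more structural; the paper's leans on the single fact that a triangle whose first map admits a retraction is split, and in exchange produces an explicit inverse of $c$. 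Both are purely formal consequences of the axioms, so either is acceptable.
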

\begin{proof}
  Let us set $w= \left(\begin{smallmatrix} u \\ v \end{smallmatrix}\right)$ and
  $d = (b,c)$.  Since $d \circ w = 0$ and $u$ is an isomorphism, we find $b = -
  c \circ v \circ u^{-1}$.

  The morphism $ w' = (u^{-1}, 0) \cl B \oplus C \to A$ is a splitting of $w$.
  Hence there exists $d' = \left(\begin{smallmatrix} d_1 \\
      d_2 \end{smallmatrix}\right) \cl D \to B \oplus C$ such that $d \circ d' =
  \id_D$ and $\id_{B \oplus C} = w \circ w' + d' \circ d$.  This gives $b \circ
  d_1 + c \circ d_2 = \id_D$ and
\begin{equation}
\label{eq:scindage2}
\begin{pmatrix} \id_B & 0 \\  0 & \id_C \end{pmatrix}
= 
\begin{pmatrix} \id_B  +  d_1 \circ b &  d_1 \circ c
\\  v \circ u^{-1}  +  d_2 \circ b &   d_2 \circ c
\end{pmatrix} .
\end{equation}
The equalities $b = - c \circ v \circ u^{-1}$ and $b \circ d_1 + c \circ d_2 =
\id_D$ give
$$
c \circ (- v \circ u^{-1} \circ d_1 + d_2) = \id_D
$$
and the $(2,2)$-term in~\eqref{eq:scindage2} gives $d_2 \circ c =\id_C$.  Hence
$c$ is an isomorphism.
\end{proof}

\section{Microlocal linked points}
\label{sec:miclinkpt}

In this section we still consider a manifold $M$ with an open covering $M = U
\cup V$.  We let also $\Lambda$ be a smooth Lagrangian submanifold of $\dT^*M$
and we consider $F\in \Derb(\cor_M)$ such that $ \dot\SSi(F) = \Lambda$ and $F$
is simple along $\Lambda$.  We recall that $\mu hom(F,F)|_{\dT^*M} \simeq
\cor_\Lambda$.

In order to map the space $H^1(U,V; \PAut(F))$ to more easily described spaces,
we use the natural morphism from $\rhom(F,F)$ to $\roim{(\dot\pi_M)} \mu
hom(F,F)$.  Since $\mu hom(F,F)|_{\dT^*M} \simeq \cor_\Lambda$, we deduce a
morphism from $\PAut(F)$ to $\roim{(\dot\pi_M)} (\cor^\times_\Lambda)$ and then
a map
$$
H^1(U,V; \PAut(F)) \to H^1(\{U',V'\}; \cor^\times_\Lambda) ,
$$
where $U' = T^*U\cap \Lambda$, $V' = T^*V \cap \Lambda$ and $H^1(\{U',V'\};
\cor^\times_\Lambda)$ denotes the \v Cech cohomology.  However we loose to much
information in this way and we consider a map to another \v Cech group
(see~\eqref{eq:defmongamma}).  The construction of this map rely on the notion
of {\em linked points} in $\Lambda$ introduced in
Definition~\ref{def:linkedpair} below.

We first introduce a general notation.  For $G,G' \in \Derb(\cor_M)$ we recall
the canonical isomorphism~\eqref{eq:proj_muhom_oim}
\begin{equation}
\label{eq:Sato-bis}
\rhom(G,G') \simeq \roim{(\pi_M)} \mu hom(G,G') .
\end{equation}
For an open subset $W$ of $M$ and $p\in T^*W$, we deduce the morphisms
\begin{alignat}{2}
\label{eq:def_umu_general}
\Hom(G|_W,G'|_W) &\to H^0(T^*W; \mu hom(G,G')) , &\qquad u &\mapsto u^{\mu+} , \\
\label{eq:def_umup_general}
\Hom(G|_W,G'|_W) &\to  \mu hom(G,G')_p , &\qquad u &\mapsto u^{\mu+}_p . 
\end{alignat}
For our simple sheaf $F$ and for $p\in \dot\SSi(F)$ we obtain
\begin{alignat}{2}
\label{eq:def_umu}
\Hom(F|_W,F|_W) &\to H^0(\dT^*W; \cor_\Lambda) , &\qquad u &\mapsto u^\mu , \\
\label{eq:def_umup}
\Hom(F|_W,F|_W) &\to  \cor , &\qquad u &\mapsto u^\mu_p . 
\end{alignat}
In this case we have $u^{\mu+}_p = u^\mu_p \cdot (\id_F)^{\mu+}_p$.

\begin{definition}
\label{def:linkedpair}
Let $W\subset M$ be an open subset and let $p,q \in \Lambda \cap T^*W$ be given
points. We say that $p$ and $q$ are $F$-linked over $W$ if $u^\mu_p = u^\mu_q$
for all $u \in \sect(W; \PAut(F))$.
\end{definition}

We remark that $u^\mu_p$ only depends on the component of $\Lambda \cap T^*W$
containing $p$ and we could also speak of $F$-linked connected components of
$\Lambda \cap T^*W$.

By Theorem~\ref{thm:germemuhom} the functors $u \mapsto u^{\mu+}_p$ or
$u \mapsto u^\mu_p$ are well-defined in the quotient category
$\Derb(\cor_M;p)$. We can also express this result as follows.

\begin{lemma}
\label{lem:umup_DMp}
Let $G,G' \in\Derb(\cor_M)$ be such that
$\dot\SSi(G), \dot\SSi(G') \subset \Lambda$ and let $p\in \Lambda$ be given. We
assume that there exists a distinguished triangle $G \to[g] G' \to H \to[+1]$
and that $p\not\in \SSi(H)$.  Then the composition with $g$ induces
isomorphisms
$$
\mu hom(G,G)_p \isoto[a] \mu hom(G,G')_p \isofrom[b] \mu hom(G',G')_p
$$
and we have $a((\id_G)^{\mu+}_p) = g^{\mu+}_p = b((\id_{G'})^{\mu+}_p)$.  In
particular, if $G$ and $G'$ are simple and $u\cl G \to G$ and $v\cl G' \to G'$
satisfy $v \circ f = f \circ u$ then $u^\mu_p = v^\mu_p$.
\end{lemma}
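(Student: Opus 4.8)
The plan is to reduce the statement to two formal inputs: the support estimate $\supp\mu hom(F,G)\subset\SSi(F)\cap\SSi(G)$ of~\eqref{eq:suppmuhom}, which kills $\mu hom$ against the ``microlocally trivial at $p$'' sheaf $H$, and the naturality of Sato's isomorphism~\eqref{eq:Sato-bis} in both arguments, which pins down where $(\id_G)^{\mu+}_p$ and $(\id_{G'})^{\mu+}_p$ go.

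First I would construct $a$ and $b$. Applying the triangulated functor $\mu hom(G,\scbul)$ to the triangle $G\to[g]G'\to H\to[+1]$ and taking germs at $p$ produces a distinguished triangle $\mu hom(G,G)_p\to[a]\mu hom(G,G')_p\to\mu hom(G,H)_p\to[+1]$ in $\Derb(\cor)$, in which $a$ is induced by composition with $g$; since $\supp\mu hom(G,H)\subset\SSi(G)\cap\SSi(H)$ and $p\notin\SSi(H)$, the third term has zero germ at $p$, so $a$ is an isomorphism. Symmetrically, applying $\mu hom(\scbul,G')$ to the same triangle and taking germs at $p$ gives $\mu hom(H,G')_p\to\mu hom(G',G')_p\to[b]\mu hom(G,G')_p\to[+1]$, again with vanishing first term because $p\notin\SSi(H)$, so $b$ is an isomorphism. (One may also phrase this by saying that $g$ becomes invertible in $\Derb(\cor_M;p)$, as $H\simeq0$ there, and that $\mu hom(\scbul,\scbul)_p$ factors through $\Derb(\cor_M;p)$, cf. Theorem~\ref{thm:germemuhom}.)

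Next I would verify the two displayed equalities. The assignment $u\mapsto u^{\mu+}_p$ of~\eqref{eq:def_umup_general} is, by construction, the germ at $p$ of the natural isomorphism~\eqref{eq:Sato-bis}, hence natural in each variable. Applying naturality in the target variable to the map $g\circ\scbul\cl\Hom(G,G)\to\Hom(G,G')$ and evaluating at $\id_G$ gives $a\big((\id_G)^{\mu+}_p\big)=(g\circ\id_G)^{\mu+}_p=g^{\mu+}_p$; applying naturality in the source variable to $\scbul\circ g\cl\Hom(G',G')\to\Hom(G,G')$ and evaluating at $\id_{G'}$ gives $b\big((\id_{G'})^{\mu+}_p\big)=(\id_{G'}\circ g)^{\mu+}_p=g^{\mu+}_p$.

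Finally, for the last assertion I would specialize to simple $G$ and $G'$. Then $\mu hom(G,G)_p\simeq\cor$ and $\mu hom(G',G')_p\simeq\cor$, generated by $(\id_G)^{\mu+}_p$ and $(\id_{G'})^{\mu+}_p$ respectively by~\eqref{eq:carc_Fsimple}, so that $u^{\mu+}_p=u^\mu_p\cdot(\id_G)^{\mu+}_p$ and $v^{\mu+}_p=v^\mu_p\cdot(\id_{G'})^{\mu+}_p$; moreover $\mu hom(G,G')_p\simeq\cor$ and $a,b$ are $\cor$-linear isomorphisms by the first step. Applying $(\scbul)^{\mu+}_p$ to the relation $v\circ g=g\circ u$ and using naturality as above, $v^\mu_p\cdot g^{\mu+}_p=b(v^{\mu+}_p)=(v\circ g)^{\mu+}_p=(g\circ u)^{\mu+}_p=a(u^{\mu+}_p)=u^\mu_p\cdot g^{\mu+}_p$ in $\mu hom(G,G')_p\simeq\cor$. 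Since $g^{\mu+}_p=a\big((\id_G)^{\mu+}_p\big)$ is the image of a generator under an isomorphism, it is nonzero, and as $\cor$ is a field we may cancel it to obtain $u^\mu_p=v^\mu_p$. The argument is essentially formal; the only points demanding care are matching the variances of $a$ and $b$ with the two sides of $v\circ g=g\circ u$ and checking $g^{\mu+}_p\neq0$ before cancelling, so I do not anticipate a genuine obstacle beyond this bookkeeping.
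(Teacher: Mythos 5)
Your proposal is correct and follows essentially the same route as the paper: apply $\mu hom(G,\scbul)$ and $\mu hom(\scbul,G')$ to the triangle, kill the $\mu hom$ germs against $H$ via the support bound~\eqref{eq:suppmuhom}, and deduce the scalar identity from $u^{\mu+}_p = u^\mu_p \cdot (\id_G)^{\mu+}_p$ and $v^{\mu+}_p = v^\mu_p \cdot (\id_{G'})^{\mu+}_p$. You merely spell out the naturality of Sato's morphism and the cancellation of $g^{\mu+}_p$, which the paper leaves implicit; no gap.
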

\begin{proof}
  We apply the functor $\mu hom(G,\cdot)$ to the given distinguished triangle
  and we take the germs at $p$.  By the bound~\eqref{eq:suppmuhom} we have $\mu
  hom(G,H)_p \simeq 0$ and we deduce the isomorphism $a$. The isomorphism $b$ is
  obtained in the same way.  Then the last assertion follows from the relations
  $u^{\mu+}_p = u^\mu_p \cdot (\id_G)^{\mu+}_p$ and $v^{\mu+}_p = v^\mu_p \cdot
  (\id_{G'})^{\mu+}_p$.
\end{proof}

We recall that $U,V$ are two open subsets of $M$ such that $M = U \cup V$.  Let
$\gamma \cl [0,1] \to \Lambda$ be a path such that
\begin{equation}
\label{eq:hyppathgamma}
\begin{minipage}[c]{11cm}
  $p_0 = \gamma(0)$ and $p_1 = \gamma(1)$ belong to $(T^*U \setminus T^*V) \cap
  \Lambda$ and are $F$-linked over $U$.
  \end{minipage} 
\end{equation}
We define a circle $C$ by identifying $0$ and $1$ in $[0,1]$. The natural
orientation of $[0,1]$ induces an orientation on $C$.  We let $U'$ and $V'$ be
the images of $\opb{\gamma}(T^*U \cap \Lambda)$ and $\opb{\gamma}(T^*V \cap
\Lambda)$ by the quotient map $[0,1] \to C$.  We recall that $H^1(\{U',V'\};
\cor^\times_C)$ is the \v Cech cohomology of the covering $C = U' \cup V'$.
Since $C$ is oriented we have a canonical isomorphism $H^1(\{U',V'\};
\cor^\times_C) \simeq \cor^\times$.

For $u \in \sect(U;\PAut(F))$ the inverse image of $u^\mu$ by $\gamma$ gives a
well-defined section of $H^0(U';\cor^\times_C)$ because $p_0$ and $p_1$ are
$F$-linked over $U$. An element of $\sect(V;\PAut(F))$ induces a section of
$H^0(V';\cor^\times_C)$ in the same way because
$p_0, p_1 \not\in T^*V \cap \Lambda$.  We deduce a well-defined map
\begin{equation}
\label{eq:defmongamma}
\mon_\gamma \cl H^1(U,V; \PAut(F)) 
\to H^1(\{U',V'\}; \cor^\times_C) \simeq \cor^\times.
\end{equation}

Now we describe a situation where the map $\mon_\gamma$ is surjective.  We
assume that our open set $V$ contains a hypersurface $H$ and that there exist
$t_1 < t_2 < t_3 \in \mo]0,1[$ such that
\begin{equation}
  \label{eq:hypUVF}
\left\{  \hspace{-4mm} \begin{minipage}[c]{11cm}
    \begin{itemize}
\item [(a)] $H' = M \setminus U$ is contained in $H$,
\item [(b)] $F|_V$ has a decomposition $F|_V \simeq F' \oplus F''$ such that $H
  \cap \supp F' \subset H'$,
\item [(c)] $V \setminus H$ has two connected components, say $V^+$ and $V^-$,
\item [(d)] $\opb{\gamma}(\SSi(F')) = \mo]t_1,t_3[,
\quad \opb{\gamma}(\SSi(F') \cap T^*V^+) = \mo]t_1,t_2[$
and $\opb{\gamma}(\SSi(F') \cap T^*V^-) = \mo]t_2,t_3[$.
\end{itemize}
  \end{minipage} \right.
\end{equation}
\begin{remark}
\label{rem:hypUVF_comconn}
By~(b) we have $\SSi(F|_V) = \SSi(F') \cup \SSi(F'')$ and, moreover, both $\mu
hom(F',F')$ and $\mu hom(F'',F'')$ are direct summands of $\mu hom(F,F)$.  For
any sheaf $G$ we have $\SSi(G) = \supp(\mu hom(G,G))$.  Since $F$ is simple we
deduce that a point of $\dot\SSi(F)\cap \dT^*V$ cannot belong to $\SSi(F')$ and
$\SSi(F'')$.  Hence, outside the zero section we actually have a disjoint union
$\dot\SSi(F|_V) = \dot\SSi(F') \sqcup \dot\SSi(F'')$.  In particular
$\dot\SSi(F')$ is a union of connected components of $T^*V \cap \Lambda$.
Then~(d) says that $\opb{\gamma}(\SSi(F')) = \mo]t_1,t_3[$ is a single connected
component of $\opb{\gamma}(T^*V \cap \Lambda)$.
\end{remark}

By~(b) and~(c) we have $F'|_{V\setminus H'} \simeq F'_{V^+} \oplus
F'_{V^-}$. Hence
\begin{equation}
\label{eq:decompFUV}
F|_{U \cap V} \simeq F'_{V^+} \oplus F'_{V^-} \oplus F''|_{U\cap V} .
\end{equation}
Let $a\in \cor^\times$ be given.  According to the
decomposition~\eqref{eq:decompFUV} we define a diagonal automorphism
$\alpha(a) \cl F|_{U\cap V} \to F|_{U\cap V}$ by
\begin{equation}
\label{eq:defalphaFUV}
\alpha(a) = \begin{pmatrix}
  a \cdot \id_{F'_{V^+}}  \\ &  \id_{F'_{V^-}} \\ &&\id_{F''} 
\end{pmatrix}.
\end{equation}

\begin{proposition}
\label{prop:mongammasurj}
We make the hypothesis~\eqref{eq:hyppathgamma} and~\eqref{eq:hypUVF} and, for a
given $a\in \cor^\times$, we define $\alpha(a) \cl F|_{U\cap V} \to F|_{U\cap
  V}$ by~\eqref{eq:defalphaFUV}.  We also denote by $\alpha(a)$ its class in
$H^1(U,V; \PAut(F))$.  Then $\mon_\gamma(\alpha(a)) = a$.
\end{proposition}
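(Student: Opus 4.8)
The goal is to compute the image of $\alpha(a)$ under the composite
$$H^1(U,V;\PAut(F)) \to[\mon_\gamma] H^1(\{U',V'\};\cor^\times_C) \simeq \cor^\times.$$
The plan is to unwind the definition of $\mon_\gamma$: one must produce, for $\alpha(a)$, a \v Cech representative consisting of a section of $\cor^\times_C$ over $U'$ coming from an automorphism of $F|_U$ and a section over $V'$ coming from an automorphism of $F|_V$, whose ratio on the overlap $U'\cap V'$ realizes $\alpha(a)^\mu$ after pullback by $\gamma$; the resulting element of $H^1(\{U',V'\};\cor^\times_C)\simeq\cor^\times$ is then read off via the orientation of $C$. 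Since $\alpha(a)$ is built using the decomposition~\eqref{eq:decompFUV} and the summand $F'$ is supported microlocally on a single connected component $\opb{\gamma}(\SSi(F'))=\mo]t_1,t_3[$ of $\opb{\gamma}(T^*V\cap\Lambda)$ by Remark~\ref{rem:hypUVF_comconn}, I expect $\alpha(a)^\mu$ to be the locally constant function on $U'\cap V'$ which equals $a$ on the component over $\mo]t_1,t_2[$ (where $F'_{V^+}$ contributes) and equals $1$ elsewhere.

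First I would fix the \v Cech data. Over $U$ take the identity automorphism $\id_{F|_U}$, whose image $(\id)^\mu$ is the constant function $1\in H^0(U';\cor^\times_C)$; this is legitimate because $p_0,p_1$ are $F$-linked over $U$ by~\eqref{eq:hyppathgamma}, so the pullback descends to $C$. Over $V$ I would choose an automorphism $\beta\in\sect(V;\PAut(F))$ whose restriction to $U\cap V$ differs from $\alpha(a)$ by an automorphism coming from $U$; concretely, using the decomposition $F|_V\simeq F'\oplus F''$ from~\eqref{eq:hypUVF}(b), take $\beta$ to be $a\cdot\id_{F'}\oplus\id_{F''}$. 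Its microlocal incarnation $\beta^\mu$ is the locally constant function on $V'$ equal to $a$ on the component of $V'$ covered by $\opb{\gamma}(\SSi(F'))$ (the image of $\mo]t_1,t_3[$) and $1$ on the remaining components, using that $\beta^\mu$ is determined componentwise on $\dot\SSi(F|_V)=\dot\SSi(F')\sqcup\dot\SSi(F'')$ and that $\mu hom(F',F')\simeq\cor_{\Lambda\cap T^*V}|_{\dot\SSi(F')}$ since $F'$ is simple along its microsupport.

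Next I would check compatibility on the overlap. On $U'\cap V'$ the \v Cech coboundary of the pair $(\id^\mu,\beta^\mu)$ is $(\id^\mu)\cdot(\beta^\mu)^{-1}$ restricted to $U'\cap V'$; by the above this is the function equal to $a^{-1}$ on the piece of $U'\cap V'$ lying over $\mo]t_1,t_2[\,\cup\,\mo]t_2,t_3[$ and $1$ elsewhere. Comparing with the cocycle defined directly by $\alpha(a)$ via~\eqref{eq:defalphaFUV}: by~\eqref{eq:decompFUV}, $\alpha(a)^\mu$ on $U\cap V$ is $a$ on $\dot\SSi(F'_{V^+})$, which by~\eqref{eq:hypUVF}(d) pulls back to $\mo]t_1,t_2[$, and $1$ on $\dot\SSi(F'_{V^-})$ and on $\dot\SSi(F'')$. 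So the two differ precisely by the automorphism-over-$V$ ambiguity, confirming that $\alpha(a)$ and the pair $(\id,\beta)$ represent the same class and that $\mon_\gamma(\alpha(a))$ is represented by the \v Cech cocycle taking value $a$ on exactly one of the two overlap intervals of the covering $C=U'\cup V'$. Under the orientation isomorphism $H^1(\{U',V'\};\cor^\times_C)\simeq\cor^\times$, which multiplies the transition values consistently around the oriented circle $C$, a cocycle equal to $a$ on one overlap component and $1$ on the other evaluates to $a$ (the sign being $+$ since $\gamma$ orients $C$ and $p_0,p_1$ are exited in the positive direction); hence $\mon_\gamma(\alpha(a))=a$.

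The main obstacle I anticipate is bookkeeping the signs and the identification $H^1(\{U',V'\};\cor^\times_C)\simeq\cor^\times$: one must verify that $U'\cap V'$ has exactly two connected components on $C$ (one near $t_0:=0\sim1$ and one in the interior, forced by~\eqref{eq:hyppathgamma} and~\eqref{eq:hypUVF}(d)), that $\alpha(a)^\mu$ is nontrivial on exactly one of them, and that the orientation of $C$ induced from $[0,1]$ makes the evaluation come out as $+a$ rather than $a^{-1}$. Everything else — that $\beta$ restricts on $U\cap V$ to $\alpha(a)$ up to an automorphism of $F|_U$, that $\beta^\mu$ is computed componentwise, that the pullback by $\gamma$ descends to $C$ — is routine once the decompositions~\eqref{eq:decompFUV} and Remark~\ref{rem:hypUVF_comconn} are in hand.
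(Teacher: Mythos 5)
Your core computation is the paper's proof: by Remark~\ref{rem:hypUVF_comconn} and hypothesis~(d) of~\eqref{eq:hypUVF}, the pullback of $\alpha(a)^\mu$ by $\gamma$ equals $a$ on the component $\mo]t_1,t_2[$ of $U'\cap V'$ and $1$ on every other component, and this \v Cech cocycle represents $a$ under the oriented identification $H^1(\{U',V'\};\cor^\times_C)\simeq\cor^\times$. So the substance is right and the route is the same.

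Two points should be corrected, though neither destroys the argument. First, $\mon_\gamma$ sends the class of $\alpha(a)$ directly to the class of the $1$-cocycle $\gamma^*(\alpha(a)^\mu)$; sections over $U'$ and $V'$ induced by elements of $\Aut(F|_U)$ and $\Aut(F|_V)$ enter only as coboundaries (that is the well-definedness of $\mon_\gamma$), so the detour through the $0$-cochain $(\id,\beta)$ is unnecessary, and the assertion that ``$\alpha(a)$ and the pair $(\id,\beta)$ represent the same class'' is not meaningful: a $0$-cochain is not a class in $H^1(U,V;\PAut(F))$. Second, the point you single out as the main thing to verify is false as stated: by~\eqref{eq:hyppathgamma} the endpoints $p_0,p_1$ lie in $(T^*U\setminus T^*V)\cap\Lambda$, so $0\sim 1$ is not in $V'$ and there is no overlap component near the identified point; what~(d) gives is that $\mo]t_1,t_2[$ and $\mo]t_2,t_3[$ are two distinct interior components of $U'\cap V'$ (indeed $\gamma(t_2)$ lies over $H\cap\supp F'\subset H'=M\setminus U$, while $V^\pm\subset M\setminus H\subset U$), and nothing in the hypotheses prevents further components of $U'\cap V'$. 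This does not harm the conclusion, because the cocycle equals $1$ on every component other than $\mo]t_1,t_2[$, so the evaluation is $a$ no matter how many components there are — but the proof should be phrased that way rather than resting on an exact count of components.
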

\begin{proof}
  We use the notations $C$, $U'$ and $V'$ described
  after~\eqref{eq:hyppathgamma}.  We identify the points $t_i$ introduced
  in~\eqref{eq:hypUVF} with their images in $C$.  By~(d) in~\eqref{eq:hypUVF}
  the intervals $]t_1,t_2[$ and $]t_2,t_3[$ are connected components of $U'\cap
  V'$.  By Remark~\ref{rem:hypUVF_comconn} and by the definition of $\alpha(a)$,
  the section $\alpha(a)^\mu$ of $\mu hom(F,F) \simeq \cor_\Lambda$ is the
  scalar $a$ over $]t_1,t_2[$ and $1$ over all other components of $U'\cap
  V'$. It follows that the \v Cech cocycle defined by $\alpha(a)^\mu$ represents
  $a \in H^1(\{U',V'\}; \cor^\times_C) \simeq \cor^\times$.
\end{proof}

\begin{corollary}
\label{cor:clisoDUVF}
Under the hypothesis~\eqref{eq:hyppathgamma} and~\eqref{eq:hypUVF} the category
$\Der(U,V;F)$ contains at least $| \cor^\times |$ isomorphism classes.
\end{corollary}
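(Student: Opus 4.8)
The plan is to combine Proposition~\ref{prop:mongammasurj}, Lemma~\ref{lem:FUValpha}, and the fact that $\cecc$ is constant on isomorphism classes to produce, for each $a\in\cor^\times$, an explicit object of $\Der(U,V;F)$, and then to separate these objects by the composite invariant $\mon_\gamma\circ\cecc$.

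First I would fix $a\in\cor^\times$ and form the automorphism $\alpha(a)\cl F|_{U\cap V}\isoto F|_{U\cap V}$ of~\eqref{eq:defalphaFUV}; it is indeed an isomorphism since $a\neq 0$. Applying Definition~\ref{def:modifMV} I obtain an object $F_{U,V}^{\alpha(a)}\in\Derb(\cor_M)$, and Lemma~\ref{lem:FUValpha} tells me that $F_{U,V}^{\alpha(a)}$ lies in $\Der(U,V;F)$ and that $\cecc(F_{U,V}^{\alpha(a)}) = [\alpha(a)^{-1}]$ in $H^1(U,V;\PAut(F))$. Since $\alpha(a)$ is diagonal with respect to the decomposition~\eqref{eq:decompFUV}, its inverse is $\alpha(a)^{-1} = \alpha(a^{-1})$, whence $\cecc(F_{U,V}^{\alpha(a)}) = [\alpha(a^{-1})]$.

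Next I would feed this class into the monodromy map~\eqref{eq:defmongamma}, whose construction uses hypothesis~\eqref{eq:hyppathgamma} (that $p_0,p_1$ are $F$-linked over $U$ and lie outside $T^*V\cap\Lambda$) and hypothesis~\eqref{eq:hypUVF}: by Proposition~\ref{prop:mongammasurj}, $\mon_\gamma(\cecc(F_{U,V}^{\alpha(a)})) = \mon_\gamma([\alpha(a^{-1})]) = a^{-1}$. Finally, recall from the remarks following Definition~\ref{def:ceccG} that $\cecc(G)$ depends only on the isomorphism class of $G$ in $\Der(U,V;F)$; hence $G\mapsto\mon_\gamma(\cecc(G))\in\cor^\times$ is an isomorphism-class invariant. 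As $a$ ranges over $\cor^\times$ the value $a^{-1}$ ranges over all of $\cor^\times$, so the objects $F_{U,V}^{\alpha(a)}$, $a\in\cor^\times$, represent at least $|\cor^\times|$ pairwise distinct isomorphism classes in $\Der(U,V;F)$, which is the claim.

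There is essentially no hard step: the real content has been front-loaded into Proposition~\ref{prop:mongammasurj} and into the well-definedness of $\mon_\gamma$ (which rests on the notion of $F$-linked points). The only points requiring a moment's care are that $\mon_\gamma\circ\cecc$ genuinely descends to isomorphism classes — which follows from the two already-established facts that $\cecc$ is an isomorphism invariant and that $\mon_\gamma$ is a well-defined map out of $H^1(U,V;\PAut(F))$ — and the elementary observation that $\alpha(a)^{-1}=\alpha(a^{-1})$, immediate from the diagonal form in~\eqref{eq:defalphaFUV}. No transfinite or cardinality subtleties arise, since we exhibit an explicit surjection from the isomorphism classes onto $\cor^\times$.
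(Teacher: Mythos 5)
Your proof is correct and is exactly the argument the paper intends (it leaves the corollary without a written proof precisely because it follows from Lemma~\ref{lem:FUValpha}, Proposition~\ref{prop:mongammasurj}, the isomorphism-invariance of $\cecc$, and the observation $\alpha(a)^{-1}=\alpha(a^{-1})$). Nothing is missing.
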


\section{Examples of microlocal linked points}
\label{sec:exmiclinkpt}

We begin with the following easy example over $\R$. In higher dimension we give
Propositions~\ref{prop:mulink-restrdim1} and~\ref{prop:mulink-projdim1} below
which reduce to this case by inverse or direct image.

\begin{proposition}
\label{prop:mulink-dim1}
  Let $t_0 \leq t_1 \in \R$ and $p_0 = (t_0;\tau_0)$, $p_1 = (t_1;\tau_1) \in
  \dT^*\R$ be given.  Let $F \in \Derb(\cor_\R)$ be a constructible sheaf with
  $p_0 , p_1 \in \SSi(F)$ such that $F$ is simple at $p_0, p_1$.  We assume that
  there exists a decomposition $F \simeq G \oplus \cor_I[d]$ where $G \in
  \Derb(\cor_\R)$, $d\in \Z$ and $I$ is an interval with ends $x_0,x_1$ such
  that $p_0,p_1 \in \SSi(\cor_I)$.  Then $p_0$ and $p_1$ are $F$-linked over any
  interval containing $\ol{I}$.
\end{proposition}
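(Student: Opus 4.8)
The plan is to reduce, microlocally at $p_0$ and at $p_1$, the sheaf $F$ to its direct summand $\cor_I[d]$, and then to exploit that the constant sheaf on an interval has only scalar endomorphisms. Here the microgerm $v\mapsto v^{\mu+}_p$ of a morphism is the one attached to Sato's isomorphism~\eqref{eq:Sato-bis}, so it factors through the localized category $\Derb(\cor_W;p)$ (Theorem~\ref{thm:germemuhom}), and the scalar $u^\mu_{p_i}$ makes sense because $F$ is simple at $p_i$, with $u^{\mu+}_{p_i}=u^\mu_{p_i}\cdot(\id_F)^{\mu+}_{p_i}$. The first point is that $p_0,p_1\notin\SSi(G)$: being the constant sheaf on an interval, $\cor_I$ is simple along $\SSi(\cor_I)$, hence simple at $p_0$ and $p_1$, so $\mu hom(\cor_I[d],\cor_I[d])_{p_i}\simeq\cor$ is concentrated in degree~$0$ for $i=0,1$; by additivity of $\mu hom(\cdot,\cdot)$ this $\cor$ is a direct summand of $\mu hom(F,F)_{p_i}$, which is itself $\simeq\cor$ in degree~$0$ because $F$ is simple at $p_i$. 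Over a field the complementary summand must vanish, so $\mu hom(G,G)_{p_i}\simeq 0$, i.e.~$p_i\notin\SSi(G)=\supp\mu hom(G,G)$.

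Now let $W$ be any open interval with $\ol{I}\subset W$, so that $p_0,p_1\in T^*W\cap\SSi(F)$, and let $u\in\sect(W;\PAut(F))=\Aut(F|_W)$. Denote by $\jmath\cl\cor_I[d]\hookrightarrow F$ and $\varpi\cl F\twoheadrightarrow\cor_I[d]$ the inclusion and the projection attached to $F\simeq G\oplus\cor_I[d]$, so that $\varpi\circ\jmath=\id$. Since $I$ is an interval, an elementary computation gives $\End_{\Derb(\cor_W)}(\cor_I[d]|_W)\simeq\cor$, a group spanned by the identity; hence $\varpi|_W\circ u\circ\jmath|_W=\lambda\cdot\id_{\cor_I[d]|_W}$ for a unique $\lambda\in\cor$. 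I claim that $u^\mu_{p_0}=\lambda=u^\mu_{p_1}$, which proves that $p_0$ and $p_1$ are $F$-linked over $W$.

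To prove the claim, fix $i\in\{0,1\}$ and pass to $\Derb(\cor_W;p_i)$, on which $\mu hom(\cdot,\cdot)_{p_i}$ is a bifunctor and $v\mapsto v^{\mu+}_{p_i}$ is defined. By the first point $p_i\notin\SSi(G|_W)$, so $G|_W$ is isomorphic to $0$ in $\Derb(\cor_W;p_i)$; thus $\jmath|_W$ and $\varpi|_W$ become mutually inverse isomorphisms $F|_W\simeq\cor_I[d]|_W$ there, and $u$ corresponds under them to $\varpi|_W\circ u\circ\jmath|_W=\lambda\cdot\id$. Applying the bifunctor $\mu hom(\cdot,\cdot)_{p_i}$ yields an isomorphism $\mu hom(F,F)_{p_i}\simeq\mu hom(\cor_I[d],\cor_I[d])_{p_i}\simeq\cor$, the last identification being~\eqref{eq:carc_Fsimple} for the simple sheaf $\cor_I[d]$; it sends $(\id_F)^{\mu+}_{p_i}$ to $(\id_{\cor_I[d]})^{\mu+}_{p_i}$, hence to $1\in\cor$, and it sends $u^{\mu+}_{p_i}$ to $(\lambda\cdot\id_{\cor_I[d]})^{\mu+}_{p_i}=\lambda\cdot 1$ by $\cor$-linearity. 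Therefore $u^\mu_{p_i}=\lambda$, independently of $i$, and the claim follows.

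The only real obstacle is the first point, $p_0,p_1\notin\SSi(G)$: without the hypothesis that $F$ is simple at $p_0$ and $p_1$, the summand $G$ could itself carry microlocal mass at these points and the microlocal identification $F|_W\simeq\cor_I[d]|_W$ in $\Derb(\cor_W;p_i)$ — which is precisely what pins the microgerm of every automorphism of $F|_W$ to one scalar coming from the interval $I$ — would break down. Everything else is routine manipulation of $\mu hom$, Sato's morphism, and the computation $\End(\cor_I)\simeq\cor$.
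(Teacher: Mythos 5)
Your proof is correct and follows essentially the same route as the paper: use the splitting to show that simplicity of $F$ at $p_k$ together with $p_k\in\SSi(\cor_I)$ forces $p_k\notin\SSi(G)$, then identify $F$ with $\cor_I[d]$ microlocally at $p_k$ (the paper packages this in Lemma~\ref{lem:umup_DMp}, you pass through $\Derb(\cor_W;p_k)$ via Theorem~\ref{thm:germemuhom}, which is the same mechanism) and conclude from $\End(\cor_I)\simeq\cor$ that the scalar is the same at both points. No gap; the level of detail even exceeds the paper's two-line argument.
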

\begin{proof}
  Let $i \cl \cor_I[d] \to F$ and $q \cl F \to \cor_I[d]$ be the morphism
  associated with the decomposition of $F$.  Then $i$ and $q$ give a morphism
  $\Hom(F,F) \to \Hom(\cor_I,\cor_I) \simeq \cor$.  Since $F$ is simple at
  $p_k$, for $k=0,1$, and $p_k \in \SSi(\cor_I)$, we have $p_k \not\in \SSi(G)$.
  Hence $u^\mu_{p_k} = ({}_Iu)^\mu_{p_k}$ by Lemma~\ref{lem:umup_DMp}.  Since
  $\Hom(\cor_I,\cor_I) \simeq \cor$ we have $({}_Iu)^\mu_{p_0} =
  ({}_Iu)^\mu_{p_1}$ for any $u$ defined in a neighborhood of $\ol{I}$.  The
  result follows.
\end{proof}

Let $f \cl M \to N$ be a morphism of manifolds. We recall the notations $f_d \cl
M \times_N T^*N \to T^*M$ and $f_\pi \cl M \times_N T^*N \to T^*N$ for the
natural maps induced on the cotangent bundles.

We study easy cases of inverse or direct image of simple sheaves.  We let $F \in
\Derb(\cor_M)$ be such that $\Lambda = \dot\SSi(F)$ is a smooth Lagrangian
submanifold and $F$ is simple along $\Lambda$.  More general, but local,
statements are given in~\cite{KS90} (see Corollaries~7.5.12 and~7.5.13).

\begin{lemma}
\label{lem:bonne_im_inv}
Let $i \cl L \to M$ be an embedding and let $x_0$ be a point of $L$.  We assume
that there exist a neighborhood $U \subset M$ of $i(x_0)$ and a submanifold $N
\subset U$ such that $\Lambda \cap \dT^*U \subset \dT^*_NU$ and $N$ is
transversal to $L$ at $x_0$.  Then, up to shrinking $U$ around $x_0$, the set
$\Lambda' = \dot\SSi((\opb{i}F)|_{U \cap L})$ is contained in $\dT^*_{N \cap
  L}L$ and $(\opb{i}F)|_{U \cap L}$ is simple along $\Lambda'$.

Moreover, if $u \cl F \to F$ is defined in a neighborhood of $x_0$ and $v \cl
\opb{i}F \to \opb{i}F$ is the induced morphism, then for any $p = (x_0;\xi_0)
\in \Lambda$ and $q = (x_0; i_d(\xi_0))$ we have $v^\mu_q = u^\mu_p$.
\end{lemma}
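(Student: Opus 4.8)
The statement has two parts: first, a geometric/microlocal claim about the shape of $\dot\SSi((\opb{i}F)|_{U\cap L})$ together with simplicity; second, a compatibility of the microlocal germs $u^\mu_p = v^\mu_q$. For the first part, the plan is to invoke Theorem~\ref{th:opboim}(iii): since $\Lambda \cap \dT^*U \subset \dT^*_NU$ and $N$ is transversal to $L$ at $x_0$, the embedding $i$ is non-characteristic for $\SSi(F)$ in a neighborhood of $x_0$ (transversality of $N$ and $L$ translates exactly into $\opb{(i_\pi)}(\SSi(F)) \cap T^*_LM \subset L \times_M T^*_MM$ near $x_0$). Then $\SSi(\opb{i}F) \subset i_d \opb{(i_\pi)}\SSi(F)$, and a direct computation with the conormal $\dT^*_NU$ shows $i_d \opb{(i_\pi)}(\dT^*_NU) \subset \dT^*_{N\cap L}L$ (using again that $N \pitchfork L$, so $N\cap L$ is a submanifold of $L$ and its conormal in $L$ is the image under $i_d$ of the restricted conormal of $N$). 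This gives $\Lambda' \subset \dT^*_{N\cap L}L$ after shrinking $U$.

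For simplicity of $\opb{i}F$ along $\Lambda'$, the cleanest route is Definition~\ref{def:simple_pure}: pick a function $\varphi$ on $L$ such that $\Lambda_\varphi$ meets $\Lambda'$ transversally at $q = (x_0; i_d(\xi_0))$, extend it (using the transversality $N \pitchfork L$) to a function $\tilde\varphi$ on $U$ with $\opb{i}\tilde\varphi = \varphi$ and $\Lambda_{\tilde\varphi} \pitchfork \Lambda$ at $p$, and then use the non-characteristic base change isomorphism $\opb{i}(\rsect_{\{\tilde\varphi\geq 0\}}F) \simeq \rsect_{\{\varphi\geq 0\}}(\opb{i}F)$ (up to a shift coming from $\omega_{L/M}$, which is a shift by a fixed integer, hence harmless for the ``concentrated in a single degree, $\simeq\cor$'' condition). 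Since $F$ is simple at $p$, the left-hand stalk is $\cor$ up to shift, hence so is the right-hand one; this is exactly simplicity of $\opb{i}F$ at $q$. As $q$ ranges over $\Lambda'$ (shrinking $U$ if needed so every point of $\Lambda'$ is of this form) we get simplicity along $\Lambda'$.

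For the last assertion, the idea is to use the functoriality of $\mu hom$ under inverse image together with the characterization~\eqref{eq:carc_Fsimple}. The inverse-image morphism for $\mu hom$ gives a map $\mu hom(F,F) \to$ (pushforward along $i_d$ of) $\mu hom(\opb{i}F, \opb{i}F)$ which sends the canonical section $(\id_F)^{\mu+}$ to $(\id_{\opb{i}F})^{\mu+}$ and, more generally, $u^{\mu+}$ to $v^{\mu+}$; taking germs at $p$ and at $q = (x_0; i_d(\xi_0))$ and using $u^{\mu+}_p = u^\mu_p \cdot (\id_F)^{\mu+}_p$, $v^{\mu+}_q = v^\mu_q \cdot (\id_{\opb{i}F})^{\mu+}_q$, we read off $v^\mu_q = u^\mu_p$. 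Alternatively — and this is probably the slicker packaging — one factors through the quotient categories $\Derb(\cor_M;p)$ and $\Derb(\cor_L;q)$: by Theorem~\ref{thm:germemuhom} it suffices to check that $\opb{i}$ sends the class of $u$ in $\Hom_{\Derb(\cor_M;p)}(F,F)$ to the class of $v$ in $\Hom_{\Derb(\cor_L;q)}(\opb{i}F,\opb{i}F)$, and that under the simplicity isomorphisms the scalars $u^\mu_p$ and $v^\mu_q$ are computed by the same recipe — which is precisely the content of the non-characteristic base change for $\rsect_{\{\varphi\geq0\}}$ used above, applied to $u$ instead of $\id$.

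The main obstacle is the last step: making rigorous that the canonical identifications $\mu hom(F,F)|_{\dT^*M}\simeq\cor_\Lambda$ and $\mu hom(\opb{i}F,\opb{i}F)|_{\dT^*L}\simeq\cor_{\Lambda'}$ are \emph{compatible} with the inverse-image morphism on $\mu hom$, i.e.\ that the shift appearing in the non-characteristic inverse image of $\mu hom$ (coming from $\omega_{L/M}$ and from the difference of inertia indices $\tau_{\varphi}$ computed in $T^*M$ vs.\ $T^*L$) does not introduce a nontrivial scalar. This is essentially a normalization check: one chooses a model situation — say $F = \cor_N$ with $N$ linear, $L$ linear transversal, $\varphi$ linear — computes both sides explicitly, sees the scalar is $1$, and then argues that the general case reduces to this one since everything in sight is local and the relevant data ($\mu hom(F,F)$ near $p$, the morphism $u$, the functions $\varphi, \tilde\varphi$) can be deformed to the model without changing the scalar (it varies continuously in $\cor^\times$ and at a reference point equals $1$).
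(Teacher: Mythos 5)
Your argument is correct in outline but follows a genuinely different, and considerably heavier, route than the paper. The paper does not use non-characteristic pullback estimates at all: it exploits the hypothesis $\Lambda\cap\dT^*U\subset\dT^*_NU$ more strongly, choosing (after shrinking $U$) a submersion $f\cl U\to L\cap U$ with $N=\opb{f}(L\cap N)$, so that by Corollary~\ref{cor:opbeqv} one has $F|_U\simeq\opb{f}G$ with $G\simeq(\opb{i}F)|_{U\cap L}$; then $\mu hom(F,F)\simeq\oim{(f_d)}\opb{f_\pi}(\mu hom(G,G))$ for the submersion $f$, and all three assertions (the bound on $\Lambda'$, simplicity, and $v^\mu_q=u^\mu_p$) drop out at once, with no test functions, no inertia indices and no normalization issue. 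Your route --- Theorem~\ref{th:opboim}(iii) for the microsupport bound (correct: transversality of $N$ and $L$ at $x_0$ gives non-characteristicity after shrinking $U$, and a covector conormal to $N$ restricting to $0$ on $T_{x_0}L$ must vanish), then a $\rsect_{\{\varphi\geq0\}}$ base-change argument for simplicity --- amounts to reproving locally Corollary~7.5.12 of \cite{KS90}, which the paper cites as the general statement. It can be made to work, but you must actually justify the commutation $\opb{i}\,\rsect_{\{\tilde\varphi\geq0\}}(F)\simeq\rsect_{\{\varphi\geq0\}}(\opb{i}F)$ near $x_0$ (the relevant condition is that $i$ is non-characteristic for $\SSi(\cor_{\{\tilde\varphi\geq0\}})^a+\SSi(F)$, which again follows from the same transversality); as stated it is an assertion, not an argument.

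The one step I would not accept as written is your treatment of the ``main obstacle''. The deformation-to-a-model argument (``the scalar varies continuously in $\cor^\times$ and equals $1$ at a reference point'') is not meaningful over a general field, and even over $\corC$ the local constancy of that scalar is precisely what would have to be proved. Fortunately the obstacle is illusory: the identifications $\mu hom(F,F)|_{\dT^*M}\simeq\cor_\Lambda$ and $\mu hom(\opb{i}F,\opb{i}F)|_{\dT^*L}\simeq\cor_{\Lambda'}$ are by definition induced by the sections $(\id_F)^{\mu+}$ and $(\id_{\opb{i}F})^{\mu+}$, and $u^\mu_p$ is the coordinate of $u^{\mu+}_p$ in the basis $(\id_F)^{\mu+}_p$. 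So once you have a $\cor$-linear inverse-image morphism on $\mu hom$ compatible with Sato's morphism and with restriction on $\rhom$ (hence sending $u^{\mu+}_p\mapsto v^{\mu+}_q$ and $(\id_F)^{\mu+}_p\mapsto(\id_{\opb{i}F})^{\mu+}_q$), linearity alone gives $v^{\mu+}_q=u^\mu_p\cdot(\id_{\opb{i}F})^{\mu+}_q$, i.e.\ $v^\mu_q=u^\mu_p$; no model computation is needed. What does require a precise citation or proof is the existence and compatibility of that morphism (see \cite{KS90}, \S\S6.4, 7.5) --- or you can bypass all of it with the paper's product trick, where the corresponding identification is transparent.
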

We note that the inclusion $\Lambda \cap T^*U \subset \dT^*_NU$ implies that
$\Lambda$ is a union of components of $\dT^*_NU$ (hence this is an equality if
$N$ is connected of codimension $\geq 2$).
\begin{proof}
  Up to shrinking $U$ we can find a submersion $f \cl U \to L\cap U$ such that
  $N = \opb{f} (L \cap N)$.  By Corollary~\ref{cor:opbeqv} we can write $F|_U
  \simeq \opb{f}G$ for some $G \in \Derb(\cor_{L\cap U})$.  Then $\mu hom(F,F)
  \simeq \oim{(f_d)} \opb{f_\pi}(\mu hom(G,G))$ and the lemma follows.
\end{proof}

\begin{lemma}
\label{lem:bonne_im_dir}
Let $q \cl M \to \R$ be a function of class $C^2$ and let $t_0$ be a regular
value of $q$.  We assume that there exist an open interval $J$ around $t_0$, a
connected hypersurface $L$ of $U \eqdot \opb{q}(J)$ and a connected component
$\Lambda_0$ of $\dT^*_LU$ such that
\begin{itemize}
\item [(i)] $q|_U$ is proper on $\supp F$,
\item [(ii)] $q|_L$ is Morse with a single critical point $x_0$ and $q(x_0) =
  t_0$,
\item [(iii)] $\Lambda_0 \subset \Lambda \cap T^*U$ and $T^*_{x_0}M \cap \Lambda
  = T^*_{x_0}M \cap \Lambda_0$,
\item [(iv)] $((\Lambda \cap T^*U) \setminus \Lambda_0) \cap q_d(M \times_\R
  \dT^*\R) = \emptyset$.
\end{itemize}
Let $p = (x_0;\xi_0)$ be the point of $\Lambda_0$ ($\xi_0$ is unique up to a
positive scalar) above $x_0$. We have $p \in \im(q_d)$ and we set $p' =
(t_0;\tau_0) = q_\pi(\opb{q_d}(p))$.  Then $p' \in \SSi(\roim{q}F)$ and
$\SSi(\roim{q}F)$ is simple at $p'$.  Moreover, for any morphism $u \cl F|_U \to
F|_U$, denoting by $v = \roim{q}(u) \cl \roim{q}F|_J \to \roim{q}F|_J$ the
induced morphism, we have $v^\mu_{p'} = u^\mu_p$.
\end{lemma}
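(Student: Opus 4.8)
The plan is to use hypothesis~(iv) to reduce the computation of $\mu hom(\roim{q}F,\roim{q}F)$ near $p'$ to a microlocal statement living entirely over $x_0$, and then to appeal to the behaviour of simple sheaves, and of $\mu hom$, under proper direct image. First, since $U=\opb{q}(J)$, base change along the open inclusion $J\hookrightarrow\R$ gives $(\roim{q}F)|_J\simeq\roim{(q|_U)}(F|_U)$, so we may replace $M,N,q,F$ by $U,J,q|_U,F|_U$; by~(i) we then have $\reim{q}F\simeq\roim{q}F$ and Theorem~\ref{th:opboim}~(ii) gives $\SSi(\roim{q}F)\subset q_\pi\,\opb{q_d}(\SSi(F))$. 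Next, $p=(x_0;\xi_0)\in\Lambda_0\subset\dT^*_LU$ forces $\xi_0$ to be conormal to $L$ at $x_0$, while~(ii) says $x_0$ is a critical point of $q|_L$, so $dq_{x_0}$ is also conormal to $L$ at $x_0$, and is non-zero since $t_0$ is a regular value; as $L$ is a hypersurface, $\xi_0$ and $dq_{x_0}$ are proportional, $\xi_0=\tau_0\,dq_{x_0}$ with $\tau_0\neq0$, and $\opb{q_d}(p)=\{(x_0;(t_0;\tau_0))\}$, $p'=(t_0;\tau_0)$.

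The key geometric step is that, after shrinking $J$ around $t_0$, the map $q_\pi$ restricted to $\opb{q_d}(\SSi(F))\cap(M\times_\R\dT^*\R)$ is proper over a neighbourhood of $p'$ and has $\opb{q_d}(p)$ as its only point over $p'$. Indeed, if $(x;(t;\tau))$ lies in that set with $(t;\tau)$ near $(t_0;\tau_0)$, then $\tau\neq0$, $t=q(x)$, and $(x;\tau\,dq_x)\in\SSi(F)$ is non-zero (shrink $J$ so that $q$ is a submersion over it); as this point also lies in $q_d(M\times_\R\dT^*\R)$, hypothesis~(iv) forces it into $\Lambda_0\subset\dT^*_LU$, so $x\in L$ and $dq_x$ is conormal to $L$ at $x$, i.e.\ $x$ is a critical point of $q|_L$; by~(ii), shrinking $J$ once more so that $t_0$ is the only critical value near it, $x=x_0$ and $t=t_0$, hence $(x;(t;\tau))=\opb{q_d}(p)$. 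Properness follows from~(i). Moreover near $p$ one has $\Lambda=\Lambda_0$, because $\Lambda_0$, being a connected component of the conic Lagrangian $\dT^*_LU$, has the same dimension as the smooth Lagrangian $\Lambda$ in which it is contained, hence is open in $\Lambda$; so near $\opb{q_d}(p)$ the set $\opb{q_d}(\SSi(F))$ coincides with $\opb{q_d}(\dT^*_LU)$, on which, by the Morse computation just made, $q_\pi$ is an embedding onto a single ray. Together with Theorem~\ref{th:opboim}~(ii) this shows that near $p'$ the set $\dot\SSi(\roim{q}F)$ is contained in that ray, hence is Lagrangian.

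Now I invoke the functoriality of $\mu hom$ under proper direct image (\cite{KS90}): there is a natural morphism $\roim{q_\pi}\opb{q_d}\mu hom(F,F)\to\mu hom(\reim{q}F,\reim{q}F)$, sending $(\id_F)^{\mu+}$ to $(\id_{\reim{q}F})^{\mu+}$ and, more generally, natural in the endomorphism variable, so that it intertwines the action of $u$ on the source with that of $v=\roim{q}(u)$ on the target. Taking germs at $p'$ and using the previous paragraph — the support of $\opb{q_d}\mu hom(F,F)$ meets the fibre of $q_\pi$ over $p'$ only at $\opb{q_d}(p)$, where $q_\pi$ is a proper embedding — this yields a morphism $\mu hom(F,F)_p\to\mu hom(\reim{q}F,\reim{q}F)_{p'}$; that it is an isomorphism is exactly the local statement on proper direct images of simple sheaves, \cite[Cor.~7.5.13]{KS90}, whose transversality hypotheses are provided by the previous paragraph and whose non-degeneracy hypothesis is the Morse condition~(ii): it is the quadratic tangency of $L$ with $\opb{q}(t_0)$ at $x_0$ that makes $\roim{q}F$ simple at $p'$, with a shift by $\demi$, rather than merely pure. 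Since the source germ is $\simeq\cor$, so is the target germ; hence $\roim{q}F$ is simple at $p'$ and, in particular, $p'\in\dot\SSi(\roim{q}F)$. Finally, by naturality in $u$ the same isomorphism sends $u^{\mu+}_p$ to $v^{\mu+}_{p'}$; comparing this with $u^{\mu+}_p=u^\mu_p\cdot(\id_F)^{\mu+}_p$ and $v^{\mu+}_{p'}=v^\mu_{p'}\cdot(\id_{\roim{q}F})^{\mu+}_{p'}$, exactly as in the last lines of the proof of Lemma~\ref{lem:umup_DMp}, gives $v^\mu_{p'}=u^\mu_p$.

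The main obstacle is the last paragraph: one must identify precisely the proper-direct-image functoriality morphism for $\mu hom$ and verify that, in the microlocal picture of the second paragraph, it induces an isomorphism on germs at $p'$ — that is, import cleanly the relevant clean-composition estimates from \cite{KS90} and keep track of the inertia-index shift, in particular the role of the Morse non-degeneracy~(ii) and of the sign of $\tau_0$. An alternative that sidesteps the general functoriality statement is to use the second paragraph to replace $F$, without changing $\roim{q}F$ in $\Derb(\cor_J;p')$, by its restriction-extension to a small ball around $x_0$, on which $F$ is up to shift the constant sheaf on one side of $L$, and then compute $\roim{q}$ of such a sheaf in a Morse normal form; but this merely relocates the same bookkeeping.
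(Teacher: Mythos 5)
Your first two paragraphs are correct, and they reproduce in substance the localization the paper also performs: hypotheses (ii)--(iv) do give that, after shrinking $J$, the only point of $\opb{q_d}(\SSi(F))\cap(M\times_\R\dT^*\R)$ over $p'$ is $\opb{q_d}(p)$, that $q_\pi$ is proper there, and that $\Lambda$ coincides with $\Lambda_0$ near $p$. The genuine gap is the third paragraph, which carries the entire weight of the lemma and which you yourself label ``the main obstacle''. You never construct the direct-image functoriality morphism $\roim{q_\pi}\opb{q_d}\muhom(F,F)\to\muhom(\roim{q}F,\roim{q}F)$, nor verify the two properties you need from it (that it sends $(\id_F)^{\mu+}$ to $(\id_{\roim{q}F})^{\mu+}$ and that it intertwines $u$ with $\roim{q}(u)$), nor that $\roim{q}F$ is simple at $p'$ so that $v^\mu_{p'}$ is even defined. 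Citing \cite{KS90}, Cor.~7.5.13 does not close this: that corollary (like 7.5.12) asserts preservation of simplicity under suitable clean/transversality hypotheses, but it says nothing about the compatibility of a canonical $\muhom$-morphism with a given endomorphism $u$, which is exactly the statement $v^\mu_{p'}=u^\mu_p$ that the rest of the paper uses; and the claim that the functoriality morphism is an isomorphism on germs at $p'$ is itself a theorem-level ``microlocal proper base change'' assertion that must be proved or located precisely, not folded into the same citation. As written, the proof establishes the geometry and then asserts, rather than proves, the conclusion.

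For comparison, the paper avoids the general $\muhom$-functoriality machinery entirely, and its route is essentially the ``alternative'' you dismiss in your last sentence: choose coordinates so that $U\simeq N\times J$ and $L$ is the graph of a Morse function near $x_0$; cut off by the excision triangle $F_W\to F\to F_Z$ with $W=B\times J$ a small ball times $J$, where (iii)--(iv) show $\dot\SSi(F_Z)$ misses $q_d(M\times_\R\dT^*\R)$, so $\roim{q}(F_Z)$ is constant and $F_Z$ is invisible both for $\SSi(\roim{q}F)$ at $p'$ and for the microlocal germs (Lemma~\ref{lem:umup_DMp}); then use simplicity to write $F|_W\simeq G\oplus\cor_{W'}[d]$ with $G$ of constant cohomology sheaves and $W'$ one of the regions bounded by the graph, and conclude by the classical Morse-theoretic computation of $\roim{q}(\cor_{W'})$ (proof of Prop.~7.4.2 of \cite{KS90}). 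On this explicit model both the simplicity at $p'$ and the equality $v^\mu_{p'}=u^\mu_p$ come out directly: an endomorphism acts by a scalar on the summand $\cor_{W'}$ and Lemma~\ref{lem:umup_DMp} transports that scalar to $p$ and to $p'$. That is precisely the bookkeeping your plan still owes, and it is shorter than completing the functoriality route.
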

\begin{proof}
  We set $N = \opb{q}(t_0)$.  Then $N$ is a smooth hypersurface of $M$ and, up
  to shrinking $J$ and restricting to a neighborhood of $\supp(F) \cap U$ we can
  assume that $U = N \times J$.  We can also find a neighborhood $V$ of $x_0$ in
  $N$ and a Morse function $\varphi \cl V \to \R$ such that, in some
  neighborhood of $x_0$, $L$ is the graph of $\varphi$.

  By~(iii) we can find an small open ball $B \subset N$ around $x_0$ such that
  $T^*(B \times J) \cap \Lambda = T^*(B \times J) \cap \Lambda_0$.  We set $W =
  B \times J$ and $Z = U \setminus W$.  We have a distinguished triangle $F_W
  \to F \to F_Z \to[+1]$.  By~(iii) again $\SSi(F_Z) \subset \SSi(F) \cup
  (\Lambda_0 + T^{*,out}_{\partial W}U)$, where $T^{*,out}_{\partial W}U$ is the
  outer conormal bundle of $\partial W$ in $U$.  By~(iv) we deduce that
  $\dot\SSi(F_Z) \cap q_d(M \times_\R \dT^*\R) = \emptyset$. Hence
  $\roim{q}(F_Z)$ is a constant sheaf on $J$ and we can assume from the
  beginning that $F = F_W$.

  By~(iii) we have $F|_W \simeq G \oplus \cor_{W'}[d]$, where $G \in
  \Derb(\cor_W)$ has constant cohomology sheaves, $d$ is some integer and $W'$
  is one of the following subsets of $W$: $W^+ = \{(x,t) \in W$;
  $t>\varphi(x)\}$, $\ol{W^+}$, $W \setminus W^+$ or $W \setminus \ol{W^+}$.
  Now the problem is reduced to the computation of $\roim{q}(\cor_{W'})$ which
  is a classical computation in Morse theory (in our case this is done in the
  proof of Proposition~7.4.2 of~\cite{KS90}).
\end{proof}

Proposition~\ref{prop:mulink-dim1} and Lemmas~\ref{lem:bonne_im_inv}
and~\ref{lem:bonne_im_dir} imply the following results.
\begin{proposition}
\label{prop:mulink-restrdim1}
Let $I$ be an interval containing $0$ and $1$ and let $i \cl I \to M$ be an
embedding.  We set $x_0 = i(0)$ and $x_1 = i(1)$.  For $k=0,1$ we assume that
there exist a neighborhood $U_k \subset M$ of $x_k$ and a hypersurface $N_k$ of
$U_k$ such that $\Lambda \cap \dT^*U_k \subset \dT^*_{N_k}U_k$ and $N_k$ is
transversal to $I$ at $x_k$.  We assume moreover that $\opb{i}F$ has a
decomposition $\opb{i}F \simeq G \oplus \cor_J[d]$ where $G \in \Derb(\cor_\R)$,
$d\in \Z$ and $J$ is an interval with ends $0,1$.  Let $p_0 , p_1 \in \Lambda$
be such that $i_d(p_0), i_d(p_1) \in \dot\SSi(\cor_J)$.  Then $p_0$ and $p_1$
are $F$-linked over any open subset containing $i(I)$.
\end{proposition}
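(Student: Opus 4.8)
The plan is to deduce Proposition~\ref{prop:mulink-restrdim1} by transporting the question along the embedding $i$ and reducing to the one-dimensional case treated in Proposition~\ref{prop:mulink-dim1}. So first I would fix an open subset $W$ of $M$ containing $i(I)$ and let $u\colon F|_W\to F|_W$ be an arbitrary automorphism; the goal is to show $u^\mu_{p_0}=u^\mu_{p_1}$. The natural move is to restrict $u$ via $\opb{i}$ to obtain an induced morphism $v=\opb{i}(u)$ of $\opb{i}F$ defined on a neighborhood of $i^{-1}(W)\supset I$ in $\R$. The key compatibility is the last sentence of Lemma~\ref{lem:bonne_im_inv}: at each of the points $x_k$ ($k=0,1$), if $p_k=(x_k;\xi_k)\in\Lambda$ and $q_k=(x_k;i_d(\xi_k))$ is the corresponding point of $\dT^*\R$, then $v^\mu_{q_k}=u^\mu_{p_k}$. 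To be able to invoke this lemma I must check its hypotheses at $x_0$ and $x_1$ separately: the neighborhoods $U_k$ and hypersurfaces $N_k$ with $\Lambda\cap\dT^*U_k\subset\dT^*_{N_k}U_k$ and $N_k$ transversal to $I$ at $x_k$ are exactly what the proposition assumes, so $\opb{i}F$ near $x_k$ has microsupport in the conormal to a point of $\R$ and is simple there, and the $\mu hom$ germs transport as stated.

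Next I would feed this into Proposition~\ref{prop:mulink-dim1}. We are given $\opb{i}F\simeq G\oplus\cor_J[d]$ with $J$ an interval with ends $0$ and $1$, and by the transversality/conormal hypotheses the points $q_0=i_d(p_0)$, $q_1=i_d(p_1)$ lie in $\dot\SSi(\cor_J)$, and $\opb{i}F$ is simple at $q_0,q_1$ (again by Lemma~\ref{lem:bonne_im_inv}). Hence Proposition~\ref{prop:mulink-dim1} applies and tells us that $q_0$ and $q_1$ are $(\opb{i}F)$-linked over any interval containing $\ol{J}=[0,1]$; since $i^{-1}(W)$ is such an interval, $v^\mu_{q_0}=v^\mu_{q_1}$ for our $v$. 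Chaining the three equalities gives $u^\mu_{p_0}=v^\mu_{q_0}=v^\mu_{q_1}=u^\mu_{p_1}$, which is exactly the assertion that $p_0$ and $p_1$ are $F$-linked over $W$.

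A small technical point I would address carefully is that $\opb{i}F$ a priori is only defined on all of $M$ pulled back to $I$ (or $\R$), but $u$ is only an automorphism of $F|_W$; so $v$ is only an automorphism of $\opb{i}F$ restricted to the open interval $i^{-1}(W)$, not all of $\R$. This is harmless because Proposition~\ref{prop:mulink-dim1} already states its conclusion ``over any interval containing $\ol{I}$'', i.e.\ it only needs $u$ (there called $u$) defined in a neighborhood of $\ol{J}$; and $i^{-1}(W)$ contains $I$ hence $\ol{J}=[0,1]$. Likewise I should note that Lemma~\ref{lem:bonne_im_inv} is a local statement around each $x_k$, so it is legitimate to apply it twice, once with the data $(U_0,N_0)$ near $x_0$ and once with $(U_1,N_1)$ near $x_1$, even though a single global $N$ need not exist.

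The main obstacle, such as it is, is bookkeeping rather than anything deep: one must match the two local normalizations at $x_0$ and $x_1$ with the single one-dimensional sheaf $\opb{i}F$ and verify that the two points $i_d(p_0)$, $i_d(p_1)$ are indeed the endpoints-type covectors of the summand $\cor_J$ so that Proposition~\ref{prop:mulink-dim1} is applicable; the hypothesis $i_d(p_0),i_d(p_1)\in\dot\SSi(\cor_J)$ is stated precisely so that this holds. Everything else is a direct concatenation of Lemma~\ref{lem:bonne_im_inv} (transport of $\mu hom$ germs under $\opb{i}$) with Proposition~\ref{prop:mulink-dim1} (the one-variable linking statement), and no genuinely new estimate is required.
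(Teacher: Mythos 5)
Your argument is correct and is exactly the route the paper intends: the paper gives no separate proof, stating only that Proposition~\ref{prop:mulink-restrdim1} follows from Proposition~\ref{prop:mulink-dim1} combined with Lemma~\ref{lem:bonne_im_inv}, which is precisely the concatenation you carry out (transport of $u^\mu_{p_k}$ to $v^\mu_{i_d(p_k)}$ at each end via the local data $(U_k,N_k)$, then the one-dimensional linking statement applied to $\opb{i}F\simeq G\oplus\cor_J[d]$). Your handling of the bookkeeping points (locality of the lemma at the two ends, $v$ only defined on $i^{-1}(W)\supset\ol{J}$) matches what the paper leaves implicit.
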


\begin{proposition}
\label{prop:mulink-projdim1}
Let $q \cl M \to \R$ be a map of class $C^2$ and let $t_0 \leq t_1$ be regular
values of $q$.  For $k=0,1$ we assume that there exist an open interval $J_k$
around $t_k$, a connected hypersurface $L_k$ of $U_k \eqdot \opb{q}J_k$ and a
connected component $\Lambda_k$ of $\dT^*_{L_k}U_k$ such that the
hypothesis~(i)-(iv) of Lemma~\ref{lem:bonne_im_dir} are satisfied.  We assume
moreover that $\roim{q}F$ has a decomposition $\roim{q}F \simeq G \oplus
\cor_J[d]$ where $G \in \Derb(\cor_\R)$, $d\in \Z$ and $J$ is an interval with
ends $t_0, t_1$.  Let $p_k = (x_k;\xi_k) \in \Lambda$ be such that $q(x_k) =
t_k$ and $q_\pi(\opb{q_d}(p_k)) \in \dot\SSi(\cor_J)$. Then $p_0$ and $p_1$ are
$F$-linked over any open subset containing $\opb{q}([t_0,t_1])$.
\end{proposition}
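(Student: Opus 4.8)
The plan is to reduce the statement to the one-dimensional Proposition~\ref{prop:mulink-dim1} by pushing forward along $q$, using Lemma~\ref{lem:bonne_im_dir} to see that $\roim{q}$ carries our simple sheaf and its automorphisms down to $\R$ while preserving the microlocal scalar $u\mapsto u^\mu$. So let $W\subset M$ be open with $q^{-1}([t_0,t_1])\subset W$ and let $u\in\sect(W;\PAut(F))=\Aut(F|_W)$; we must show $u^\mu_{p_0}=u^\mu_{p_1}$. Since $q$ is proper on $\supp F$ (as is needed for $\roim{q}F$ to be defined with the stated decomposition), $q^{-1}([t_0,t_1])\cap\supp F$ is compact and contained in $W$, so we may pick an open interval $J'\supset[t_0,t_1]$ with $q^{-1}(J')\cap\supp F\subset W$ and, shrinking $J_0,J_1$, with $J_k\subset J'$ and with (i)--(iv) of Lemma~\ref{lem:bonne_im_dir} still valid over $U_k:=q^{-1}(J_k)$. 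As $F$ vanishes off $\supp F$, the restriction of $u$ to $q^{-1}(J')\cap W$ extends uniquely to an automorphism $u'$ of $F|_{q^{-1}(J')}$ (Mayer--Vietoris for $\rhom(F,F)$ along the cover $q^{-1}(J')=(q^{-1}(J')\cap W)\cup(q^{-1}(J')\setminus\supp F)$, whose overlap term vanishes), and likewise $u'$ restricts to $u_k\in\Aut(F|_{U_k})$. Since $u^\mu$ is a section of $\mu hom(F,F)|_{\dT^*M}\simeq\cor_\Lambda$ whose germ at $p_k$ depends only on $u$ near $x_k$, we get $(u_k)^\mu_{p_k}=u^\mu_{p_k}$.

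Next I would apply Lemma~\ref{lem:bonne_im_dir} to $q\colon U_k\to\R$, the hypersurface $L_k$ and the component $\Lambda_k$. This is legitimate: since $t_k$ is a regular value, the hypothesis that $q_\pi(\opb{q_d}(p_k))$ is defined and nonzero forces $p_k\in q_d(M\times_\R\dT^*\R)$, and then hypothesis~(iv) of the lemma forces $p_k\in\Lambda_k$, so $p_k$ is the point $p$ of the lemma and $p'_k:=q_\pi(\opb{q_d}(p_k))$ its $p'$. The lemma yields $p'_k\in\SSi(\roim{q}F)$, the simplicity of $\roim{q}F$ at $p'_k$, and --- writing $v:=\roim{q}(u')$ --- the equality $(v|_{J_k})^\mu_{p'_k}=(u_k)^\mu_{p_k}$. (Here I use proper base change $(\roim{q}F)|_{J_k}\simeq\roim{q}(F|_{U_k})$ and its compatibility with $\roim{q}$ on morphisms, so $v|_{J_k}=\roim{q}(u_k)$; the same base change over $J'$ shows the decomposition $\roim{q}F\simeq G\oplus\cor_J[d]$ holds over $J'$, while the assumption $q_\pi(\opb{q_d}(p_k))\in\dot\SSi(\cor_J)$ says $p'_k\in\dot\SSi(\cor_J)$.)

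It then remains to invoke Proposition~\ref{prop:mulink-dim1} for the sheaf $\roim{q}F$ on $J'$, the points $p'_0=(t_0;\tau_0)$, $p'_1=(t_1;\tau_1)$ with $t_0\le t_1$, and the decomposition $\roim{q}F\simeq G\oplus\cor_J[d]$ where $J$ is an interval with ends $t_0,t_1$: all its hypotheses have just been verified. We conclude that $p'_0$ and $p'_1$ are $(\roim{q}F)$-linked over any interval containing $\ol{J}=[t_0,t_1]$, in particular over $J'$, hence $v^\mu_{p'_0}=v^\mu_{p'_1}$, and so $u^\mu_{p_0}=v^\mu_{p'_0}=v^\mu_{p'_1}=u^\mu_{p_1}$. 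As $u\in\Aut(F|_W)$ was arbitrary, $p_0$ and $p_1$ are $F$-linked over $W$ by Definition~\ref{def:linkedpair}. The only delicate step is the bookkeeping of the first two paragraphs --- carrying an automorphism defined merely on $W$ down to honest automorphisms of $F|_{U_k}$ and of $\roim{q}F$ near $t_k$ without altering the value of the microlocal scalar, which relies on the locality of $u\mapsto u^\mu_{p_k}$ and on proper base change; the two cited results then chain together with no further computation.
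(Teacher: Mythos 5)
Your proof is correct and is exactly the paper's (unstated) argument: the paper derives this proposition by simply combining Lemma~\ref{lem:bonne_im_dir} (which transports the microlocal scalar from $p_k$ to $q_\pi(\opb{q_d}(p_k))$ under $\roim{q}$) with the one-dimensional Proposition~\ref{prop:mulink-dim1}, and your extra bookkeeping extending $u$ from $W$ to a full preimage $\opb{q}(J')$ is a genuine detail the paper leaves implicit. The only blemish is your justification of properness of $q$ on $\supp F$ (it is not needed for $\roim{q}F$ or its decomposition to make sense); however, the containment $\opb{q}(J')\cap\supp F\subset W$ that you actually use does follow from hypothesis~(i) of Lemma~\ref{lem:bonne_im_dir} applied near the two ends $t_0,t_1$ (the middle fibers lie in $W$ by assumption), so the argument stands.
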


\section{Constructible sheaves on $\R$}
\label{sec:conshR}

In this section we apply Gabriel's theorem and obtain that a constructible sheaf
on $\R$ with coefficients in a field $\cor$ has a decomposition as a direct sum
of constant sheaves on intervals.

We first recall the part of Gabriel's theorem that we need.  We follow the
presentation of Brion's lecture~\cite{B12} on the subject and we refer the
reader to {\em loc. cit.} for further details.

A {\em quiver} is a finite directed graph, that is, a quadruple $Q = (Q_0$,
$Q_1$, $s, t)$ where $Q_0$, $Q_1$ are finite sets (the set of vertices,
resp. arrows) and $s, t \cl Q_1 \to Q_0$ are maps assigning to each arrow its
source, resp. target.  A {\em representation} of a quiver $Q$ consists of a
family of $\cor$-vector spaces $V_i$ indexed by the vertices $i\in Q_0$,
together with a family of linear maps $f_\alpha \cl V_{s(\alpha)} \to
V_{t(\alpha)}$ indexed by the arrows $\alpha\in Q_1$.

We are only interested in the following example given by the constructible
sheaves on $\R$.  Let $\ul{x} = \{x_1 < \cdots < x_n \}$ be a finite family of
points in $\R$.  We set $x_0 = -\infty$, $x_{n+1} = +\infty$ and we denote by
$\Mod_{\ul{x}}(\cor_\R)$ the category of sheaves $F$ on $\R$ such that the
stalks $F_y$ are finite dimensional for all $y\in \R$ and the restrictions
$F|_{]x_k,x_{k+1}[}$ are constant for $k = 0,\ldots,n$.  Such a sheaf $F$ is
determined by the data of the spaces of sections $V'_i = F(]x_i,x_{i+2}[) \isoto
F_{x_{i+1}}$ for $i=0,\ldots,n-1$ and $V''_i = F(]x_i,x_{i+1}[)$ for
$i=0,\ldots,n$, together with the restriction maps $V'_i \to V''_i$ and $V'_i
\to V''_{i+1}$ for $i=0,\ldots,n-1$.  Conversely, any such family of vector
spaces $V'_i$, $V''_i$ and linear maps defines a sheaf in
$\Mod_{\ul{x}}(\cor_\R)$.  Hence $\Mod_{\ul{x}}(\cor_\R)$ is equivalent to the
category of representations of the following quiver $Q = (Q_0$, $Q_1$, $s, t)$
where $Q_0 = \{0,\ldots,2n\}$ and there is exactly one arrow in $Q_1$ from
$2j-1$ to $2j-2$ and from $2j-1$ to $2j$, for $j=1,\ldots,n$.  In this
equivalence a sheaf $F \in \Mod_{\ul{x}}(\cor_\R)$ gives a representation
$(\{V_i\}, f_\alpha)$ of $Q$ where
\begin{alignat*}{2}
V_{2j-1} &= F(]x_{j-1} ,x_{j+1}[) &\qquad &\text{for $j=1,\ldots,n$,} \\
V_{2j} &= F(]x_{j} ,x_{j+1}[) &\qquad &\text{for $j=0,\ldots,n$,}
\end{alignat*}
and the $f_\alpha$ are the restriction maps.

Our quiver $Q$ is of type $A_{2n+1}$ and we can apply Gabriel's theorem. We only
recall the part we need.  For a representation $(\{V_i\}, f_\alpha)$ the
dimension vector is $(\dim V_i)_{i \in Q_0}$.  The Tits form is the quadratic
form on $\R^{Q_0}$ defined by $q_Q(\ul{d}) = \sum_{i\in Q_0} d_i^2 -
\sum_{\alpha\in Q_1} d_{s(\alpha)} d_{t(\alpha)}$.  In our case we have $Q_0 =
\{0,\ldots,2n\}$ and we find
\begin{equation}
\label{eq:Titsform}
q_Q(\ul{d}) = \frac{1}{2} 
( d_0^2 + \sum_{i=1}^{2n} (d_i - d_{i-1})^2 + d_{2n}^2) .  
\end{equation}

Now we can state part of Gabriel's theorem.  A representation is said {\em
  indecomposable} if it cannot be split as the sum of two non zero
representations. A representation $V$ is {\em Schur} if $\Hom(V,V) \simeq \cor
\cdot \id_V$.

\begin{theorem}[see Theorem 2.4.3 in~\cite{B12}]
\label{thm:gabriel}
Assume that the Tits form $q_Q$ is positive definite. Then:
\\
(i) Every indecomposable representation is Schur and has no non-zero
self-extensions.
\\
(ii) The dimension vectors of the indecomposable representations are exactly
those $\ul{d} \in \N^{Q_0}$ such that $q_Q(\ul{d}) = 1$.
\\
(iii) Every indecomposable representation is uniquely determined by its
dimension vector, up to isomorphism.
\end{theorem}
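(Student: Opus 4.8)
The statement is Gabriel's theorem in the positive-definite case, so the plan is to run the classical proof via the Bernstein--Gelfand--Ponomarev reflection functors, which yields (i), (ii) and (iii) at once and over an arbitrary field. First I would record the preliminaries. Since $q_Q$ is a positive definite integral quadratic form of a graph, $Q$ is a disjoint union of Dynkin diagrams; the set $\Phi=\{\ul d\in\Z^{Q_0}: q_Q(\ul d)=1\}$ is a finite root system with simple roots the standard basis vectors $\alpha_i=e_i$, the reflections $s_i$ generate a finite Weyl group $W$, and every root lies in $\N^{Q_0}$ or in $-\N^{Q_0}$; write $\Phi^+=\Phi\cap\N^{Q_0}$. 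I would also use the Euler identity for quiver representations: from the canonical exact sequence $0\to\Hom(X,Y)\to\bigoplus_i\Hom(X_i,Y_i)\to\bigoplus_\alpha\Hom(X_{s(\alpha)},Y_{t(\alpha)})\to\Ext^1(X,Y)\to0$ one gets $\dim\Hom(X,Y)-\dim\Ext^1(X,Y)=\langle\ul{\dim}X,\ul{\dim}Y\rangle$, a bilinear form whose diagonal is $q_Q$ (compare~\eqref{eq:Titsform}); in particular $\dim\Hom(V,V)-\dim\Ext^1(V,V)=q_Q(\ul{\dim}V)$ for every $V$.

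Next I would set up the reflection functors. For a sink $i$ of a quiver $Q'$ let $\sigma_iQ'$ be the quiver with all arrows at $i$ reversed, and let $S_i^+\cl\mathrm{Rep}(Q')\to\mathrm{Rep}(\sigma_iQ')$ replace the space at $i$ by $\ker\bigl(\bigoplus_{t(\alpha)=i}V_{s(\alpha)}\to V_i\bigr)$, with the induced structure maps, leaving the rest unchanged; dually define $S_i^-$ for a source. The facts to establish are routine but essential: $S_i^+S_i=0$ (where $S_i$ is the simple supported at $i$); if $V$ is indecomposable with $V\not\simeq S_i$ then $S_i^+V$ is again indecomposable, $S_i^-S_i^+V\simeq V$, and $\ul{\dim}(S_i^+V)=s_i(\ul{\dim}V)$; and $S_i^+$ restricts to an equivalence between the subcategories of representations with no direct summand isomorphic to $S_i$, so it preserves $\Hom$-spaces between such objects. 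Since $Q$ has no oriented cycles I can order $Q_0=\{1,\dots,n\}$ so that every arrow decreases the label; then $1$ is a sink of $Q$, $2$ is a sink of $\sigma_1Q$, and so on, and the Coxeter functor $C^+=S_n^+\cdots S_1^+$ is defined on $\mathrm{Rep}(Q)$ and acts on dimension vectors by the Coxeter element $c=s_n\cdots s_1$.

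The key point — and the main obstacle — is the nilpotence $(C^+)^N=0$ for $N\gg0$: applying the elementary reflections successively to an indecomposable $V$, one reaches after finitely many steps a simple $S_j$, which the next functor annihilates. I would prove this from the combinatorics of the Coxeter element: if $V$ were never sent to a simple, the reflections could be applied to it forever, and after $m$ complete sweeps the dimension vector would be $c^m(\ul{\dim}V)$, still a nonzero element of $\N^{Q_0}$; but $c$ has finite order $h$ in $W$ and no nonzero fixed vector in $\R^{Q_0}$ (its eigenvalues are nontrivial roots of unity), so $\sum_{m=0}^{h-1}c^m=(c^h-1)(c-1)^{-1}=0$, and evaluating at $\ul{\dim}V$ gives $0=\sum_{m=0}^{h-1}c^m(\ul{\dim}V)\ge\ul{\dim}V\ne0$, a contradiction.

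Granting nilpotence, the three statements follow. If $V$ is indecomposable, some sequence of reflections carries it to a simple $S_j$; reading dimension vectors backwards, $\ul{\dim}V=w(\alpha_j)\in\Phi^+$, so $q_Q(\ul{\dim}V)=1$; since all intermediate objects are indecomposable and not the relevant simple, $\Hom$-preservation gives $\End(V)\simeq\End(S_j)=\cor$, hence $V$ is Schur, and then the Euler identity forces $\dim\Ext^1(V,V)=1-q_Q(\ul{\dim}V)=0$ — this is (i) and the forward inclusion of (ii). For the reverse inclusion and for (iii): given $\ul d\in\Phi^+$, the same Coxeter argument, now purely in the root system, shows that following the admissible sequence of reflections $\ul d$ is eventually sent to a simple root $\alpha_j$; running the corresponding functors $S^-$ backwards from $S_j$ produces an indecomposable with dimension vector $\ul d$, and if $V,V'$ are two indecomposables sharing this dimension vector, the reflection sweep witnessing nilpotence for $V$ applies equally to $V'$ (the dimension vectors stay equal throughout) and sends both to a representation whose dimension vector is a coordinate vector, necessarily the corresponding simple, so $V'\simeq V$. (An alternative, closer to Brion's notes, is geometric: on the representation variety $R(Q,\ul d)$ with its $G_{\ul d}$-action one has $\codim\mathcal O_V=\dim\Ext^1(V,V)$, and together with the Euler identity and the finiteness of $G_{\ul d}$-orbits — the hard input, equivalent to representation-finiteness of Dynkin quivers — one finds that the generic representation is rigid, Schur, indecomposable, and the unique indecomposable of its dimension vector.)
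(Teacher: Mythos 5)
Your proposal is correct in outline, but be aware that the paper contains no proof of Theorem~\ref{thm:gabriel} to compare it with: the theorem is quoted from Brion's notes~\cite{B12} and used as a black box, the only proof-like remark being that, for the particular $A_{2n+1}$ quiver coming from constructible sheaves on $\R$, part~(i) reduces to Lemma~\ref{lem:morph_deux_int0} together with $\Ext^1(\cor_I,\cor_I)\simeq 0$. What you wrote is the classical Bernstein--Gelfand--Ponomarev reflection-functor proof, and it is sound as a sketch; its load-bearing inputs (the basic properties of $S_i^{\pm}$, the four-term exact sequence giving $\dim\Hom-\dim\Ext^1=\langle\,\cdot\,,\cdot\,\rangle$, and the absence of nonzero fixed vectors for a Coxeter element) are asserted rather than proved, which is reasonable here; note the last one has an elementary argument: if $s_n\cdots s_1v=v$ then comparing coordinates forces $s_1v=v$, and inductively $v$ is fixed by every $s_i$, hence lies in the radical of the positive definite form. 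One point worth making explicit in your (iii): since the two indecomposables have equal dimension vectors and every intermediate vector in the sweep is a positive root, neither object can become the simple at the next sink before the sweep reaches a coordinate vector, so both are indeed carried to the same $S_j$. Your parenthetical geometric variant is the one closer to the cited source~\cite{B12}; the BGP route you follow is purely algebraic and constructs an indecomposable for each positive root explicitly, while either approach is far more than the paper needs, since only the $A_{2n+1}$ case with the explicit indecomposables $\cor_I$ of Corollary~\ref{cor:cons_sh_R} is ever used.
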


By~\eqref{eq:Titsform} a dimension vector $\ul{d}$ satisfies the condition
$q_Q(\ul{d}) = 1$ if and only if there exist $i\leq j \in \{0,\ldots,2n\}$ such
that $d_k = 1$ if $i\leq k \leq j$ and $d_k = 0$ else.  By Gabriel's Theorem,
for any such $\ul{d}$ we have exactly one indecomposable representation whose
dimension vector is $\ul{d}$.  We see easily that the corresponding sheaves in
$\Mod_{\ul{x}}(\cor_\R)$ are the constant sheaves $\cor_I$ on the intervals $I$
with ends $-\infty$, $x_1,\dots, x_n$ or $+\infty$ (the intervals can be open,
closed or half-closed).

We also remark that~(i) of Theorem~\ref{thm:gabriel} is given in our case by the
following easy result and the fact that $\Ext^1(\cor_I,\cor_I) \simeq 0$.
\begin{lemma}
\label{lem:morph_deux_int0}
Let $I,J$ be two intervals of $\R$.  Then
$$
\Hom(\cor_I,\cor_J) \simeq
\begin{cases}
  \cor  & \text{if $I\cap J$ is closed in $I$ and open in $J$,} \\
  0 & \text{else.}
\end{cases}
$$
In particular, if $I$ and $J$ are distinct, then we have $\Hom(\cor_I,\cor_J)
\simeq 0$ or $\Hom(\cor_J,\cor_I) \simeq 0$ and, for all morphisms $f \cl \cor_I
\to \cor_J$, $g \cl \cor_J \to \cor_I$, we have $g \circ f = 0$.
\end{lemma}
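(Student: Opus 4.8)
The plan is to compute $\Hom(\cor_I,\cor_J)$ directly. Since we are working in the abelian category $\Mod(\cor_\R)$ and $\cor_I$, $\cor_J$ are sheaves concentrated in degree $0$, it suffices to analyze morphisms of sheaves. First I would recall that a morphism $\cor_I\to\cor_J$ is determined by its value at each point: over the open set $I\cap J$ it is multiplication by some section $c$ of $\cor$, which must be locally constant, hence a single scalar on each connected component; since intervals are connected, $c\in\cor$ is a single scalar, and outside $I\cap J$ the morphism is forced to be zero. So the only question is whether a given scalar $c$ actually extends to a morphism of sheaves on all of $\R$, i.e.\ whether the candidate map is compatible with the stalk maps at the boundary points of $I$ and $J$.

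The key step is the boundary analysis. At a point $x$ on the boundary of $I$ or $J$, the stalk $(\cor_I)_x$ is $\cor$ if $x\in I$ and $0$ if $x\notin I$, and similarly for $J$; the restriction/corestriction behavior of the constant sheaf on an interval is: the stalk at $x$ ``sees'' $\cor_I$ from the side(s) of $x$ that lie in $I$. Concretely, multiplication by a nonzero $c$ on $I\cap J$ extends to a sheaf morphism $\cor_I\to\cor_J$ if and only if, at every point $x\in\R$, the stalk map $(\cor_I)_x\to(\cor_J)_x$ it induces is well-defined — which fails exactly when there is a point $x\in J$ (so $(\cor_J)_x=\cor$, the target wants a value) that is a limit of points of $I\cap J$ but $x\notin I$ (so $(\cor_I)_x=0$, nothing maps there) — this is impossible since then the section over a neighborhood would have to be $c$ and $0$ simultaneously near $x$; and dually when $x\in I$, $x\notin J$, but $x$ is in the closure of $I\cap J$ inside $I$. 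Unwinding, the morphism exists (necessarily unique up to the scalar $c$) precisely when $I\cap J$ is closed in $I$ (no boundary point of the overlap is approached from inside $I$ without lying in $J$) and open in $J$ (the overlap does not ``reach the edge'' inside $J$ in a way that forces a jump). Thus $\Hom(\cor_I,\cor_J)\simeq\cor$ in that case and $0$ otherwise.

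For the ``in particular'' clause: suppose $I\neq J$ and both $\Hom(\cor_I,\cor_J)$ and $\Hom(\cor_J,\cor_I)$ are nonzero. Then $I\cap J$ is closed in $I$ and open in $J$, and also closed in $J$ and open in $I$; hence $I\cap J$ is open and closed in the connected set $I\cup J$ — note $I\cap J\neq\emptyset$ since otherwise both Hom groups vanish — so $I\cap J=I\cup J$, forcing $I=J$, a contradiction. Therefore at least one of the two Hom groups is zero. Finally, given $f\cl\cor_I\to\cor_J$ and $g\cl\cor_J\to\cor_I$ with $I\neq J$: if $I\cap J=\emptyset$ then $f=0$; if $I=J$ is excluded; otherwise by the previous sentence one of $f,g$ lives in a zero group, so in all cases $g\circ f=0$. (The case $I=J$ is not claimed in this clause.) I expect the main obstacle to be stating the boundary bookkeeping cleanly — the four interval-endpoint cases (open/closed at each end) interacting on both $I$ and $J$ — rather than any conceptual difficulty; a short case check on the two endpoints of $I\cap J$ suffices.
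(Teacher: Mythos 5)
Your overall strategy (reduce a morphism to a single scalar $c$ acting on $I\cap J$, zero outside, and decide at the endpoints whether this defines a sheaf morphism) is reasonable, and your deduction of the ``in particular'' clause from the displayed criterion is fine; note the paper itself states this lemma without proof, so the only question is whether your boundary analysis is sound. It is not: the two failure modes you list are not the right ones, and they are not equivalent to the criterion you claim to have ``unwound''. The negation of your two cases (no $x\in J\setminus I$ in the closure of $I\cap J$, and no $x\in I\setminus J$ in the closure of $I\cap J$) says exactly that $I\cap J$ is closed in $I$ \emph{and closed in} $J$ --- not closed in $I$ and open in $J$. Concretely, for $I=[0,1]$, $J=[0,2]$ neither of your failure cases occurs, yet $\Hom(\cor_{[0,1]},\cor_{[0,2]})=0$ (a global section of $\cor_{[0,2]}$ is a constant on $[0,2]$, whose support cannot lie in $[0,1]$ unless it vanishes); the genuine obstruction sits at the point $1\in I\cap J$, a limit of points of $J\setminus I$, i.e.\ precisely the failure of openness in $J$, which your list does not detect. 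Conversely, your first failure case is not an obstruction at all: for $I=(0,1)$, $J=[0,1)$ the point $x=0$ lies in $J\setminus I$ and in the closure of $I\cap J$, yet the canonical nonzero morphism $\cor_{(0,1)}\to\cor_{[0,1)}$ exists (it sits in the exact sequence $0\to\cor_{(0,1)}\to\cor_{[0,1)}\to\cor_{\{0\}}\to0$), in agreement with the lemma. Your argument that ``the section over a neighborhood would have to be $c$ and $0$ simultaneously'' fails there because near such an $x$ the sheaf $\cor_I$ has no nonzero sections (their support would not be closed in the neighborhood), so nothing forces the value $c$.

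The underlying slip is testing existence by asking whether the stalk maps are ``well-defined'': any family of stalk maps is well defined; what must be checked is compatibility with restriction on sections over small neighborhoods of the endpoints, and this is exactly where the asymmetry between $\cor_U$ ($U$ open) and $\cor_Z$ ($Z$ closed) enters. A clean repair: for each endpoint $x$ of $I\cap J$ and a small interval $V$ around $x$, compute $\sect(V;\cor_I)$ and $\sect(V;\cor_J)$ (equal to $\cor$ or $0$ according to whether the trace of the interval on $V$ is closed in $V$ or not) and require commutation with restriction to $V\cap \Int(I\cap J)$; equivalently, use $\Hom(\cor_I,F)\simeq \sect_I(U;F)$ for $U$ open containing $I$ as a closed subset and compute $\sect_I(U;\cor_J)$ directly. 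Either route yields the stated dichotomy in a few lines (with the tacit assumption $I\cap J\neq\emptyset$, which you do handle separately), after which your ``in particular'' argument goes through unchanged.
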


Any constructible sheaf with compact support belongs to $\Mod_{\ul{x}}(\cor_\R)$
for some finite family of points $\ul{x}$. Hence Gabriel's Theorem gives the
following decomposition result for constructible sheaves.  The unicity follows
from~(i) of Theorem~\ref{thm:gabriel} and the Krull-Schmidt theorem.

\begin{corollary}
\label{cor:cons_sh_R}
We recall that $\cor$ is a field. Let $F \in \Mod_\rc(\cor_\R)$.  We assume
that $F$ has compact support.  Then there exist a finite family of intervals
$I_a$ and integers $n_a \in \N$, $a\in A$, such that
\begin{equation}
\label{eq:cons_sh_R1}
F \simeq \bigoplus_{a\in A} \cor^{n_a}_{I_a} .
\end{equation}
Moreover this decomposition is unique in the following sense.  If we have
another decomposition $F \simeq \bigoplus_{b\in B} \cor^{m_b}_{J_b}$
like~\eqref{eq:cons_sh_R1} and we assume that the intervals $I_a$, $a\in A$, are
distinct as well as the intervals $J_b$, $b\in B$, then there exists a bijection
$\sigma \cl A \isoto B$ such that $J_{\sigma(a)} = I_a$ and $m_{\sigma(a)} =
n_a$ for all $a\in A$.
\end{corollary}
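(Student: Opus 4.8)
The plan is to move $F$ to the category of representations of the quiver $Q$ introduced above, apply Gabriel's theorem, and translate the answer back.

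Since $F \in \Mod_\rc(\cor_\R)$ has compact support, there is a finite set $\ul{x} = \{x_1 < \cdots < x_n\}$ outside of which $F$ is locally constant, and the stalks of $F$ are finite dimensional; hence $F \in \Mod_{\ul{x}}(\cor_\R)$. Under the additive equivalence $\Mod_{\ul{x}}(\cor_\R) \simeq \mathrm{Rep}(Q)$ described above, with $Q$ of type $A_{2n+1}$, the sheaf $F$ becomes a finite dimensional representation. By~\eqref{eq:Titsform} the Tits form $q_Q$ is positive definite, so Theorem~\ref{thm:gabriel} applies. As the category of finite dimensional representations of $Q$ is $\cor$-linear abelian with the Krull--Schmidt property, we get a decomposition $F \simeq \bigoplus_{c\in C} W_c$ into indecomposable representations, unique up to permutation and isomorphism because, by Theorem~\ref{thm:gabriel}(i), each $W_c$ is Schur, hence has a local endomorphism ring. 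By Theorem~\ref{thm:gabriel}(ii) and~(iii) the indecomposable representations are classified by the dimension vectors $\ul{d}$ with $q_Q(\ul{d}) = 1$; reading off~\eqref{eq:Titsform}, these are exactly the vectors with $d_k = 1$ for $i\leq k\leq j$ and $d_k = 0$ otherwise, which under the equivalence correspond precisely to the constant sheaves $\cor_I$ on intervals $I$ with endpoints among $-\infty, x_1, \ldots, x_n, +\infty$ (allowing $I$ to be a point and to be open, closed or half-closed). Collecting the $W_c$ by isomorphism type and letting $n_a$ count the copies of $\cor_{I_a}$ gives~\eqref{eq:cons_sh_R1}.

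For the uniqueness statement, given a second decomposition $F \simeq \bigoplus_{b\in B} \cor_{J_b}^{m_b}$ with the $J_b$ distinct, enlarge $\ul{x}$ so that it contains the finitely many endpoints of all the $I_a$ and $J_b$, so that every sheaf in sight lies in $\Mod_{\ul{x}}(\cor_\R)$; indecomposability of a constant sheaf on an interval is unaffected by this enlargement. Distinct intervals have distinct dimension vectors, hence by Theorem~\ref{thm:gabriel}(iii) (or directly by Lemma~\ref{lem:morph_deux_int0}, which shows $\cor_I$ and $\cor_J$ cannot be isomorphic when $I\neq J$) they give non-isomorphic indecomposable sheaves. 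The Krull--Schmidt uniqueness then forces a bijection $\sigma\cl A\isoto B$ with $\cor_{J_{\sigma(a)}} \simeq \cor_{I_a}$, i.e.\ $J_{\sigma(a)} = I_a$, and with matching multiplicities $m_{\sigma(a)} = n_a$.

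The serious input is entirely contained in Gabriel's theorem, which is already quoted, so there is no real obstacle; the only points requiring care are bookkeeping ones: checking that the equivalence with $\mathrm{Rep}(Q)$ is additive so that it respects direct sums and indecomposables, confirming that the list of dimension vectors with $q_Q(\ul{d}) = 1$ matches the list of interval sheaves with the correct open/closed behaviour at the endpoints, and observing that enlarging the finite set $\ul{x}$ changes neither $F$ nor the indecomposability of its summands. I expect the endpoint bookkeeping (which intervals occur, and whether $\pm\infty$ are allowed) to be the fiddliest part, but it is read off directly from~\eqref{eq:Titsform}.
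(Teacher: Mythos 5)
Your proof is correct and follows essentially the same route as the paper: identify $F$ with a representation of the type $A_{2n+1}$ quiver attached to $\Mod_{\ul{x}}(\cor_\R)$, apply Gabriel's theorem to see that the indecomposables are exactly the constant sheaves on intervals, and get uniqueness from the Schur property together with Krull--Schmidt. The extra bookkeeping you include (additivity of the equivalence, enlarging $\ul{x}$ for the second decomposition) is fine and only makes explicit what the paper leaves implicit.
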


\section{Constructible sheaves on the circle}
\label{sec:conshcercle}

In this section we extend Corollary~\ref{cor:cons_sh_R} to the circle.  Like in
the case of $\R$ the result is also a particular case of quiver representation
theory and Auslander-Reiten theory. The author is grateful to Claire Amiot for
explanations and references (see for example~\cite{R84} \S~3.6 p.153 and
Theorem~5 p.158).  However it is quicker to prove the facts we need than recall
these general results.

We denote by $\cer$ the circle and we let $e \cl \R \to \cer \simeq \R/2\pi\Z$
be the quotient map.  We use the coordinate $\theta$ on $\cer$ defined up to a
multiple of $2\pi$.  We also denote by $T \cl \R \to \R$ the translation $T(x) =
x+2\pi$.  We recall that $\cor$ is a field.

Since $e$ is a covering map, we have an isomorphism of functors $\epb{e} \simeq
\opb{e}$, hence an adjunction $(\eim{e},\opb{e})$.  For any $n\in \Z$ we have
$e\circ T^n = e$, hence natural isomorphisms of functors $\opb{(T^n)} \opb{e}
\simeq \opb{e}$ and $\eim{e} \eim{(T^n)} \simeq \eim{e}$.  For $G \in
\Mod(\cor_\R)$ the isomorphism $\eim{e}( \eim{(T^n)} (G)) \isoto \eim{e}(G)$
gives by adjunction $i_n(G) \cl \eim{(T^n)} (G) \to \opb{e} \eim{e}(G)$.  For
$x\in \R$ we have $( \opb{e} \eim{e}(G))_x \simeq (\eim{e}(G))_x \simeq
\bigoplus_{n\in \Z} G_{T^n(x)}$. We deduce that the sum of the $i_n(G)$ gives an
isomorphism
\begin{equation}
\label{eq:iminvimdire}
\bigoplus_{n\in \Z} \oim{(T^n)}(G) \isoto \opb{e} \eim{e}(G) .
\end{equation}

Let $I$ be a bounded interval of $\R$.  We have $\eim{e}(\cor_I) \isoto
\oim{e}(\cor_I)$.  Let $A_I$ be the algebra $A_I = \Hom(\oim{e}(\cor_I),
\oim{e}(\cor_I))$.  The adjunction $(\opb{e},\oim{e})$ gives a morphism $\opb{e}
\oim{e}(\cor_I) \to \cor_I$. Using also the adjunction $(\eim{e},\opb{e})$ we
obtain a natural morphism
\begin{equation}
\label{eq:defvarepsilon_I}
\varepsilon_I \cl A_I \simeq \Hom(\cor_I, \opb{e} \oim{e}(\cor_I))
\to  \Hom(\cor_I, \cor_I) \simeq \cor .
\end{equation}

\begin{lemma}
\label{lem:hom-imdir-interv}
Let $I$ be a bounded interval of $\R$ and let $A_I$ be the algebra $A_I =
\Hom(\oim{e}(\cor_I), \oim{e}(\cor_I))$.  Then the morphism $\varepsilon_I$
defined in~\eqref{eq:defvarepsilon_I} is an algebra morphism and
$\ker(\varepsilon_I)$ is a nilpotent ideal of $A$.

More precisely, if $I$ is closed or open, then $\varepsilon_i$ is an
isomorphism.  If $I$ is half-closed, say $I = [a,x[$ or $I = \mo]x,a]$ and we
set $E_a = I \cap \opb{e}(e(a))$, then the identification
$(\oim{e}(\cor_I))_{e(a)} \simeq \cor^{E(a)}$ induces a morphism
$$
A_I \to \Hom(\cor^{E(a)}, \cor^{E(a)}),
\qquad \varphi \mapsto \varphi_{e(a)}
$$
which identifies $A_I$ with the subalgebra of matrices generated by the standard
nilpotent matrix of order $|E(a)|$; the eigenvalue of $\varphi_{e(a)}$ is then
$\varepsilon_I(\varphi)$.  Moreover,
\begin{itemize}
\item [(a)] a morphism $u \in A_I$ is an isomorphism if and only if
  $\varepsilon_I(u) \not= 0$,
\item [(b)] for any open interval $J \subset \cer$ and any $\varphi \in A_I$,
  the morphism $\sect(J; \varphi) \cl \sect(J;\oim{e}(\cor_I)) \to
  \sect(J;\oim{e}(\cor_I))$ has only one eigenvalue which is
  $\varepsilon_I(\varphi)$,
\item [(c)] for any $p \in \dot\SSi(\oim{e}(\cor_I))$ and any $\varphi \in A_I$,
  we have $\varphi^\mu_p = \varepsilon_I(\varphi)$.
\end{itemize}
\end{lemma}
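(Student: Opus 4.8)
The plan is to compute the algebra $A_I$ completely, by transporting the endomorphisms of $\oim e\cor_I$ to morphisms of sheaves on $\R$, and then to read off all the assertions.

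First I would use the isomorphism $\oim e\cor_I\simeq\eim e\cor_I$ together with the adjunction $(\eim e,\opb e)$ (recall $\epb e\simeq\opb e$) to get
\[ A_I\;\simeq\;\Hom(\eim e\cor_I,\oim e\cor_I)\;\simeq\;\Hom(\cor_I,\opb e\oim e\cor_I)\;\simeq\;\bigoplus_{n\in\Z}\Hom(\cor_I,\cor_{T^nI}), \]
where the last identification is~\eqref{eq:iminvimdire} applied to $\oim e\cor_I\simeq\eim e\cor_I$ and the sum is finite because $I$ is bounded. Then Lemma~\ref{lem:morph_deux_int0} tells us which summands survive: one checks that $I\cap T^nI$ is closed in $I$ and open in $T^nI$ if and only if $n=0$ (when $I$ is open or closed), or $0\le n\le N-1$ with $N=|E(a)|=\#\{j\ge0;\,a+2\pi j\in I\}$ (when $I=[a,x[$), and symmetrically when $I=\mo]x,a]$; each surviving summand is then a copy of $\cor$. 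In the open or closed case this already gives $A_I\simeq\Hom(\cor_I,\cor_I)\simeq\cor$, so $\varepsilon_I$, being unital, is an isomorphism. In the half-closed case let $\psi_n\in A_I$ be the element corresponding to the restriction morphism $\cor_I\to\cor_{T^nI}$, so that $A_I$ has basis $\psi_0,\dots,\psi_{N-1}$.

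Next I would pin down the algebra structure through the stalk at $e(a)$. The stalk $(\oim e\cor_I)_{e(a)}$ is canonically $\cor^{E(a)}$; write $e_0,\dots,e_{N-1}$ for the basis indexed by $a,a+2\pi,\dots,a+2\pi(N-1)$. The stalk functor gives an algebra homomorphism $A_I\to\End(\cor^{E(a)})$, $\varphi\mapsto\varphi_{e(a)}$. Unwinding the adjunction above and using equivariance under the deck group $2\pi\Z$, one computes that $\psi_n$ is sent to $N_0^n$, where $N_0$ is the standard nilpotent $e_j\mapsto e_{j-1}$ (with $e_0\mapsto0$). Since $N_0^0,\dots,N_0^{N-1}$ are linearly independent and $\dim_\cor A_I=N$, this homomorphism is injective with image the subalgebra $\cor[N_0]$ generated by the standard nilpotent of order $N$, which is the asserted description of $A_I$. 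It remains to identify $\varepsilon_I$ in this picture: from the definition~\eqref{eq:defvarepsilon_I} one has $\varepsilon_I(\psi_0)=1$, while for $n\ne0$ the scalar $\varepsilon_I(\psi_n)$ is realized by a composition $\cor_I\to\cor_{T^nI}\to\cor_I$, which vanishes by the last part of Lemma~\ref{lem:morph_deux_int0} since $I$ and $T^nI$ are distinct intervals. Hence $\varepsilon_I$ is the ``constant term'', i.e.\ the common eigenvalue of $\varphi_{e(a)}$; in particular it is an algebra morphism, and $\ker\varepsilon_I=\bigoplus_{n\ge1}\cor\psi_n$ corresponds to the ideal $(N_0)$, which is nilpotent since $N_0^N=0$.

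Granting this, the remaining points are short. For (a): if $\varepsilon_I(u)\ne0$, write $u=\varepsilon_I(u)\,\id+v$ with $v\in\ker\varepsilon_I$; since $v$ is nilpotent in $A_I$, $u$ is invertible in $A_I$, hence an isomorphism of sheaves; conversely if $u$ is an isomorphism then $\varepsilon_I(u)\varepsilon_I(u^{-1})=\varepsilon_I(\id)=1$. For (b): $\sect(J;-)$ is a functor, so $\sect(J;\varphi)=\varepsilon_I(\varphi)\,\id+\sect(J;v)$ with $\sect(J;v)$ nilpotent, whence $\sect(J;\varphi)$ has the single eigenvalue $\varepsilon_I(\varphi)$. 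For (c): near the point of $\cer$ below $p$ (which is $e(a)$ or $e(x)$) one obtains, by lifting $I$ there and pushing forward along the local homeomorphism $e$, a decomposition $\oim e\cor_I|_W\simeq L\oplus\cor_{J_0}$ with $L$ a constant sheaf, $J_0$ a half-closed interval, $p\in\dot\SSi(\cor_{J_0})$ and $p\notin\SSi(L)$; in particular $\oim e\cor_I$ is simple at $p$. As in the proof of Proposition~\ref{prop:mulink-dim1} (i.e.\ Lemma~\ref{lem:umup_DMp} applied to $\cor_{J_0}\hookrightarrow\oim e\cor_I|_W\twoheadrightarrow\cor_{J_0}$), $\varphi^\mu_p$ equals the $\Hom(\cor_{J_0},\cor_{J_0})\simeq\cor$-component $c$ of $\varphi|_W$. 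Finally $c$ is the scalar by which $\varphi$ acts on the kernel of a suitable restriction map $\sect(W';\oim e\cor_I)\to\sect(W'';\oim e\cor_I)$, hence an eigenvalue of $\sect(W';\varphi)$, and so $c=\varepsilon_I(\varphi)$ by (b).

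I expect the fussiest step to be the bookkeeping in the middle paragraph — verifying that the chosen generator $\psi_n$ maps to $N_0^n$ (and not to its transpose or a scalar multiple), i.e.\ that the orientation and adjunction conventions line up so that the ``standard nilpotent of order $N$'' is literally obtained — together with making the local decomposition used for (c) and the identification $\varphi^\mu_p=c$ entirely precise. The reductions themselves (to Hom's on $\R$, and to the nilpotence of $\ker\varepsilon_I$) are routine once Lemma~\ref{lem:morph_deux_int0} is in hand.
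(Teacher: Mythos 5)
Your proof is correct and follows essentially the paper's own route: the same adjunction computation identifying $A_I$ with $\bigoplus_n\Hom(\cor_I,\cor_{T^nI})$ via~\eqref{eq:iminvimdire} and Lemma~\ref{lem:morph_deux_int0}, and the same stalk computation at $e(a)$ exhibiting the basis of shift morphisms as powers of the standard nilpotent. The only (minor) divergence is in~(c), where you split off a local half-closed interval and invoke Lemma~\ref{lem:umup_DMp} together with~(b), while the paper simply observes that $\varphi\mapsto\varphi^\mu_p$ is an algebra morphism $A_I\to\cor$ and hence kills the nilpotent ideal; both arguments are fine.
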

\begin{proof}
  Using $\opb{e} \oim{e}(\cor_I) \simeq \bigoplus_{n\in \Z} \oim{(T^n)}(\cor_I)$
  and Lemma~\ref{lem:morph_deux_int0}, the cases $I$ closed or open are obvious.
  If $I$ is half-closed of length $l$, we find $A_I \simeq \cor^{|E(a)|}$ as a
  vector space.  Assuming $I = [a,x[$ (the case $]x,a]$ is similar), a basis of
  $A_I$ is given by the morphisms $e_*(u_n)$, for $n=0,\ldots, |E(a)|-1$, where
  $u_n \cl \cor_I \to \cor_{T^n(I)}$ is the natural morphism and we use the
  natural identification $\phi_n \cl e_*(\cor_{T^n(I)}) \simeq e_*(\cor_I)$.
  At the level of germs $\phi_n$ identifies the summands $(\cor_I)_{a+2\pi k}$
  of $(e_*(\cor_I))_{e(a)}$ and $(\cor_{T^n(I)})_{a+2\pi (k+n)}$ of
  $(e_*(\cor_{T^n(I)}))_{e(a)}$.  We obtain that $(e_*(u_n))$ acts on $
  \cor^{E(a)}$ by $(s_1,s_2,\ldots) \mapsto (s_{1+n},s_{2+n},\ldots)$. We deduce
  that the image of $A_I$ in $\End(\cor^{E(a)})$ is as claimed in the lemma.

  The assertions~(a)-(c) follow from the structure of $A_I$.  For~(b) and~(c)
  we remark that $A_I \to \End(\sect(J;\oim{e}(\cor_I)))$, $\varphi \mapsto
  \sect(J; \varphi)$, and $A_I \to \cor$, $\varphi \mapsto \varphi^\mu_p$, are
  algebra morphisms.
\end{proof}

\begin{lemma}
\label{lem:interv_facteur}
Let $F\in \Mod(\cor_\cer)$. Let $I$ be a bounded interval of $\R$ such that
$\cor_I$ is a direct summand of $\opb{e}F$. Then $\oim{e}\cor_I$ is a direct
summand of $F$.
\end{lemma}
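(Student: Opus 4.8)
The plan is to manufacture morphisms $\oim{e}\cor_I \to F$ and $F \to \oim{e}\cor_I$ out of the given splitting of $\opb{e}F$, and then to check that their composite is invertible by evaluating on it the algebra morphism $\varepsilon_I$ of Lemma~\ref{lem:hom-imdir-interv}.

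First I would unwind the hypothesis: saying that $\cor_I$ is a direct summand of $\opb{e}F$ means that there are morphisms $s\cl \cor_I \to \opb{e}F$ and $r\cl \opb{e}F \to \cor_I$ with $r\circ s = \id_{\cor_I}$. Since $I$ is bounded we have the isomorphism $\eim{e}\cor_I \isoto \oim{e}\cor_I$ recalled just before Lemma~\ref{lem:hom-imdir-interv}. Combining it with the adjunction $(\eim{e},\opb{e})$, the morphism $s$ yields a morphism $\tilde s\cl \oim{e}\cor_I \to F$ --- namely the counit $\eim{e}\opb{e}F \to F$ precomposed with $\eim{e}(s)$, viewed through $\eim{e}\cor_I \simeq \oim{e}\cor_I$ --- while the adjunction $(\opb{e},\oim{e})$ turns $r$ into a morphism $\tilde r\cl F \to \oim{e}\cor_I$, namely $\oim{e}(r)$ postcomposed with the unit $F \to \oim{e}\opb{e}F$. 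Transporting $\tilde r\circ\tilde s$ once more across $\eim{e}\cor_I \simeq \oim{e}\cor_I$, we obtain an element $\phi \in A_I = \End(\oim{e}\cor_I)$.

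The key computation is then $\varepsilon_I(\phi) = 1$. Unravelling the definition of $\varepsilon_I$ --- which itself factors through $\eim{e}\cor_I \simeq \oim{e}\cor_I$ and the $(\eim{e},\opb{e})$-adjunction --- one gets $\varepsilon_I(\phi) = z_{\cor_I} \circ \opb{e}(\tilde r) \circ \opb{e}(\tilde s) \circ u_{\cor_I}$, where $u$ denotes the unit of $(\eim{e},\opb{e})$ and $z$ the counit of $(\opb{e},\oim{e})$. Applying the triangle identities for the two adjunctions together with the naturality of $u$ and of $z$, one obtains $\opb{e}(\tilde s)\circ u_{\cor_I} = s$ and $z_{\cor_I}\circ\opb{e}(\tilde r) = r$, hence $\varepsilon_I(\phi) = r\circ s = \id_{\cor_I}$, which is $1$ under $\Hom(\cor_I,\cor_I) \simeq \cor$. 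By Lemma~\ref{lem:hom-imdir-interv}(a) this forces $\phi$ to be an isomorphism in $A_I$; then $\tilde s$ precomposed with $\phi^{-1}$, viewed as a morphism $\oim{e}\cor_I \to F$, is split by $\tilde r$, so $\oim{e}\cor_I$ is a direct summand of $F$. I expect the only real difficulty to be the bookkeeping: keeping the two adjunctions and the identification $\eim{e}\cor_I \simeq \oim{e}\cor_I$ consistent so that the definition of $\varepsilon_I$ meshes with the triangle identities; after that the equality $\varepsilon_I(\phi) = r\circ s$ comes out formally.
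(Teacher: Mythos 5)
Your proof is correct and follows essentially the same route as the paper: construct the two adjoint morphisms $\oim{e}\cor_I \to F$ and $F \to \oim{e}\cor_I$ from the given splitting, compute that $\varepsilon_I$ of their composite equals $1$ (the paper phrases the same adjunction computation as $v=\opb{e}(p_0')\circ i_0$ composed with the counit), and conclude by Lemma~\ref{lem:hom-imdir-interv} that the composite is invertible.
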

\begin{proof}
  We let $i_0 \cl \cor_I \to \opb{e}F \simeq \epb{e}F$ and
  $p_0 \cl \opb{e}F \to \cor_I$ be morphisms such that
  $p_0 \circ i_0 = \id_{\cor_I}$ and we denote by
  $i_0' \cl \oim{e} \cor_I \simeq \eim{e} \cor_I \to F$ and
  $p_0' \cl F \to \oim{e} \cor_I$ their adjoint morphisms.  Let
  $v \cl \cor_I \to \opb{e} \oim{e}\cor_I$ be the adjoint of $p_0' \circ i_0'$.
  Then $v = \opb{e}(p_0') \circ i_0$.  Composing with the natural morphism
  $a\cl \opb{e} \oim{e}\cor_I \to \cor_I$ we find
  $\varepsilon_I(p_0' \circ i_0') = a \circ v = p_0 \circ i_0 = \id_{\cor_I}$.
  Hence $p_0' \circ i_0'$ is an isomorphism by Lemma~\ref{lem:hom-imdir-interv}
  and this gives the result.
\end{proof}

\begin{lemma}
\label{lem:interv_facteur2}
Let $F\in \Mod_\rc(\cor_\cer)$.  We assume that $F$ is not locally constant.
Then there exists a bounded interval $I$ of $\R$ such that $\cor_I$ is a direct
summand of $\opb{e}F$.
\end{lemma}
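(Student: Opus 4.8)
The plan is to restrict $G=\opb{e}F$ to a large bounded interval, apply Corollary~\ref{cor:cons_sh_R} there, and transport one of the resulting interval summands back to $\R$; the point will be that the periodicity of $G$ prevents \emph{all} of those summands from touching the endpoints of the interval. Since $F$ is constructible, the set $S\subset\cer$ of points at which $F$ fails to be locally constant is finite, and $S\neq\emptyset$ because $F$ is not locally constant. Choose $\theta_0\in S$ and $\theta_1\in\cer\setminus S$, let $x_1\in\R$ be a lift of $\theta_1$, and set $m=\dim_\cor F_{\theta_1}$ (finite), $N=2m+1$, $W=\mo]x_1,x_1+2\pi N[$, with open inclusion $j\cl W\hookrightarrow\R$.

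First I would record two facts. (a) The sheaf $G_W\eqdot\eim{j}(\opb{j}G)$ is constructible on $\R$ with support in the compact set $\ol{W}$, so Corollary~\ref{cor:cons_sh_R} gives $G_W\simeq\bigoplus_\alpha\cor_{I_\alpha}^{n_\alpha}$ with bounded intervals $I_\alpha$ (which we may take pairwise distinct); since the stalks of $\eim{j}(\opb{j}G)$ vanish at the endpoints of $W$, no $I_\alpha$ contains $x_1$ or $x_1+2\pi N$. (b) If $\ol{I_\alpha}\subset W$ for some $\alpha$ with $n_\alpha\geq 1$, then $\cor_{I_\alpha}$ is already a direct summand of $G$. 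Indeed, writing $i$ for the inclusion of the closed set $\R\setminus W$, the standard triangle $\eim{j}\opb{j}G\to G\to\oim{i}\opb{i}G\to[+1]$ has first arrow the counit $\varphi\cl G_W\to G$, and $\epb{i}(\cor_{I_\alpha})\simeq 0$ because $\cor_{I_\alpha}$ vanishes on a neighbourhood of $\R\setminus W$ (as $\ol{I_\alpha}$ is a compact subset of the open $W$); hence $\RHom(\oim{i}\opb{i}G,\cor_{I_\alpha})\simeq 0$, so composition with $\varphi$ is a bijection $\Hom(G,\cor_{I_\alpha})\isoto\Hom(G_W,\cor_{I_\alpha})$. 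Taking a splitting $\iota\cl\cor_{I_\alpha}\to G_W$, $p\cl G_W\to\cor_{I_\alpha}$ from the decomposition of $G_W$, and letting $\tilde p\cl G\to\cor_{I_\alpha}$ be the morphism with $\tilde p\circ\varphi=p$, one gets $\tilde p\circ(\varphi\circ\iota)=p\circ\iota=\id$, so $\cor_{I_\alpha}$ is a retract of $G=\opb{e}F$.

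By (b) it suffices to find $\alpha$ with $\ol{I_\alpha}\subset W$. Suppose not: every $I_\alpha$ has $x_1$ or $x_1+2\pi N$ as an endpoint (necessarily an open one, by (a)); call these the boundary intervals. The boundary intervals with left endpoint $x_1$ contribute one dimension each to the stalk of $G_W$ just to the right of $x_1$, which is the stalk of $G$ there, of dimension $m$ since $\theta_1\notin S$; hence there are at most $m$ of them, and likewise at most $m$ with right endpoint $x_1+2\pi N$, so at most $2m$ boundary intervals in all. Now write $W$ as the union of the $N$ pieces $P_k=\mo]x_1+2\pi k,x_1+2\pi(k+1)[$, $k=0,\dots,N-1$. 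Since $e\circ T=e$ we have $\opb{T}G\simeq G$, so $G|_{P_k}\simeq G|_{P_0}$ for all $k$; and $e$ restricts to a homeomorphism $P_0\isoto\cer\setminus\{\theta_1\}$, which meets $S$ (it contains $\theta_0$), so $G|_{P_0}$, hence every $G|_{P_k}$, is not locally constant, in particular not constant. On the other hand $\opb{j}G=\opb{j}G_W\simeq\bigoplus_\alpha\cor_{I_\alpha\cap W}^{n_\alpha}$, so restricting to $P_k$ gives $G|_{P_k}\simeq\bigoplus_\alpha\cor_{I_\alpha\cap P_k}^{n_\alpha}$; for this to be non-constant some $I_\alpha$ must have an endpoint strictly inside $P_k$. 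As $x_1$ and $x_1+2\pi N$ lie in no $P_k$, a boundary interval has at most one endpoint in $\bigcup_k P_k$, so distinct indices $k$ require distinct boundary intervals; thus there are at least $N$ boundary intervals, forcing $N\leq 2m$ and contradicting $N=2m+1$.

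Hence some $I_\alpha$ satisfies $\ol{I_\alpha}\subset W$, and by (b) the constant sheaf $\cor_{I_\alpha}$ on this bounded interval is a direct summand of $\opb{e}F$, which is the assertion. I expect the only delicate points to be the bookkeeping in (a) and (b)---in particular that the bijection $\Hom(G,\cor_{I_\alpha})\isoto\Hom(G_W,\cor_{I_\alpha})$ is given by composition with $\varphi$, so that the retraction genuinely transports---while the geometric heart is the counting argument, whose moral is that periodicity forces ``many'' non-constant pieces whereas compact support permits only ``few'' boundary pieces.
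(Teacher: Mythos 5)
Your proof is correct and follows essentially the same strategy as the paper's: restrict $\opb{e}F$ to a bounded window of length a multiple of $2\pi$ controlled by stalk dimensions, decompose it there by Corollary~\ref{cor:cons_sh_R}, use the $2\pi$-periodicity of $\opb{e}F$ to find an interval summand whose closure lies inside the open window, and then extend the splitting to all of $\R$. Only the bookkeeping differs: the paper centres the window at a lift of a point where $F$ is not locally constant and shows the summand ending there is short by stacking its translates against the bound $N=\max_\theta\dim F_\theta$, while you anchor the window at a locally constant point and count boundary summands against the $N$ fundamental domains, and your adjunction/excision argument for extending the retraction replaces the paper's factorization of $p$ through $G_{\ol{I_{a_0}}}$ and of $i$ through $G_J\to G$ --- all of which is sound.
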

\begin{proof}
  (i) We set $N = \max_{\theta \in \cer}\{ \dim F_\theta\}$.  We choose
  $\theta_0 \in \cer$ such that $F$ is not constant around $\theta_0$ and $x_0
  \in \R$ such that $e(x_0) =\theta_0$.  We choose $r> 2 \pi N$ and we define $J
  = \mo]x_0 - r,x_0 + r[$. We 
set $G = \opb{e} F$ and we
apply Corollary~\ref{cor:cons_sh_R} to $G_J$,
  writing $G_J \simeq \bigoplus_{a\in A} \cor^{n_a}_{I_a}$ as
  in~\eqref{eq:cons_sh_R1} for some finite family of intervals $I_a$ and
  integers $n_a \in \N$, $a\in A$.  One of the intervals $I_a$ has an end at
  $x_0$, say the left end (the other case being similar): there exists $a_0 \in
  A$ and $y_0 \in [x_0,x_0+r]$ such that $\ol{I_{a_0}} = [x_0,y_0]$.

  \sui (ii) Let us prove that $y_0 < x_0+r$.  Since $\opb{T} G\isoto G$, the
  sheaf $\cor_{T^n(I_{a_0})}$ is a direct summand of $G_{T^n(J)}$, for any $n\in
  \Z$.  We set $I'_n = T^n(I_{a_0}) \cap [x_0,x_0 + r[$ and $J'_n = T^n(J) \cap
  [x_0,x_0 + r[$.  We obtain that $\cor_{I'_n}$ is a direct summand of
  $G_{J'_n}$ for all $n \in \Z$.  For $n=0,\ldots,N$ we have $J'_n = [x_0,x_0 +
  r[$. Hence $\cor_{I'_n}$ is a direct summand of $G_{[x_0,x_0 + r[}$ for
  $n=0,\ldots,N$.  If $y_0 = x_0+r$, then the intervals $I'_n$, for
  $n=0,\ldots,N$, contain $z_0 = x_0+ (2\pi N +r)/2$.  It follows that $\dim
  F_{e(z_0)} = \dim G_{z_0} \geq N+1$ which contradicts the definition of $N$.
  Hence $y_0 < x_0 +r$.

  \sui (iii) By~(ii) we have $\ol{I_{a_0}} \subset J$.  Let $i \cl
  \cor_{I_{a_0}} \to G_J$ and $p\cl G_J \to \cor_{I_{a_0}}$ be given by the
  decomposition of $G_J$.  Then $p$ factorizes through $G_{\ol{I_{a_0}}}$ and
  extends to $G$ through the natural morphism $G \to G_{\ol{I_{a_0}}}$.  Since
  $i$ extends to $G$ through $G_J \to G$, we see that $\cor_{I_{a_0}}$ is a
  direct summand of $G$.
\end{proof}

\begin{proposition}
\label{prop:cons_sh_cercle}
Let $F \in \Mod_\rc(\cor_\cer)$.  Then there exist a finite family
$\{(I_a,n_a)\}_{a \in A}$, of bounded intervals and integers, and a locally
constant sheaf of finite rank $L\in \Mod(\cor_\cer)$ such that
\begin{equation}
\label{eq:cons_sh_cer1}
F \simeq L \oplus \bigoplus_{a\in A} \oim{e}(\cor^{n_a}_{I_a}) .
\end{equation}
\end{proposition}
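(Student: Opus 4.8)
The plan is to argue by induction on the number of points needed to stratify $F$, peeling off one summand of the form $\oim{e}(\cor^{n}_{I})$ at a time using the results already established on $\R$. Concretely, I would first dispose of the case where $F$ is locally constant: then $L = F$ works and the index set $A$ is empty. So assume $F$ is not locally constant. By Lemma~\ref{lem:interv_facteur2} there exists a bounded interval $I \subset \R$ such that $\cor_I$ is a direct summand of $\opb{e}F$, and then by Lemma~\ref{lem:interv_facteur} the sheaf $\oim{e}(\cor_I)$ is a direct summand of $F$, say $F \simeq \oim{e}(\cor_I) \oplus F'$ for some $F' \in \Mod_\rc(\cor_\cer)$.

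The induction needs a decreasing complexity measure. I would take the integer $n(F) = \sum_{\theta \in \cer}(\dim F_\theta - \dim F_{\theta^-})$ where $F_{\theta^-}$ denotes the stalk of the ``generic value'' just to the left (this is the sum, over the finitely many points where $F$ is not locally constant in a neighborhood, of the jump in stalk dimension from a generic nearby point); equivalently one can just count the total number of intervals with a finite endpoint appearing when $\opb{e}F$ is restricted to a fundamental domain and decomposed via Corollary~\ref{cor:cons_sh_R}. The key point is that splitting off $\oim{e}(\cor_I)$ strictly decreases this measure: since $I$ is a bounded interval with at least one finite endpoint, $\opb{e}(\oim{e}(\cor_I)) \simeq \bigoplus_{n\in\Z}\oim{(T^n)}(\cor_I)$ by~\eqref{eq:iminvimdire} contributes a positive amount, so $F'$ is strictly simpler than $F$. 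When $n(F) = 0$ the sheaf $F$ is locally constant and we stop. Applying the induction hypothesis to $F'$ gives $F' \simeq L \oplus \bigoplus_{a\in A'}\oim{e}(\cor^{n_a}_{I_a})$, and absorbing the new summand $\oim{e}(\cor_I)$ (combining its multiplicity with any already-present $\oim{e}(\cor_I)$ in the list) yields the desired decomposition of $F$.

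The main obstacle is verifying that the complexity measure is genuinely well-defined and strictly decreasing, i.e.\ that passing to a complement $F'$ with $F \simeq \oim{e}(\cor_I)\oplus F'$ does not reintroduce lost complexity elsewhere. For this I would note that $\opb{e}(-)$ is exact and conservative, so a direct sum decomposition on $\cer$ pulls back to one on $\R$; restricting to a large fundamental domain $[x_0, x_0+2\pi N + r[$ as in the proof of Lemma~\ref{lem:interv_facteur2} and applying the uniqueness part of Corollary~\ref{cor:cons_sh_R}, the multiset of intervals appearing for $\opb{e}F'$ is exactly that for $\opb{e}F$ with one translate-orbit of $I$ removed. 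Since each such orbit meets the fundamental domain in at least one interval with a finite endpoint, the count drops, which is what the induction needs. A small bookkeeping point: one should work on a fixed sufficiently large window and check that the decomposition there is compatible with the global one by the same extension argument used in step~(iii) of the proof of Lemma~\ref{lem:interv_facteur2} (morphisms to and from $\cor_{I}$ with $\ol{I}$ inside the window extend to $\R$), but this is routine given the tools already in place.

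\begin{proof}
If $F$ is locally constant of finite rank we take $A = \emptyset$ and $L = F$.
Otherwise we argue by induction on the integer $n(F)$ equal to the number of
intervals with at least one finite endpoint occurring in the decomposition
of $(\opb{e}F)|_W$ given by Corollary~\ref{cor:cons_sh_R}, where $W$ is a
fundamental domain $\mo]x_0 - r, x_0 + r\mc[$ chosen as in the proof of
Lemma~\ref{lem:interv_facteur2} (large enough that every relevant interval and
its needed translates lie inside $W$); by the uniqueness statement in
Corollary~\ref{cor:cons_sh_R} this does not depend on the choice of such $W$.
If $n(F) = 0$ then $F$ is locally constant and we are done.

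Assume $n(F) \geq 1$ and the result for all sheaves with strictly smaller value.
Since $F$ is not locally constant, Lemma~\ref{lem:interv_facteur2} provides a
bounded interval $I$ with $\cor_I$ a direct summand of $\opb{e}F$, and
Lemma~\ref{lem:interv_facteur} then gives a decomposition
$F \simeq \oim{e}(\cor_I) \oplus F'$ with $F' \in \Mod_\rc(\cor_\cer)$.
Applying the exact and conservative functor $\opb{e}$ and using the
isomorphism~\eqref{eq:iminvimdire} for $G = \cor_I$, we get
$\opb{e}F \simeq \bigl(\bigoplus_{n\in\Z}\oim{(T^n)}(\cor_I)\bigr) \oplus
\opb{e}F'$.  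Restricting to $W$ and invoking the uniqueness in
Corollary~\ref{cor:cons_sh_R}, the intervals occurring for $(\opb{e}F')|_W$ are
exactly those for $(\opb{e}F)|_W$ with the translates of $I$ meeting $W$
removed.  As $I$ is bounded with a finite endpoint, at least one such translate
meets $W$ and has a finite endpoint, so $n(F') < n(F)$.

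By the induction hypothesis there are a finite family
$\{(I_a, n_a)\}_{a\in A'}$ of bounded intervals and integers and a locally
constant sheaf of finite rank $L$ with
$F' \simeq L \oplus \bigoplus_{a\in A'}\oim{e}(\cor^{n_a}_{I_a})$.
Combining, and merging the extra summand $\oim{e}(\cor_I)$ with any term
$\oim{e}(\cor_{I_a})$ already present with $I_a = I$ (increasing its multiplicity
by one) or else adjoining it as a new term, we obtain a decomposition of $F$ of
the form~\eqref{eq:cons_sh_cer1}.
\end{proof}
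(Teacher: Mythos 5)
Your proof is correct and takes essentially the same route as the paper's: after disposing of the locally constant case, you split off a summand $\oim{e}(\cor_I)$ using Lemmas~\ref{lem:interv_facteur2} and~\ref{lem:interv_facteur} and induct on a complexity measure that strictly drops at each step. The only (minor) difference is the measure: the paper uses an intrinsic count $r(F)$ built from stalk dimensions at the finitely many points where $F$ is not locally constant, whereas your count of intervals in a window $W$ is not in fact independent of $W$ as claimed (a larger window contains more translates), but the induction still goes through since you only ever compare $F$ and $F'$ over one common window --- or you could simply use your initial intrinsic jump-counting formulation, which is essentially the paper's $r(F)$.
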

\begin{proof}
  (i) Let $\theta \in \cer$ be a point around which $F$ is not constant. We can
  find a small arc $I$ around $\theta$ such that $F$ is constant on the two
  components of $I \setminus \{\theta\}$, of rank say $n_\theta^1$ and
  $n_\theta^2$.  We set $r_\theta(F) = n_\theta^1 + n_\theta^2 +
  \dim(F_\theta)$.  We define $r(F) = \sum_\theta r_\theta(F)$, where $\theta$
  runs over the points around which $F$ is not constant.  If $F$ is locally
  constant we set $r(F) = 0$.  We prove the proposition by induction on $r(F)$.

  \sui (ii) We assume $r(F)\not=0$.  Then $F$ is not locally constant. By
  Lemmas~\ref{lem:interv_facteur2} and~\ref{lem:interv_facteur} there exists a
  bounded interval $I$ of $\R$ such that $F \simeq \oim{e}(\cor_I) \oplus F'$
  for some $F' \in \Mod_\rc(\cor_\cer)$.  Then $r(F') < r(F)$ and the induction
  proceeds.
\end{proof}

\section{Locally constant sheaves on the circle}
\label{sec:lccstshcercle}

From now on we assume that $\cor = \corC$.  We assume that the
circle $\cer$ is oriented. This gives an isomorphism $\pi_1(\cer) \simeq \Z$
independent of a choice of base point in $\cer$.

A locally constant sheaf $L$ (also called local system) on a connected manifold
$M$ is determined by its monodromy, which is a representation of $\pi_1(M,x)$ in
$\Aut(L_x)$ for a given base point $x$.  We have in fact an equivalence of
categories between local systems on $M$ and representations of $\pi_1(M,x)$.
When $M=\cer$ the monodromy is determined by the image of $1\in \pi_1(\cer)
\simeq \Z$. Hence we obtain an equivalence between the category of local systems
over $\corC$ on $\cer$ and the category of pairs $(V,A)$, where $V$ is a
$\corC$-vector space and $A \in \GL(V)$ (a morphism from $(V,A)$ to $(V',A')$ is
a morphism $V\to V'$ commuting with $A,A'$).  In particular local systems of
finite rank are classified up to isomorphism by matrices in Jordan form.

We write $\N^* = \N \setminus \{0\}$.  Let $\alpha \in \corC^\times$ and $r\in
\N^*$.  We denote by $N_r \in \Mat(r,r;\corC)$ the standard nilpotent matrix of
order $r$ and we set $A_{\alpha,r} = \alpha I_r + N_r$.  For $\alpha=1$ we write
for short $A_r = A_{1,r}$.  We choose $1\in \cer$ as base point and we let
$L_{\alpha,r} \in \Mod(\corC_{\cer})$ be the irreducible local system with
monodromy $A_{\alpha,r}$.  We also write $L_r = L_{1,r}$ and $L_{\alpha,0} = 0$
for all $\alpha$.  Since any matrix of finite rank has a Jordan form, we obtain
the following decomposition. Let $L\in \Mod(\corC_\cer)$ be a local system of
finite rank. Then there exists a finite subset $B \subset \corC^\times \times
\N^*$ and integers $n_{\alpha,r} \in \N^*$, for $(\alpha,r) \in B$, such that
\begin{equation}
\label{eq:dec_loca_syst}
L \simeq \bigoplus_{  (\alpha,r) \in B} L_{\alpha,r}^{n_{\alpha,r}} .
\end{equation}
The equivalence between monodromy matrices and local systems is compatible with
duality. In particular, for any $(\alpha,r) \in \corC^\times \times \N^*$ there
exists an isomorphism $\DD'(L_{\alpha,r}) \simeq L_{1/\alpha,r}$.

In this section we define a group $H^i_{\alpha,r}(F)$ for all $F\in
\Derb(\corC_\cer)$ such that, if $F$ is constructible, the dimension of
$H^i_{\alpha,r}(F)$ is the multiplicity of $L_{\alpha,r}$ in the locally
constant component of $H^i(F)$ given by Proposition~\ref{prop:cons_sh_cercle}.
We have chosen the definition of $H^i_{\alpha,r}(F)$ so that we can extend it
easily to a relative situation in the next section.

In Proposition~\ref{prop:morph_elem0} we explain how these multiplicities are
modified when we quotient a locally constant sheaf on $\cer$ by a constant sheaf
on an arc. This will be used to study sheaves on the cylinder near a circle
which is tangent to the projection of the microsupport.

\medskip

For $(\alpha,r) \in \corC^\times \times \N^*$ the matrix $A_{\alpha,r}$ has an
eigenspace of dimension $1$.  The inclusion of this eigenspace gives the
morphism $a_r$ in the exact sequence of local systems: $0 \to L_{\alpha,1}
\to[a_r] L_{\alpha,r} \to[b_r] L_{\alpha,r-1} \to 0$.  We write the induced
distinguished triangle as
\begin{equation}
\label{eq:dtnilp}
L_{\alpha,1} \to[a_r] L_{\alpha,r} \to[b_r] L_{\alpha,r-1} \to[c_r] L_{\alpha,1}[1].
\end{equation}
Taking the cohomology gives the exact sequence
\begin{equation}
\label{eq:exseqnilp}
\begin{split}
& 0 \to H^0(\cer; L_{\alpha,1}) \to[a_r^0] H^0(\cer; L_{\alpha,r}) 
 \to[b_r^0] H^0(\cer; L_{\alpha,r-1})  \\
& \hspace{1cm}  \to[c_r^0] H^1(\cer; L_{\alpha,1}) 
\to[a_r^1] H^1(\cer; L_{\alpha,r}) \to[b_r^1] H^1(\cer; L_{\alpha,r-1}) \to 0 .
\end{split}
\end{equation}

\begin{lemma}
\label{lem:cohom_Lalphar}
For any $(\alpha,r) \in \corC^\times \times \N^*$ with $\alpha\not= 1$ and any
$i\in \Z$, we have $H^i(\cer; L_{\alpha,r}) \simeq 0$.

For $\alpha=1$ and for any $r\in \N^*$ we have $H^0(\cer; L_{r}) \simeq \corC$
and $H^1(\cer; L_{r}) \simeq \corC$. Moreover, if $r\geq 2$, the morphisms
$a_r^0$, $c_r^0$, $b_r^1$ in~\eqref{eq:exseqnilp} are isomorphisms and $b_r^0 =
a_r^1 =0$.
\end{lemma}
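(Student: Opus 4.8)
The plan is to compute the cohomology of $L_{\alpha,r}$ directly from its monodromy. Recall that a local system $L$ on $\cer$ with monodromy $A \in \GL(V)$ has $\rsect(\cer;L)$ represented by the two-term complex $V \to[A-\id_V] V$ placed in degrees $0$ and $1$ --- this is the standard Mayer--Vietoris computation for a cover of $\cer$ by two arcs. Hence $H^0(\cer;L) \simeq \ker(A-\id_V)$ and $H^1(\cer;L) \simeq \coker(A-\id_V)$.

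First I would apply this to $L_{\alpha,r}$, for which $A - \id_V = A_{\alpha,r} - I_r = (\alpha-1)I_r + N_r$. If $\alpha \neq 1$ this matrix is upper triangular with all diagonal entries equal to $\alpha - 1 \neq 0$, hence invertible; therefore $H^i(\cer;L_{\alpha,r}) \simeq 0$ for all $i \in \Z$. If $\alpha = 1$ then $A - \id_V = N_r$, whose kernel and cokernel are both one-dimensional, so $H^0(\cer;L_{r})\simeq \corC \simeq H^1(\cer;L_{r})$.

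It remains to analyze the maps in~\eqref{eq:exseqnilp} when $\alpha = 1$ and $r \geq 2$. By the previous paragraph, and since $r-1 \geq 1$, all six terms of~\eqref{eq:exseqnilp} are one-dimensional. The key point is that $a_r^0$ is an isomorphism: under the identification $H^0(\cer;\cdot) \simeq \ker(A-\id)$, the map $a_r^0$ is the restriction of the morphism of local systems $a_r$, which by construction is the inclusion of the eigenspace $\ker(A_{1,r} - I_r) = \ker N_r$; since $\ker N_r$ is exactly $H^0(\cer;L_{r})$, the map $a_r^0$ is onto, hence an isomorphism between one-dimensional spaces. The rest is a formal chase in the exact sequence~\eqref{eq:exseqnilp}: exactness at $H^0(\cer;L_{r})$ forces $b_r^0 = 0$; then $c_r^0$ is injective, hence an isomorphism; then exactness at $H^1(\cer;L_{1})$ forces $a_r^1 = 0$; and then $b_r^1$ is injective, hence an isomorphism.

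The only point requiring a little care is the identification of $a_r^0$ with the eigenspace inclusion and the observation that this eigenspace coincides with $H^0(\cer;L_{r})$; everything else reduces to the standard cohomology of a local system on the circle and to linear algebra in one-dimensional spaces, so I do not expect any real difficulty.
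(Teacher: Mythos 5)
Your proof is correct. The computations agree with the paper's in substance (identifying $H^0(\cer;L)$ with the invariants of the monodromy, and showing $a_r^0$ is an isomorphism because $a_r$ is the inclusion of the $1$-eigenspace, which spans all invariants of $A_{1,r}$), but you reach $H^1$ and $b_r^1$ by a different route. The paper computes $H^1(\cer;L_{\alpha,r})$ via the duality $\DD'(L_{\alpha,r}) \simeq L_{1/\alpha,r}$, so that $H^1(\cer;L_{\alpha,r}) \simeq \bigl(H^0(\cer;L_{1/\alpha,r})\bigr)^*$, and it likewise gets $b_r^1$ as the dual of $a_r^0$, after which only $b_r^0 = a_r^1 = 0$ and the bijectivity of $c_r^0$ remain to be read off the sequence. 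You instead represent $\rsect(\cer;L)$ by the two-term complex $V \xrightarrow{A-\id} V$, so both $H^0$ and $H^1$ come out at once as $\ker(A-\id)$ and $\coker(A-\id)$, and you then run the full exactness chase in~\eqref{eq:exseqnilp} starting from the single input that $a_r^0$ is an isomorphism. Your version is more self-contained and elementary (no appeal to duality of local systems); the paper's version is shorter given that the identity $\DD'(L_{\alpha,r}) \simeq L_{1/\alpha,r}$ was already recorded in that section and is reused elsewhere. Both arguments are complete; the one identification you flag as delicate — that $H^0(a_r)$ is the restriction of the stalkwise inclusion to invariant vectors — is indeed the crux and is justified by the equivalence between local systems on $\cer$ and monodromy representations, under which $H^0(\cer;-)$ is the invariants functor.
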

\begin{proof}
  (i) Through the equivalence between local systems and monodromy matrices,
  taking global sections corresponds to finding the invariant vectors, that is,
  the eigenspace with eigenvalue $1$.  We deduce the assertions on $H^0(\cer;
  L_{\alpha,r})$ for all $\alpha$. Since $\DD'(L_{\alpha,r}) \simeq
  L_{1/\alpha,r}$ the assertions on $H^1(\cer; L_{\alpha,r}) \simeq (H^0(\cer;
  L_{1/\alpha,r}))^*$ follow.

  \sui (ii) For $\alpha=1$ and $1 \leq r' \leq r$ we have only one injective
  morphism $(\corC^{r'},A_{r'}) \to (\corC^r,A_{r})$ up to a non-zero
  multiple. It sends the $1$-eigenspace to the $1$-eigenspace.  This means that
  $H^0(\cer; L_{r'}) \to H^0(\cer; L_{r})$ is an isomorphism.  In particular
  $a_r^0$ is an isomorphism and, by duality, so is $b_r^1$.  It follows that
  $b_r^0 = a_r^1 =0$ and then that $c_r^0$ is an isomorphism.
\end{proof}

Let $F\in \Derb(\corC_{\cer})$.  The tensor product of $F$ with~\eqref{eq:dtnilp}
for $\alpha=1$ and $r=2$ gives the distinguished triangle
\begin{equation}
\label{eq:dtnilpF}
F \to L_2 \otimes F \to F \to[c(F)] F[1] ,
\end{equation}
where we set $c(F) = c_2 \otimes \id_F \cl F \to F[1]$. Taking the global
sections we obtain the morphisms
\begin{equation}
\label{eq:multFc2}
H^i(\cer; c(F)) \cl H^i(\cer;F) \to H^{i+1}(\cer;F) .
\end{equation}
We remark that $c(\cdot)$ is a morphism of functors from $\id$ to $[1]$, that
is, we have the commutative square, for any morphism $u \cl F\to G$ in
$\Derb(\corC_{\cer})$,
\begin{equation}
\label{eq:multc2_func}
\vcenter{\xymatrix@C=2cm{
F \ar[r]^{u} \ar[d]_{c(F)} &  G \ar[d]^{c(G)}  \\
F[1] \ar[r]^{u[1]} &  G[1] 
}}
\end{equation}
which is a particular case of $(a\otimes \id) \circ (\id \otimes u) = a\otimes u
= (\id \otimes u) \circ (a\otimes \id)$ for any other morphism $a$ in
$\Derb(\corC_{\cer})$.

\begin{lemma}
\label{lem:constcomp}
Let $F\in \Mod(\corC_\cer)$ and $i\in \Z$. We assume $F = L_{\alpha,r}$ for some
$(\alpha,r) \in \corC^\times\times \N^*$ or $F = \oim{e}(\corC_I)$ where $I$ is a
bounded interval of $\R$.  Then the morphism $H^i(\cer; c(F))$
in~\eqref{eq:multFc2} vanishes except in the case $F = L_{1,1} = \corC_\cer$ and
$i=0$.  In this case
$$
H^0(\cer; c(\corC_\cer)) \cl H^0(\cer; \corC_\cer) \isoto H^1(\cer; \corC_\cer)
$$
is an isomorphism.
\end{lemma}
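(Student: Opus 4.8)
The plan is to apply the derived global sections functor $\rsect(\cer;-)$ to the distinguished triangle~\eqref{eq:dtnilpF}; in the resulting long exact sequence the map $H^i(\cer;F)\to H^{i+1}(\cer;F)$ coming from the connecting morphism is exactly the morphism $H^i(\cer;c(F))$ of~\eqref{eq:multFc2}. The key elementary remark is that this morphism goes between the \emph{consecutive} degrees $i$ and $i+1$, so that if $\rsect(\cer;F)$ is concentrated in a single cohomological degree, then $H^i(\cer;c(F))=0$ for every $i$ for free. Since $\cer$ has cohomological dimension $1$, for all the $F$ under consideration $\rsect(\cer;F)$ lives in degrees $0$ and $1$ only; hence $H^i(\cer;c(F))$ can be nonzero only for $i=0$, and only when both $H^0(\cer;F)$ and $H^1(\cer;F)$ are nonzero. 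It then remains to check that this situation occurs precisely for $F=\corC_\cer$, and to identify the resulting map.

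First I would dispose of the sheaves $F=\oim{e}(\corC_I)$. Because $e$ has discrete fibres one has $\reim{e}(\corC_I)=\eim{e}(\corC_I)\simeq\oim{e}(\corC_I)$, and $\cer$ is compact, so
$$
\rsect(\cer;\oim{e}(\corC_I))\simeq\rsect_c(\cer;\reim{e}(\corC_I))\simeq\rsect_c(\R;\corC_I).
$$
An elementary computation — e.g.\ from the excision triangles relating $\corC_I$, $\corC_{\ol I}$ and the one- or two-point sheaf supported on $\ol I\setminus I$ — shows that $\rsect_c(\R;\corC_I)$ is concentrated in a single degree (degree $0$ if $I$ is closed, degree $1$ if $I$ is open, and it is zero if $I$ is half-closed). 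By the remark above, $H^i(\cer;c(F))=0$ for all $i$ in this case. The local systems with $\alpha\neq1$ are just as easy: $\rsect(\cer;L_{\alpha,r})\simeq0$ by Lemma~\ref{lem:cohom_Lalphar}, so again $H^i(\cer;c(F))=0$ for all $i$.

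It remains to treat $F=L_{1,r}=L_r$. For $r=1$ we have $F=\corC_\cer$, and~\eqref{eq:dtnilpF} for $F=\corC_\cer$ is nothing but~\eqref{eq:dtnilp} for $(\alpha,r)=(1,2)$ up to canonical isomorphism; hence $c(\corC_\cer)=c_2$ and $H^0(\cer;c(\corC_\cer))=c_2^0$, which is an isomorphism by Lemma~\ref{lem:cohom_Lalphar}, while the remaining $H^i(\cer;c(\corC_\cer))$ vanish by the cohomological dimension argument. For $r\geq2$ I would use that $c(\cdot)$ is a morphism of functors: the square~\eqref{eq:multc2_func} applied to the inclusion $a_r\colon\corC_\cer=L_{1,1}\to L_r$ is commutative, and applying $H^0(\cer;-)$ produces a commutative square whose horizontal arrows are $a_r^0\colon H^0(\cer;\corC_\cer)\to H^0(\cer;L_r)$ and $a_r^1\colon H^1(\cer;\corC_\cer)\to H^1(\cer;L_r)$ and whose vertical arrows are $H^0(\cer;c(\corC_\cer))$ and $H^0(\cer;c(L_r))$. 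By Lemma~\ref{lem:cohom_Lalphar}, $a_r^0$ is an isomorphism and $a_r^1=0$; since $H^0(\cer;c(\corC_\cer))$ is an isomorphism by the case $r=1$, commutativity of the square forces $H^0(\cer;c(L_r))=0$, and the remaining degrees vanish once more by cohomological dimension. This exhausts all cases.

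The argument is essentially a bookkeeping reduction to Lemma~\ref{lem:cohom_Lalphar}, so I do not expect a real obstacle. The one point requiring some care is the case $F=\oim{e}(\corC_I)$: one must be sure that $\oim{e}(\corC_I)$ carries no higher direct images, so that $\rsect(\cer;-)$ genuinely reduces to $\rsect_c(\R;-)$ of $\corC_I$; this is where both the boundedness of $I$ and the fact that $e$ is a covering enter. A secondary, purely organisational, subtlety is to invoke the functoriality square~\eqref{eq:multc2_func} with the morphism $a_r$ of~\eqref{eq:dtnilp}, so that the statements about $a_r^0$ and $a_r^1$ in Lemma~\ref{lem:cohom_Lalphar} apply verbatim.
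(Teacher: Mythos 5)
Your proposal is correct and follows essentially the same route as the paper: in both cases the vanishing for $\oim{e}(\corC_I)$ and for $L_{\alpha,r}$ with $\alpha\neq 1$ comes from the source or target of $H^i(\cer;c(F))$ being zero, the case $F=\corC_\cer$ is the identification $c(\corC_\cer)=c_2$ together with Lemma~\ref{lem:cohom_Lalphar}, and the case $L_{1,r}$, $r\geq 2$, uses the functoriality square~\eqref{eq:multc2_func} with $u=a_r$ and the facts $a_r^0$ iso, $a_r^1=0$. Your extra computation of $\rsect(\cer;\oim{e}(\corC_I))$ via $\rsect_c(\R;\corC_I)$ is just a more explicit version of the paper's one-line observation and changes nothing substantive.
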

\begin{proof}
  When $F = \oim{e}( \corC_{I})$ or $F = L_{\alpha,r}$ with $\alpha\not= 1$ we
  have $H^0(\cer;F) \simeq 0$ or $H^1(\cer;F) \simeq 0$. Hence $H^i(\cer; c(F))$
  is trivially $0$ for all $i$.

When $F = \corC_\cer$ the result follows from Lemma~\ref{lem:cohom_Lalphar}
since $c(\corC_\cer)$ is the morphism $c_2$ of~\eqref{eq:exseqnilp}.

It remains to consider $F = L_{1,r}$ with $r\geq 2$.  We take the cohomology of
the square~\eqref{eq:multc2_func} with $u=a_r \cl \corC_\cer \to L_r$.  We remark
that $H^0(\cer; c(\corC_\cer)) = c_2^0$ in the notations of
Lemma~\ref{lem:cohom_Lalphar} and we obtain the commutative square
$$
\xymatrix@C=2cm{
H^0(\cer; \corC_\cer) \ar[r]^{a_r^0} \ar[d]_{c_2^0}
 &  H^0(\cer; L_{r}) \ar[d]^{H^0(\cer; c(L_{r}))}  \\
H^1(\cer; \corC_\cer) \ar[r]^{a_r^1}  &  H^1(\cer; L_{r}) . }
$$
By Lemma~\ref{lem:cohom_Lalphar} the morphisms $a_r^0$ and $c_2^0$ are
isomorphisms and $a_r^1 = 0$.  Hence $H^0(\cer; c(L_{r})) = 0$.  Since
$H^i(\cer; c(L_{r}))$ vanishes trivially for $i\not=0$, the lemma is proved.
\end{proof}

\begin{lemma}
\label{lem:decprodlocsyst}
Let $p\leq q \in \N^*$ and $\alpha,\beta \in \corC^\times$ be given.
We have a decomposition
\begin{equation}
\label{eq:decprodnilp}
L_{\alpha,p} \otimes L_{\beta,q} \simeq \bigoplus_{i = 1}^p L_{\alpha\beta,q-p+2i-1} .
\end{equation}
In particular, if $p=q$ and $\alpha = 1/\beta$, then
$L_{\alpha,p} \otimes L_{\beta,q}$ has a direct summand $\corC_\cer$ with
multiplicity $1$.  In the other cases $L_{\alpha,p} \otimes L_{\beta,q}$ does
not have $\corC_\cer$ as direct summand.
\end{lemma}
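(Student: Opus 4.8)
The plan is to translate the statement into the representation theory of $SL_2(\corC)$ and to invoke the Clebsch--Gordan formula. First I would reduce to the unipotent case $\alpha=\beta=1$. Since $A_{\alpha,r}=\alpha I_r+N_r$ is conjugate to $\alpha(I_r+N_r)=\alpha A_r$ (conjugate by $\operatorname{diag}(1,\alpha,\dots,\alpha^{r-1})$), the local system $L_{\alpha,r}$ is isomorphic to $L_{\alpha,1}\otimes L_r$; moreover $L_{\alpha,1}\otimes L_{\beta,1}\simeq L_{\alpha\beta,1}$ because the monodromy multiplies. Using that $\otimes$ is associative and commutative, the general case follows from the unipotent one: if $L_p\otimes L_q\simeq\bigoplus_{i=1}^p L_{q-p+2i-1}$, then $L_{\alpha,p}\otimes L_{\beta,q}\simeq (L_{\alpha,1}\otimes L_{\beta,1})\otimes(L_p\otimes L_q)\simeq L_{\alpha\beta,1}\otimes\bigoplus_{i=1}^p L_{q-p+2i-1}\simeq\bigoplus_{i=1}^p L_{\alpha\beta,q-p+2i-1}$.

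Next I would identify $L_r$ with the $r$-dimensional irreducible representation of $SL_2(\corC)$. Pick the nilpotent $f=\left(\begin{smallmatrix}0&0\\1&0\end{smallmatrix}\right)\in\mathfrak{sl}_2(\corC)$ and set $g=\exp(f)\in SL_2(\corC)$, a unipotent element. In the $r$-dimensional irreducible representation $\rho_r$, the operator $\rho_r(f)$ is nilpotent with a single Jordan block of size $r$, hence $\rho_r(g)=\exp(\rho_r(f))$ is unipotent with $\rho_r(g)-I_r$ of rank $r-1$, so $\rho_r(g)$ is conjugate to $A_r$. Therefore $L_r$ is, up to isomorphism, the local system whose monodromy representation $\pi_1(\cer)\simeq\Z\to\GL(\corC^r)$ sends $1\mapsto\rho_r(g)$; equivalently $L_r$ is obtained from the $SL_2(\corC)$-module $\rho_r$ by restriction along the one-parameter subgroup $n\mapsto g^n$.

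Then $L_p\otimes L_q$ corresponds to the monodromy $\rho_p(g)\otimes\rho_q(g)=(\rho_p\otimes\rho_q)(g)$, and the Clebsch--Gordan decomposition of $\mathfrak{sl}_2(\corC)$-modules, valid for the connected group $SL_2(\corC)$, reads $\rho_p\otimes\rho_q\simeq\bigoplus_{i=1}^p\rho_{q-p+2i-1}$ when $p\le q$. Restricting along $g$ and using the previous identification gives $L_p\otimes L_q\simeq\bigoplus_{i=1}^p L_{q-p+2i-1}$, hence \eqref{eq:decprodnilp}. For the last assertion, recall $\corC_\cer=L_{1,1}$: a summand $L_{\alpha\beta,q-p+2i-1}$ equals $L_{1,1}$ exactly when $\alpha\beta=1$ and $q-p+2i-1=1$; since $p\le q$ and $1\le i\le p$, the latter forces $q=p$ and $i=1$, which is the unique such index, so $\corC_\cer$ occurs as a direct summand with multiplicity one precisely when $p=q$ and $\alpha=1/\beta$, and with multiplicity zero otherwise.

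The main obstacle is the identification step: one must check carefully that $\exp$ of the principal nilpotent in the $r$-dimensional irreducible representation is a single unipotent Jordan block of size $r$ — so that it is genuinely conjugate to $A_r$, not merely unipotent — and that an isomorphism of $SL_2(\corC)$-modules restricts to an isomorphism of $\Z$-modules along the cyclic subgroup generated by $g$, so that the $SL_2$-decomposition descends to local systems. Everything else is the classical Clebsch--Gordan computation and elementary bookkeeping. Alternatively, one may avoid the Lie-group language: realize $L_r\simeq\operatorname{Sym}^{r-1}L_2$ by checking directly that $\operatorname{Sym}^{r-1}$ of a size-$2$ unipotent Jordan block is a size-$r$ unipotent Jordan block, and then apply the Clebsch--Gordan decomposition of $\operatorname{Sym}^{p-1}V\otimes\operatorname{Sym}^{q-1}V$ for a rank-$2$ representation $V$ with trivial determinant.
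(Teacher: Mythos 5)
Your proof is correct and follows essentially the same route as the paper: both pass through the equivalence between local systems on $\cer$ and monodromy matrices and reduce \eqref{eq:decprodnilp} to the classical statement about the tensor product of two (unipotent) Jordan blocks. The only difference is that the paper simply cites that matrix statement as known, whereas you also supply its standard proof, first factoring out the scalars via $L_{\alpha,r}\simeq L_{\alpha,1}\otimes L_r$ and then deducing the unipotent case from the Clebsch--Gordan decomposition for $SL_2(\corC)$ restricted along a unipotent element; both steps are sound.
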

\begin{proof}
  By the correspondence between local systems on $\cer$ and monodromy matrices,
  the result follows from a similar statement about the tensor product of two
  unipotent matrices.
\end{proof}

\begin{definition}
\label{def:alpharcomp}
Let $(\alpha,r) \in \corC^\times\times\N^*$ be given.  For $F\in
\Derb(\corC_\cer)$ we define 
\begin{equation}
\label{eq:defalpahrcom}
H^i_{\alpha,r}(F) = \im(H^i(\cer; c(F \otimes L_{1/\alpha,r}))) ,
\end{equation}
which is a subspace of $H^{i+1}(\cer; F \otimes L_{1/\alpha,r})$.
Using~\eqref{eq:multc2_func} we see that $F \mapsto H^i_{\alpha,r}(F)$ is a
functor.
\end{definition}

Then Lemmas~\ref{lem:constcomp} and~\ref{lem:decprodlocsyst} translate as
follows.

\begin{proposition}
\label{prop:loc_cst_comp}
  Let $F \in \Mod(\corC_\cer)$ be a constructible sheaf.  We write $F = G \oplus
  L$, where $G$ is a sum of sheaves of the type $\oim{e}(\corC_I)$ for some
  bounded interval $I$ of $\R$, and $L$ is a local system, decomposed as
  in~\eqref{eq:dec_loca_syst} $L \simeq \bigoplus_{ (\alpha,r) \in B}
  L_{\alpha,r}^{n_{\alpha,r}}$, for some finite subset $B \subset \corC^\times
  \times \N^*$ and integers $n_{\alpha,r}$. Then, for all $(\alpha,r) \in B$,
  we have $H^0_{\alpha,r}(F) \simeq \corC^{n_{\alpha,r}}$.  We also have
  $H^i_{\alpha,r}(F) \simeq 0$ if $i \not= 0$ or $(\alpha,r) \not\in B$.
\end{proposition}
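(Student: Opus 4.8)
The plan is to exploit that the assignment $F \mapsto H^i_{\alpha,r}(F)$ is additive: it is obtained by composing the exact functor $-\otimes L_{1/\alpha,r}$, the functorial morphism $c(-)$ of~\eqref{eq:dtnilpF}, the functor $H^i(\cer;-)$, and the operation of taking the image of a morphism, each of which commutes with finite direct sums (the image of a finite direct sum of morphisms is the direct sum of the images). So it suffices to compute $H^i_{\alpha,r}$ on the two kinds of indecomposable summands furnished by Proposition~\ref{prop:cons_sh_cercle}, namely $\oim{e}(\corC_I)$ for a bounded interval $I$ of $\R$, and the local systems $L_{\beta,q}$, and then reassemble.

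First I would treat $F = \oim{e}(\corC_I)$. Since $L_{1/\alpha,r}$ is locally constant of finite rank, the projection formula (together with $\eim{e}(\corC_I)\isoto\oim{e}(\corC_I)$) gives $\oim{e}(\corC_I)\otimes L_{1/\alpha,r}\simeq \oim{e}(\corC_I\otimes \opb{e}L_{1/\alpha,r})$; but $\opb{e}L_{1/\alpha,r}$ is a local system on the contractible space $\R$, hence constant of rank $r$ by Corollary~\ref{cor:opbeqv}, so $\oim{e}(\corC_I)\otimes L_{1/\alpha,r}\simeq (\oim{e}\corC_I)^r$. By Lemma~\ref{lem:constcomp} the morphism $H^i(\cer;c(\oim{e}\corC_I))$ vanishes for all $i$, hence so does $H^i(\cer;c((\oim{e}\corC_I)^r))$, and therefore $H^i_{\alpha,r}(\oim{e}\corC_I)\simeq 0$ for every $i$ and every $(\alpha,r)$. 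In particular $H^i_{\alpha,r}(G)\simeq 0$ for the summand $G$ of $F$ which is a direct sum of such terms.

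Next I would treat $F = L_{\beta,q}$. By Lemma~\ref{lem:decprodlocsyst}, $L_{\beta,q}\otimes L_{1/\alpha,r}$ is a direct sum of local systems $L_{\gamma,s}$, and it contains $\corC_\cer$ as a direct summand exactly when $\beta=\alpha$ and $q=r$, in which case the multiplicity is $1$. By Lemma~\ref{lem:constcomp}, $H^i(\cer;c(L_{\gamma,s}))$ vanishes unless $L_{\gamma,s}=\corC_\cer$ and $i=0$, where it is an isomorphism $\corC\isoto\corC$. Combining these two facts yields $H^i_{\alpha,r}(L_{\beta,q})\simeq 0$ for $i\neq 0$, and $H^0_{\alpha,r}(L_{\beta,q})\simeq\corC$ if $(\beta,q)=(\alpha,r)$, while $H^0_{\alpha,r}(L_{\beta,q})\simeq 0$ otherwise.

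Finally I would assemble: writing $F\simeq G\oplus\bigoplus_{(\beta,q)\in B}L_{\beta,q}^{n_{\beta,q}}$ and using additivity, $H^i_{\alpha,r}(F)\simeq\bigoplus_{(\beta,q)\in B}H^i_{\alpha,r}(L_{\beta,q})^{n_{\beta,q}}$, which is $0$ for $i\neq 0$, and for $i=0$ equals $\corC^{n_{\alpha,r}}$ when $(\alpha,r)\in B$ and $0$ when $(\alpha,r)\notin B$. The only points needing a little care are the identification of $\oim{e}(\corC_I)\otimes L_{1/\alpha,r}$ with a direct sum of sheaves of the same type (so that Lemma~\ref{lem:constcomp} applies verbatim) and the bookkeeping showing that passing to images is compatible with the direct sum decompositions; neither is a real obstacle, since the substance of the statement is already encoded in Lemmas~\ref{lem:constcomp} and~\ref{lem:decprodlocsyst}.
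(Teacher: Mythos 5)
Your proposal is correct and follows exactly the route the paper intends: the paper states the proposition as an immediate translation of Lemmas~\ref{lem:constcomp} and~\ref{lem:decprodlocsyst}, and your argument simply makes explicit the additivity of $F\mapsto H^i_{\alpha,r}(F)$, the projection-formula identification $\oim{e}(\corC_I)\otimes L_{1/\alpha,r}\simeq(\oim{e}\corC_I)^r$, and the bookkeeping over the summands $L_{\beta,q}$.
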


\begin{proposition}
\label{prop:morph_elem0}
Let $L \in \Mod(\corC_\cer)$ be a local system of finite rank. Let $I\subset
\cer$ be an open arc and let $u \cl \corC_I \to L$ be a non-zero morphism.  We
set $F = \coker(u)$.  We assume that $\dim(H^0(\cer; F)) > \dim(H^0(\cer; L))$.
Then there exists $(\alpha,r) \in \corC^\times \times \N^*$ such that $(\alpha,r)
\not= (1,1)$ and $\dim(H^0_{\alpha,r}(F)) < \dim(H^0_{\alpha,r}(L))$.
\end{proposition}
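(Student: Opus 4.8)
The plan is to exploit the exact sequence $0 \to \corC_I \to L \to F \to 0$ on $\cer$ together with the long exact cohomology sequences, and to track how the Jordan type of the local system changes when we pass to the cokernel. Write $L \simeq \bigoplus_{(\alpha,r)\in B} L_{\alpha,r}^{n_{\alpha,r}}$ as in~\eqref{eq:dec_loca_syst}. Since $I$ is an open arc, $\cor_I$ has $H^0(\cer;\corC_I) = 0$ and $H^1(\cer;\corC_I) \simeq \corC$ (it is $\oim{e}\corC_J$ for an open bounded $J$, so by Lemma~\ref{lem:constcomp} its cohomology is concentrated in degree $1$ of dimension $1$). From $0\to\corC_I\to L\to F\to 0$ we get the long exact sequence
\begin{equation*}
0 \to H^0(\cer;L) \to H^0(\cer;F) \to H^1(\cer;\corC_I) \to H^1(\cer;L) \to H^1(\cer;F) \to 0 .
\end{equation*}
The hypothesis $\dim H^0(\cer;F) > \dim H^0(\cer;L)$ forces the connecting map $H^0(\cer;F)\to H^1(\cer;\corC_I)\simeq\corC$ to be surjective, hence $H^1(\cer;\corC_I)\to H^1(\cer;L)$ is the zero map; so $\dim H^0(\cer;F) = \dim H^0(\cer;L) + 1$ and $\dim H^1(\cer;F) = \dim H^1(\cer;L) + 1$. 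In particular $F$ is again constructible with $\dot\SSi(F)=\emptyset$ outside the at most two endpoints of $I$, and by Proposition~\ref{prop:cons_sh_cercle} we may write $F \simeq L' \oplus \bigoplus_a \oim{e}(\corC_{I_a})$ with $L'$ a local system; the summands $\oim{e}(\corC_{I_a})$ contribute nothing to $H^0(\cer;-)$ and their total contribution to $H^1(\cer;-)$ equals their number (by Lemma~\ref{lem:constcomp}), so $\dim H^0(\cer;L') = \dim H^0(\cer;L)+1$ and the multiplicities $n'_{\alpha,r}$ of $L'$ satisfy $\sum_{(\alpha,r)} n'_{\alpha,r} - \sum_{(\alpha,r)} n_{\alpha,r} = $ (number of arc-summands) $\leq 1$ for the $H^1$-count... actually the cleaner bookkeeping is via $H^0$: since $\dim H^0(\cer; L_{\alpha,r}) = \delta_{\alpha,1}$ (Lemma~\ref{lem:cohom_Lalphar}), we get $\sum_r n'_{1,r} = \sum_r n_{1,r} + 1$.

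**Finding the block that drops.**
Now I want a $(\alpha,r)\neq(1,1)$ with $\dim H^0_{\alpha,r}(F) < \dim H^0_{\alpha,r}(L)$, i.e. $n'_{\alpha,r} < n_{\alpha,r}$ (using Proposition~\ref{prop:loc_cst_comp}, which identifies $\dim H^0_{\alpha,r}(-)$ of a constructible sheaf with the multiplicity $n_{\alpha,r}$ of $L_{\alpha,r}$ in its locally constant part, and notes the arc-summands $\oim{e}(\corC_{I_a})$ contribute $0$ to every $H^0_{\alpha,r}$). Suppose for contradiction that $n'_{\alpha,r} \geq n_{\alpha,r}$ for all $(\alpha,r)\neq(1,1)$. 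Consider the monodromy: on the generic open arc $\cer\setminus\{\text{endpoints of }I\}$ the sheaves $L$ and $F$ are locally constant, and away from $I$ itself $L$ and $F$ agree. The key structural fact is that $L\to F$ is an isomorphism over $\cer\setminus\overline{I}$, and over $I$ the map realizes $F|_I$ as the quotient of $L|_I$ (a trivial local system of rank $=\operatorname{rk} L$) by a rank-$1$ constant subsheaf; choosing a base point in $\cer\setminus\overline{I}$ we see the monodromy $A'$ of $L'$ is conjugate to a matrix obtained from the monodromy $A$ of $L$ by a "one-dimensional surgery": concretely $A = P A' Q$-type relations, or better, the total space computation shows $A'$ has rank one less on... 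Here is the precise mechanism I would use: the stalk $F_{x}$ at a point $x\in I$ has dimension $\operatorname{rk}L - 1$, while at the endpoints of $I$ the stalk of $F$ has dimension $\operatorname{rk}L$ if $I$ is open (cokernel in the category of sheaves: at an endpoint $\corC_I$ has stalk $0$, so $F$ and $L$ have the same stalk there). Thus $F$ is a local system of rank $\operatorname{rk}L - 1$ away from $\overline I$ and jumps up by $1$ at each endpoint — this is exactly an $\oim{e}(\corC_{I_a})$ phenomenon for two half-open intervals, or one open interval, sitting over $I$. So the locally constant part $L'$ of $F$ has rank $\operatorname{rk}L - 1$ (when both endpoints are distinct and $F$ has rank $\operatorname{rk}L-1$ generically), and $\operatorname{rk}L' = \operatorname{rk}L - 1 = \sum n'_{\alpha,r}r < \sum n_{\alpha,r}r$.

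**Deriving the contradiction.**
From $\operatorname{rk}L' = \operatorname{rk}L - 1$ and $\sum_r n'_{1,r} = \sum_r n_{1,r}+1$ (the $H^0$-count above), together with the assumed $n'_{\alpha,r}\geq n_{\alpha,r}$ for all $(\alpha,r)\neq(1,1)$, I would extract the contradiction as follows. The assumption gives $\sum_{(\alpha,r)\neq(1,1)} n'_{\alpha,r}\,r \geq \sum_{(\alpha,r)\neq(1,1)} n_{\alpha,r}\,r$. Combined with $\operatorname{rk}L' + 1 = \operatorname{rk}L$, i.e. $\sum_{\text{all}} n'_{\alpha,r}r + 1 = \sum_{\text{all}} n_{\alpha,r}r$, subtracting yields $n'_{1,1} + 1 \leq n_{1,1}$, i.e. $n'_{1,1} \leq n_{1,1} - 1$. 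But the $H^0$-count, $\sum_r n'_{1,r} = \sum_r n_{1,r} + 1$, says the total $\alpha=1$ block count goes UP by one, while $\sum_{r\geq 2}n'_{1,r} \geq \sum_{r\geq 2} n_{1,r}$ by assumption; combined with $n'_{1,1}\leq n_{1,1}-1$ this forces $\sum_{r\geq 2} n'_{1,r} \geq \sum_{r\geq 2} n_{1,r} + 2$. Now I compare rank contributions from the $\alpha=1,r\geq 2$ part against the $\operatorname{rk}L'=\operatorname{rk}L-1$ budget: each extra block of size $r\geq 2$ costs at least $2$ in rank, but under our assumption nothing can compensate (all other multiplicities only grew), contradicting $\operatorname{rk}L' < \operatorname{rk}L$. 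So some $(\alpha,r)\neq(1,1)$ must have $n'_{\alpha,r} < n_{\alpha,r}$, which by Proposition~\ref{prop:loc_cst_comp} is exactly $\dim H^0_{\alpha,r}(F) < \dim H^0_{\alpha,r}(L)$, finishing the proof.

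**Main obstacle.**
The delicate point — and the step I would spend the most care on — is the bookkeeping that relates the three invariants $\dim H^0(\cer;-)$, $\operatorname{rk}(\text{locally constant part})$, and the individual multiplicities $n_{\alpha,r}$, and ruling out the boundary cases where $I$ is not open or where the two endpoints of $I$ coincide (so that $F$ may pick up an $\oim{e}(\corC_J)$ for a closed or the whole $\cer$-minus-point interval). One must check the stalk dimensions of $\coker(\corC_I\to L)$ at the endpoints carefully in each case and confirm that the arc-summands $\oim{e}(\corC_{I_a})$ contribute $0$ to $H^0_{\alpha,r}$ but exactly the right amount to the rank, so that the inequality "total block count for $\alpha=1$ strictly increases while total rank strictly decreases" genuinely forces a strict decrease in some $n_{\alpha,r}$ with $(\alpha,r)\neq(1,1)$. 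An alternative, perhaps cleaner, route to the same combinatorial conclusion is to work directly with $H^0_{\alpha,r}(-)$ via the functoriality in Definition~\ref{def:alpharcomp} and the exact triangle $F \to L_2\otimes F \to F \to[c(F)] F[1]$ tensored by $L_{1/\alpha,r}$, deriving a long exact sequence relating $H^0_{\alpha,r}(\corC_I)$, $H^0_{\alpha,r}(L)$ and $H^0_{\alpha,r}(F)$; since $H^0_{\alpha,r}(\corC_I)=0$ for $(\alpha,r)\neq(1,1)$, monotonicity would follow unless one carefully uses the $(1,1)$-case hypothesis to force the defect elsewhere. I would try both and present whichever yields the shortest honest argument.
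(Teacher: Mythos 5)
Your overall strategy --- decompose $F$ by Proposition~\ref{prop:cons_sh_cercle}, translate both the hypothesis and the conclusion into the multiplicities $n_{\alpha,r}$, $n'_{\alpha,r}$ via Proposition~\ref{prop:loc_cst_comp}, and derive a numerical contradiction --- is genuinely different from the paper's, and the final arithmetic is fine \emph{given} your two counting identities. But those identities are exactly where the proof is missing, and as stated they are not justified (and one auxiliary claim is false). First, the assertion that the interval summands $\oim{e}(\corC_{I_a})$ of $F$ contribute nothing to $H^0(\cer;-)$ fails for compact intervals: $H^0(\cer;\oim{e}\corC_{[a,b]})\simeq\corC$, and such summands really can occur in a cokernel of this type (for $L=\corC_\cer$ and $u$ the canonical map, $F=\corC_{\cer\setminus I}$ is a closed arc). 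So $\sum_r n'_{1,r}=\sum_r n_{1,r}+1$ does not follow from $\dim H^0(\cer;F)=\dim H^0(\cer;L)+1$ unless you first exclude, or account for, closed-type summands. Second, $\operatorname{rk}L'=\operatorname{rk}L-1$ is asserted, not proved: a priori the arcs $I_a$ could lie over $I$, over its complement, or wrap around the circle, and each possibility changes both the rank count and the $H^0$ count. Your description of the stalks is moreover backwards at the key point ($F\simeq L$ outside $\ol{I}$, so $F$ has full rank there and corank $1$ on $I$, not the reverse), and the claim $\dim H^1(\cer;F)=\dim H^1(\cer;L)+1$ is wrong (these dimensions are equal once the connecting map is surjective), though you do not seem to use it. The route can be repaired --- the inclusion $\dot\SSi(F)\subset\dot\SSi(\corC_I)$ pins down which endpoints and co-directions the intervals $I_a$ may have, and an in/out stalk count over $I$ and its complement, combined with the corrected $H^0$ identity, does produce the contradiction --- but that structural analysis is the real content of your approach and it is absent; your ``main obstacle'' paragraph names it without doing it.

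For comparison, the paper avoids decomposing $F$ altogether. Assuming no $(\alpha,r)\neq(1,1)$ drops, it shows the component $u_0$ of $u$ into the trivial isotypic part $\corC_\cer^{n_{1,1}}$ is nonzero (otherwise $\corC_\cer^{n_{1,1}}$ splits off $F$, and a stalk count at a point of $I$ gives a contradiction), then composes with a projection to get a nonzero $u_1\cl\corC_I\to\corC_\cer$, for which $H^1(\cer;u_1)$ is an isomorphism; hence $H^1(\cer;u)$ is injective, and the long exact sequence forces $\dim H^0(\cer;F)=\dim H^0(\cer;L)$, contradicting the hypothesis. This sidesteps all the bookkeeping your version would need.
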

\begin{proof}
(i)  We decompose $L$ as in~\eqref{eq:dec_loca_syst} $L \simeq \bigoplus_{
    (\alpha,r) \in B} L_{\alpha,r}^{n_{\alpha,r}}$, for some finite subset $B
  \subset \corC^\times \times \N^*$ and integers $n_{\alpha,r}$.  By
  Proposition~\ref{prop:loc_cst_comp} we have $n_{\alpha,r} =
  \dim(H^0_{\alpha,r}(L))$.

  We can also write $F = G \oplus L'$, where $G$ is a sum of sheaves of the type
  $\oim{e}(\corC_J)$ for some bounded interval $J$ of $\R$, and $L'$ is a local
  system.  We decompose $L'$ as $L \simeq \bigoplus_{ (\alpha,r) \in B'}
  L_{\alpha,r}^{n'_{\alpha,r}}$, for some finite subset $B' \subset \corC^\times
  \times \N^*$ and integers $n'_{\alpha,r}$.  We also have $n'_{\alpha,r} =
  \dim(H^0_{\alpha,r}(F))$.

  \sui (ii) We argue by contradiction and assume that $n'_{\alpha,r} \geq
  n_{\alpha,r}$ for all $(\alpha,r) \not= (1,1)$.  Let $u_0 \cl \corC_I \to
  L_{1,1}^{n_{1,1}} \simeq \corC_\cer^{n_{1,1}}$ be the morphism obtained by
  composing $u$ with the projection $L \to L_{1,1}^{n_{1,1}}$ given by the
  decomposition of $L$. Let us prove that $u_0 \not= 0$.  If $u_0 = 0$, then
  $L_{1,1}^{n_{1,1}}$ is a direct summand of $F$ and we have $n'_{1,1} \geq
  n_{1,1}$. Hence $n'_{\alpha,r} \geq n_{\alpha,r}$ for all $(\alpha,r) \in B$.
  This implies $\dim(L'_{x}) \geq \dim(L_{x})$ for all $x\in \cer$.  However, if
  $x\in I$, we have $\dim(F_{x}) = \dim(L_{x}) - 1$.  In particular
  $\dim(L'_{x}) < \dim(L_{x})$ and we obtain a contradiction.  Hence $u_0 \not=
  0$.

  \sui (iii) Since $u_0 \not= 0$ we can find $v \cl \corC_\cer^{n_{1,1}} \to
  \corC_\cer$ such that $u_1 = v \circ u_0 \cl \corC_I \to \corC_\cer$ is non
  zero.  Then $H^1(\cer;u_1) \cl H^1(\cer;\corC_I) \to H^1(\cer;\corC_\cer)$ is
  an isomorphism.  Since $u_1$ factorizes through $u$, it follows that the
  morphism $H^1(\cer;u)$ is injective. Using the cohomology sequence associated
  with $\corC_I \to[u] L \to F$,
$$
0 \to H^0(\cer;L) \to H^0(\cer;F) \to H^1(\cer; \corC_I)
 \to[H^1(\cer;u)]  H^1(\cer; L) ,
$$
we deduce that $\dim(H^0(\cer;L)) = \dim (H^0(\cer;F))$.  This contradicts the
hypothesis and proves the proposition.
\end{proof}

\section{Sheaves on the cylinder}
\label{sec:shcyl}

In this section we study sheaves on $M = \cer \times J$, where $J$ is an
interval, with a Lagrangian microsupport in general position.  Let $\Lambda
\subset \dT^*M$ be a closed conic Lagrangian submanifold (maybe non
connected). We set $C_\Lambda = \dot\pi_M(\Lambda)$.  In the generic case
$C_\Lambda$ is a curve with only cusps and double points as singularities.  Let
$t_0 \in J$ be given.  If $\cer \times \{t_0\}$ intersects $C_\Lambda$ only at
its smooth points and transversally, then we can find a diffeomorphism around
$\cer \times \{t_0\}$ which sends $C_\Lambda$ to a product $C \times J'$ for a
finite subset $C$ of $\cer$ and a small interval $J'$ around $t_0$.  By
Corollary~\ref{cor:opbeqv} the sheaves $F$ with $\dot\SSi(F) \subset \Lambda$
are of the type $\opb{p}(G)$ for $G \in \Derb(\corC_\cer)$, where $p \cl \cer
\times J' \to \cer$ is the projection.

Here we study a less generic situation, namely the case where $\cer \times
\{t_0\}$ intersects $C_\Lambda$ transversally except at one point.  We only
study the case we will need: $F$ is simple and its direct image to $J$
decomposes in some prescribed way. In this situation we show that we can find
$F$-linked points in disjoint components of $\Lambda \cap T^*(\cer \times J')$
for a neighborhood $J'$ of $t_0$.

\subsection{Locally constant components}

We choose an orientation of the circle $\cer$.  Let $J$ be an open interval of
$\R$.  Let $p \cl \cer \times J \to \cer$ and $q \cl \cer \times J \to J$ be the
projections.

The morphism $c_2 \cl \corC_\cer \to \corC_\cer[1]$ of~\eqref{eq:dtnilp} (for
$\alpha=1$ and $r=2$) pulls back to $\cer\times J$ and we extend
Definition~\ref{def:alpharcomp} to the relative situation as follows.

\begin{definition}
\label{def:alpharcomp_cyl}
We denote by $c_2 \cl \corC_{\cer\times J} \to \corC_{\cer\times J}[1]$ the
morphism corresponding to $1\in \corC$ through the isomorphisms
$$
\Hom(\corC_{\cer\times J}, \corC_{\cer\times J}[1])
\simeq  H^1(\cer\times J; \corC_{\cer\times J}) \simeq \corC .
$$
For $F\in \Derb(\corC_{\cer\times J})$ we define $c(F) = c_2 \otimes \id_F \cl F
\to F[1]$ and, for given $(\alpha,r) \in \corC^\times\times\N^*$, we obtain the
following morphism in $\Derb(\corC_J)$
$$
\roim{q}( c(F \otimes \opb{p}L_{1/\alpha,r})) \cl
\roim{q}(F \otimes \opb{p}L_{1/\alpha,r})
\to \roim{q}(F \otimes \opb{p}L_{1/\alpha,r})[1] .
$$
Taking its cohomology in degree $i\in \Z$ we define
\begin{equation}
\label{eq:defalpahrcom_cyl}
R^iq_{\alpha,r}(F) = \im(H^i\roim{q}( c(F \otimes \opb{p}L_{1/\alpha,r}))) 
\; \in \Mod(\corC_J) .
\end{equation}
In particular $R^iq_{\alpha,r}(F)$ is a subsheaf of $H^{i+1}\roim{q}(F \otimes
\opb{p}L_{1/\alpha,r})$.  As for $H^i_{\alpha,r}(\cdot)$ we see by
using~\eqref{eq:multc2_func} that $F \mapsto R^iq_{\alpha,r}(F)$ is a functor
from $\Derb(\corC_{\cer\times J})$ to $\Mod(\corC_J)$.
\end{definition}

\begin{lemma}
\label{lem:germ_alpharcomp_cyl}
Let $F\in \Derb(\corC_{\cer\times J})$.  For any $t\in J$ and $(\alpha,r) \in
\corC^\times\times\N^*$ we have a canonical isomorphism $(R^iq_{\alpha,r}(F))_t
\simeq H^i_{\alpha,r}(F|_{\opb{q}(t)})$.
\end{lemma}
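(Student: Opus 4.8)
The plan is to check that every ingredient in the definition of $R^iq_{\alpha,r}(F)$ restricts correctly to the fibre $\opb{q}(t) = \cer\times\{t\}$, and then to apply proper base change.  Write $i_t\cl\cer\times\{t\}\hookrightarrow\cer\times J$ for the inclusion of the fibre, which we identify with $\cer$ via $p$, and $j_t\cl\{t\}\hookrightarrow J$ for the inclusion of the point, so that the stalk at $t$ of a sheaf on $J$ is $\opb{j_t}$.  Then $F|_{\opb{q}(t)} = \opb{i_t}F$, and, writing $q_t\cl\cer\times\{t\}\to\{t\}$ for the map induced by $q$, the square $q\circ i_t = j_t\circ q_t$ is Cartesian.

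Since $\cer$ is compact, $q$ is proper, so $\roim{q}\simeq\reim{q}$, and proper base change supplies a canonical isomorphism $\opb{j_t}\roim{q}H\simeq\reim{q_t}\opb{i_t}H = \rsect(\cer;\opb{i_t}H)$, functorial in $H\in\Derb(\corC_{\cer\times J})$ (the last identity uses again that $\cer$ is compact, so $\rsect_c(\cer;-)\simeq\rsect(\cer;-)$).  We apply this to $H = F\otimes\opb{p}L_{1/\alpha,r}$ and to the morphism $c(H)\cl H\to H[1]$.  Because $p\circ i_t = \id_\cer$, we have $\opb{i_t}\opb{p}L_{1/\alpha,r}\simeq L_{1/\alpha,r}$; because $c_2$ on $\cer\times J$ is the pullback $\opb{p}c_2$ of $c_2$ on $\cer$, we have $\opb{i_t}c_2 = c_2$; and, $\opb{i_t}$ being monoidal and sending a tensor product of morphisms to the tensor product of their images, $\opb{i_t}(c(F\otimes\opb{p}L_{1/\alpha,r}))$ is canonically identified with $c(\opb{i_t}F\otimes L_{1/\alpha,r}) = c(F|_{\opb{q}(t)}\otimes L_{1/\alpha,r})$.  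Hence
\[
\opb{j_t}\roim{q}\bigl(c(F\otimes\opb{p}L_{1/\alpha,r})\bigr)
\;\simeq\;
\rsect\bigl(\cer;\,c(F|_{\opb{q}(t)}\otimes L_{1/\alpha,r})\bigr)
\]
canonically in $\Derb(\corC)$.

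Finally, $\opb{j_t}$ and $H^i$ are exact, hence commute with one another and with the formation of the image of a morphism of sheaves; applying $H^i$ to the last isomorphism therefore gives
\begin{align*}
(R^iq_{\alpha,r}(F))_t
&= \opb{j_t}\,\im\bigl(H^i\roim{q}(c(F\otimes\opb{p}L_{1/\alpha,r}))\bigr) \\
&\simeq \im\bigl(H^i(\cer;\,c(F|_{\opb{q}(t)}\otimes L_{1/\alpha,r}))\bigr)
= H^i_{\alpha,r}(F|_{\opb{q}(t)}),
\end{align*}
which is the asserted canonical isomorphism.  I do not expect a genuine obstacle here: the argument is essentially formal, and the only point that requires care is to keep every identification canonical — the naturality of proper base change together with the compatibility of $\opb{i_t}$ with $\otimes$ and with the functor $c(\cdot)$ — so that the resulting isomorphism does not depend on any auxiliary choice.
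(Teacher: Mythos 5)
Your argument is correct and is essentially the paper's proof: both take germs at $t$, use exactness of the stalk functor to commute it past $H^i$ and the formation of images, and use proper base change (valid since $\cer$ is compact) to identify $(\roim{q}(F\otimes\opb{p}L_{1/\alpha,r}))_t$ with $\rsect(\cer;F|_{\opb{q}(t)}\otimes L_{1/\alpha,r})$ and the restricted morphism with $c(F|_{\opb{q}(t)}\otimes L_{1/\alpha,r})$. Your only addition is to spell out the compatibility of $\opb{i_t}$ with $\otimes$ and with $c(\cdot)$, which the paper leaves implicit.
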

\begin{proof}
  We set $F' = F \otimes \opb{p}L_{1/\alpha,r}$, $G = \roim{q}(F')$ and $d =
  \roim{q}( c(F'))$. Hence $d$ is a morphism from $G$ to $G[1]$.

  Let $a\cl A \to B$ be a morphism in $\Mod(\corC_J)$.  Taking the germ is an
  exact functor and we have a canonical isomorphism $(\im(a))_t \simeq
  \im(a_t)$.  We apply this to $A = H^iG$, $B = H^{i+1}G$ and $a = H^i(d)$.
Since taking the germs commutes with the cohomology, we obtain
$$
(R^iq_{\alpha,r}(F))_t \simeq
 \im \bigl(H^i(d_t) \cl H^i(G_t) \to H^{i+1}(G_t) \bigr).
$$
By the base change formula we have $G_t \simeq \rsect(\cer; F|_{\opb{q}(t)}
\otimes L_{1/\alpha,r})$ and $d_t = c(F|_{\opb{q}(t)} \otimes
L_{1/\alpha,r})$. The lemma follows.
\end{proof}

By the triangular inequality we have $\SSi(F) \subset \bigcup_{i\in \Z}
\SSi(H^i(F))$ for any sheaf $F$ on a manifold $N$.  In general this inclusion is
strict. If $\dim N = 1$ and the coefficient ring is a field, this is an equality
since $F \simeq \bigoplus_{i\in \Z} H^i(F)[-i]$ by
Lemma~\ref{lem:faiscdim1_compl_scinde}.  We deduce the following bound.  Let
$\cor$ be a field and let $F,G \in \Mod(\cor_\R)$ be constructible sheaves.  Let
$u \cl F \to G$ be a morphism in $\Mod(\cor_\R)$.  Then
\begin{equation}
\label{eq:faiscdim1_SSkercoker}
(\SSi(\ker(u)) \cup \SSi(\coker(u)) )
\subset ( \SSi(F) \cup \SSi(G) ) .
\end{equation}
Indeed, if we see $u$ as a morphism in $\Derb(\cor_\R)$ and denote by $F'$ the
cone of $u$, then $\ker(u) \simeq H^0(F')$ and $\coker(u) \simeq H^1(F')$.  Then
the bound follows from the triangular inequality and $\SSi(F') = \SSi(H^0(F'))
\cup \SSi(H^1(F'))$.

We apply~\eqref{eq:faiscdim1_SSkercoker} to the morphism $H^i(\roim{q}( c(F
\otimes \opb{p}L_{1/\alpha,r})))$ of Definition~\ref{def:alpharcomp_cyl}.  Since
$\opb{p}L_{1/\alpha,r}$ is locally constant, Theorem~\ref{th:opboim} and
Corollary~\ref{cor:opboim} give
\begin{equation}
\label{eq:SSRiqalphar}
\SSi(R^iq_{\alpha,r}(F) ) \subset  q_\pi\opb{q_d}\SSi(F)  .
\end{equation}

\subsection{Generic tangent point}
For the remaining part of Section~\ref{sec:shcyl} we consider a smooth conic
Lagrangian submanifold $\Lambda$ of $\dT^*M$ such that $\dot\pi_M(\Lambda)$ is
smooth and tangent to one fiber of $q$. We study the $F \in \Derb(\cor_M)$ which
are simple along $\Lambda$.

More specifically our interval $J$ is $J = \mo]-1,1[$.  We take the coordinate
$\theta \in \mo]-\pi,\pi]$ on $\cer$ and $t$ on $J$.  Let $\Omega \subset \cer
\times J$ be the open subset $\Omega = \{(\theta,t)$; $-1 < \theta < 1$, $t>
\theta^2 \}$.  We set $\Lambda_\Omega = \dot\SSi(\corC_\Omega)$; this is the
half-part of $T^*_{\partial \Omega}M$ which contains $(0,0;0,-1)$ (see
Example~\ref{ex:microsupport}-(iii)).  We let $\Lambda_0 \subset \dT^*\cer$ be a
conic Lagrangian submanifold, that is, a finite union of half-lines
$\{\theta_i\} \times \Rp$ or $\{\theta_i\} \times (-\Rp)$.  We set $C =
\dot\pi_\cer(\Lambda_0)$ and we assume $C \cap \ol{p(\Omega)} = \emptyset$, that
is, $C \subset (\cer \setminus [-1,1])$.  We define
\begin{equation}
\label{eq:defLambda_cyl}
\Lambda = (\Lambda_0 \times T^*_JJ) \sqcup \Lambda_\Omega .
\end{equation}
As remarked at the beginning of Section~\ref{sec:shcyl}, for a given generic
conic Lagrangian submanifold $\Lambda$ such that $\dot\pi_M(\Lambda)$ is
tangent to $\opb{q}(0)$, we can find coordinates such that $\Lambda$ is
described by~\eqref{eq:defLambda_cyl}, up to the choice of a side of
$T^*_{\partial \Omega}M$.  So~\eqref{eq:defLambda_cyl} only gives ``half'' of
the generic case of a tangent point.  The other half is given by $\Lambda^a$.
We only study the case~\eqref{eq:defLambda_cyl}. The results for $\Lambda^a$
are similar; see Remark~\ref{rem:dual_Lambda_cyl} below.

\begin{proposition}
\label{prop:description_F_simpleLambda_cyl}
Let $F \in \Derb(\cor_M)$ be such that $\dot\SSi(F) = \Lambda$ and $F$ is simple
along $\Lambda$. Then there exist $G \in \Derb(\cor_\cer)$ which is simple along
$\Lambda_0$ and $i\in \Z$ such that we have a distinguished triangle
\begin{equation}
\label{eq:descrFcyl1}
\corC_\Omega[i] \to[u] \opb{p}(G) \to F \to[+1].
\end{equation}
In particular we have
\begin{itemize}
\item [(a)] if $u = 0$, then $F \simeq \opb{p}G \oplus \corC_\Omega[i+1]$,
\item [(b)] if $u\not= 0$, there exist $G' \in \Mod(\cor_\cer)$ and $G'' \in
  \Derb(\cor_\cer)$, which are simple along their microsupports $\dot\SSi(G')$,
  $\dot\SSi(G'') \subset \Lambda_0$, and a morphism $u \cl \corC_\Omega \to
  \opb{p} G'$ such that $F \simeq \opb{p}G'' \oplus \coker(u)[i]$.
\end{itemize}
\end{proposition}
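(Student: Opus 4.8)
The plan is to adapt the proof of Corollary~\ref{cor:fais_sphere0}: compare $F$ with $\corC_\Omega$ microlocally along $\Lambda_\Omega$, and recognise the complementary part of $F$ as a pull-back from $\cer$. First I record the microlocal picture. Since the hypothesis gives $C=\dot\pi_\cer(\Lambda_0)\subset\cer\setminus[-1,1]$ while $p(\overline\Omega)\subset[-1,1]$, the images $\dot\pi_M(\Lambda_\Omega)=\partial\Omega=:\Gamma$ and $\dot\pi_M(\Lambda_0\times T^*_JJ)=C\times J$ are disjoint closed subsets of $M$, so $\Lambda=\Lambda_\Omega\sqcup(\Lambda_0\times T^*_JJ)$ is a disjoint union in $\dT^*M$. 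Now $\corC_\Omega$ is simple along $\Lambda_\Omega$ (Example~\ref{ex:shift}) and $F$ is simple along $\Lambda\supset\Lambda_\Omega$; as $\Lambda_\Omega\cong\Gamma\times\Rp$ is connected and contractible, $\mu hom(\corC_\Omega,F)|_{\dT^*M}$ is supported on $\Lambda_\Omega$ and is there isomorphic to $\corC_{\Lambda_\Omega}$ up to a shift, and $\dot\pi_M$ restricts on $\Lambda_\Omega$ to the projection $\Gamma\times\Rp\to\Gamma$, whose fibres are contractible, so $\roim{\dot\pi_M{}}(\mu hom(\corC_\Omega,F)|_{\dT^*M})\simeq\corC_\Gamma$ up to a shift, concentrated in a single degree; likewise for $\mu hom(F,\corC_\Omega)$.

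Running the argument of Corollary~\ref{cor:fais_sphere0} — Sato's triangle for $(\corC_\Omega,F)$, using $\DD'\corC_\Omega\simeq\corC_{\overline\Omega}$, together with its dual for $(F,\corC_\Omega)$, using $\DD'(\mu hom(A,B))|_{\dT^*M}\simeq\mu hom(B,A)|_{\dT^*M}\otimes\opb{\dot\pi_M}\omega_M$ — produces, after a suitable shift, a genuine morphism, either $\corC_\Omega[j]\to F$ or $F\to\corC_\Omega[j]$ for some $j\in\Z$, which becomes an isomorphism in $\Derb(\corC_M;p)$ for every $p\in\Lambda_\Omega$; equivalently its cone $L$ satisfies $\dot\SSi(L)\cap\Lambda_\Omega=\emptyset$, i.e. $\dot\SSi(L)\subset\Lambda_0\times T^*_JJ\subset T^*\cer\times T^*_JJ$. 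By Corollary~\ref{cor:opbeqv}, applied to $p\cl\cer\times J\to\cer$ and the contractible interval $J$, we get $L\simeq\opb p\roim p(L)$; set $G_1:=\roim p(L)$. Moreover $G_1$ is simple along $\Lambda_0$: applying $\mu hom(\opb p(G_1),-)$ and $\mu hom(-,F)$ to the triangle connecting $L$, $F$ and a shift of $\corC_\Omega$, and restricting to $\Lambda_0\times T^*_JJ$, where the $\corC_\Omega$-terms vanish by the disjointness above, gives $\mu hom(\opb p(G_1),\opb p(G_1))|_{\Lambda_0\times T^*_JJ}\simeq\mu hom(F,F)|_{\Lambda_0\times T^*_JJ}\simeq\corC_{\Lambda_0\times T^*_JJ}$ compatibly with the units, and then the pull-back formula $\mu hom(\opb p(G_1),\opb p(G_1))\simeq\oim{(p_d)}\opb{p_\pi}\mu hom(G_1,G_1)$ (as in Lemma~\ref{lem:bonne_im_inv}) forces $\mu hom(G_1,G_1)|_{\Lambda_0}\simeq\corC_{\Lambda_0}$.

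It remains to bring the triangle to the stated shape, and here the only genuinely new point enters. In the case $F\to\corC_\Omega[j]$ the triangle is $F\to\corC_\Omega[j]\to\opb p(G_1)\to[+1]$; rotating it twice puts $F$ in the position of a cone and gives $\corC_\Omega[j-1]\to\opb p(G_1[-1])\to F\to[+1]$, which is the assertion with $i=j-1$ and $G=G_1[-1]$. In the case $\corC_\Omega[j]\to F$ the triangle is $\corC_\Omega[j]\to F\to\opb p(G_1)\to[+1]$, and its connecting morphism lies in $\Hom(\opb p(G_1),\corC_\Omega[j+1])$; but since $p(\overline\Omega)\subset[-1,1]$ is disjoint from $C$, the sheaf $\opb p(G_1)$ is constant on a simply connected open neighbourhood $W=\opb p(A)\supset\overline\Omega$ (with $A\subset\cer$ an arc containing $[-1,1]$ and disjoint from $C$) — using $\dot\SSi(G_1)\subset\Lambda_0$ and Lemma~\ref{lem:faiscdim1_compl_scinde} — and a short computation with the triangle $\corC_\Omega\to\corC_W\to\corC_{W\setminus\Omega}\to[+1]$, in which $W$ and $W\setminus\Omega$ are contractible and the restriction map is an isomorphism, shows $\rsect(W;\corC_\Omega)=0$, hence $\Hom(\opb p(G_1),\corC_\Omega[n])=0$ for all $n$; so the connecting morphism vanishes, $F\simeq\corC_\Omega[j]\oplus\opb p(G_1)$, and this is again the assertion, now with $i=j-1$, $G=G_1$ and $u=0$. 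I expect this vanishing to be the main subtlety: it is what removes the apparent asymmetry in the direction of the comparison morphism.

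Finally (a) and (b) follow at once. If $u=0$ then $F$ is the cone of the zero morphism $\corC_\Omega[i]\to\opb p(G)$, so $F\simeq\opb p(G)\oplus\corC_\Omega[i+1]$. If $u\ne0$, decompose $G\simeq\bigoplus_kH^kG[-k]$ by Lemma~\ref{lem:faiscdim1_compl_scinde} and each $H^kG$ by Proposition~\ref{prop:cons_sh_cercle}; each summand $\opb p(H^kG)$ is constant near $\overline\Omega$ (again because $p(\overline\Omega)$ is disjoint from $C$), so since $\Omega$ is contractible the components of $u$ into all summands except the one in the matching degree, $\opb p(H^{-i}G)[i]$, vanish. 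Writing $G':=H^{-i}G\in\Mod(\corC_\cer)$, letting $u\cl\corC_\Omega\to\opb p(G')$ be the un-shifted component of $u$, and $G'':=\bigoplus_{k\ne-i}H^kG[-k]$, the summand $\opb p(G'')$ splits off the cone, while $u$ is injective as a morphism of sheaves (as $\Omega$ is connected and simply connected), so its cone is $\coker(u)$; this gives $F\simeq\opb p(G'')\oplus\coker(u)[i]$, with $G'$ and $G''$ simple along their microsupports in $\Lambda_0$ as direct summands of $G$.
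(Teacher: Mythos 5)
Most of your architecture is sound, and several pieces are correct as written: the identification of the cone of a microlocal comparison morphism as $\opb{p}(G_1)$ via Corollary~\ref{cor:opbeqv}, the vanishing $\Hom(\opb{p}(G_1),\corC_\Omega[n])=0$ (your support/adjunction computation with $\rsect(W';\corC_\Omega)=0$ is fine), and the derivation of (a) and (b) by splitting $G$ into its cohomology sheaves and using that a non-zero map $\corC_\Omega\to\opb{p}G'$ into a sheaf constant near $\ol\Omega$ is injective. This is genuinely different from the paper, which never invokes a global Sato/duality argument: it works on the strip $W=\mo]-1,1\mc[\times J$, where $\dot\SSi(F|_W)\subset\Lambda_\Omega$, reduces to dimension $1$ by Corollary~\ref{cor:opbeqv} and splits $F|_W$ as a constant sheaf plus $\corC_\Omega[i+1]$ or $\corC_{W\setminus\Omega}[i]$; in \emph{both} cases one gets a morphism $F|_W\to\corC_\Omega[i+1]$ with non-vanishing symbol (in the second case via the excision map $\corC_{W\setminus\Omega}[i]\to\corC_\Omega[i+1]$), which extends through $F\to F_{\ol\Omega}$ to a global $p'\cl F\to\corC_\Omega[i+1]$. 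So the arrow out of $F$ always exists, no direction dichotomy is needed, and your ``main subtlety'' never arises.

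The genuine gap is precisely the step you summon that dichotomy for: ``running the argument of Corollary~\ref{cor:fais_sphere0}'' to get a morphism $\corC_\Omega[j]\to F$ or $F\to\corC_\Omega[j]$ which is invertible in $\Derb(\corC_M;p)$ for all $p\in\Lambda_\Omega$. In Corollary~\ref{cor:fais_sphere0} every term of both Sato triangles is supported at the single point $P$, so they are complexes of finite-dimensional vector spaces, $\DD'$ is (up to shift) termwise transposition, it commutes with taking stalks/global sections, and ``either $H^0(u)\neq0$ or $H^0(v')\neq0$'' is linear algebra. None of this transfers here: $\ol\Omega$ and $\Gamma=\partial\Omega$ are non-compact, so $\rsect(M;\cdot)$ does not intertwine $\DD'$ with any duality of the section spaces (no proper supports), and the failure of lifting the canonical section of $\corC_\Gamma$ in the first triangle is a non-vanishing class in $H^1(M;F_{\ol\Omega})$, which does not formally produce a morphism $\phi$ with $\phi\circ v$ equal to the canonical class on the dual side. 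Moreover the dual Sato triangle for the pair $(F,\corC_\Omega)$, or the identification $\DD'\DD'F\simeq F$ needed to rewrite $\DD'$ of the first triangle, requires constructibility of $F$, which is not among the hypotheses (e.g.\ $F$ may contain an infinite-rank constant summand and still be simple along $\Lambda$). So the existence of at least one comparison morphism is asserted, not proved; the statement is true, but the cheapest proof I know is exactly the local splitting on $W$ described above, which delivers the morphism $F\to\corC_\Omega[i+1]$ directly and lets the rest of your argument proceed unchanged.
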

\begin{proof}
  (i) We set $W = \mo]-1,1\mc[ \times J$.  The inclusion of $\Omega$ in $W$ is
  diffeomorphic to the inclusion of $\mo]-1,1\mc[ \times \mo]0,1[$ in
  $(\mo]-1,1[)^2$.  Such a diffeomorphism sends $\Lambda_0$ to a product
  $(\mo]-1,1\mc[ \times \{0\}) \times (\{0\} \times \Rp)$ and, by
  Corollary~\ref{cor:opbeqv}, we are reduced to the dimension $1$.  By
  Corollary~\ref{cor:cons_sh_R} we obtain the following possibilities for
  $F|_W$. There exists $L \in \Derb(\cor)$ and $i\in \Z$ such that
\begin{align}
\label{eq:descrF0}
F|_W &\simeq  L_W \oplus  \corC_\Omega[i+1] \\
\label{eq:descrF1}
\text{or} \qquad
F|_W &\simeq L_W \oplus \corC_{W \setminus \Omega}[i] .
\end{align}
We have a morphism $v \cl \corC_{W \setminus \Omega} [i] \to \corC_\Omega[i+1]$
given by the excision triangle for the inclusion $\Omega \subset W$.  The cone
of $v$ is $\corC_W[i+1]$.  Using the decomposition~\eqref{eq:descrF0} or the
decomposition~\eqref{eq:descrF1} together with $v$ we obtain in both cases a
morphism $p \cl F|_W \to \corC_\Omega[i+1]$ such that the cone of $p$ is a
constant sheaf on $W$.

\sui (ii) Tensoring $p$ with $\corC_{\ol{\Omega}}$ and composing with $F \to
F_{\ol{\Omega}}$ we obtain $p' \cl F \to \corC_\Omega[i+1]$.  We define $F_1$ by
the triangle $F_1 \to F \to[p'] \corC_\Omega[i+1] \to[+1]$.  Then $F_1|_W$ is
constant and we deduce that $\SSi(F_1)$ is contained in $(\Lambda_0 \times
T^*_JJ)$. Hence $F_1 \simeq \opb{p}(G)$ for some $G \in \Derb(\cor_\cer)$.  We
deduce the triangle~\eqref{eq:descrFcyl1}.

\sui (iii) We set $G' = H^{-i}(G)$ and $G'' \simeq \bigoplus_{k\in \Z\setminus
  \{-i\}} H^k(G)[-k]$.  We have $G \simeq G'[i] \oplus G''$ by
Lemma~\ref{lem:faiscdim1_compl_scinde}.  Since $\Hom(\corC_\Omega, \corC_W[k])
\simeq H^k(\Omega; \corC_W)$ vanishes for $k\not=0$, we see that $u$ is the
composition of $H^{-i}(u) \cl \corC_\Omega \to \opb{p}(G')$ and $\opb{p}(G'[i])
\to G$.  If $u\not= 0$, we deduce the case~(b) of the proposition.  The case~(a)
is obvious.
\end{proof}

\begin{remark}
\label{rem:dual_Lambda_cyl}
Let $F \in \Derb(\cor_M)$ be such that $\dot\SSi(F) = \Lambda^a$ and $F$ is
simple along $\Lambda^a$.  Then there exist $G \in \Derb(\cor_\cer)$ and $i\in
\Z$ such that we have a distinguished triangle
\begin{equation}
\label{eq:dual_descrFcyl1}
F \to  \opb{p}(G) \to[v] \corC_{\ol{\Omega}}[i] \to[+1].
\end{equation}
The proof is similar to the proof of
Proposition~\ref{prop:description_F_simpleLambda_cyl}.  If $F$ is
cohomologically constructible, this result is actually a corollary of the
proposition. Indeed we can apply the proposition to $\DD F$, since
$\SSi(\DD F) = \SSi(F)^a$, and use $\DD(\DD F) \simeq F$.
\end{remark}

We will later look for $F$-linked points, which means understanding $u^\mu_p$
for an automorphism $u$ of $F$.  The following lemma is useful to decompose
$u$.  This is a standard result about triangulated categories (see for example
the proof of Lemma~10.1.5 in~\cite{KS90}).

\begin{lemma}
\label{lem:morph_triang}
Let $\catc$ be an abelian category and let $A \to[f] B \to[g] C \to[h] A[1]$ be
a distinguished triangle in $\Derb(\catc)$.  We assume that $\Hom(A,C)$ and
$\Hom(A,C[-1])$ vanish.  Then, for any $u\cl B \to B$ there exist unique
morphisms $u'\cl A \to A$ and $u'' \cl C \to C$ such that we have the
commutative diagram:
$$
\xymatrix{
A \ar[r]^f \ar[d]^{u'} & B \ar[r]^g \ar[d]^u 
& C \ar[r]^h \ar[d]^{u''} & A[1] \ar[d]^{u'[1]} \\
A \ar[r]^f  & B \ar[r]^g   & C \ar[r]^h  & A[1] .
}
$$
\end{lemma}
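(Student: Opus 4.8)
The plan is to run the standard diagram chase in a triangulated category, using the long exact sequences obtained by applying $\Hom(A,\cdot)$ and the contravariant $\Hom(\cdot,C)$ to the given triangle, together with the axiom that a commutative square can be completed to a morphism of distinguished triangles.

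First I would produce $u'$. Applying $\Hom(A,\cdot)$ to $A \to[f] B \to[g] C \to[h] A[1]$ yields an exact sequence in which composition with $f$ carries $\Hom(A,A)$ into $\Hom(A,B)$ and composition with $g$ carries $\Hom(A,B)$ into $\Hom(A,C)$. The element $u\circ f$ of $\Hom(A,B)$ has image $g\circ u\circ f$ in $\Hom(A,C)$, which vanishes by hypothesis; hence $u\circ f = f\circ u'$ for some $u'\cl A\to A$. The commutative square formed by $f$, $u'$ and $u$ then extends, by the axiom allowing one to complete a square to a morphism of distinguished triangles, to such a morphism, which supplies a $u''\cl C\to C$ for which the whole diagram of the lemma commutes. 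This gives existence.

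For uniqueness, suppose $(u'_1,u''_1)$ and $(u'_2,u''_2)$ both fit into the diagram, and set $v'=u'_1-u'_2$ and $v''=u''_1-u''_2$, so that $f\circ v'=0$, $v''\circ g=0$ and $h\circ v''=v'[1]\circ h$. Rotating the triangle to $C[-1]\to A \to[f] B \to[g] C$ and applying $\Hom(A,\cdot)$ shows that $\{w\cl A\to A \mid f\circ w=0\}$ is the image of $\Hom(A,C[-1])$ under composition with the first arrow; since $\Hom(A,C[-1])\simeq 0$, we get $v'=0$. Dually, applying $\Hom(\cdot,C)$ to $A\to[f]B\to[g]C\to[h]A[1]$ shows that $\{w\cl C\to C \mid w\circ g=0\}$ is the image of $\Hom(A[1],C)$ under composition with $h$; since $\Hom(A[1],C)\simeq\Hom(A,C[-1])\simeq 0$, we get $v''=0$. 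Hence $(u',u'')$ is unique.

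The argument is entirely formal, so there is no real obstacle; the only point needing care is that the completion axiom yields existence but not uniqueness of $u''$, so the difference argument of the last paragraph is genuinely needed. It is also worth keeping track of which vanishing hypothesis is used where: $\Hom(A,C)\simeq 0$ is what forces $g\circ u\circ f=0$ in the existence step, while $\Hom(A,C[-1])\simeq 0$ (occurring once as $\Hom(A,C[-1])$ and once as $\Hom(A[1],C)$) is what forces uniqueness of $u'$ and of $u''$.
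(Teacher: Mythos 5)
Your proof is correct and follows essentially the same route as the paper: the existence of $u'$ from the long exact sequence obtained by applying $\Hom(A,\cdot)$ together with $\Hom(A,C)\simeq 0$, and uniqueness of $u'$ and $u''$ from $\Hom(A,C[-1])\simeq\Hom(A[1],C)\simeq 0$. The only (harmless) variation is that you obtain $u''$ from the axiom completing a square to a morphism of triangles, which in fact also guarantees commutativity of the square involving $h$, whereas the paper produces $u''$ directly from the long exact sequence for $\Hom(\cdot,C)$; both arguments are standard and equivalent here.
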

\begin{proof}
  Since $\Hom(A,C) \simeq 0$ we have $g \circ (u \circ f) = 0$.  Since
  $\Hom(\cdot,\cdot)$ sends distinguished triangles to long exact sequences we
  deduce the existence of $u'$.  Its unicity of $u'$ follows from $\Hom(A,C[-1])
  \simeq 0$ in the same way.  The same proof works for $u''$.
\end{proof}

\begin{lemma}
\label{lem:automF}
Let $F \in \Derb(\cor_M)$ be as in
Proposition~\ref{prop:description_F_simpleLambda_cyl} and let
$\corC_\Omega[i] \to[u] \opb{p}(G) \to F \to[+1]$ be the
triangle~\eqref{eq:descrFcyl1}.  Then any automorphism $v \cl F \isoto F$
induces unique automorphisms $v' \cl \corC_\Omega \isoto \corC_\Omega$ and
$v'' \cl \opb{p} G \isoto \opb{p} G$ such that $(v',v'',v)$ is an automorphism
of the triangle~\eqref{eq:descrFcyl1}.
\end{lemma}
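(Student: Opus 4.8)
The plan is to derive this from Lemma~\ref{lem:morph_triang} applied to a rotation of the triangle~\eqref{eq:descrFcyl1}. Write~\eqref{eq:descrFcyl1} as $\corC_\Omega[i] \to[u] \opb{p}(G) \to[w] F \to[\delta] \corC_\Omega[i+1]$ and rotate it to the distinguished triangle $\opb{p}(G) \to[w] F \to[\delta] \corC_\Omega[i+1] \to[+1]$. I want to apply Lemma~\ref{lem:morph_triang} with $A = \opb{p}(G)$, $B = F$ and $C = \corC_\Omega[i+1]$, so the hypothesis to verify is that $\Hom(\opb{p}(G),\corC_\Omega[i])$ and $\Hom(\opb{p}(G),\corC_\Omega[i+1])$ vanish; I claim in fact $\RHom(\opb{p}(G),\corC_\Omega)\simeq 0$.

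To see this, recall that $\dot\SSi(G) \subset \Lambda_0$ lies over $C = \dot\pi_\cer(\Lambda_0)$, which is contained in $\cer\setminus[-1,1]$. Hence $\dot\SSi(G|_{\mo]-1,1[}) = \emptyset$, so by Example~\ref{ex:microsupport}~(i) the cohomology sheaves of $G$ are locally constant over the contractible interval $\mo]-1,1[$, and $G|_{\mo]-1,1[} \simeq L_{\mo]-1,1[}$ for some $L \in \Derb(\corC)$; consequently $\opb{p}(G)|_{W'} \simeq L_{W'}$ with $W' = \mo]-1,1[\times J$. Since on $\ol{\Omega}$ one has $\theta^2 \leq t < 1$, forcing $|\theta| < 1$, the set $\ol{\Omega}$ is closed in $M$ and contained in $W'$; thus the natural morphism $\corC_\Omega \to \roim{j}(\opb{j}\corC_\Omega)$ is an isomorphism, where $j \cl W' \hookrightarrow M$. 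By adjunction, $\RHom_M(\opb{p}(G),\corC_\Omega) \simeq \RHom_{W'}(L_{W'},\corC_\Omega|_{W'}) \simeq \RHom_{\corC}(L,\rsect(W';\corC_\Omega|_{W'}))$, and the excision triangle $\corC_\Omega|_{W'} \to \corC_{W'} \to \corC_{W'\setminus\Omega} \to[+1]$ identifies $\rsect(W';\corC_\Omega|_{W'})$ with the fibre of $\rsect(W') \to \rsect(W'\setminus\Omega)$; both $W'$ and $W'\setminus\Omega = \{\theta^2 \geq t\}$ being contractible, with the inclusion a homotopy equivalence, this fibre is $0$. So the claimed vanishing holds.

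It then remains to run Lemma~\ref{lem:morph_triang}: given an automorphism $v \cl F \isoto F$, it produces unique $v'' \cl \opb{p}(G) \to \opb{p}(G)$ and $\widetilde v \cl \corC_\Omega[i+1] \to \corC_\Omega[i+1]$ making a morphism of the rotated triangle; setting $v' = \widetilde v[-1]$ and unrotating yields the three commutation relations $u \circ v' = v'' \circ u$, $w \circ v'' = v \circ w$ and $\delta \circ v = v'[1] \circ \delta$, i.e. $(v',v'',v)$ is a morphism of the triangle~\eqref{eq:descrFcyl1}; uniqueness of $v'$ and $v''$ is the uniqueness part of Lemma~\ref{lem:morph_triang}. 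Finally, uniqueness forces $v \mapsto v'$ and $v \mapsto v''$ to be multiplicative and to send $\id_F$ to the identity, so applying the construction to $v^{-1}$ produces two-sided inverses and shows $v'$, $v''$ are automorphisms. The only step that needs any computation is the vanishing $\RHom(\opb{p}(G),\corC_\Omega)\simeq 0$, and as sketched it is short once one observes that $G$ is constant over $\mo]-1,1[$ and that $\ol{\Omega} \subset W'$; I do not anticipate a genuine obstacle.
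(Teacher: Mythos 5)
Your proof is correct and follows essentially the paper's own route: rotate the triangle~\eqref{eq:descrFcyl1} so that $F$ sits in the middle, verify the Hom-vanishing hypothesis of Lemma~\ref{lem:morph_triang} (i.e.\ $\RHom(\opb{p}G,\corC_\Omega)\simeq 0$), deduce existence and uniqueness of $v',v''$, and obtain invertibility by applying the same argument to $v^{-1}$ together with uniqueness. The only divergence is in the vanishing itself: the paper gets it in one line from $\roim{p}(\corC_\Omega)\simeq 0$ and the adjunction $(\opb{p},\roim{p})$, whereas you argue through the constancy of $G$ on $\mo]-1,1[$, the inclusion $\ol{\Omega}\subset W'$, and a cohomology computation on $W'$ — correct, but longer than necessary.
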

\begin{proof}
  We have $\roim{p}(\corC_\Omega) \simeq 0$, hence
  $\RHom(\opb{p} G, \corC_\Omega) \simeq 0$ by adjunction.  The existence and
  unicity of $v'$ and $v''$ follow from Lemma~\ref{lem:morph_triang}.  The fact
  that they are isomorphisms follows from the same result applied to $v^{-1}$
  and from the unicity.
\end{proof}

\begin{proposition}
\label{prop:RiqalpharF_Lambda_cyl}
Let $F \in \Derb(\cor_M)$ be as in
Proposition~\ref{prop:description_F_simpleLambda_cyl} and let $(\alpha,r) \in
\corC^\times \times \N^*$ and $i\in \Z$ be given.  We set for short $L =
\opb{p}L_{1/\alpha,r}$ and $F' = \roim{q}(F \otimes L)$, $F'_i =
R^iq_{\alpha,r}(F)$.  Let $v \in \Aut(F)$ be given.  We set $v' =
\roim{q}(v\otimes \id_L)$ and $v'_i = R^iq_{\alpha,r}(v)$.  Then
$$
(v')^{\mu+}_{q_0} = v^\mu_{p_0} \cdot (\id_{F'})^{\mu+}_{q_0}
\quad \text{and} \quad
(v'_i)^{\mu+}_{q_0} = v^\mu_{p_0} \cdot (\id_{F'_i})^{\mu+}_{q_0},
$$
where $q_0 = (0;-1) \in T^*J$ and $p_0 = (0,0;0,-1) \in T^*M$.
\end{proposition}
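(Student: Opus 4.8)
The plan is to reduce the two identities, via the triangle~\eqref{eq:descrFcyl1}, to a statement about the sheaf $\corC_\Omega$, where the microlocal scalar of an automorphism is transparent. By Proposition~\ref{prop:description_F_simpleLambda_cyl} there is a distinguished triangle $\corC_\Omega[i_0]\to\opb{p}(G)\to F\to[\partial]\corC_\Omega[i_0+1]$ with $G$ simple along $\Lambda_0$, for some $i_0\in\Z$, and by Lemma~\ref{lem:automF} the automorphism $v$ of $F$ lifts to a unique automorphism of this triangle $(w_1,w_2,v)$; as $\Omega$ is connected, $\Aut(\corC_\Omega)=\corC^\times$ and $w_1=\mu_1\cdot\id_{\corC_\Omega}$ for a unique $\mu_1\in\corC^\times$. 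First I would check that $v^\mu_{p_0}=\mu_1$: since $\opb{p}(G)$ has locally constant cohomology along the fibres of $q$ one has $\SSi(\opb{p}(G))\subset T^*\cer\times T^*_JJ$, so $p_0=(0,0;0,-1)\notin\SSi(\opb{p}(G))$; hence $\partial$ is an isomorphism in $\Derb(\corC_M;p_0)$, compatible with $v$ and with $w_1[i_0+1]=\mu_1\cdot\id$, and the claim follows from Theorem~\ref{thm:germemuhom} and Lemma~\ref{lem:umup_DMp}.

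For the first identity I would apply the triangulated functor $\roim{q}((-)\otimes L)$ to the above morphism of triangles. Since $q$ is proper ($\cer$ is compact) and $\opb{p}(G)\otimes L\simeq\opb{p}(G\otimes L_{1/\alpha,r})$ is pulled back from $\cer$, base change gives $\roim{q}(\opb{p}(G)\otimes L)\simeq\opb{a}\,\rsect(\cer;G\otimes L_{1/\alpha,r})$ with $a\colon J\to\pt$; in particular $q_0\notin\SSi(\roim{q}(\opb{p}(G)\otimes L))$. Therefore, in the triangle $\roim{q}(\opb{p}(G)\otimes L)\to F'\to[\partial']\roim{q}(\corC_\Omega\otimes L)[i_0+1]\to[+1]$ obtained by applying this functor (and rotating), the morphism $\partial'$ is an isomorphism in $\Derb(\corC_J;q_0)$, and by functoriality it intertwines $v'=\roim{q}(v\otimes\id_L)$ with $\roim{q}(w_1\otimes\id_L)[i_0+1]=\mu_1\cdot\id$. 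Applying Lemma~\ref{lem:umup_DMp} to $\partial'$ — only the part of it that does not use simplicity of the target — then yields $(v')^{\mu+}_{q_0}=\mu_1\cdot(\id_{F'})^{\mu+}_{q_0}=v^\mu_{p_0}\cdot(\id_{F'})^{\mu+}_{q_0}$.

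For the second identity I would apply instead $R^iq_{\alpha,r}$ and $H^i\roim{q}((-)\otimes L)$ to the same triangle, after two preliminary computations: $R^iq_{\alpha,r}(\corC_\Omega[i_0])=0$ (because $c_2\otimes\id_{\corC_\Omega}=0$, as $\Ext^1_M(\corC_\Omega,\corC_\Omega)\simeq H^1(\Omega;\corC)=0$), and $R^iq_{\alpha,r}(\opb{p}(G))\simeq\opb{a}\,H^i_{\alpha,r}(G)$ is constant on $J$, hence has $q_0$ outside its microsupport. By~\eqref{eq:SSRiqalphar} we have $\SSi(F'_i)\subset q_\pi\opb{q_d}\SSi(F)$, which near $q_0$ lies in the half-line through $q_0$ together with the zero section; combining this with the Morse-type computation $\roim{q}(\corC_\Omega\otimes L)\simeq\corC_{]0,1[}^r[-1]$ (using that $L$ restricts to a trivial local system on the contractible set $\Omega$) and with the long exact cohomology sequence of $\roim{q}((-)\otimes L)$ attached to the triangle, one sees that $q_0\in\SSi(F'_i)$ can occur only for $i=-i_0$; for the other $i$ both sides of the identity vanish, lying in $H^0(\mu hom(F'_i,F'_i))_{q_0}=0$. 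For $i=-i_0$ the same exact sequence exhibits $H^{-i_0}(F')$ and $H^{1-i_0}(F')$, microlocally near $q_0$, as sums of pieces of the form $\corC_{]0,1[}$ and $\corC_{]-1,0]}$ coming from the $\corC_\Omega$-summand and of locally constant pieces coming from $\opb{p}(G)$; on the $\corC_\Omega$-pieces the endomorphism induced by $v$ is $\mu_1\cdot\id$, since it is induced by $\roim{q}(w_1\otimes\id_L)=\mu_1\cdot\id$ on $\corC_{]0,1[}^r=H^{1-i_0}\roim{q}(\corC_\Omega[i_0]\otimes L)$ and the connecting maps are $v$-equivariant. From this one extracts a $v'_{-i_0}$-invariant subsheaf of $F'_{-i_0}=\im\bigl(H^{-i_0}\roim{q}(c(F\otimes L))\bigr)$ that contains all of its microsupport near $q_0$ and on which $v'_{-i_0}$ acts by $\mu_1$, whence $(v'_{-i_0})^{\mu+}_{q_0}=\mu_1\cdot(\id_{F'_{-i_0}})^{\mu+}_{q_0}=v^\mu_{p_0}\cdot(\id_{F'_{-i_0}})^{\mu+}_{q_0}$.

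The main obstacle is this last step: one must carry out, near $q_0$, an explicit analysis of how the locally constant cohomology contributed by $\roim{q}(\opb{p}(G)\otimes L)$ and the $\corC_{]0,1[}$-type cohomology contributed by $\roim{q}(\corC_\Omega\otimes L)$ interact inside $\roim{q}(F\otimes L)$ and inside the image defining $R^{-i_0}q_{\alpha,r}(F)$, and then verify that the automorphism surviving microlocalization at $q_0$ is exactly $\mu_1\cdot\id$. This relies on the classification of constructible sheaves on $\R$ (Corollary~\ref{cor:cons_sh_R}) and on the vanishing of the relevant $\Hom$ and $H^0\mu hom$ groups between $\corC_{]0,1[}$ and $\corC_{]-1,0]}$ near $q_0$, which prevents the pieces of opposite type from interfering.
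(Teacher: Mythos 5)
Your treatment of the first identity is essentially the paper's: push the triangle~\eqref{eq:descrFcyl1} through $\roim{q}((-)\otimes L)$, note that $\roim{q}(\opb{p}(G)\otimes L)$ is constant on $J$ so that $q_0$ lies outside its microsupport, identify the scalar of the induced automorphism of $\corC_\Omega$ with $v^\mu_{p_0}$, and conclude with Lemma~\ref{lem:umup_DMp}. Up to that point the proposal is correct.

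For the second identity, however, there is a genuine gap, and it sits exactly where you say the ``main obstacle'' is. After reducing (via the splitting $F'\simeq\bigoplus_i H^iF'[-i]$) to the statement that $H^i(v')$ and $H^{i+1}(v')$ act microlocally at $q_0$ by the scalar $v^\mu_{p_0}$, one still has to show that the induced endomorphism of $F'_i=\im\bigl(H^i(\roim{q}(c(F\otimes L)))\bigr)$ acts by the same scalar at $q_0$. This does not follow from exhibiting ``$\corC_\Omega$-pieces'' and ``locally constant pieces'' of $H^iF'$, because the decomposition of $H^iF'$ into summands of type $\corC_{]0,1[}$, $\corC_{]-1,0]}$ and constant sheaves is not canonical and is not preserved by $H^i(v')$: microlocal triviality at $q_0$ only pins down certain diagonal blocks, while the off-diagonal blocks and the action on the constant summands are arbitrary, and taking the image of $H^i(c)$ genuinely mixes them (part of $\ker$/$\coker$/$\im$ is generated inside the constant components). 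In particular the ``$v'_{-i_0}$-invariant subsheaf containing the microsupport near $q_0$'' that you propose to extract is not known to exist. The paper closes this gap with Lemma~\ref{lem:vmu_kercoker}: writing the relevant sheaves on $\R$ as sums of $\corC_{[0,+\infty[}$, $\corC_\R$, $\corC_{]-\infty,0]}$, putting $c$ in a normal form, and using the commutation relation $c\,a=a'\,c$ (notably its $(0,1)$-block) to show that the induced endomorphisms of $\ker(c)$ and $\coker(c)$ are again the scalar $\alpha$ microlocally at the relevant covector. Some such blockwise argument, exploiting the equivariance relation and not just the microlocal scalars, is indispensable; your sketch acknowledges the difficulty but does not supply it, so the proof of the second identity is incomplete. (Minor points: the restriction to $i=-i_0$ and the computation $R^iq_{\alpha,r}(\corC_\Omega[i_0])=0$ are correct but unnecessary, since the uniform argument through Lemma~\ref{lem:vmu_kercoker} treats all $i$ at once.)
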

Remark~\ref{rem:dual_Lambda_cyl} also applies to this proposition and the result
also holds if we assume $\dot\SSi(F) = \Lambda^a$ instead of $\dot\SSi(F) =
\Lambda$.
\begin{proof}
  (i) Let $\corC_\Omega[j] \to[u] \opb{p}(G) \to F \to[+1]$ be the
  triangle~\eqref{eq:descrFcyl1}.  The tensor product with $L$ and the direct
  image by $q$ give the distinguished triangle
\begin{equation}
\label{eq:RiqalpharF1}
G' \to F' \to[f] \corC_{]0,1[}^r[j] \to[+1],
\end{equation}
where $G' = \roim{q}(\opb{p}(G) \otimes L)$.  Let $v_1 \cl \corC_\Omega[j]
\isoto \corC_\Omega[j]$ and $v_2 \cl \opb{p} G \isoto \opb{p} G$ be the
morphisms given by Lemma~\ref{lem:automF}.  We set $w_i = \roim{q}(v_i \otimes
\id_L)$.  Hence $(w_2, v', w_1[1])$ is an automorphism of the
triangle~\eqref{eq:RiqalpharF1}.

\sui (ii) Since $\Aut(\corC_\Omega) \simeq \corC$ we have $v_1 = (v_1)^\mu_{p_0}
\, \id_{\corC_\Omega[j]}$.  Hence $v_1\otimes \id_L$ and then $w_1$ are also
$(v_1)^\mu_{p_0}$ times the identity morphism. We deduce
\begin{equation}
\label{eq:RiqalpharF2}
(w_1)^{\mu+}_{q_0} = (v_1)^\mu_{p_0} 
\cdot (\id_{\corC_{]0,1[}^r[j]})^{\mu+}_{q_0} .
\end{equation}
We remark that $G'$ is constant on $J$. In particular $p_0 \not\in \SSi(G')$
and by Lemma~\ref{lem:umup_DMp} we deduce from~\eqref{eq:RiqalpharF2} that
$(v')^{\mu+}_{q_0} = (v_1)^\mu_{p_0} \cdot (\id_{F'})^{\mu+}_{q_0}$.  By
Lemma~\ref{lem:umup_DMp} again we have $v^\mu_{p_0} = (v_1)^\mu_{p_0}$ and we
obtain the first equality of the lemma.

\sui (iii) Since $F' \simeq \bigoplus_{i\in\Z} H^i(F')[-i]$ we deduce from~(ii)
that $(H^iv')^{\mu+}_{q_0} = (v_1)^\mu_{p_0} \cdot (\id_{H^iF'})^{\mu+}_{q_0}$
for all $i\in \Z$.  Now the morphism $c = c(F \otimes L)$ of
Definition~\ref{def:alpharcomp_cyl} commutes with $v'$
by~\eqref{eq:multc2_func}. Hence the $H^i(c)$ commute with the $H^j(v')$ and the
second equality of the lemma follows from Lemma~\ref{lem:vmu_kercoker} below.
\end{proof}

\begin{lemma}
\label{lem:vmu_kercoker}
Let $G,G' \in \Mod(\corC_\R)$ be such that
$\dot\SSi(G), \dot\SSi(G') \subset \{0\} \times \Rp \subset T^*_0\R$.  We set
$q_0 = (0;1)$.  Let
$$
\xymatrix{
  \ker(c) \ar[r] \ar[d]_{b}  & G \ar[r]^{c} \ar[d]_{a} 
&  G' \ar[r] \ar[d]^{a'} &   \coker(c) \ar[d]^{b'}  \\
  \ker(c) \ar[r]  &  G \ar[r]^{c} & G' \ar[r] &   \coker(c) }
$$
be a commutative diagram with exact rows.  We assume that there exists
$\alpha \in \corC$ such that
$a^{\mu+}_{q_0} = \alpha \cdot (\id_G)^{\mu+}_{q_0}$ and
$(a')^{\mu+}_{q_0} = \alpha \cdot (\id_{G'})^{\mu+}_{q_0}$.  Then
$b^{\mu+}_{q_0}$ and $(b')^{\mu+}_{q_0}$ are also $\alpha$ times the identity
morphism.
\end{lemma}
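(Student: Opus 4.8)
The plan is to recast everything inside the localized category $\Derb(\corC_\R;q_0)$. By Theorem~\ref{thm:germemuhom} one has $H^{0}(\mu hom(H,H))_{q_0}\simeq\Hom_{\Derb(\corC_\R;q_0)}(H,H)$ for every $H\in\Mod(\corC_\R)$, and under this identification $u\mapsto u^{\mu+}_{q_0}$ is the image of $u$ under the localization functor $\Mod(\corC_\R)\to\Derb(\corC_\R;q_0)$. Hence the hypothesis says precisely that $\tilde a:=a-\alpha\,\id_G$ and $\tilde a':=a'-\alpha\,\id_{G'}$ become zero in $\Derb(\corC_\R;q_0)$, and we must show the same for $\tilde b:=b-\alpha\,\id_{\ker c}$ and $\tilde b':=b'-\alpha\,\id_{\coker c}$.

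Factor $c$ through its image, obtaining short exact sequences $0\to\ker c\to[f]G\to[s]\operatorname{im}c\to 0$ and $0\to\operatorname{im}c\to[f']G'\to[s']\coker c\to 0$; by \eqref{eq:faiscdim1_SSkercoker} the sheaves $\ker c,\operatorname{im}c,\coker c$ again have microsupport in $\{0\}\times\Rp$. The given commutative diagram restricts to morphisms of these two short exact sequences, hence to morphisms of the associated distinguished triangles, compatibly with subtracting $\alpha\,\id$. Working in $\Derb(\corC_\R;q_0)$: from the square $\tilde a\circ f=f\circ\tilde b$ and $\tilde a=0$ we get $f\circ\tilde b=0$, so, applying $\Hom_{\Derb(\corC_\R;q_0)}(\ker c,-)$ to the rotated triangle $\operatorname{im}c[-1]\to\ker c\to[f]G\to[s]\operatorname{im}c$, the morphism $\tilde b$ factors through an element of $\Hom_{\Derb(\corC_\R;q_0)}(\ker c,\operatorname{im}c[-1])$; symmetrically, from $\tilde b'\circ s'=s'\circ\tilde a'$ and $\tilde a'=0$, $\tilde b'$ factors through an element of $\Hom_{\Derb(\corC_\R;q_0)}(\operatorname{im}c,\coker c[-1])$.

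It therefore suffices to see that these two groups vanish, and here I would use the structure of one-dimensional constructible sheaves. A sheaf $H$ with $\dot\SSi(H)\subset\{0\}\times\Rp$ is locally constant away from $0$ and satisfies $H_0\isoto H_{0^{+}}$ (the generization to a stalk slightly to the right of $0$); applying Corollary~\ref{cor:cons_sh_R} to the data $(H_{0^-},H_0,\rho_H)$ with $\rho_H\colon H_0\to H_{0^-}$ the generization map, one gets $H\simeq\corC_{[0,\infty)}^{\,p}\oplus\corC_\R^{\,q}\oplus\corC_{(-\infty,0)}^{\,r}$ on $\R$, with $p=\dim\ker\rho_H$, $r=\dim\coker\rho_H$. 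In $\Derb(\corC_\R;q_0)$ we have $\corC_\R\simeq 0$ and $\corC_{(-\infty,0)}\simeq\corC_{[0,\infty)}[-1]$, so every such $H$ becomes a finite sum of copies of the simple sheaf $P:=\corC_{[0,\infty)}$ and of $P[-1]$; and since $\mu hom(P,P)|_{\dT^*\R}\simeq\corC_{\{0\}\times\Rp}$ lies in degree $0$, $\Hom_{\Derb(\corC_\R;q_0)}(P,P[-1])=0$, whence $\Hom_{\Derb(\corC_\R;q_0)}(\ker c,\operatorname{im}c[-1])$ is a space of matrices indexed by the $P[-1]$-summands of $\ker c$ and the $P$-summands of $\operatorname{im}c$ (and likewise for the other group). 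In the situation in which the lemma is applied, $G$ and $G'$ are consecutive cohomology sheaves $H^iF'$, $H^{i+1}F'$ of a sheaf $F'$ which near $q_0$ is, by Propositions~\ref{prop:description_F_simpleLambda_cyl} and~\ref{prop:RiqalpharF_Lambda_cyl}, a shift of a single simple sheaf; hence at most one of $G,G'$ is nonzero in $\Derb(\corC_\R;q_0)$, so $\operatorname{im}c\simeq 0$ there and both Hom groups vanish, giving $\tilde b=\tilde b'=0$. The delicate point is exactly this last step: once one localizes at $q_0$ the groups $\Hom(\ker c,\operatorname{im}c[-1])$ and $\Hom(\operatorname{im}c,\coker c[-1])$ are not forced to vanish formally, and one must feed in the concrete microlocal shape of $G$ and $G'$ at $q_0$ to kill the obstruction.
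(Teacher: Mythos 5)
Your reformulation in the localized category $\Derb(\corC_\R;q_0)$ is correct as far as it goes: by Theorem~\ref{thm:germemuhom} the hypothesis does say that $\tilde a=a-\alpha\,\id_G$ and $\tilde a'=a'-\alpha\,\id_{G'}$ vanish in $\Derb(\corC_\R;q_0)$, and the two rotated triangles do reduce Lemma~\ref{lem:vmu_kercoker} to the vanishing of $\Hom_{\Derb(\corC_\R;q_0)}(\ker c,\im c\,[-1])$ and $\Hom_{\Derb(\corC_\R;q_0)}(\im c,\coker c\,[-1])$. But, as you concede, these groups do \emph{not} vanish: microlocally at $q_0$ every sheaf in play is a sum of copies of $P=\corC_{[0,+\infty[}$ and of $P[-1]$ (the summands of type $\corC_{]-\infty,0[}$), and the block pairing the $P[-1]$-summands of $\ker c$ with the $P$-summands of $\im c$ is a genuine obstruction. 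Your way of killing it --- appealing to the situation of Proposition~\ref{prop:RiqalpharF_Lambda_cyl} to claim that at most one of $G,G'$ is microlocally nonzero, hence $\im c\simeq 0$ at $q_0$ --- is not a proof of the lemma, which carries no such hypothesis, and moreover it misreads the application: in Corollary~\ref{cor:mulink-projRqar} the sheaf $R^iq_{\alpha,r}(F)=\im(c)$ is precisely required to have a summand $\corC_{J'}$ with $q_k\in\dot\SSi(\corC_{J'})$ (this microlocal nontriviality of the image at the tangency point is the whole mechanism for detecting $F$-linked points), and nothing prevents both consecutive cohomology sheaves from carrying microsupport at $q_0$, one through $\corC_{[0,+\infty[}$-type summands and the other through $\corC_{]-\infty,0[}$-type ones. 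So the ``delicate last step'' is a real gap, not a detail.

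The paper closes exactly this gap by a non-microlocal argument. It writes $G$ and $G'$ explicitly as sums of the three elementary sheaves ($\corC_{[0,+\infty[}$, $\corC_\R$ and the remaining boundary type), puts $c$ in a canonical block form, and lists the summands of $\ker c$; the dangerous one, $K$, is generated by shifted-type subsheaves sitting inside the $\corC_\R$-components of $G$ --- and these components are invisible at $q_0$, so the hypothesis on $a^{\mu+}_{q_0}$ says nothing about how $a$ acts on them. This is why no argument carried out entirely in $\Derb(\corC_\R;q_0)$ can finish: the needed information is recovered from the sheaf-level identity $c\,a=a'\,c$ before localizing, namely from its $(0,1)$-block, which forces the relevant submatrix of $a_{11}$ (the action of $a$ on those $\corC_\R$-components) to be $\alpha$ times the identity; the cokernel is then handled by duality. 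To repair your proof you would have to show, using $c\,a=a'\,c$ at the level of sheaves and the explicit shape of $c$, that the class of $\tilde b$ in the nonvanishing group $\Hom_{\Derb(\corC_\R;q_0)}(\ker c,\im c\,[-1])$ is zero --- which is in substance the paper's matrix computation --- rather than appeal to extra features of the intended application.
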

\begin{proof}
  Let us set $F_0 = \corC_{[0,+\infty[}$, $F_1 = \corC_\R$ and $F_2 =
  \corC_{]-\infty,0]}$.  The lemma is clear if $c$ is of the form $F_0^{m_0} \to
  F_0^{n_0}$, $F_2^{m_2} \to F_2^{n_2}$, $F_2^{m_2} \to F_1^{n_1}$ or $F_1^{m_1}
  \to F_0^{n_0}$.  We check that nothing strange happens when we mix these
  components.

  \sui (i) By the hypothesis on the microsupports we can write $G =
  \bigoplus_{i=0}^2 F_i^{m_i}$ and $G' = \bigoplus_{i=0}^2 F_i^{n_i}$.  We write
  $a$ in matrix form, that is, $a = (a_{ij})$ with $a_{ij} \cl F_j^{m_j} \to
  F_i^{n_i}$, and $a',c$ in the same way.  We write $a_{ij}$ as a scalar matrix
  (it is understood that we multiply it by the canonical morphism from $F_j$ to
  $F_i$).  Since $\Hom(F_i,F_j) = 0$ for $i\leq j$ and $\Hom(F_2,F_0) = 0$, we
  obtain upper triangular matrices with $(0,2)$ entry zero.  The hypothesis
  means that $a_{00}$, $a_{22}$, $a'_{00}$ and $a'_{22}$ are $\alpha$ times the
  identity matrix and we have to prove the same property for the matrices $b$
  and $b'$.

  \sui (ii) Multiplying $c$ on the left and the right by invertible matrices we
  can assume from the beginning that it has the following canonical
  form. Writing $M_{n,m}^r$ for the $n\times m$ matrix with entries $0$ except
  $(M_{n,m}^r)_{ii} = 1$ for $i=1,\ldots,r$ and $M_{n,m}^{r,r',s}$ for the
  matrix with entries $0$ except $(M_{n,m}^{r,r',s})_{r+i, r'+i} = 1$ for
  $i=1,\ldots,s$, we have integers $r_0,r_1,r_2$ and $s_0,s_1$ such that $c_{ii}
  = M_{n_i,m_i}^{r_i}$ for $i=0,1,2$ and $c_{i,i+1} =
  M_{n_i,m_{i+1}}^{r_i,r_{i+1},s_i}$ for $i=0,1$.

  \sui (iii) For $k=1,\ldots, m_0+m_1+m_2$ we let $E(k)$ be the subsheaf of $G =
  \bigoplus_{i=0}^2 F_i^{m_i}$ generated by the $k^{th}$ component.  For
  $k=m_0+1,\ldots, m_0+m_1$ we let $E'(k)$ be the subsheaf of $G$ generated by
  the image of $F_2 \to F_1$ in the $k^{th}$ component.  We see that $\ker(c)$
  is the sum of the following four summands
  \begin{align*}
    K_0 &= \langle E(r_0+1),\ldots, E(m_0) \rangle 
      \simeq F_0^{m_0-r_0} ,  \\
    K_1 &= \langle E(m_0+r_1+s_0+1),\ldots, E(m_0+m_1) \rangle 
      \simeq  F_1^{m_1-r_1-s_{0}} ,  \\
    K_2 &= \langle E(m_0{+}m_1{+}r_2{+}s_1{+}1),
      \ldots, E(m_0{+}m_1{+}m_2) \rangle \simeq F_2^{m_2-r_2-s_{1}} ,  \\
    K &= \langle E'(m_0+r_1+1),\ldots, E'(m_0+r_1+s_0) \rangle 
      \simeq F_2^{s_0}.
  \end{align*}
  The matrix $b$ is the square submatrix of $a$ corresponding to the columns and
  rows with the indices $i$ such that $E(i)$ or $E'(i)$ appears in $\ker(c)$.
  The summand $K_1$ does not contribute in $\mu hom(\ker(c),\ker(c))$.  It is
  clear that $b$ acts by multiplication by $\alpha$ on $K_0$ and $K_2$ because
  $a_{00}$ and $a_{22}$ are $\alpha$ times the identity matrix.

  Looking at the $(0,1)$ term in the relation $c\, a = a' \, c$ we find that the
  square submatrix of $a_{11}$ formed by the rows and columns $r_1+1,\ldots,
  r_1+s_0$ is $\alpha$ times the identity matrix. Hence $b$ acts by
  multiplication by $\alpha$ on $K$ also and we have proved the lemma for
  $\ker(c)$.

  \sui (iv) The duality $\DD'(\cdot)$ sends our sheaves $F_i$ to similar sheaves
  and $\coker(c)$ to $\ker(\DD'c)$. We deduce the result for $\coker(c)$ and
  this concludes the proof.
\end{proof}

\subsection{Microlocal linked points - first case}

Let $J$ be an open interval of $\R$.  In the case $M = \cer \times J$ we can
now give a refinement of Proposition~\ref{prop:mulink-projdim1} by replacing the
direct image $\roim{q}F$ with $R^iq_{\alpha,r}(F)$.  Let $F \in
\Derb(\corC_{\cer\times J})$ be such that $\Lambda' = \dot\SSi(F)$ is a smooth
Lagrangian submanifold and $F$ is simple along $\Lambda'$.  Let $t_0 \leq t_1
\in J$ be given.  For $k=0,1$ we assume that there exists an open interval $J_k$
around $t_k$ such that we can find a diffeomorphism $\cer \times J_k \simeq \cer
\times \mo]-1,1[$ sending $\cer \times \{t_k\}$ to $\cer \times \{1\}$ and
$\Lambda' \cap T^*(\cer \times J_k)$ to the $\Lambda$ described
in~\eqref{eq:defLambda_cyl} or to $\Lambda^a$.

\begin{corollary}
\label{cor:mulink-projRqar}
Let $i\in \Z$ and $(\alpha,r) \in \corC^\times\times\N^*$ be given.  We assume
that $R^iq_{\alpha,r}(F)$ has a decomposition $R^iq_{\alpha,r}(F) \simeq G
\oplus \corC_{J'}$ where $G \in \Mod(\corC_{J})$ and $J'$ is an interval with
ends $t_0, t_1$.  Let $p_k = (x_k;\xi_k) \in \Lambda$ be such that $q(x_k) =
t_k$ and $q_\pi(\opb{q_d}(p_k)) \in \dot\SSi(\corC_{J'})$. Then $p_0$ and $p_1$
are $F$-linked over any open subset containing $\opb{q}([t_0,t_1])$.
\end{corollary}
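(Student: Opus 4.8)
The plan is to mirror the derivation of Proposition~\ref{prop:mulink-projdim1} from Proposition~\ref{prop:mulink-dim1} and Lemma~\ref{lem:bonne_im_dir}, with Proposition~\ref{prop:RiqalpharF_Lambda_cyl} now playing the role of Lemma~\ref{lem:bonne_im_dir}. Fix an open subset $W$ of $\cer\times J$ with $W\supset\opb{q}([t_0,t_1])$ and an automorphism $u\in\sect(W;\PAut(F))$; we must show $u^\mu_{p_0}=u^\mu_{p_1}$. Since $\cer$ is compact and $\cer\times[t_0,t_1]\subset W$, we may choose an open interval $W_0$ with $[t_0,t_1]\subset W_0$ and $\opb{q}(W_0)\subset W$. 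Because $q$ is proper, $R^iq_{\alpha,r}$ commutes with restriction along $\opb{q}(W_0)\hookrightarrow\cer\times J$, so $v\eqdot R^iq_{\alpha,r}(u|_{\opb{q}(W_0)})$ is a well-defined automorphism of $R^iq_{\alpha,r}(F)|_{W_0}$, i.e. $v\in\sect(W_0;\PAut(R^iq_{\alpha,r}(F)))$. Note also that $R^iq_{\alpha,r}(F)\in\Mod_\rc(\corC_J)$, since $F$ is constructible (being simple along a smooth Lagrangian) and $R^iq_{\alpha,r}$ preserves constructibility.

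Next I would work near each $t_k$. Write $\bar q_k=q_\pi(\opb{q_d}(p_k))\in\dT^*J$. Using the given diffeomorphism $\cer\times J_k\simeq\cer\times\mo]-1,1[$, the pair $(F|_{\cer\times J_k},\Lambda'\cap T^*(\cer\times J_k))$ becomes the situation of~\eqref{eq:defLambda_cyl} (or, via Remark~\ref{rem:dual_Lambda_cyl}, its image under $\cdot^{a}$). Since $p_k\in\Lambda$ and $\opb{q_d}(p_k)\neq\emptyset$, its covector is conormal to a fibre of $q$, so $p_k$ is forced to be the tangent point of the model, identified with $p_0=(0,0;0,-1)$, and $\bar q_k$ is identified with $q_0=(0;-1)$. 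The structure theory of Section~\ref{sec:shcyl} (Proposition~\ref{prop:description_F_simpleLambda_cyl} together with~\eqref{eq:SSRiqalphar}, which confines $\SSi(R^iq_{\alpha,r}(F))$ to a single smooth branch near $\bar q_k$) shows that $R^iq_{\alpha,r}(F)$ is simple at $\bar q_k$; this is where the hypothesis that $R^iq_{\alpha,r}(F)$ has the summand $\corC_{J'}$ with $\bar q_k\in\dot\SSi(\corC_{J'})$ enters. Proposition~\ref{prop:RiqalpharF_Lambda_cyl}, applied with this $v$, then gives $v^{\mu+}_{\bar q_k}=u^\mu_{p_k}\cdot(\id_{R^iq_{\alpha,r}(F)})^{\mu+}_{\bar q_k}$, and since $R^iq_{\alpha,r}(F)$ is simple at $\bar q_k$ this reads $v^\mu_{\bar q_k}=u^\mu_{p_k}$, for $k=0,1$.

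Now I would invoke the one-dimensional statement. Over $W_0$ the sheaf $R^iq_{\alpha,r}(F)$ is isomorphic to $G|_{W_0}\oplus\corC_{J'}$ with $\ol{J'}=[t_0,t_1]\subset W_0$ (in the degenerate case $t_0=t_1$ the interval $J'$ is the point $\{t_0\}$ and $\bar q_0,\bar q_1$ are the two half-lines of $\dT^*_{t_0}J$), and it is simple at $\bar q_0,\bar q_1$ by the previous paragraph. Proposition~\ref{prop:mulink-dim1} applied to $R^iq_{\alpha,r}(F)$ (with this decomposition and $I=J'$) shows that $\bar q_0$ and $\bar q_1$ are $R^iq_{\alpha,r}(F)$-linked over any open interval containing $\ol{J'}$, in particular over $W_0$; hence $v^\mu_{\bar q_0}=v^\mu_{\bar q_1}$. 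Combining with the previous paragraph, $u^\mu_{p_0}=v^\mu_{\bar q_0}=v^\mu_{\bar q_1}=u^\mu_{p_1}$. Since $u\in\sect(W;\PAut(F))$ was arbitrary and $W$ was an arbitrary open set containing $\opb{q}([t_0,t_1])$, this shows $p_0$ and $p_1$ are $F$-linked over $W$.

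The main obstacle I anticipate is the bookkeeping of the second paragraph: checking that under the local diffeomorphisms the prescribed points $p_k$ and $\bar q_k$ really correspond to the distinguished points $p_0$ and $q_0$ of the model~\eqref{eq:defLambda_cyl} (and to their analogues for $\Lambda^{a}$ when that case occurs), and extracting simplicity of $R^iq_{\alpha,r}(F)$ at $\bar q_k$ from the description of how the locally constant components of $F|_{\cer\times\{t\}}$ jump across the tangent fibre. The compatibility of $R^iq_{\alpha,r}$ with the several restrictions (to $\opb{q}(W_0)$ and to the models $\cer\times J_k$) is routine once properness of $q$ is used, and the rest of the argument is the same formal chain as in the passage from Proposition~\ref{prop:mulink-dim1} to Proposition~\ref{prop:mulink-projdim1}.
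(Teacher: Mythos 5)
There is a genuine gap at the point where you claim that $R^iq_{\alpha,r}(F)$ is simple at the points $\bar q_k = q_\pi(\opb{q_d}(p_k))$. The bound~\eqref{eq:SSRiqalphar} only confines $\dot\SSi(R^iq_{\alpha,r}(F))$ near $t_k$ to a single half-line; it says nothing about the microlocal rank there, and simplicity does not follow (e.g.\ $\corC_{]-1,0]}^2$ has its microsupport contained in a single half-line but is not simple). Nothing in the hypotheses of Corollary~\ref{cor:mulink-projRqar} prevents the complement $G$ from also having $\bar q_k$ in its microsupport: the decomposition~\eqref{eq:decR0qarF} can have $b\geq 2$ or $b,c\geq 1$, and in the actual application (part~(v) of the proof of Proposition~\ref{prop:otherFlinkedpoint}) several intervals of the decomposition may share the end $t_k$ with the same coorientation. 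Without simplicity of $R^iq_{\alpha,r}(F)$ at $\bar q_k$, the scalar $v^\mu_{\bar q_k}$ is not even defined, the passage from $(v)^{\mu+}_{\bar q_k}=u^\mu_{p_k}\cdot(\id)^{\mu+}_{\bar q_k}$ to $v^\mu_{\bar q_k}=u^\mu_{p_k}$ breaks down, and Proposition~\ref{prop:mulink-dim1} cannot be invoked, since its proof uses simplicity precisely to conclude $p_k\notin\SSi(G)$.

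The paper's proof sidesteps simplicity of the direct image entirely. Setting $F'_i=R^iq_{\alpha,r}(F)$ and $v'_i=R^iq_{\alpha,r}(u)$, Proposition~\ref{prop:RiqalpharF_Lambda_cyl} gives $(v'_i)^{\mu+}_{\bar q_k}=u^\mu_{p_k}\cdot(\id_{F'_i})^{\mu+}_{\bar q_k}$ as an identity in $\mu hom(F'_i,F'_i)_{\bar q_k}$, whatever its rank. One then writes $v'_i$ in matrix form with respect to the decomposition $F'_i\simeq G\oplus\corC_{J'}$; the $(2,2)$-entry is $\alpha\,\id_{\corC_{J'}}$ for a scalar $\alpha$ because $\Hom(\corC_{J'},\corC_{J'})\simeq\corC$. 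Projecting the displayed identity onto the $\corC_{J'}$-component and using $(\id_{\corC_{J'}})^{\mu+}_{\bar q_k}\neq 0$ (here is where $\bar q_k\in\dot\SSi(\corC_{J'})$ and the simplicity of $\corC_{J'}$ enter) yields $\alpha=u^\mu_{p_0}$ and $\alpha=u^\mu_{p_1}$, hence $u^\mu_{p_0}=u^\mu_{p_1}$. If you replace your reduction to Proposition~\ref{prop:mulink-dim1} by this matrix argument, the rest of your set-up (choice of $W$, identification of $p_k$ with the tangent point of the model, application of Proposition~\ref{prop:RiqalpharF_Lambda_cyl}) is sound.
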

\begin{proof}
  Let $W$ be an open subset of $M$ containing $\opb{q}([t_0,t_1])$ and let $v
  \cl F|_W \isoto F|_W$ be given.  We set $F'_i = R^iq_{\alpha,r}(F)$, $v'_i =
  R^iq_{\alpha,r}(v)$ and $q_k = q_\pi(\opb{q_d}(p_k))$ for $k=0,1$.  By
  Proposition~\ref{prop:RiqalpharF_Lambda_cyl} we have, for $k=0,1$,
\begin{equation}
\label{eq:mulink-projRqar1}
(v'_i)^{\mu+}_{q_k} =  v^\mu_{p_k} \cdot (\id_{F'_i})^{\mu+}_{q_k} .
\end{equation}

On the other hand, using the decomposition $F'_i \simeq G \oplus \corC_{J'}$ we
can write $v'_i$ in matrix form $v'_i = (v_{ab})_{a,b = 1,2}$.  Then
$(v'_i)^{\mu+}_{q_k}$ is given by the matrix $((v_{ab})^{\mu+}_{q_k})$.  We have
$v_{22} = \alpha \, \id_{\corC_{J'}}$ for some scalar $\alpha$ and it follows
that $(v_{22})^{\mu+}_{q_k} = \alpha\cdot (\id_{\corC_{J'}})^{\mu+}_{q_k}$.

By~\eqref{eq:mulink-projRqar1} the matrix $((v_{ab})^{\mu+}_{q_k})$ is the
multiple $v^\mu_{p_k}$ of the identity matrix and we deduce $\alpha =
v^\mu_{p_k}$, for $k=0,1$.  Hence $v^\mu_{p_0} = v^\mu_{p_1}$ and we have proved
that $p_0$ and $p_1$ are $F$-linked.
\end{proof}

\subsection{Back to locally constant components}

By Corollary~\ref{cor:mulink-projRqar} we can detect $F$-linked points under
some condition on $R^iq_{\alpha,r}(F)$.  In this paragraph and the next one we
study the situation where the condition is not satisfied and give another
instance of $F$-linked points.

We restrict to the case~(b) of
Proposition~\ref{prop:description_F_simpleLambda_cyl}. We have $G \simeq
\bigoplus_{j\in\Z} H^j(G)[-j]$ and the proposition implies that we also have $F
\simeq \bigoplus_{j\in\Z} H^j(F)[-j]$.  Hence there is no loss of generality if
we assume that $G \in \Mod(\corC_\cer)$, $u \cl \corC_\Omega \to \opb{p} G$ is a
non zero morphism and $F = \coker(u)$. In particular we have the exact sequence
in $\Mod(\corC_M)$
\begin{equation}
\label{eq:notation_u_G_F}
0 \to \corC_\Omega \to [u] \opb{p} G \to F \to 0 .
\end{equation}

We recall that, by Proposition~\ref{prop:cons_sh_cercle} there exist a finite
family $\{I_a\}_{a \in A}$ of bounded intervals and a locally constant sheaf $L$
on $\cer$ such that
\begin{equation}
\label{eq:decG}
G \simeq L \oplus \bigoplus_{a\in A} \oim{e}( \corC_{I_a}) .
\end{equation}
(Since $G$ is simple, the exponents of the $\corC_{I_a}$ must be $1$.)
By~\eqref{eq:SSRiqalphar} we have $\dot\SSi(R^0q_{\alpha,r}(F) ) \subset \{0\}
\times (-\Rp)$ for any $(\alpha,r) \in \corC^\times\times\N^*$ and the
decomposition of $R^0q_{\alpha,r}(F)$ in Corollary~\ref{cor:cons_sh_R} becomes
\begin{equation}
\label{eq:decR0qarF}
R^0q_{\alpha,r}(F) 
\simeq \corC_J^a \oplus \corC_{]-1,0]}^b \oplus \corC_{]0,1[}^c ,
\end{equation}
for some integers $a,b,c$.

\begin{proposition}
\label{prop:alpharcomp_cyl}
Let $u \cl \corC_\Omega \to \opb{p} G$ be a non zero morphism and $F =
\coker(u)$.  We assume that $\roim{q} F$ decomposes as $\roim{q} F \simeq V_J
\oplus \corC_{]0,1[}$ for some $V \in \Derb(\corC)$.  Then we have (at least)
one of the following possibilities
  \begin{itemize}
  \item [(a)] there exists $(\alpha,r) \not= (1,1)$ such that in the
    decomposition~\eqref{eq:decR0qarF} we have $b \geq 1$,
  \item [(b)] there exists $a\in A$ such that the morphism $\corC_\Omega \to
    \opb{p} \oim{e}( \corC_{I_a})$ induced by $u$ and~\eqref{eq:decG} is non
    zero.
  \end{itemize} 
\end{proposition}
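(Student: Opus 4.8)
The plan is to establish the dichotomy in contrapositive form: assuming that~(b) fails I will produce a pair $(\alpha,r)\neq(1,1)$ for which $b\geq1$. If~(b) fails, then for every $a\in A$ the morphism $\corC_\Omega\to\opb p\oim e(\corC_{I_a})$ induced by $u$ and the decomposition~\eqref{eq:decG} vanishes, so $u$ factors as $\corC_\Omega\to[\bar u]\opb pL\hookrightarrow\opb pG$, and therefore $F\simeq\coker(\bar u)\oplus\opb pG'$ with $G'=\bigoplus_{a\in A}\oim e(\corC_{I_a})$. Since $u\neq0$ we have $\bar u\neq0$, and in particular $L\neq0$.

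First I would record the restrictions of $F$ to the circles $\cer\times\{t\}$. Identifying $\cer\times\{t\}$ with $\cer$, one has $\Omega\cap(\cer\times\{t\})=\emptyset$ for $t\leq0$, whereas $\Omega\cap(\cer\times\{t\})$ is the nonempty open arc $I_t=\{\theta\in\cer\;;\;\theta^2<t\}$ for $0<t<1$. Applying the exact functor $\opb{i_t}$ to the exact sequence~\eqref{eq:notation_u_G_F} then gives $F|_{\cer\times\{t\}}\simeq G$ for $t<0$, and $F|_{\cer\times\{t\}}\simeq\coker(u_t)\oplus G'$ for $0<t<1$, where $u_t=\opb{i_t}\bar u\cl\corC_{I_t}\to L$. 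Because $\opb pL$ is locally constant and $\Omega$ is connected, whether the stalk map $\bar u_x\cl\corC\to L_{p(x)}$ vanishes does not depend on $x\in\Omega$; since $\bar u\neq0$ it is nowhere zero on $\Omega$, so $u_t\neq0$ for all small $t>0$.

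Next I would read off the jump in cohomology from the hypothesis. By properness of $q$ and the base change formula, $(\roim qF)_t\simeq\rsect(\cer;F|_{\cer\times\{t\}})$, so $\roim qF\simeq V_J\oplus\corC_{]0,1[}$ forces $V\simeq\rsect(\cer;G)\simeq\rsect(\cer;L)\oplus\rsect(\cer;G')$ (taking $t<0$) and $\rsect(\cer;\coker(u_t))\oplus\rsect(\cer;G')\simeq V\oplus\corC[0]$ (taking $0<t<1$). Everything here lies in $\Derb(\corC)$, so cancelling the common summand $\rsect(\cer;G')$ — by comparing $\dim H^i$ in each degree — yields $\rsect(\cer;\coker(u_t))\simeq\rsect(\cer;L)\oplus\corC[0]$, hence $\dim H^0(\cer;\coker(u_t))=\dim H^0(\cer;L)+1$. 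Proposition~\ref{prop:morph_elem0}, applied to the nonzero morphism $u_t\cl\corC_{I_t}\to L$ and its cokernel, then furnishes $(\alpha,r)\neq(1,1)$ with $\dim H^0_{\alpha,r}(\coker(u_t))<\dim H^0_{\alpha,r}(L)$.

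Finally I would translate this back to~(a). By Proposition~\ref{prop:loc_cst_comp} the sheaves $\oim e(\corC_{I_a})$, and hence $G'$, contribute nothing to $H^0_{\alpha,r}(\cdot)$, so $H^0_{\alpha,r}(F|_{\cer\times\{t\}})=H^0_{\alpha,r}(\coker(u_t))$ for $0<t<1$ while $H^0_{\alpha,r}(G)=H^0_{\alpha,r}(L)$. By Lemma~\ref{lem:germ_alpharcomp_cyl} the stalk of $R^0q_{\alpha,r}(F)$ at a point $t<0$ has dimension $\dim H^0_{\alpha,r}(L)$, and at a point $0<t<1$ has the strictly smaller dimension $\dim H^0_{\alpha,r}(\coker(u_t))$. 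Writing $R^0q_{\alpha,r}(F)\simeq\corC_J^{a}\oplus\corC_{]-1,0]}^{b}\oplus\corC_{]0,1[}^{c}$ as in~\eqref{eq:decR0qarF}, these two dimensions equal $a+b$ and $a+c$, so $b>c\geq0$, i.e.\ $b\geq1$, which is~(a). I expect the delicate (but routine) points to be keeping track of the cohomological degree in which the extra summand sits and justifying the factorisation ``not~(b) implies that $u$ factors through $\opb pL$''; the real content is provided by Proposition~\ref{prop:morph_elem0}.
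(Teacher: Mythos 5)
Your proof is correct and follows essentially the same route as the paper's: assume (b) fails, restrict to fibers $\cer\times\{t\}$ with $t<0$ and $0<t<1$, use proper base change and the hypothesis on $\roim{q}F$ to get $\dim H^0(\cer;\coker(u_t))=\dim H^0(\cer;L)+1$, invoke Proposition~\ref{prop:morph_elem0} to produce $(\alpha,r)\neq(1,1)$ with a drop in $H^0_{\alpha,r}$, and conclude $b\geq 1$ via Lemma~\ref{lem:germ_alpharcomp_cyl} and~\eqref{eq:decR0qarF}. The only (harmless) difference is that you carry the summand $G'=\bigoplus_a\oim{e}(\corC_{I_a})$ along and cancel it at the end, whereas the paper reduces to $G=L$ at the outset.
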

\begin{proof}
  (i) We write $G = G_1 \oplus L$ according to~\eqref{eq:decG}, where $G_1 =
  \bigoplus_a \oim{e}( \corC_{I_a})$.  We decompose $u = (u_1,u_L)$ in the same
  way.

  We assume that (b) does not hold, that is, $u_1=0$.  Then $F = \opb{p}(G_1)
  \oplus \coker(u_L)$ and $R^0q_{\alpha,r}(F)$ is the sum of
  $R^0q_{\alpha,r}(\opb{p}(G_1))$, which is a constant sheaf on $J$, and
  $R^0q_{\alpha,r}(\coker(u_L))$.  Hence we are reduced to the case where $G =
  L$, which we assume from now on.

  \sui (ii) We set $I = \Omega \cap (\cer \times \{1/2\})$ and $F' = F|_{\cer
    \times \{1/2\}}$.  We have the exact sequence $0 \to \corC_I \to[u'] L \to
  F' \to 0$ where $u' = u|_{\cer \times \{1/2\}}$.  We also have $F|_{\cer
    \times \{-1/2\}} \simeq L$.  Hence, by Lemma~\ref{lem:germ_alpharcomp_cyl},
  the hypothesis on $\roim{q} F$ translates as $\dim(H^0(\cer; F')) =
  \dim(H^0(\cer; L)) + 1$.  By Proposition~\ref{prop:morph_elem0} (applied to
  $u'\cl \corC_I \to L$) there exists $(\alpha,r) \in \corC^\times \times \N^*$
  such that $(\alpha,r) \not= (1,1)$ and $\dim(H^0_{\alpha,r}(F')) <
  \dim(H^0_{\alpha,r}(L))$.  By Lemma~\ref{lem:germ_alpharcomp_cyl} again, we
  obtain $a+c < a+b$, hence $b \geq 1$, as required.
\end{proof}

\subsection{Microlocal linked points - second case}

In this paragraph we consider the case~(b) of
Proposition~\ref{prop:alpharcomp_cyl}.  We keep the notations of the
proposition; in particular $u \cl \corC_\Omega \to \opb{p} G$ is a non zero
morphism and $F = \coker(u)$.  We will show that the points of
$\SSi(\corC_\Omega)$ are $F$-linked with points of $\SSi(\opb{p} G)$.

We use the decomposition~\eqref{eq:decG} and we set $G_a =
\oim{e}(\corC_{I_a})$. We denote by $i_a \cl G_a \to G$, $p_a \cl G \to G_a$ the
natural morphisms given by~\eqref{eq:decG}.  For a morphism $u \cl \corC_\Omega
\to \opb{p} G$ we define $A_u = \{a\in A$; $p_a \circ u \not=0\}$.  Hence $u$
factorizes through $\bigoplus_{a\in A_u} G_a \oplus L$.  We say that $u$ is
minimal if $A_u$ is minimal (for the inclusion) in the set $P_{A,u}$ of subsets
of $A$ defined by $P_{A,u} = \{A_{\phi \circ u}$; $\phi \in \Aut(G)\}$.

For $\phi \in \Aut(G)$ we have $\coker(\phi \circ u) \simeq \coker(u)$.  Hence
we can assume that $u$ is minimal without changing $\coker(u)$.

\begin{proposition}
\label{prop:linkedpt-minimalset}
Let $u \cl \corC_\Omega \to \opb{p} G$ be a non zero morphism and $F =
\coker(u)$.  We assume that $u$ is minimal in the above sense with respect to
the decomposition~\eqref{eq:decG} and that $A_u \not= \emptyset$.  Let $p_0 \in
\dot\SSi(\corC_\Omega)$ and $p_1 \in \dot\SSi(\opb{p}(\oim{e}( \corC_{I_a})))$
be given, where $a \in A_u$.  Then $p_0$ and $p_1$ are $F$-linked over $\cer
\times J$.
\end{proposition}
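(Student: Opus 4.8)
The plan is to prove that $v^\mu_{p_0}=v^\mu_{p_1}$ for every $v\in\Aut(F)=\sect(\cer\times J;\PAut(F))$. Write the exact sequence~\eqref{eq:notation_u_G_F} as the distinguished triangle $\corC_\Omega\to[u]\opb{p}G\to[\pi]F\to[+1]$. By Lemma~\ref{lem:automF} (with shift $i=0$), $v$ lifts to an automorphism $(v',v'',v)$ of this triangle; since $\Aut(\corC_\Omega)\simeq\corC^\times$ we have $v'=\lambda\cdot\id_{\corC_\Omega}$ for some $\lambda\in\corC^\times$, and the commuting squares read $v''u=uv'$ and $v\pi=\pi v''$. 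Because $\Lambda=(\Lambda_0\times T^*_JJ)\sqcup\Lambda_\Omega$ is a disjoint union, with $\dot\SSi(\opb{p}G)\subset\Lambda_0\times T^*_JJ$ and $\dot\SSi(\corC_\Omega)=\Lambda_\Omega$, we get $p_0\notin\SSi(\opb{p}G)$ and $p_1\notin\SSi(\corC_\Omega)$. Rotating the triangle to $F[-1]\to\corC_\Omega\to[u]\opb{p}G\to[+1]$, whose third term has $p_0$ outside its microsupport, and applying the last assertion of Lemma~\ref{lem:umup_DMp} to the square $v'\circ(F[-1]\to\corC_\Omega)=(F[-1]\to\corC_\Omega)\circ(v[-1])$, we obtain $v^\mu_{p_0}=(v')^\mu_{p_0}=\lambda$. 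Rotating instead to $\opb{p}G\to[\pi]F\to\corC_\Omega[1]\to[+1]$ and applying Lemma~\ref{lem:umup_DMp} at $p_1$ to the square $v\pi=\pi v''$, we obtain $v^\mu_{p_1}=(v'')^\mu_{p_1}$. So it remains to prove $(v'')^\mu_{p_1}=\lambda$.

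Next I descend to $\cer$. As $J$ is contractible, $\opb{p}$ is fully faithful (Corollary~\ref{cor:opbeqv}), so $v''=\opb{p}\phi$ for a unique $\phi\in\Aut(G)$; moreover $\mu hom(\opb{p}G,\opb{p}G)\simeq\oim{p_d}\opb{p_\pi}\mu hom(G,G)$ (as in the proof of Lemma~\ref{lem:bonne_im_inv}), and under this identification $(v'')^\mu_{p_1}=\phi^\mu_{\ol{p_1}}$, where $\ol{p_1}\in\dot\SSi(G_a)$ is the point below $p_1$. At $\ol{p_1}$ only the $G_a$–summand of $G$ contributes to $\mu hom(G,G)$ (the $\dot\SSi(G_b)$ for $b\neq a$, and $\dot\SSi(L)=\emptyset$, miss $\ol{p_1}$), so $\phi^\mu_{\ol{p_1}}=(p_a\phi i_a)^\mu_{\ol{p_1}}=\varepsilon_{I_a}(p_a\phi i_a)$ by Lemma~\ref{lem:hom-imdir-interv}(c). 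Also, since $\roim{p}\opb{p}\simeq\id$ and the fibres of $p|_\Omega$ are contractible, there is an isomorphism $\Hom(\corC_\Omega,\opb{p}G)\isoto\Hom_\cer(\corC_{I_\Omega},G)$, natural in $G$, where $I_\Omega=p(\Omega)\subset\cer$ is an open arc; let $\ol{u}\cl\corC_{I_\Omega}\to G$ correspond to $u$. By naturality $A_u=A_{\ol{u}}$ and $\ol{u}$ is minimal in the sense of the proposition, and the relation $v''u=uv'=\lambda u$ becomes $\phi\circ\ol{u}=\lambda\ol{u}$. Thus the proposition reduces to the purely one–dimensional statement: if $\phi\in\Aut(G)$, $\phi\circ\ol{u}=\lambda\ol{u}$ and $\ol{u}$ is minimal, then $\varepsilon_{I_a}(p_a\phi i_a)=\lambda$ for every $a\in A_{\ol{u}}$.

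This last point is where minimality enters. Write $\ol{u}=\sum_y i_y\ol{u}_y$ according to~\eqref{eq:decG} (with $G_L=L$) and set $\phi_{xy}=p_x\phi i_y$; projecting $\phi\ol{u}=\lambda\ol{u}$ to the $G_a$–summand gives $(\phi_{aa}-\lambda\,\id)\,\ol{u}_a=-\sum_{y\neq a}\phi_{ay}\ol{u}_y$. Suppose $\varepsilon_{I_a}(\phi_{aa})\neq\lambda$ for some $a\in A_{\ol{u}}$. Then $\varepsilon_{I_a}(\phi_{aa}-\lambda\,\id)\neq0$, so $\phi_{aa}-\lambda\,\id$ is invertible in $\End(G_a)$ by Lemma~\ref{lem:hom-imdir-interv}(a), and $\chi\eqdot\id_G+\sum_{y\neq a}i_a\circ[(\phi_{aa}-\lambda\,\id)^{-1}\phi_{ay}]\circ p_y$ is an automorphism of $G$ (it is $\id+n$ with $n^2=0$) such that $(\chi\ol{u})_a=0$ while $(\chi\ol{u})_x=\ol{u}_x$ for $x\neq a$; hence $A_{\chi\ol{u}}=A_{\ol{u}}\setminus\{a\}$, contradicting the minimality of $\ol{u}$. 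Therefore $\varepsilon_{I_a}(p_a\phi i_a)=\lambda$ for all $a\in A_{\ol{u}}$, and tracing back through the reductions above we obtain $v^\mu_{p_1}=(v'')^\mu_{p_1}=\lambda=v^\mu_{p_0}$, so $p_0$ and $p_1$ are $F$-linked over $\cer\times J$. I expect the main obstacle to be the descent step: one must check that $\opb{p}$ transports the microlocal scalars $u\mapsto u^\mu_p$ correctly (via $\mu hom$ of an inverse image under the submersion $p$) and that the isomorphism $\Hom(\corC_\Omega,\opb{p}G)\simeq\Hom_\cer(\corC_{I_\Omega},G)$ is natural enough to carry both the minimality of $u$ and the eigenvalue equation down to $\cer$; once one is there, the row-reduction argument is immediate.
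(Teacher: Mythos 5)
Your proof is correct and takes essentially the same route as the paper: lift $v$ to an automorphism of the triangle via Lemma~\ref{lem:automF}, use minimality together with the scalar-plus-nilpotent structure of $\End(\oim{e}(\corC_{I_a}))$ from Lemma~\ref{lem:hom-imdir-interv} to force the eigenvalue of the $(a,a)$-component to equal $\lambda$, and conclude with Lemma~\ref{lem:umup_DMp} and Lemma~\ref{lem:hom-imdir-interv}(c). The only (harmless) difference is that you first descend everything to $\cer$ via full faithfulness of $\opb{p}$ before running the matrix/minimality argument, whereas the paper works directly on the cylinder, identifying $\End(G_a)$ with $\End(\opb{p}G_a)$ componentwise.
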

\begin{proof}
  (i) We set $G_0 = L$ and $A^+ = A \sqcup \{0\}$.  By Lemma~\ref{lem:automF}
  any automorphism $v \cl F \isoto F$ induces automorphisms $v' \cl \corC_\Omega
  \isoto \corC_\Omega$ and $v'' \cl \opb{p} G \isoto \opb{p} G$ such that $u
  \circ v' = v'' \circ u$.

  \sui (ii) We have $\End(\corC_\Omega) \simeq \corC$ and we can write $v' =
  \alpha\, \id$.  Let us write $u$ and $v''$ in matrix notation, that is, $u =
  (u_a)_{a\in A^+}$ and $v'' = (v''_{ba})_{a,b \in A^+}$ with $u_a \cl
  \corC_\Omega \to \opb{p} G_a$ and $v''_{ba} \cl \opb{p} G_a \to \opb{p} G_b$.
  In particular $v''_{aa} \in \End(G_a)$, where we identify $\End(G_a)$ and
  $\End(\opb{p}G_a)$.  By Lemma~\ref{lem:hom-imdir-interv}, for $a\in A$ we can
  write $v''_{aa} = \beta_a \id_{G_a} + n_a$ with $\beta_a \in \corC$ and $n_a$
  a nilpotent element of $\End(G_a)$.

  \sui (iii) Now we prove that $\beta_a = \alpha$ for all $a\in A_u$.  The
  relation $u \circ v' = v'' \circ u$ gives, for any $a\in A_u$,
$$
\alpha \cdot  u_a = (\beta_a \id_{G_a} + n_a) \circ u_a 
 + \sum_{b \in A^+ \setminus \{a\}} v''_{ab} \circ u_b .
$$
Let us assume that $\beta_{a_0} \not= \alpha$ for some $a_0 \in A_u$.  Then
$w_{a_0} = (\alpha-\beta_{a_0}) \id_{G_{a_0}} -n_{a_0}$ is invertible and
$u_{a_0} = \sum_{b \in A^+ \setminus \{a_0\}}w_{a_0}^{-1} \circ v''_{a_0 b}
\circ u_b$.  Hence we can write $u = \phi \circ u'$ where $u' \cl \corC_\Omega
\to \bigoplus_{a\in A^+} G_a$ and $\phi \cl G \to G$ are defined as follows.  We
set $u'_b = u_b$ for $b\in A^+ \setminus \{a_0\}$ and $u'_{a_0} = 0$.  We set
$\phi = \id_G + n$ where $n = (n_{ba})_{a,b \in A^+}$ is the nilpotent element
given by $n_{ba} = 0$ except $n_{a_0 b} = w_{a_0}^{-1} \circ v''_{a_0b}$ for $b
\in A^+ \setminus \{a_0\}$.  Then $\phi$ is invertible and this contradicts the
minimality of $u$. We thus have proved $\beta_a = \alpha$ for all $a\in A_u$.

\sui (iv) By Lemma~\ref{lem:umup_DMp} we have $v^\mu_{p_0} = (v')^\mu_{p_0}$ and
$v^\mu_{p_1} = (v'')^\mu_{p_1}$.  We see easily that $(v')^\mu_{p_0} = \alpha$.
By Lemma~\ref{lem:hom-imdir-interv}-(c) we have $(v'')^\mu_{p_1} = \beta_a$ and
the result follows from~(iii).
\end{proof}

\section{Simple sheaves with a skyscraper direct image}
\label{sec:skyscrim}

We remain in the setting of the previous section.  We set $M = \cer\times \R$
and we denote by $q\cl M \to \R$ the projection.  We let $\Lambda \subset
\dT^*M$ be a smooth closed connected conic Lagrangian submanifold.  We assume
that $\pi_M(\Lambda)$ is tangent to $\opb{q}(0)$ at two points and is on the
same side of $\opb{q}(0)$ near both points (see~\eqref{eq:hypLambda} below).  We
assume that there exists $F \in \Derb(\corC_{M})$ such that $F$ is simple along
$\Lambda$, $\roim{q}F$ has $\corC_{\{0\}}$ as a direct summand and $F$ is
constant outside a compact set (see~\eqref{eq:hypFimdir_cst}).  We prove in
Theorem~\ref{thm:clisoDUVF} that, if $\pi_M(\Lambda)$ has only two cusps, the
full subcategory of simple sheaves in $\Derb_\Lambda(\corC_{M})$ contains
uncountably many isomorphism classes.  For this we check that the hypothesis of
Corollary~\ref{cor:clisoDUVF} are satisfied and we first prove in
Proposition~\ref{prop:otherFlinkedpoint} that we have enough $F$-linked points.

The hypothesis that $\pi_M(\Lambda)$ has only two cusps is actually not really
used before the proof of Theorem~\ref{thm:clisoDUVF} and it appears in the proof
as follows: the points of $\Lambda$ which are not above the cusps and where $F$
has a constant shift are connected.

\subsection{Hypothesis on $\Lambda$ and $F$}
\label{sec:hypLambdaF}

Let $\Lambda \subset \dT^*M$ be a smooth conic Lagrangian submanifold such that
$\Lambda/\Rp$ is a circle.  We set $C_\Lambda = \dot\pi_{M}(\Lambda)$.  For $t$
in $\R$ we set $\fib_t = \opb{q}(t)$.  We assume
\begin{equation}
  \label{eq:hypLambda}
\left\{  \hspace{-4mm} \begin{minipage}[c]{11cm}
\begin{itemize}
\item [(a)] $C_\Lambda$ is a curve with only cusps and ordinary double points
  as singularities,
\item [(b)] the map $\Lambda/\Rp \to C_\Lambda$ is $1{:}1$ outside the double
  points,
\item [(c)] $C_\Lambda$ has exactly two cusps, $c_0$, $c_1$,
\item [(d)] $\fib_0$ does not contain any cusp or double point of $C_\Lambda$,
\item [(e)] $\fib_0$ is tangent to $C_\Lambda$ at two points, say $z_0$, $z_1$,
  and $\Lambda \cap T^*_{z_0}M = \{0\} \times \Rp$, $\Lambda \cap T^*_{z_1}M =
  \{0\} \times (-\Rp)$,
\item [(f)] near $z_0$ and $z_1$ we have
  $C_\Lambda \subset \opb{q}([0,+\infty[)$,
\item [(g)] the other intersections of $\fib_0$ and $C_\Lambda$ are
  transversal,
\item [(h)] any fiber $\fib_t$, for $t\not= 0$, contains at most one
  ``accident'' among: tangent point to $C_\Lambda$, double point or cusp.
\end{itemize}
  \end{minipage} \right.
\end{equation}

The inclusion of $\Lambda$ in $T^*M$ near $z_1$ and the inclusion of $\Lambda^a$
in $T^*M$ near $z_0$ are diffeomorphic to the situation described
in~\eqref{eq:defLambda_cyl}.  We can choose coordinates $(\theta,t)$ on $M$ and
$(\theta,t;\eta,\tau)$ on $T^*M$, with $\theta\in \mo]-\pi,\pi]$ and $t\in \R$,
so that the geometry is described as follows.  We set $J = \mo]-2,2[$ and $\bb =
\opb{q}(J)$.  We let $z_0 = (-2,0)$, $z_1 = (2,0)$, $z'_a$, $a\in A$, be the
intersection points of $\fib_0$ and $C_\Lambda$ ($A$ is a finite set).  The
curve $C_\Lambda \cap \bb$ is the disjoint union
\begin{equation*}
C_\Lambda \cap \bb = P_0 \sqcup P_1
\sqcup \bigsqcup_{a \in A} (\{z'_a\} \times J), 
\end{equation*}
where $P_0$ and $P_1$ are the parabolas
\begin{align*}
 P_0 &= \{ (\theta, 2(\theta+2)^2); \; \theta\in \mo]-3,-1[\}, \\ 
 P_1 &= \{ (\theta, 2 (\theta-2)^2); \;
 \theta\in \mo]1,3\mc[ \} .
\end{align*}
We let $\Omega_0$, $\Omega_1$ be the open subsets bounded by $P_0$, $P_1$, that
is, $\Omega_0 = \{ (\theta,y)\in \bb$; $y > 2 (\theta+2)^2\}$ and
$\Omega_1 = \{ (\theta,y)\in \bb$; $y > 2 (\theta-2)^2\}$.  We also set
$\Lambda_{P_0} = \Lambda \cap \opb{\pi_M}(P_0)$ and
$\Lambda_{P_1} = \Lambda \cap \opb{\pi_M}(P_1)$.

We denote by $z'_i = (\theta'_i,1)$, $z''_i = (\theta''_i,1)$ the intersections
of $P_i$ and $\fib_{1}$, for $i= 0, 1$.  We also denote by $p_0$, $p_1$,
$p'_0$, $p''_0$, $p'_1$, $p''_1$ the points of $\Lambda$ (well-defined up to a
positive multiple) above $z_0$, $z_1$, $z'_0$, $z''_0$, $z'_1$, $z''_1$.

\medskip

In this section we consider a simple sheaf $F \in \Derb(\corC_{M})$ along
$\Lambda$ and we assume
\begin{equation}
  \label{eq:hypFimdir_cst}
\left\{  \hspace{-4mm} \begin{minipage}[c]{11cm}
\begin{itemize}
\item [(a)] $\dot\SSi(F) = \Lambda$ and $F \in \Dersf_\Lambda(\cor_M)$,
\item [(b)] there exists $L \in \Derb(\corC)$ such that $(\roim{q}F)|_J \simeq
  \corC_{\{0\}} \oplus L_J$,
\item [(c)] there exists $A>0$ such that $F|_{M \setminus (\cer \times
    [-A,A])}$ has constant cohomology sheaves.
\end{itemize}
  \end{minipage} \right.
\end{equation}

Since $F$ is simple along $\Lambda$, it has a shift $s(p) \in \Z$ at any $p\in
\Lambda$.  As recalled after Definition~\ref{def:simple_pure}, the shift is
locally constant outside the cusps and changes by $1$ when $p$ crosses a cusp.
Let us denote by $\Xi_{c_0} = \Lambda \cap T^*_{c_0}M$ and $\Xi_{c_1} = \Lambda
\cap T^*_{c_1}M$ the half-lines of $\Lambda$ over the cusps.  Hence $s(p)$ takes
two values over $\Lambda \setminus (\Xi_{c_0} \cup \Xi_{c_1})$, say $s$ and
$s+1$.  We let $\Lambda_0$ be the connected component of $\Lambda \setminus
(\Xi_{c_0} \cup \Xi_{c_1})$ where the shift is $s$ and we let $\Lambda_1$ be the
other component.  We also recall that $\cor_{\mo]-\infty,0[}$ has shift $-1/2$
at $(0;1)$ and $\cor_{\mo]-\infty,0]}$ has shift $1/2$ at $(0;-1)$ (see
Example~\ref{ex:shift}).  In particular the constant sheaf on a closed or open
interval has the same shifts at both ends. The constant sheaf on a half-closed
interval has different shifts at both ends.

\begin{remark}
\label{rem:dualite}
The hypothesis~\eqref{eq:hypFimdir_cst} is stable by duality up to the symmetry
$(\theta,t) \mapsto (-\theta,t)$ of $\cer \times \R$ and up to changing
$\Lambda$ by $\Lambda^a$.  Indeed we have, for a general $F \in \Derb(\corC_M)$,
$\SSi(\DD F) = (\SSi(F))^a$ and $\roim{q}(\DD F) \simeq \DD (\reim{q}F)$.
Moreover, on $\R$ we have $\DD(\corC_{\{0\}}) \simeq \corC_{\{0\}}$.
\end{remark}

\subsection{Some $F$-linked points}

By Proposition~\ref{prop:cons_sh_cercle}, for any $t\in \R$ the sheaf
$F|_{\fib_t}$ can be decomposed uniquely (though non canonically) as a direct
sum of locally constant sheaves and sheaves of the type $\oim{e}(\corC_I)[i]$
for intervals $I\subset \R$ and shifts $i\in \Z$, where $e \cl \R \to \cer$ is
the natural map.
Moreover, if $\fib_t$ meets $C_\Lambda$ only at smooth points and transversally,
then Lemma~\ref{lem:bonne_im_inv} implies that $F|_{\fib_t}$ is simple.  In this
case, for a given $(x;\xi) \in \dT^*\fib_t$ there exists at most one interval
$I$ in the decomposition such that $(x;\xi) \in \SSi(\oim{e}(\corC_I))$.

We say that two points $(x;\xi)$, $(x';\xi') \in \Lambda$ are $F$-conjugate if
there exists $t$ such that 
\begin{itemize}
\item [-] $q(x) = q(x') = t$ and $\fib_t$ meets $C_\Lambda$ only at smooth points
  and transversally,
\item [-] $F|_{\fib_t}$ has a direct summand $\oim{e}(\corC_I)[i]$ where $I$ has
  ends $a$ and $b$ such that $x=e(a)$ and $x' = e(b)$,
\item [-] $(x;\xi_1)$, $(x';\xi'_1) \in \dot\SSi(F|_{\fib_t})$ where $\xi_1$,
  $\xi'_1$ are the projections of $\xi$, $\xi'$ to $T^*\fib_t$.
\end{itemize}
By unicity of the decomposition, if $(x;\xi)$ has a conjugate point, it is
unique (up to a positive multiple).
In particular, the points $p'_0$, $p''_0$, $p'_1$, $p''_1$
have conjugate points, that we denote by $\ol{p'_0}$, $\ol{p''_0}$,
$\ol{p'_1}$, $\ol{p''_1}$.

We will also use the following notations:
\begin{equation}
\label{eq:not_WiUi}
\begin{alignedat}{2}
W_0 &= \mo]-3,-1\mc[ \times J, 
&\quad U_0 &= W_0 \setminus \ol{\Omega_0} ,  \\
W_1 &= \mo]1,3\mc[ \times J, 
&\quad U_1 &= W_1 \setminus \ol{\Omega_1} .
\end{alignedat}
\end{equation}
We recall that $W_0 \cap C_\Lambda = P_0$ and $W_1 \cap C_\Lambda = P_1$.

\begin{lemma}
\label{lem:autour_des_deux_parab}
There exist $L^0 ,L^1 \in \Derb(\corC)$ such that
\begin{equation}
\label{eq:aut_parab1}
F|_{W_0} \simeq \corC_{U_0}[1] \oplus L^0_{W_0}
\quad \text{and} \quad
F|_{W_1} \simeq \corC_{\ol{U_1} \cap W_1}[1] \oplus L^1_{W_1} .
\end{equation}
\end{lemma}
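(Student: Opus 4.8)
The plan is to analyse $F$ near each of the two tangency points $z_0$ and $z_1$ by means of the structure results of Section~\ref{sec:shcyl}, and then to restrict to $W_0$ and $W_1$.

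First I would pin down the microsupport. Since $W_i\cap C_\Lambda=P_i$ with $P_i$ an embedded smooth curve carrying no double point of $C_\Lambda$, hypothesis~\eqref{eq:hypLambda}(b) gives $\SSi(F)\cap\dT^*W_i\subset\Lambda\cap\opb{\pi_M}(P_i)=\Lambda_{P_i}$, which is one of the two halves of $T^*_{P_i}M$; by~\eqref{eq:hypLambda}(e),(f) it is the half that, over $z_0$, points into $\Omega_0$ and, over $z_1$, points out of $\Omega_1$. Hence, inside $\dT^*W_0$ the microsupport of $F$ equals that of $\corC_{U_0}$ (and of $\corC_{W_0\setminus U_0}$), and inside $\dT^*W_1$ it equals that of $\corC_{\Omega_1\cap W_1}$ (and of $\corC_{\ol{U_1}\cap W_1}$), using that the constant sheaves on an open region and on its closed complement have the same microsupport off the zero section.

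Next I would run the local argument of step~(i) of the proof of Proposition~\ref{prop:description_F_simpleLambda_cyl}, which is local in the base. A diffeomorphism of $W_1$ onto a product carrying $\Omega_1\cap W_1$ to an ``upper half-product'' straightens $\Lambda_{P_1}$ into (a half of) a relative cotangent bundle over the base, so Corollary~\ref{cor:opbeqv} reduces to dimension one; Corollary~\ref{cor:cons_sh_R} together with the simplicity of $F$ along $\Lambda_{P_1}$ then forces $F|_{W_1}\simeq L^1_{W_1}\oplus C_1$, where $L^1\in\Derb(\corC)$ (here $W_1$ is simply connected) and $C_1$ is either $\corC_{\Omega_1\cap W_1}[m]$ — the constant sheaf on the open region — or $\corC_{\ol{U_1}\cap W_1}[m']$ — the constant sheaf on its closed complement — for some shift. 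Symmetrically, applying the same reasoning to $\DD F$ (which is simple along $\Lambda^a$; cf.\ Remark~\ref{rem:dual_Lambda_cyl}) yields $F|_{W_0}\simeq L^0_{W_0}\oplus C_0$ with $C_0$ equal to $\corC_{\ol{\Omega_0}\cap W_0}[n]$ or $\corC_{U_0}[n']$.

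The remaining, and main, point is to decide which alternative holds and to compute the shift, so as to get $C_0=\corC_{U_0}[1]$ and $C_1=\corC_{\ol{U_1}\cap W_1}[1]$. I would do this with hypothesis~\eqref{eq:hypFimdir_cst}(b). By~\eqref{eq:hypLambda}(d),(g) the fibre $\fib_0$ meets $C_\Lambda$ away from $z_0,z_1$ only transversally at smooth points, so near $t=0$ the sheaf $\roim{q}F$ is a direct sum of a locally constant sheaf and of the contributions of $z_0$ and of $z_1$, which a Morse-type computation (in the spirit of the proof of Lemma~\ref{lem:bonne_im_dir}) reads off from $C_0$ and $C_1$. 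Matching this against $(\roim{q}F)|_J\simeq\corC_{\{0\}}\oplus L_J$ — crucially the fact that the skyscraper $\corC_{\{0\}}$ lies in degree $0$ — eliminates the open-region alternatives and every other value of the shift, leaving only $C_0=\corC_{U_0}[1]$ and $C_1=\corC_{\ol{U_1}\cap W_1}[1]$. (Alternatively, once the $W_0$ statement is established, the $W_1$ statement follows from Remark~\ref{rem:dualite}: the symmetry $(\theta,t)\mapsto(-\theta,t)$ combined with $\DD$ and $\Lambda\mapsto\Lambda^a$ swaps $z_0$ with $z_1$ and replaces the constant sheaf on an open region by the constant sheaf on the closure of the complementary region.) The hard part is precisely this last step, tying the local open/closed-region and shift ambiguity at the tangent points to the global direct-image hypothesis; everything before it is a routine use of the results of Sections~\ref{sec:conshR}--\ref{sec:shcyl}.
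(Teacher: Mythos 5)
Your reduction to the two alternatives with an undetermined shift on each of $W_0$, $W_1$ (straightening, Corollary~\ref{cor:opbeqv}, Corollary~\ref{cor:cons_sh_R}, simplicity), and the remark that the $W_1$ statement follows from the $W_0$ one by duality, coincide with step~(i) of the paper's proof. The gap is in the decisive step, which you describe but do not carry out, and the mechanism you propose for it would fail. Matching germs of $\roim{q}F$ at $t=0$ against the degree-$0$ skyscraper cannot eliminate the wrong alternative: the local Morse datum of $\corC_{\ol{\Omega_0}\cap W_0}$ at $z_0$ for the function $q$ is $\corC$ in degree $0$, the same as that of $\corC_{U_0}[1]$, so both candidates produce an identical one-dimensional, degree-$0$ jump of $\roim{q}F$ across $t=0$ from the side $t\geq 0$ (near $0$ the would-be contribution $\corC_{[0,2[}$ of $\corC_{\ol{\Omega_0}}$ is indistinguishable from $\corC_{\{0\}}$ on that side); likewise $\corC_{\ol{U_1}\cap W_1}[1]$ and $\corC_{\Omega_1\cap W_1}[2]$ give the same jump from the side $t\leq 0$. (Incidentally, at $z_0$ it is the \emph{open} piece $\corC_{U_0}[1]$ that survives, so ``eliminates the open-region alternatives'' cannot be the right criterion.) Moreover, your assertion that near $t=0$ the sheaf $\roim{q}F$ is a direct sum of a locally constant sheaf and of ``contributions'' of $z_0$ and $z_1$ is not justified: the summands of $F|_{W_i}$ are only defined over $W_i$ and in general do not extend along the whole fibers of $q$, so their direct images are not direct summands of $\roim{q}F$.

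What actually closes the argument in the paper is global over $J$, not the germ at $0$. First, $\ol{\Omega_0}$ is closed in $\bb$ and contained in $W_0$, so a direct summand $\corC_{\ol{\Omega_0}}[d]$ of $F|_{W_0}$ \emph{does} extend to a direct summand of $F|_{\bb}$; its direct image $\corC_{[0,2[}[d]$ would then be a direct summand of $\corC_{\{0\}} \oplus L_J$, contradicting the uniqueness of the decomposition in Corollary~\ref{cor:cons_sh_R} --- it is the full half-closed interval $[0,2[$, not its germ at $0$, that produces the contradiction; this is what rules out the region enclosed by the parabola, and it has no purely local-at-$t=0$ substitute. Second, to pin $d=1$ in $C_0=\corC_{U_0}[d]$ the paper again argues globally: the excision triangle gives a morphism $\corC_{\ol{\Omega_0}}[d-1]\to F|_{W_0}$, which extends to $\bb$; its cone $F'$ has $\dot\SSi(F')=(\Lambda\cap T^*\bb)\setminus\Lambda_{P_0}$, so $(0;1)\notin\SSi(\roim{q}F')$, the skyscraper cannot survive in $\roim{q}F'$, and one gets a non-zero map $\corC_{[0,2[}[d-1]\to\corC_{\{0\}}$, forcing $d=1$. (A local determination of the shift along your lines could work once the correct alternative is known, but even that requires justifying, via~\eqref{eq:hypLambda}(h) and a non-characteristic deformation over a thin band, that the jump of $\roim{q}F$ at $0$ from the side $t\geq 0$ is computed by the Morse datum at $z_0$ alone.) Without these global ingredients the key step of your proof does not go through.
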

\begin{proof}
  (i) By duality the second isomorphism is reduced to the first one (see
  Remark~\ref{rem:dual_Lambda_cyl}) and we only prove the first one.  As in
  part~(i) of the proof of
  Proposition~\ref{prop:description_F_simpleLambda_cyl} we know that $F|_{W_0}$
  is the sum of a constant sheaf, say $L^0_{W_0}$, and $\corC_V[d]$ where $d$
  is some integer and $V = \ol{\Omega_0}$ or $V = U_0$.  However, if
  $V = \ol{\Omega_0}$, the decomposition
  $F|_{W_0} \simeq \corC_V[d] \oplus L^0_{W_0}$ extends to $\bb$ and
  $\roim{q}(\corC_V[d])$ is a direct summand of $\roim{q}F|_J$.  Since
  $\roim{q}(\corC_{\ol{\Omega_0}}[d]) \simeq \corC_{[0,2[}[d]$, this
  contradicts~\eqref{eq:hypFimdir_cst}-(b). Hence $V = U_0$.

  \sui (ii) It remains to see that $d=1$. There exists a non zero morphism
  $u\cl \corC_{\ol{\Omega_0}}[d-1] \to F|_{W_0}$ and a distinguished triangle
  $\corC_{\ol{\Omega_0}}[d-1] \to F|_{W_0} \to L^0_{W_0} \oplus \corC_{W_0}[d]
  \to \corC_{\ol{\Omega_0}}[d]$.
  Then the morphism $u$ extends to $\bb$ as
  $u'\cl \corC_{\ol{\Omega_0}}[d-1] \to F|_{\bb}$.  Let $F'$ be the cone of
  $u'$.  Then $\SSi(F') = (\Lambda \cap T^*\bb) \setminus \Lambda_{P_0}$ and
  $\SSi(\roim{q}F')$ does not contain $(0;1)$.  It follows that $\corC_{\{0\}}$
  cannot be a direct summand of $\roim{q}F'$.  Since $\roim{q}F'$ is the cone
  of
$$
\roim{q}(u') \cl \corC_{[0,2[}[d-1] \to \roim{q}F|_J \simeq
  \corC_{\{0\}} \oplus L_J ,
$$
we deduce that we have a non zero morphism
$ \corC_{[0,2[}[d-1] \to \corC_{\{0\}}$.  This implies that $d=1$ and the lemma
is proved.
\end{proof}

It follows from Lemma~\ref{lem:autour_des_deux_parab} and Example~\ref{ex:shift}
that the points of $\Lambda_{P_0}$ have shift $1/2$ and the points of
$\Lambda_{P_1}$ have shift $3/2$.  Hence $s=1/2$, $\Lambda_{P_0} \subset
\Lambda_0$ and $\Lambda_{P_1} \subset \Lambda_1$.

\begin{lemma}
\label{lem:shift_extremites}
We have either $\ol{p'_0} = p''_0$ or $F$ has different shifts at $p'_0$ and
$\ol{p'_0}$.
In the same way $\ol{p''_0} = p'_0$ or $F$ has different shifts at $p''_0$ and
$\ol{p''_0}$ and the same assertions hold for the points $p'_1$, $p''_1$.
\end{lemma}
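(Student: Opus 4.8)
The plan is to read the conjugate point $\ol{p'_0}$ off the decomposition of $F|_{\fib_1}$ given by Proposition~\ref{prop:cons_sh_cercle} and to compare the shift of $F$ at its two ends. Since $z'_0$ is a smooth point of $C_\Lambda$ at which $\fib_1$ meets $C_\Lambda$ transversally, $F|_{\fib_1}$ is simple near $z'_0$ by Lemma~\ref{lem:bonne_im_inv}, so $\dot\SSi(F|_{\fib_1})\cap T^*_{z'_0}\fib_1$ is a single half-line and there is a unique direct summand of $F|_{\fib_1}$ whose microsupport meets it, necessarily of the form $S=\oim{e}(\corC_I)[j]$ and not locally constant. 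Writing $a,b$ for the ends of $I$ with $e(a)=z'_0$, the point $\ol{p'_0}$ is the point of $\Lambda$ over $e(b)$, and (the restriction $F\mapsto F|_{\fib_1}$ shifting all shifts by the same amount) the shift of $F$ at $\ol{p'_0}$ is obtained from that of $S$ at $b$. By Lemma~\ref{lem:autour_des_deux_parab} we have $F|_{W_0}\simeq\corC_{U_0}[1]\oplus L^0_{W_0}$, and $\corC_{U_0}|_{\fib_1}$ is near $z'_0$ the degree $-1$ constant sheaf on the component of $U_0\cap\fib_1$ lying on the side of $z'_0$ away from $\Omega_0$; hence $j=1$ and $I$ is open at the end $a$. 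The analogous computation at $z''_0,z'_1,z''_1$, using $\corC_{\ol{U_1}\cap W_1}[1]$ near $W_1$, shows that $F|_{\fib_1}$ has an ``open, degree $-1$'' jump at $z''_0$ and a ``closed, degree $-1$'' jump at $z'_1$ and $z''_1$; and, as noted after Lemma~\ref{lem:autour_des_deux_parab}, $F$ has shift $1/2$ along $\Lambda_{P_0}$ (hence at $p'_0,p''_0$) and $3/2$ along $\Lambda_{P_1}$ (hence at $p'_1,p''_1$).

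Next I would compute the shift of $S=\oim{e}(\corC_I)[1]$ at $b$ from Example~\ref{ex:shift}: $\oim{e}(\corC_I)$ has shift $-1/2$ at an open end of $I$ and $1/2$ at a closed end, so $S$ has shift $1/2$ at an open end and $3/2$ at a closed end; the case where $I$ has length a multiple of $2\pi$ is impossible, since then $\oim{e}(\corC_I)$ has a two dimensional stalk over $e(a)=e(b)$ and is not simple, contradicting the simplicity of $F|_{\fib_1}$. As $I$ is open at $a$ and $F$ has shift $1/2$ at $p'_0$, two cases remain. If $I$ is half closed, i.e. closed at $b$, then $F$ has shift $3/2$ at $\ol{p'_0}$, which differs from the shift $1/2$ at $p'_0$, and the second alternative of the lemma holds. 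If $I$ is open at $b$, then $F$ has shift $1/2$ at $\ol{p'_0}$ too, and one must show $\ol{p'_0}=p''_0$.

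In this last case $e(b)$ is a point of $C_\Lambda\cap\fib_1$ at which $F|_{\fib_1}$ carries an ``open, degree $-1$'' jump, with $e(b)\neq z'_0$; by the first paragraph $e(b)\notin\{z'_1,z''_1\}$, so $e(b)\in\{z''_0\}\cup\{z'_a\}_{a\in A}$. It then remains to exclude the transverse crossing points: near $(z'_a,1)$ the curve $C_\Lambda$ is the vertical line $\{z'_a\}\times J$, so by Corollary~\ref{cor:opbeqv} $F$ is there a pullback $\opb{p}$ from $\cer$, whence the jump of $F|_{\fib_t}$ at $z'_a$ is the same shift of a fixed constant sheaf on a half-line of $\cer$ for all nearby $t$; combining this with the hypothesis~\eqref{eq:hypFimdir_cst} that $\roim{q}F$ has $\corC_{\{0\}}$ as a direct summand — which, exactly as in the proof of Lemma~\ref{lem:autour_des_deux_parab}, is governed by the two tangency points $z_0,z_1$ and not by the $z'_a$ — one rules out that the arc on which $S$ equals the rank one constant sheaf of degree $-1$ terminates at some $z'_a$. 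Hence $e(b)=z''_0$, and since $\dot\SSi(F|_{\fib_1})\cap T^*_{z''_0}\fib_1$ is a single half-line, $\ol{p'_0}=p''_0$, the first alternative.

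The assertion for $\ol{p''_0}$ follows verbatim, the jump of $F|_{\fib_1}$ at $z''_0$ being again ``open, degree $-1$''. For the pair $p'_1,p''_1$ the jumps at $z'_1,z''_1$ are ``closed, degree $-1$'' and $\Lambda_{P_1}$ has shift $3/2$, so the roles of open/closed and of the shifts $1/2,3/2$ are interchanged and the same argument applies; alternatively one reduces to the $P_0$ case by the symmetry $(\theta,t)\mapsto(-\theta,t)$ together with duality (Remark~\ref{rem:dualite}). \emph{The main obstacle} is the exclusion of the transverse crossing points $(z'_a,1)$ as the terminal end $b$ of $I$ when $I$ is open at both ends: this is the one place where the explicit geometry of the two parabolas $P_0,P_1$ and the skyscraper hypothesis on $\roim{q}F$ are genuinely needed, the rest of the proof being the open/closed bookkeeping for $\oim{e}(\corC_I)$ together with Lemma~\ref{lem:autour_des_deux_parab}.
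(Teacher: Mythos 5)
Your reduction agrees with the paper up to the crucial point: using Lemma~\ref{lem:autour_des_deux_parab} to see that the summand of $F|_{\fib_1}$ attached to $z'_0$ is $\oim{e}(\corC_I)[1]$ with $I$ open at the end over $z'_0$, and observing that the half-closed case gives the second alternative, so that everything comes down to showing $\ol{z'_0}=z''_0$ when $I$ is open at both ends. In that case, however, your argument has a genuine gap. After eliminating $z'_1,z''_1$ by their local jump type, you must rule out that the second end $e(b)$ is one of the transverse crossing points $z'_a$, and there you only offer that $F$ is locally a pullback near $(z'_a,1)$ (so the jump there is locally constant in $t$) ``combined with'' the skyscraper hypothesis, which you assert is ``governed by the two tangency points and not by the $z'_a$''. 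That is a heuristic, not a proof: the hypotheses \eqref{eq:hypLambda} and \eqref{eq:hypFimdir_cst} impose no local constraint on the simple sheaf structure of $F$ along the vertical components $\{z'_a\}\times J$ of $C_\Lambda$, so nothing local forbids the summand $\oim{e}(\corC_I)[1]$ from having an open end at some $z'_a$; and how the global summand $\corC_{\{0\}}$ of $\roim{q}F$ interacts with the decomposition of $F|_{\fib_1}$ is precisely what has to be established.

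The paper closes this case by a global argument which you would still need (or an equivalent). Assuming $I$ open, one passes to $G=(\DD'F)[1]$, so the relevant summand of $G|_{\fib_1}$ becomes $\oim{e}\corC_{\ol{I}}$, and one takes the canonical section $1_{\ol{I}}$, viewed in $H^0(\cer;G|_{\fib_1})\simeq (H^0\roim{q}G)_1$. Hypothesis \eqref{eq:hypFimdir_cst}(b) implies that $H^0(\roim{q}G)|_J$ is constant, so this germ extends to a section of $H^0G$ over all of $\bb$. Then the decomposition $H^0G|_{W_0}\simeq \corC_{\ol{U_0}\cap W_0}\oplus\corC^n_{W_0}$, with $\ol{U_0}\cap W_0$ connected, forces the component of the extended section on $\corC_{\ol{U_0}\cap W_0}$ to be nonzero (its restriction to $\fib_1$ contains the branch $\mo]a',z'_0]$), hence to have support all of $\ol{U_0}\cap W_0$; restricting back to $\fib_1$ it therefore also contains $[z''_0,b'\mc[$, which is what forces $\ol{z'_0}=z''_0$. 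Some such propagation through the connected region bounded by $P_0$, driven by the skyscraper summand, is indispensable; your elimination of the points $z'_a$ does not go through without it.
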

\begin{proof}
  By symmetry between $p'_0$ and $p''_0$ and by Remark~\ref{rem:dualite} it is
  enough to prove the first assertion. 

  \sui (i) Let us write $p'_0 = (z'_0; \xi'_0)$ and $\ol{p'_0} = (\ol{z'_0},
  \ol{\xi'_0})$.  By the definition of conjugate point we can write $F|_{\fib_1}
  \simeq \oim{e}\corC_I[d] \oplus F'$ where $e \cl \R \to \fib_1 = \cer$ is the
  quotient map and $I$ is an interval of $\R$ with ends $a,b$ such that $e(a) =
  z'_0$ and $e(b) = \ol{z'_0}$.  Then the lemma is equivalent to $\ol{z'_0} =
  z''_0$ or $I$ is half-closed.

  \sui (ii) Using~\eqref{eq:aut_parab1}, we see that $F|_{\fib_1 \cap W_0}$ has
  a direct summand which is $\corC_{\fib_1 \cap U_0}[1]$.  We have $\fib_1 \cap
  U_0 = \mo]a',z'_0\mc[ \cup \mo]z''_0,b'\mc[$ where $a' = (-3,1)$ and
  $b'=(-1,1)$. It follows that $\corC_{]a',z'_0[}[1]$ is a direct summand of
  $(\oim{e}\corC_I[d])|_{W_0}$. Hence $d=1$ and $I$ is open near its end
  projecting to $z'_0$.

\newcommand{\clI}{{\ol{I}}}

\sui (iii) Now we assume that $I$ is not half-closed and we prove that
$\ol{z'_0} = z''_0$.  By~(ii) $I$ is open and, setting $G = (\DD' F)[1]$, $G' =
\DD' F'$, we have $G|_{\fib_1} \simeq \oim{e}\corC_\clI \oplus G'$.  We let
$1_{\clI}$ be the canonical section of $\corC_{\clI}$ and, using the given
decomposition of $G|_{\fib_1}$, we set
$$
s = (1_{\clI},0) \in H^0(\R;\corC_{\clI}) \oplus H^0(\cer;G')
\simeq H^0(\cer; G|_{\fib_1}) .
$$
By proper base change we have
$H^0(\cer; G|_{\fib_1}) \simeq (H^0(\roim{q}G))_1$ and $s$ corresponds to a
germ $\bar s$ of $H^0(\roim{q}G)$ at $1$.  By the hypothesis on $\roim{q}F$ the
restriction $\roim{q}G|_J$ is the sum of $\corC_{\{0\}}[-1]$ and a constant
sheaf. Hence we can extend $\bar s$ to
$$
s' \in H^0(J;\roim{q}G) \simeq H^0(B;G) .
$$
Both $s$ and $s'$ induce sections of the cohomology sheaves in degree $0$ that
we denote respectively by $s_0 \in \sect(\cer; H^0G|_{\fib_1})$ and $s'_0 \in
\sect(B;H^0G)$ and we have $s'_0|_{\fib_1} = s_0$.

\sui (iv) The decomposition~\eqref{eq:aut_parab1} gives
$H^0G|_{W_0} \simeq \corC_{\ol{U_0} \cap W_0} \oplus \corC^n_{W_0}$ for some
integer $n$.  Let us write $s'_0|_{W_0} = (s_1,s_2)$ according to this
decomposition.  Since $\ol{U_0} \cap W_0$ is connected we have either $s_1 = 0$
or $\supp(s_1) = \ol{U_0} \cap W_0$.  Hence either $s_1|_{\fib_1} = 0$ or
$\supp(s_1)|_{\fib_1} = \mo]a',z'_0] \cup [z''_0,b'[$.

Since $s'_0|_{\fib_1} = s_0$ is induced by the section $1_{\clI}$ and
$\corC_{]a',z'_0]}$ is a direct summand of $(\oim{e}\corC_\clI)|_{W_0}$, we
must have $s_1 \not=0$.  It follows that $\corC_{[z''_0,b'[}$ is also a direct
summand of $(\oim{e}\corC_\clI)|_{W_0}$ and we obtain $\ol{z'_0} = z''_0$ as
claimed.
\end{proof}

\begin{lemma} 
\label{lem:p0p1lies}
Let $V$ be any neighborhood of $\fib_0$. Then the points $p_0$ and $p_1$ are
$F$-linked over $V$.
\end{lemma}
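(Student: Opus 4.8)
The plan is to view Lemma~\ref{lem:p0p1lies} as the degenerate case $t_0=t_1=0$, $J'=\{0\}$ of Proposition~\ref{prop:mulink-projdim1}: since $\opb q(\{0\})=\fib_0$, the conclusion ``$F$-linked over any open set containing $\opb q([t_0,t_1])$'' is exactly ``$F$-linked over any neighbourhood of $\fib_0$''. The catch is that hypothesis~(iv) of Lemma~\ref{lem:bonne_im_dir} cannot be arranged here: on any tube $\opb q(]-\varepsilon,\varepsilon[)$ both $\Lambda_{P_0}$ and $\Lambda_{P_1}$ contain a covector collinear with $dq$, because $\fib_0$ is tangent to $C_\Lambda$ at the two points $z_0,z_1$. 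So the argument of Proposition~\ref{prop:mulink-projdim1} must be redone by hand, tying the two tangent points together through the $\corC_{\{0\}}$ summand of $\roim q F$. First I would reduce to proving that $p_0,p_1$ are $F$-linked over one fixed tube $V\eqdot\opb q(]-\varepsilon,\varepsilon[)$ for $\varepsilon>0$ small: every neighbourhood of the compact set $\fib_0$ contains such a $V$, and being $F$-linked over a subset implies the same over any larger open set, since restriction commutes with $\mu hom$ and with Sato's morphism. Over $V$, proper base change (the fibres of $q$ are circles) together with \eqref{eq:hypFimdir_cst}(b) gives $\roim q(F|_V)\simeq\corC_{\{0\}}\oplus L_{]-\varepsilon,\varepsilon[}$.

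The splitting of $\roim q(F|_V)$ produces, via the adjunctions $(\opb q,\roim q)$ and $(\roim q,\epb q)$ (recall $\roim q=\reim q$) together with $\epb q\,\corC_{\{0\}}\simeq\opb q\,\corC_{\{0\}}[1]=\corC_{\fib_0}[1]$, two morphisms: $\psi\colon\corC_{\fib_0}\to F|_V$, adjoint to the inclusion $\corC_{\{0\}}\hookrightarrow\roim q F$, and $\phi\colon F|_V\to\corC_{\fib_0}[1]$, adjoint to the projection $\roim q F\to\corC_{\{0\}}$. By \eqref{eq:hypLambda}(e)--(f) the pair $(\Lambda,F)$ is, near $z_0$, in the normal form ``$\Lambda^a$'' of \eqref{eq:defLambda_cyl} and, near $z_1$, in the normal form ``$\Lambda$''; by Lemma~\ref{lem:autour_des_deux_parab}, $F|_{W_0}\simeq\corC_{U_0}[1]\oplus L^0_{W_0}$ and $F|_{W_1}\simeq\corC_{\ol{U_1}\cap W_1}[1]\oplus L^1_{W_1}$, so $F$ has shift $1/2$ at $p_0$ and $3/2$ at $p_1$, while $\corC_{\fib_0}$ has shift $1/2$ at both points. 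Hence the shifts of source and target of $\psi$ agree at $p_0$, and those of $\phi$ agree at $p_1$.

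The crux is to show that $\psi$ is a microlocal isomorphism at $p_0$ and $\phi$ a microlocal isomorphism at $p_1$; this is the step where the local geometry at the tangencies genuinely enters, and I expect the bookkeeping to be the hardest part. I would argue near $z_0$ (the case of $\phi$ near $z_1$ being dual, via Remark~\ref{rem:dual_Lambda_cyl}) using the Morse-theoretic direct-image computations already employed in the proofs of Lemmas~\ref{lem:autour_des_deux_parab} and~\ref{lem:bonne_im_dir}. The nonzero morphism $u_0\colon\corC_{\ol{\Omega_0}}\to F$ built in the proof of Lemma~\ref{lem:autour_des_deux_parab} has $\roim q(u_0)\colon\corC_{[0,2[}\to\corC_{\{0\}}\oplus L_J$ with nonzero $\corC_{\{0\}}$-component (otherwise $\corC_{\{0\}}$ would survive as a direct summand of the $\roim q F'$ there, contradicting that $(0;1)\notin\SSi(\roim q F')$), and on $W_0$ the morphism $u_0$ is, up to the constant summand $L^0_{W_0}$, the canonical morphism $\corC_{\ol{\Omega_0}}\to\corC_{U_0}[1]$ generating $\Ext^1(\corC_{\ol{\Omega_0}},\corC_{U_0})\simeq\corC$, whose cone is locally constant near $z_0$ (a one-variable computation, parallel to $\roim q(\corC_{\ol{\Omega_0}}[d])\simeq\corC_{[0,2[}[d]$). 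Tracing this, together with the natural morphism $\corC_{\fib_0\cap W_0}\to\corC_{U_0}[1]$, through the adjunctions identifies $\psi$, near $z_0$, with a morphism whose cone is locally constant there; hence $p_0\notin\SSi$ of the cone of $\psi$, i.e.\ $\psi$ is a microlocal isomorphism at $p_0$.

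Granting this, fix $u\in\Aut(F|_V)$. Since $\psi$ is a microlocal isomorphism at $p_0$, Lemma~\ref{lem:umup_DMp} and Theorem~\ref{thm:germemuhom} give $u^\mu_{p_0}=\lambda_0(u)$, where $\lambda_0(u)\in\corC^\times$ is determined by $u\circ\psi\equiv\psi\circ(\lambda_0(u)\,\id)$ in $\Derb(\corC_M;p_0)$ (using $\End(\corC_{\fib_0})=H^0(\fib_0;\corC)=\corC$); dually $u^\mu_{p_1}=\lambda_1(u)$ with $\phi\circ u\equiv(\lambda_1(u)\,\id)\circ\phi$ in $\Derb(\corC_M;p_1)$. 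Let $\mu(u)\in\corC^\times$ be the $\corC_{\{0\}}$-diagonal entry of $\roim q(u)\in\Aut(\corC_{\{0\}}\oplus L_{]-\varepsilon,\varepsilon[})$. By naturality of the adjunction, $u\circ\psi$ and $\psi\circ(\mu(u)\,\id)$ are adjoint to $\roim q(u)\circ(\text{incl})$ and to $\mu(u)\cdot(\text{incl})$; their difference is adjoint to a morphism $\corC_{\{0\}}\to L_{]-\varepsilon,\varepsilon[}$ and therefore factors through the locally constant sheaf $\opb q L_{]-\varepsilon,\varepsilon[}$, whose microsupport misses $p_0$, so $u\circ\psi\equiv\psi\circ(\mu(u)\,\id)$ in $\Derb(\corC_M;p_0)$ and $\lambda_0(u)=\mu(u)$. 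The same computation at $p_1$ (now with $\epb q L_{]-\varepsilon,\varepsilon[}\simeq\opb q L_{]-\varepsilon,\varepsilon[}[1]$, again locally constant and hence with microsupport missing $p_1$) gives $\lambda_1(u)=\mu(u)$. Therefore $u^\mu_{p_0}=\mu(u)=u^\mu_{p_1}$ for every $u\in\Aut(F|_V)$, i.e.\ $p_0$ and $p_1$ are $F$-linked over $V$, and hence over every neighbourhood of $\fib_0$.
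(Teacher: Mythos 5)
Your reduction to a small tube $V=\opb{q}(\mo]-\varepsilon,\varepsilon[)$ and your final bookkeeping (deducing $u^\mu_{p_0}=\mu(u)=u^\mu_{p_1}$ from microlocal statements about $\psi$ and $\phi$) are sound, but the crux claim is false as stated, and it is exactly the step you left as a sketch. You assert that the cone of $\psi\cl\corC_{\fib_0}\to F|_V$ is locally constant near $z_0$, i.e.\ $p_0\notin\SSi(\mathrm{cone}(\psi))$. This cannot happen: near $p_0$ one has $\SSi(\corC_{\fib_0})=T^*_{\fib_0}M$ while $\dot\SSi(F)=\Lambda$ is the conormal to the parabola $P_0$, and these two Lagrangians meet only along the ray $\Rp p_0$. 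By the triangular inequality $\SSi(\mathrm{cone}(\psi))$ contains $\SSi(\corC_{\fib_0})\setminus\SSi(F)$, hence all points $(\theta,0;0,\tau)$ with $\tau>0$ and $\theta$ close to $-2$, $\theta\neq-2$; since the microsupport is closed it contains $p_0$. The same objection applies to $\phi$ at $p_1$. So no morphism $\corC_{\fib_0}\to F|_V$ can be an isomorphism in $\Derb(\corC_M;p_0)$, and the proposed identification via $\corC_{\ol{\Omega_0}}\to F$ cannot be completed. What your last paragraph actually needs is weaker, namely that the class of $\psi$ in $\Hom_{\Derb(\corC_M;p_0)}(\corC_{\fib_0},F)\simeq H^0(\mu hom(\corC_{\fib_0},F))_{p_0}$ is non-zero (and similarly for $\phi$ at $p_1$); you neither state nor prove this, and proving it is the real content of the lemma: it amounts to comparing the microlocal germ of an automorphism of $F$ at $p_0$ with that of its direct image at $(0;1)$, which is precisely what the Morse-theoretic Lemma~\ref{lem:bonne_im_dir} provides. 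As it stands the proposal has a genuine gap at its central step.

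Moreover, the reason you gave for abandoning the paper's route is not compelling. It is true that hypothesis~(iv) of Lemma~\ref{lem:bonne_im_dir}, read literally with $U=\opb{q}(J_0)\supset\fib_0$, fails because the other tangent point is also conormal to the fibers. But $q_\pi\opb{q_d}(p_0)$ and $q_\pi\opb{q_d}(p_1)$ are the two opposite rays $(0;1)$ and $(0;-1)$ of $\dT^*_0\R$, so in the proof of Lemma~\ref{lem:bonne_im_dir} applied at $p_0$ the extra piece $\Lambda_{P_1}$ only contributes to $\SSi(\roim{q}(F_Z))$ along the ray opposite to the one under study; $\roim{q}(F_Z)$ is still non-characteristic at $(0;\tau_0)$, which is all the argument uses, and the conclusion $v^\mu_{p'}=u^\mu_p$ holds unchanged at each of $p_0$, $p_1$. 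Then Proposition~\ref{prop:mulink-dim1} applied to the summand $\corC_{\{0\}}$ of $\roim{q}F$ (an interval reduced to a point, whose microsupport contains both rays, with the complementary summand constant near $0$) gives $v^\mu_{(0;1)}=v^\mu_{(0;-1)}$, whence the lemma. This is exactly the paper's proof: a direct application of Proposition~\ref{prop:mulink-projdim1} with $t_0=t_1=0$ and $J=\{0\}$, and the degenerate feature you worried about is harmless because the two tangencies live over opposite covectors.
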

\begin{proof}
  This is a special case of Proposition~\ref{prop:mulink-projdim1}, with
  $t_0 = t_1 =0$ and $J = \{0\}$.
\end{proof}

\begin{proposition}
\label{prop:otherFlinkedpoint}
We set $U = \opb{q}(\mo]-\infty,1[)$. Then there exists
$p \in \Lambda \cap T^*U$ such that $p_1$ and $p$ are $F$-linked over $U$ and
$p \not\in \Xi_{c_0} \cup \Xi_{c_1} \cup \Lambda_{P_0} \cup \Lambda_{P_1}$.
\end{proposition}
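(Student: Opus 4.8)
The plan is to propagate $F$-linkedness from the tangency point $p_1$ (the point of $\Lambda$ over $z_1$) to a generic point of $\Lambda$, using the analysis of a tangent point from Section~\ref{sec:shcyl}. First I would record two formal facts: $F$-linkedness over an open set is transitive, and it propagates from a smaller open set to any larger one (because $\mu hom$ and $u\mapsto u^\mu_p$ are local). By Lemma~\ref{lem:p0p1lies} applied with $V=U$ — which is a neighbourhood of $\fib_0$ — the points $p_0$ and $p_1$ are already $F$-linked over $U$; so it suffices to produce $p\in\Lambda\cap T^*U$ outside $\Xi_{c_0}\cup\Xi_{c_1}\cup\Lambda_{P_0}\cup\Lambda_{P_1}$ which is $F$-linked, over some open $W\subseteq U$, to $p_1$ or to $p_0$.

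Near $z_1$ the Lagrangian $\Lambda$ is, after a diffeomorphism, of the model form \eqref{eq:defLambda_cyl}, but a full cylinder $\cer\times J'$ around $\fib_0$ carries the second tangency, at $z_0$, which one must remove first. Using Lemma~\ref{lem:autour_des_deux_parab}, which controls $F$ near $z_0$ (there $F|_{W_0}\simeq\corC_{U_0}[1]\oplus L^0_{W_0}$ and the tangency is of ``$\Lambda^a$'' type), I would build on $\cer\times J'$ — with $J'$ so small that $\cer\times J'\subseteq U$ — a distinguished triangle $F|_{\cer\times J'}\to F'\to N\to[+1]$ in which $N$ is supported in an arbitrarily small neighbourhood of $z_0$ (near $z_0$ it is the excision cone $\corC_{\ol{\Omega_0}}[1]$, so that near $z_0$ one gets $F'\simeq\corC_{W_0}[1]\oplus L^0_{W_0}$, which has empty $\dot\SSi$ there), $F'=F|_{\cer\times J'}$ away from $z_0$, and $F'$ simple along a Lagrangian $\Lambda'$ of the model form \eqref{eq:defLambda_cyl} whose only tangency with $\fib_0$ is $z_1$. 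Applying $\roim q$, using \eqref{eq:hypFimdir_cst}(b) and the Morse-type computation of $\roim q(\corC_{\ol{\Omega_0}})$ as in the proof of Lemma~\ref{lem:bonne_im_dir}, one checks that $\roim q(F')$ satisfies, up to a shift, the hypothesis of Proposition~\ref{prop:alpharcomp_cyl}; moreover one is in case (b) of Proposition~\ref{prop:description_F_simpleLambda_cyl}, since near $z_1$ the sheaf $F'=F$ has the summand $\corC_{\ol{U_1}\cap W_1}[1]$ of \eqref{eq:aut_parab1} and is therefore not of the form $\opb pG''\oplus\corC_{\Omega_1}[\ast]$.

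Now apply Proposition~\ref{prop:alpharcomp_cyl} to $F'$ on $\cer\times J'$. In case (b) there is a transverse intersection point $z'_a$ of $\fib_0$ with $C_\Lambda$ on which a component of $u\cl\corC_{\Omega_1}\to\opb p(\oim e(\corC_{I_a}))$ is non-zero; after replacing $u$ by a minimal representative (which does not change $\coker u$) Proposition~\ref{prop:linkedpt-minimalset} gives that $p_1$ — which is the point over the vertex of $P_1$, lying in $\dot\SSi(\corC_{\Omega_1})$ — is $F'$-linked over $\cer\times J'$ to a point $p\in\dot\SSi(\opb p(\oim e(\corC_{I_a})))$; such a $p$ lies over the straightened vertical curve through $z'_a$, hence is not over a cusp and, its base point lying neither on $P_0$ nor on $P_1$, satisfies $p\notin\Xi_{c_0}\cup\Xi_{c_1}\cup\Lambda_{P_0}\cup\Lambda_{P_1}$. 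In case (a), where the multiplicity of some $L_{\alpha,r}$ with $(\alpha,r)\neq(1,1)$ in the locally constant part of $F'|_{\fib_t}$ — which near $z_1$ coincides with $F|_{\fib_t}$ — drops as $t$ crosses $0$, I would instead use Proposition~\ref{prop:morph_elem0} and Lemma~\ref{lem:shift_extremites}: for a small accident-free fibre $\fib_\epsilon$ the sheaf $F|_{\fib_\epsilon}$ acquires a summand $\oim e(\corC_I)[d]$ with $I$ half-closed, one endpoint over a point of $\Lambda_{P_1}$ and the other over a point $p$ at which $F$ has shift $\neq 3/2$ (so $p\notin\Lambda_{P_1}$; $p$ is not over a cusp, and a shift/support comparison with \eqref{eq:aut_parab1} rules out $p\in\Lambda_{P_0}$); by Proposition~\ref{prop:mulink-restrdim1} applied to an embedded arc of $\fib_\epsilon$ realising $I$, $p$ is $F$-linked over an open subset of $U$ to the $\Lambda_{P_1}$-endpoint, which in turn is $F$-linked to $p_1$ over $U$ because $\Lambda_{P_1}\cap T^*U$ is a connected arc containing both (the remark after Definition~\ref{def:linkedpair}).

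Finally, in case (b) one passes from $F'$-linkedness back to $F$-linkedness: since $N$ is supported near $z_0$, Lemma~\ref{lem:morph_triang} (the $\Hom$-vanishings being those of Lemma~\ref{lem:automF}) shows that every $u\in\Aut(F|_{\cer\times J'})$ induces an automorphism $u'$ of $F'$ agreeing with $u$ microlocally away from $\SSi N$; as $p_1,p\notin\SSi N$, the $F'$-linkedness of $p_1$ and $p$ over $\cer\times J'$ yields their $F$-linkedness there, hence over $U$. Combined with the $F$-linkedness of $p_0$ and $p_1$ over $U$ and transitivity, $p$ is $F$-linked to $p_1$ over $U$, proving the proposition. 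The main obstacle is the reduction step: building the triangle $F\to F'\to N$ from the ``$\Lambda^a$'' structure near $z_0$ and, above all, tracking the shifts so that $\roim q(F')$ comes out in exactly the form required by Proposition~\ref{prop:alpharcomp_cyl}; the shift/conjugate-point bookkeeping needed to certify that $p$ avoids $\Lambda_{P_0}$ in case (a) is a secondary difficulty.
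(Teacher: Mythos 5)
Your reduction (removing the tangency at $z_0$ by coning off an excision-type morphism built from Lemma~\ref{lem:autour_des_deux_parab}) and your treatment of case~(b) of Proposition~\ref{prop:alpharcomp_cyl} (minimal $u$, Proposition~\ref{prop:linkedpt-minimalset}, then passing from $F'$-linkedness back to $F$-linkedness via Lemma~\ref{lem:morph_triang} and Lemma~\ref{lem:umup_DMp}) are essentially the paper's argument. The genuine gap is your treatment of case~(a). You claim that a drop, across $t=0$, of the multiplicity of a Jordan block $L_{\alpha,r}$ with $(\alpha,r)\neq(1,1)$ forces $F|_{\fib_\epsilon}$, for a nearby accident-free fibre, to acquire a half-closed summand $\oim{e}(\corC_I)[d]$ with one end over $\Lambda_{P_1}$ and the other over a point of different shift. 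This is false: the disappearance of a block $(\alpha,r)\neq(1,1)$ is a monodromy phenomenon, not one visible as such an interval. Take $G=L_{\alpha,1}$ of rank one with $\alpha\neq 1$ and $u\cl\corC_{\Omega_1}\to\opb{p}G$ non-zero; then after the tangency the fibre becomes $\oim{e}(\corC_{[a,b]})$, a \emph{closed} interval with \emph{both} ends over $\Lambda_{P_1}$ (consistent with the alternative $\ol{p'_1}=p''_1$ of Lemma~\ref{lem:shift_extremites}), the block $(\alpha,1)$ dies, and there is no half-closed summand with an end outside $\Lambda_{P_1}$ at all. Worse, in such a situation $\Lambda$ may have no component other than $\Lambda_{P_0}\cup\Lambda_{P_1}$ over a neighbourhood of $\fib_0$, so no admissible $p$ exists locally: your restriction of the whole construction to a small cylinder $\cer\times J'$ makes case~(a) unprovable in principle.

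This is exactly why the paper builds $F'$ over all of $U=\opb{q}(\mo]-\infty,1[)$ rather than over a small cylinder: in case~(a) it uses the relative groups $R^{-1}q_{\alpha,r}(F')$ \emph{globally} on $\mo]-\infty,1[$, the constancy of $F$ at infinity (hypothesis~(c) of~\eqref{eq:hypFimdir_cst}) to get compact support, the decomposition of Corollary~\ref{cor:cons_sh_R} to produce an interval summand with one end at $0$ and the other end at some $t_0\le -2$, the genericity hypothesis~(h) of~\eqref{eq:hypLambda} to identify the fibre $\fib_{t_0}$ as a simple tangency, and finally Corollary~\ref{cor:mulink-projRqar} to link $p_1$ with a point $p$ lying over the distant fibre $\fib_{t_0}$. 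So the missing idea is a global mechanism (propagation of the $(\alpha,r)$-block along the $t$-axis back to the tangency where it was created) replacing your local analysis near $\fib_0$; without it, case~(a) is not handled, and the secondary issues you flag (degree bookkeeping for $\roim{q}(F')$, which the paper settles using~\eqref{eq:hypFimdir_cst}(b) in step~(i) of its proof) are comparatively minor.
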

\begin{proof}
  (i) We have the excision distinguished triangle $\corC_{U_0} \to \corC_{W_0}
  \to \corC_{\ol{\Omega_0}} \to[a] \corC_{U_0}[1]$.  Using the
  decomposition~\eqref{eq:aut_parab1} the morphism $a$ induces $b \cl
  \corC_{\ol{\Omega_0}} \to F|_{W_0}$.  Then $b$ extends uniquely to a morphism
  $c \cl \corC_{\ol{\Omega_0}} \to F|_U$ and we define $F' \in \Derb(\corC_U)$ by
  the distinguished triangle
\begin{equation}
\label{eq:otherFlinkedpoint1}
\corC_{\ol{\Omega_0}\cap U} \to[c] F|_U \to F'   \to[+1] .
\end{equation}
Then $F'|_{W_0}$ is constant and
$\dot\SSi(F') = (\Lambda \cap T^*U) \setminus \Lambda_{P_0}$.  We deduce that
$\dot\SSi(\roim{q}(F'))|_J \subset \{0\} \times \Rp$ and $\corC_{\{0\}}$ cannot
be a direct summand of $\roim{q}(F')$.  The direct image
of~\eqref{eq:otherFlinkedpoint1} by $q$ gives a distinguished triangle
$\corC_{[0,1[} \to[c'] \roim{q}(F|_U) \to \roim{q}(F') \to[+1]$ and the
morphism $\corC_{[0,1[} \to \corC_{\{0\}}$ induced by $c'$ is not $0$.  Hence
$\roim{q}(F')|_J$ is the sum of $ \corC_{]0,1[}$ and a constant sheaf.

\sui (ii) Since $\dot\SSi(F') = (\Lambda \cap T^*U) \setminus \Lambda_{P_0}$,
the sheaf $F'|_{\opb{q}(\mo]-2,1[)}$ satisfies the hypothesis of
Proposition~\ref{prop:description_F_simpleLambda_cyl} with $\Omega = \Omega_1$.
By~\eqref{eq:aut_parab1} we are in the case~(b) of the proposition.  Hence there
exist $G_0 \in \Mod(\corC_\cer)$, a non zero morphism $u \cl \corC_{\Omega_1}
\to \opb{p} G_0$ and $G'' \in \Derb(\corC_\cer)$ such that
$F'|_{\opb{q}(\mo]-2,1[)} \simeq \opb{p} G''|_{\opb{q}(\mo]-2,1[)} \oplus
F_0[1]$, where $F_0 \in \Mod(\corC_{\opb{q}(\mo]-2,1[)})$ is the cokernel of
$u$:
\begin{equation}
\label{eq:otherFlinkedpoint0}
0 \to \corC_{\Omega_1} \to[u] \opb{p} G_0|_{\opb{q}(\mo]-2,1[)} \to F_0 \to 0 .
\end{equation}
Since $\roim{q}(\opb{p} G'')$ is constant, we deduce from~(i) that
$\roim{q}(F_0)$ is the sum of $ \corC_{]0,1[}$ and a constant sheaf on
$\mo]-2,1[$.

We are thus in the situation of Proposition~\ref{prop:alpharcomp_cyl} and we
obtain the conclusion~(a) or~(b) of this proposition. We study these cases
in~(iv) and~(v) below.

\sui (iii) We consider a morphism $f\cl F \to F$ and we will relate
$f^\mu_{p_1}$ with some other $f^\mu_p$.  We remark that
$\RHom(\corC_{\ol{\Omega_0}},F') \simeq 0$ because $F'$ is constant on $W_0$.
Applying Lemma~\ref{lem:morph_triang}, as in the proof of
Lemma~\ref{lem:automF}, we deduce that $f$ induces unique morphisms $f' \cl F'
\to F'$ and $\omega_0 \cl \corC_{\ol{\Omega_0}} \to \corC_{\ol{\Omega_0}}$
defining an automorphism of the triangle~\eqref{eq:otherFlinkedpoint1}. 

By Lemma~\ref{lem:umup_DMp} we have $f^\mu_{p_1} = (f')^\mu_{p_1}$ and
$f^\mu_{p} = (f')^\mu_{p}$ if $p \not\in \SSi(\corC_{\Omega_0})$.  Hence it is
enough to find $p$ such that $p_1$ and $p$ are $F'$-linked.

\sui (iv) As remarked before Proposition~\ref{prop:linkedpt-minimalset} we can
replace $u$ by $\phi \circ u$, with $\phi \in \Aut(G)$,
in~\eqref{eq:otherFlinkedpoint0} without changing $\coker(u)$. Hence we can
assume that $u$ is minimal. If $A_u \not= \emptyset$, we choose $p \in
\dot\SSi(\opb{p}(\oim{e}( \corC_{I_a}))$ for any $a \in A_u$.
Proposition~\ref{prop:linkedpt-minimalset} implies that $p_1$ and $p$ are
$F_0$-linked over $U$.  Since $F_0[1]$ is a direct summand of
$F'|_{\opb{q}(\mo]-2,1[)}$, the automorphism $f'$ induces an automorphism $f_0$
of $F_0$.  By Lemma~\ref{lem:umup_DMp} we have $(f_0)^\mu_{p_1} =
(f')^\mu_{p_1}$ and $(f_0)^\mu_{p} = (f')^\mu_{p}$.  Hence $p_1$ and $p$ are
also $F'$-linked, as required.  It is clear that $p \not\in \Xi_{c_0} \cup
\Xi_{c_1} \cup \Lambda_{P_0} \cup \Lambda_{P_1}$.

\sui (v) If $A_u = \emptyset$, then we are in the case~(a) of
Proposition~\ref{prop:alpharcomp_cyl}.  Hence there exists $(\alpha,r) \not=
(1,1)$ such that $R^0q_{\alpha,r}(F_0) \simeq \corC_{]-2,1[}^a \oplus
\corC_{]-2,0]}^b \oplus \corC_{[0,1[}^c$ with $b\geq 1$.  Since $F_0[1]$ is a
direct summand of $F'|_{\opb{q}(\mo]-2,1[)}$ it follows that
$R^{-1}q_{\alpha,r}(F')|_{\mo]-2,1[}$ has $\corC_{]-2,0]}$ as direct summand.

By the hypothesis~(c) of~\eqref{eq:hypFimdir_cst}, $F$, hence $F'$, is constant
on $\cer\times \mo]-\infty,T[$ for $T\ll0$.  It follows that
$(R^{-1}q_{\alpha,r}(F'))_{]-\infty,1/2[}$ has compact support.  By
Corollary~\ref{cor:cons_sh_R} we can decompose
$R(^{-1}q_{\alpha,r}(F'))_{]-\infty,1/2[}$ into a direct sum of constant sheaves
on intervals.  We obtain one interval, say $I$, of the form $I = [t_0,0]$ or $I
= \mo]t_0,0]$, for some $t_0 \in \mo]-\infty,-2]$.  By the hypothesis~(h)
of~\eqref{eq:hypLambda} the fiber $\fib_{t_0}$ contains no cusp or double point
and there exists a unique $p = (x;\xi) \in \Lambda$ (up to a positive multiple)
such that $q(x) = t_0$ and $q_\pi(\opb{q_d}(p)) \in \dot\SSi(\corC_{I})$.  By
Corollary~\ref{cor:mulink-projRqar} the points $p$ and $p_1$ are $F'$-linked
over $U$.  It is clear that $p \not\in \Xi_{c_0} \cup \Xi_{c_1} \cup
\Lambda_{P_0} \cup \Lambda_{P_1}$ and this concludes the proof.
\end{proof}

\subsection{Simple sheaves along $\Lambda$}

Let $\Lambda \subset \dT^*M$ be a smooth conic Lagrangian submanifold
satisfying~\eqref{eq:hypLambda}.  We recall that $\Derb_\Lambda(\corC_{M})$ is
the full subcategory of $\Derb(\corC_{M})$ formed by the $G$ such that
$\dot\SSi(G) \subset \Lambda$.

\begin{theorem}
\label{thm:clisoDUVF}
We assume that there exists $F \in \Derb(\corC_{M})$ which
satisfies~\eqref{eq:hypFimdir_cst}. We set $U = M \setminus \fib_1$ and $V =
\opb{q}(]0,2[)$. Then the category $\Der(U,V;F)$ contains uncountably many
isomorphism classes.  In particular the full subcategory of simple sheaves in
$\Derb_\Lambda(\corC_{M})$ contains uncountably many isomorphism classes.
\end{theorem}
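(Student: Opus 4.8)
The plan is to verify the hypotheses of Corollary~\ref{cor:clisoDUVF} for the covering $M = U \cup V$ with $U = M \setminus \fib_1$ and $V = \opb{q}(\mo]0,2[)$, so that the conclusion $|\Der(U,V;F)| \geq |\corC^\times|$ (which is uncountable since $\corC^\times$ is uncountable) follows. Concretely, one must produce a path $\gamma \cl [0,1] \to \Lambda$ satisfying~\eqref{eq:hyppathgamma} together with the hypersurface $H$ and the times $t_1 < t_2 < t_3$ satisfying~\eqref{eq:hypUVF}. I would take $H = \fib_1$, so that $H' = M \setminus U = \fib_1 \subset H$, giving~\eqref{eq:hypUVF}(a); and $V \setminus H = \opb{q}(\mo]0,1\mc[) \sqcup \opb{q}(\mo]1,2[)$, giving~\eqref{eq:hypUVF}(c) with $V^+$ and $V^-$ the two pieces (the point $\fib_1$ is not in $V$ since $V = \opb{q}(\mo]0,2[)$, so actually one should be careful: take $H$ to be a slightly thickened copy or work with $V = \opb{q}(\mo]\epsilon,2[)$; more cleanly, since we need $H \subset V$ and $H' \subset H$, we set $H = \fib_1$ and note $\fib_1 \subset V$ requires $1 \in \mo]0,2[$, which holds, so $V \setminus H$ indeed has the two components above).

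The decomposition~\eqref{eq:hypUVF}(b), namely $F|_V \simeq F' \oplus F''$ with $H \cap \supp F' \subset H'$, is the heart of the matter and comes from the local structure near the parabola $P_1$: by Lemma~\ref{lem:autour_des_deux_parab} we have $F|_{W_1} \simeq \corC_{\ol{U_1} \cap W_1}[1] \oplus L^1_{W_1}$, and the summand $\corC_{\ol{U_1}\cap W_1}[1]$ has microsupport exactly $\Lambda_{P_1}$, whose projection meets $\fib_1$ only inside $\ol{U_1}$, hence only at the endpoints of the arc — so after shrinking $W_1$ appropriately and extending, $F' = \corC_{\ol{U_1}\cap W_1}[1]$ (or the appropriate globalization over $V$) works, since its support meets $\fib_1$ precisely in the finite set which lies in $H' = \fib_1$ trivially. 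One then chooses $F'$ to be this constant-sheaf-on-an-arc summand and $F''$ the complement.

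For the path $\gamma$: I would build it inside $\Lambda$ starting and ending near $\fib_1$ but on the $U$-side, specifically with $p_0 = \gamma(0)$ and $p_1 = \gamma(1)$ two points of $(T^*U \setminus T^*V) \cap \Lambda$ — for instance $p_1$ itself (the point above $z_1 = (2,0)$, since $0 \notin \mo]0,2[$ so $p_1 \in T^*U \setminus T^*V$) and a second point $p$ produced by Proposition~\ref{prop:otherFlinkedpoint}, which furnishes $p \in \Lambda \cap T^*(\opb{q}(\mo]-\infty,1[))$ that is $F$-linked with $p_1$ over $\opb{q}(\mo]-\infty,1[) \subset U$ and avoids $\Xi_{c_0} \cup \Xi_{c_1} \cup \Lambda_{P_0} \cup \Lambda_{P_1}$. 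Since $\Lambda/\Rp$ is a circle, these two points can be joined by an arc in $\Lambda$; the only subtlety is choosing the arc so that it enters and exits the region over $P_1$ exactly once, passing through $\Lambda_{P_1}$ along a connected sub-arc, so that its inverse image of $\SSi(F') = \Lambda_{P_1}$ is a single interval $\mo]t_1,t_3[$ split by the passage through $\fib_1$ (at $t_2$) into the $V^+$-part $\mo]t_1,t_2[$ and the $V^-$-part $\mo]t_2,t_3[$, giving~\eqref{eq:hypUVF}(d). Here the hypothesis that $C_\Lambda$ has only two cusps is used, as remarked in Section~\ref{sec:hypLambdaF}, to guarantee that the relevant portion of $\Lambda$ (the points off the cusps with constant shift) is connected, so such an arc exists; and the $F$-linkedness of $p_0 = p$ and $p_1 = p_1$ over $U$ is exactly what Proposition~\ref{prop:otherFlinkedpoint} provides.

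With~\eqref{eq:hyppathgamma} and~\eqref{eq:hypUVF} both established, Corollary~\ref{cor:clisoDUVF} gives at least $|\corC^\times|$ isomorphism classes in $\Der(U,V;F)$. Finally, since $F$ is simple along $\Lambda$ and the modifications $F^{\alpha}_{U,V}$ of Definition~\ref{def:modifMV} have the same microsupport $\Lambda$ (being cones in a Mayer--Vietoris triangle whose other terms have microsupport in $\Lambda$, using the triangular inequality and that $\SSi(F_W) \subset \SSi(F) \cup (\text{outer conormal})$ is controlled here because $\partial U$, $\partial V$ are the fibers $\fib_1$ and $\fib_0,\fib_2$ which meet $C_\Lambda$ only transversally or at the controlled tangencies), they lie in $\Derb_\Lambda(\corC_M)$ and are simple there; hence the full subcategory of simple sheaves in $\Derb_\Lambda(\corC_M)$ is uncountable. \emph{The main obstacle} is the construction of the path $\gamma$ with the precise intersection behavior~\eqref{eq:hypUVF}(d) together with checking that $F^{\alpha}_{U,V}$ stays simple along $\Lambda$ — both require carefully tracking microsupports near the parabola $P_1$ and near the fibers bounding $U$ and $V$, and it is here that connectedness of $\Lambda_0$ (equivalently, the two-cusp hypothesis) is essential.
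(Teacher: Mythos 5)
Your overall strategy is the paper's: verify~\eqref{eq:hyppathgamma} and~\eqref{eq:hypUVF} with $H=\fib_1$ using Proposition~\ref{prop:otherFlinkedpoint}, apply Corollary~\ref{cor:clisoDUVF}, and conclude for $\Derb_\Lambda(\corC_M)$ by locality. But your verification of~\eqref{eq:hypUVF}(b) and~(d) has a genuine gap. The summand $\corC_{\ol{U_1}\cap W_1}[1]$ of Lemma~\ref{lem:autour_des_deux_parab} is a direct summand of $F|_{W_1}$ only; extended by zero it is not a summand of $F|_V$ (its microsupport sticks out over $\partial W_1$, whereas any summand of $F|_V$ has microsupport in $\Lambda$). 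The correct ``globalization'' -- which is what the paper actually does -- is to choose a submersion $r\cl V\to \fib_1$ with $F|_V\simeq \opb{r}(F|_{\fib_1})$ (Corollary~\ref{cor:opbeqv}) and take $F'=\opb{r}(G')$, where $G'=\oim{e}(\corC_I)$ is the interval summand of $F|_{\fib_1}$ whose microsupport contains the half-line over the point where $\pi_M\circ\gamma$ crosses $\fib_1$. But then $\dot\SSi(F')$ is \emph{not} just the part of $\Lambda$ over the parabola: it has a second component over the conjugate point ($\ol{p'_1}$, resp.\ $\ol{p'_0}$), and condition~(d) requires $\gamma$ to avoid that component. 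This is precisely the role of Lemma~\ref{lem:shift_extremites} (either the conjugate is the other end of the same parabola, which the path does not revisit, or it has a different shift and so lies in the other component of $\Lambda\setminus(\Xi_{c_0}\cup\Xi_{c_1})$, which a path staying in one component never meets); you never invoke it, and without it your claim that $\opb{\gamma}(\SSi(F'))$ is a single interval is unjustified.

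A second gap concerns the endpoints of $\gamma$: you fix them as $p_1$ and the point $p$ of Proposition~\ref{prop:otherFlinkedpoint}, but $p$ may lie in $\Lambda_0$, the component containing $\Lambda_{P_0}$ rather than $\Lambda_{P_1}$; then a path from $p_1$ to $p$ cannot stay in one component of $\Lambda\setminus(\Xi_{c_0}\cup\Xi_{c_1})$ and your control of its intersection with $\SSi(F')$ breaks down. The paper resolves this with Lemma~\ref{lem:p0p1lies}: $p_0$ and $p_1$ are $F$-linked over $U$, hence $p_0$, $p_1$, $p$ are mutually linked, and one starts the path at $p_0$ or at $p_1$ according to which component contains $p$, crossing $P_0$ or $P_1$ accordingly; you omit this case distinction. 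Finally, your concern that proving $F^\alpha_{U,V}$ stays simple along $\Lambda$ is a ``main obstacle'' is misplaced: objects of $\Der(U,V;F)$ are by definition isomorphic to $F$ on $U$ and on $V$, and both the microsupport condition and simplicity are local, so this step is immediate (and this is how the paper argues), with no need for microsupport estimates on Mayer--Vietoris cones.
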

\begin{proof}
  We will apply Corollary~\ref{cor:clisoDUVF}. For this we prove that there
  exist a path $\gamma \cl [0,1] \to \Lambda$, such that $\gamma$, $H = \fib_1$,
  $U$ and $V$ satisfy~\eqref{eq:hyppathgamma} and~\eqref{eq:hypUVF}.

  \sui (i) By Proposition~\ref{prop:otherFlinkedpoint} there exists $p_2 =
  (z_2;\xi_2) \in \Lambda \cap T^*U$ such that $p_1$ and $p_2$ are $F$-linked
  over $U$ and $p_2 \not\in \Xi_{c_0} \cup \Xi_{c_1} \cup \Lambda_{P_0} \cup
  \Lambda_{P_1}$.  By Lemma~\ref{lem:p0p1lies} it follows that $p_2$, $p_0$ and
  $p_1$ are $F$-linked over $U$.

  The point $p_2$ is in $\Lambda_0$ or $\Lambda_1$.  We assume $p_2 \in
  \Lambda_0$, the other case being similar.  Let $\gamma \cl [0,1] \to \Lambda$
  be an embedding such that $\gamma(0) = p_0$, $\gamma(1) =p_2$, $\gamma([0,1])
  \subset \Lambda_0$ and $\pi_M \circ \gamma$ is an immersion (hence $\pi_M
  \circ \gamma$ describes the portion of the curve $C_\Lambda$ between $z_0$ and
  $z_2$ which does not contain any cusp).  Then $\gamma$ meets either the
  half-line $\Rp\cdot p'_0$ or the half-line $\Rp\cdot p''_0$.  We
  assume it meets $\Rp\cdot p'_0$, the other case being similar.  By
  Lemma~\ref{lem:shift_extremites} we have either $\ol{p'_0} = p''_0$ or
  $\ol{p'_0}$ belongs to $\Lambda_1$. In both cases the path $\gamma$ does not
  meet $\Rp\cdot \ol{p'_0}$.

  \sui (ii) We can decompose $F|_H$ as $F|_H \simeq G' \oplus G''$ where $G' =
  \oim{e}(\corC_I)$ satisfies $\dot\SSi(G') = \Rp\cdot p'_0 \sqcup \Rp\cdot
  \ol{p'_0}$.  We can also find a submersion $r \cl V \to H$ such that
  $\dot\SSi(F|_V) = r_d(\opb{r_\pi}(\dot\SSi(F|_H)))$ and we deduce $F|_V \simeq
  \opb{r}(F|_H)$ by Corollary~\ref{cor:opbeqv}.  Then, setting $F' =
  \opb{r}(G')$ and $F'' = \opb{r}(G'')$, we see that~\eqref{eq:hyppathgamma}
  and~\eqref{eq:hypUVF} are satisfied.  Now the first assertion of the theorem
  follows from Corollary~\ref{cor:clisoDUVF}.

  \sui (iii) The property ``$\dot\SSi(F_1) \subset \Lambda$ and $F_1$ is simple
  along $\Lambda$'' is local. Hence $\Der(U,V;F)$ is a subcategory of the
  category of simple sheaves in $\Derb_\Lambda(\corC_{M})$.  The second
  assertion of the theorem follows.
\end{proof}

\section{Proof of the three cusps conjecture}
\label{sec:proof}

\newcommand{\Phiproj}{\Psi}
\newcommand{\Phistd}{\Phi}

In this section we prove the three cusps conjecture.  As explained in the
introduction we will use the following result of~\cite{GKS10}.  Let $I$ be an
open interval containing $0$ and let $\Phi \cl \dT^*M \times I \to \dT^*M$ be a
Hamiltonian isotopy such that $\Phi_0 = \id$ and $\Phi_t$ is homogeneous for the
action of $\Rp$, that is, $\Phi_t(x;\lambda\,\xi) = \lambda \cdot \Phi_t(x;\xi)$
for all $\lambda \in \Rp$ and all $(x;\xi) \in \dT^*M$, where we set as usual
$\Phi_t(\cdot) = \Phi(\cdot,t)$.  The graph of $\Phi_t$ is a conic Lagrangian
submanifold $\Lambda_{\Phi_t}$ of $\dT^*M^2$ (up to multiplication by $-1$ in the
fibers of the second $M$). We can find a unique conic Lagrangian submanifold
$\Lambda_\Phi$ of $\dT^*(M^2\times I)$ such that $\Lambda_{\Phi_t} = i_{t,d}
(\opb{i_{t,\pi}} (\Lambda_\Phi))$ for all $t\in I$, where $i_t \cl M^2\times \{t\}
\to M^2 \times I$ is the inclusion.

\begin{theorem}[Theorem~3.7 of~\cite{GKS10}]
\label{thm:qhi}
There exists a unique $K \in \Der(\cor_{M^2 \times I})$ such that $\dot\SSi(K)
= \Lambda_\Phi$ and $\opb{i_0}K = \cor_{\Delta}$ where $\Delta$ is the diagonal of
$M^2$.
\end{theorem}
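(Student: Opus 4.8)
\smallskip

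The plan is to prove uniqueness first and then existence, organising everything around composition of kernels. For $A,B\in\Der(\cor_{M^2\times I})$, viewed as kernels from $M$ to $M$ depending on the parameter in $I$, write $A\circ B\in\Der(\cor_{M^2\times I})$ for their composition over the intermediate copy of $M$ (pull back to $M^3\times I$, tensor, and push forward by $\reim{q}$ along the projection $q$ forgetting the middle factor). When $M$ is compact — the only case needed for the application — $q$ is proper, so the standard microsupport estimate for composition of kernels deduced from Theorem~\ref{th:opboim} gives $\dot\SSi(A\circ B)$ contained in the geometric composition $\dot\SSi(A)\circ\dot\SSi(B)$, and proper base change gives $(A\circ B)|_{M^2\times\{t\}}\simeq A_t\circ B_t$; in general one imposes the relevant properness conditions. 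A direct computation with the graphs shows $\Lambda_\Phi\circ\Lambda_\Psi=\Lambda_{\Phi\circ\Psi}$; in particular, taking $\Psi=\Phi^{-1}$ the composed isotopy is the constant identity and the $\partial_t$-covectors cancel identically, so $\Lambda_\Phi\circ\Lambda_{\Phi^{-1}}$ is the antidiagonal conormal $\dT^*_\Delta(M^2)$ times the zero-section of $T^*I$, that is, the microsupport of $\cor_{\Delta\times I}$.

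\emph{Uniqueness.} First I would record the model case: if $L\in\Der(\cor_{M^2\times I})$ satisfies $\dot\SSi(L)\subset\dT^*_\Delta(M^2)\times(\text{zero-section of }T^*I)$ and $\opb{i_0}L\simeq\cor_\Delta$, where $i_0\cl M^2\times\{0\}\to M^2\times I$, then $\dot\SSi(L)\subset T^*(M^2)\times T^*_I I$, hence $L\simeq\opb{p}\roim{p}L$ by Corollary~\ref{cor:opbeqv} for $p\cl M^2\times I\to M^2$, and applying $\opb{i_0}$ identifies $\roim{p}L$ with $\cor_\Delta$, so $L\simeq\cor_{\Delta\times I}$. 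Next, from the given solution $K$ for $\Phi$ one manufactures an inverse kernel $K^{-1}$: it is obtained from $K$ by exchanging the two copies of $M$ and applying the duality $\DD$ twisted by $\omega_M$ on one factor, normalised so that $\cor_\Delta\mapsto\cor_\Delta$. Since $\SSi(\DD K)=\SSi(K)^a$, one checks $\dot\SSi(K^{-1})=\Lambda_{\Phi^{-1}}$ and $\opb{i_0}K^{-1}\simeq\cor_\Delta$, so $K^{-1}$ is a solution for $\Phi^{-1}$. If now $K'$ is any other solution for $\Phi$, then $\dot\SSi(K'\circ K^{-1})\subset\Lambda_\Phi\circ\Lambda_{\Phi^{-1}}$ is of the form treated by the model case and $(K'\circ K^{-1})|_{M^2\times\{0\}}\simeq\cor_\Delta\circ\cor_\Delta\simeq\cor_\Delta$, whence $K'\circ K^{-1}\simeq\cor_{\Delta\times I}$; composing on the right with $K$ and using that $\cor_{\Delta\times I}$ is a unit for $\circ$ gives $K'\simeq K$. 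Running the same argument with $\Phi_{t+t_0}\circ\Phi_{t_0}^{-1}$ shows, more generally, that a solution is determined by its restriction to any slice $M^2\times\{t_0\}$.

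\emph{Existence.} I would argue by continuation in $t$. Near $t=0$ one has the solution $\cor_{\Delta\times J}$. Given a solution $K^{(0)}$ on $M^2\times J_0$ with $0\in J_0$ and a point $t_0\in J_0$, extend it past $t_0$ as follows: the isotopy $\Psi_s\eqdot\Phi_{t_0+s}\circ\Phi_{t_0}^{-1}$ is close to the identity for $s$ in a short interval $J'$, so it can be quantized — see below — by some $\tilde K$ on $M^2\times J'$ with $\tilde K|_0\simeq\cor_\Delta$; then $\tilde K\circ\opb{p}(K^{(0)}_{t_0})$ is a solution for $\Phi$ on $M^2\times(t_0+J')$ whose restriction to $M^2\times\{t_0\}$ is $K^{(0)}_{t_0}$, so by the uniqueness just proved it agrees with $K^{(0)}$ on the overlap and the two glue. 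Every relatively compact subinterval of $I$ containing $0$ is covered by finitely many such short intervals, giving a solution there, and these agree by uniqueness, yielding $K$ on all of $I$. It remains to quantize an isotopy $C^\infty$-close to the identity on a short interval. For such an isotopy $\Lambda_\Phi\subset\dT^*(M^2\times I)$ is close to the conormal of $\Delta\times I$, hence admits a generating family: a function $\psi(x,y,t,\theta)$, homogeneous of degree $1$ and nondegenerate/quadratic at infinity in $\theta\in\R^N\setminus\{0\}$, whose fibrewise critical locus projects onto $\Lambda_\Phi$ and which for the identity reduces to $\psi=\langle x-y,\theta\rangle$ (the small-deformation case of the existence of a generating family). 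To $\psi$ one associates a sheaf $K$ by the standard generating-family construction — a complex built from the sublevel sets $\{\psi\le c\}$ and pushed forward by $\reim{\pi}$, $\pi\cl M^2\times I\times(\R^N\setminus\{0\})\to M^2\times I$ — which for $\psi=\langle x-y,\theta\rangle$ returns $\cor_{\Delta\times I}$ up to a shift, absorbed by a normalisation, and for which the microsupport, computed from the description of $\SSi(\cor_{\{\psi\le c\}})$ in Example~\ref{ex:microsupport}(iii), the estimates of Theorem~\ref{th:opboim}, and the behaviour of $\psi$ at infinity, equals $\Lambda_\Phi$.

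\emph{Main obstacle.} The delicate point is this last local construction, and within it obtaining $\dot\SSi(K)$ \emph{exactly} equal to $\Lambda_\Phi$: the projection $\pi$ forgets the non-compact fibre variable $\theta$ and is not proper on the support of $\cor_{\{\psi\le c\}}$, so the proper-direct-image estimate of Theorem~\ref{th:opboim}(ii) does not apply verbatim, and one must control $\psi$ at infinity in $\theta$ — the role of the quadratic-at-infinity condition — so that $\reim{\pi}$ behaves microlocally like a proper direct image and no spurious covectors appear. Everything else — the composition formalism, the model case, uniqueness relative to an arbitrary slice, and the continuation and gluing — is bookkeeping with the estimates of Section~\ref{sec:micshfth} and base change.
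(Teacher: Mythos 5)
This theorem is not proved in the paper at all: it is quoted from~\cite{GKS10}, so there is no internal proof to compare with, only the original one. Your uniqueness argument does coincide with the strategy of~\cite{GKS10}: define an inverse kernel by swapping the factors and dualizing, observe that $K'\circ K^{-1}$ has microsupport in $T^*M^2\times T^*_II$ and restricts to $\cor_\Delta$ at $t=0$, conclude by the constancy statement (Corollary~\ref{cor:opbeqv}) that it is $\cor_{\Delta\times I}$, and compose back. Your continuation-and-gluing scheme for existence, using uniqueness relative to an arbitrary slice, is also the right skeleton. (For non-compact $M$ one must in addition prove that the two projections are proper on $\supp K$ before any composition is allowed; you flag this but it is a real lemma in~\cite{GKS10}, not mere bookkeeping. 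For the application in this paper $M=\sph$ is compact, so this is harmless here.)

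The genuine gap is in the local existence step, which is exactly the substantive content of the theorem, and there your sketch both imports an unproved external result and leaves its key verification open. First, the existence of a ``generating family'' for $\Lambda_\Phi$ when the isotopy is close to the identity is essentially the Chaperon--Sikorav theorem; you do not prove it, and as stated it is internally inconsistent: a function of $\theta$ homogeneous of degree $1$ cannot be quadratic at infinity, so the hypothesis under which your sublevel-set construction is supposed to behave needs to be reformulated (conic generating families with a different control at infinity) before the argument even starts. Second, and more seriously, the assertion that the associated sheaf $\reim{\pi}(\cdot)$ has $\dot\SSi$ \emph{equal} to $\Lambda_\Phi$ and restricts to $\cor_\Delta$ at $t=0$ is precisely what you yourself label the ``main obstacle'': the pushforward is non-proper, Theorem~\ref{th:opboim}~(ii) does not apply, and no argument is given to exclude extra covectors or to compute the restriction. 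Since this is where all the analytic content lies, the proposal does not yet constitute a proof. Note also that the route differs from~\cite{GKS10}, whose existence proof is purely sheaf-theoretic (local-in-time reduction plus composition of explicitly quantizable pieces) and does not pass through generating-function theory; indeed the logical order there is the opposite one, the sheaf theorem being presented as a counterpart of Sikorav's result rather than a consequence of it.
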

In~\cite{GKS10} it is also proved that $K_t = \opb{i_t}K$ is invertible for the
composition.  We recall that, for three manifolds $M_1,M_2,M_3$ and
$K \in \Derb(\cor_{M_1 \times M_2})$, $L \in \Derb(\cor_{M_2 \times M_3})$ we
define $K \circ L = \reim{q_{13}}(\opb{q_{12}}K \otimes \opb{q_{23}}L)$, where
$q_{ij}$ are the projections from the triple product to $M_i\times M_j$.  Then
there exists $K_t^{-1}$ such that
$K_t \circ K_t^{-1} \simeq K_t^{-1} \circ K_t \simeq \cor_\Delta$.  Choosing
$M_1=M_2=M$ and $M_3$ a point, we also obtain an action of $K_t$ on
$\Der(\cor_M)$ by $F \mapsto K_t \circ F$.  This is an equivalence with
$K_t^{-1} \circ \cdot$ as inverse.  When $M$ is compact this induces an
equivalence, for  any closed conic $\Lambda \subset \dT^*M$
\begin{equation}
\label{eq:comp=eq}
\Derb_\Lambda(\cor_{M}) \isoto \Derb_{\Phi_t(\Lambda)}(\cor_{M}) .
\end{equation}
(If $M$ is not compact, this is still true for locally bounded derived
categories.)  Actually the sheaf $K_t$ is simple along $\Lambda_{\Phi_t}$.  The
composition of two simple sheaves is simple (see~\S7 in~\cite{KS90}).  We
recall the notation $\Dersf_\Lambda(\cor_M)$ of
Definition~\ref{def:simple_pure}.  Then~\eqref{eq:comp=eq} induces an
equivalence
\begin{equation}
\label{eq:comp=eqSimp}
\Dersf_\Lambda(\cor_{M}) \isoto \Dersf_{\Phi_t(\Lambda)}(\cor_{M}) .
\end{equation}

We can give another formulation of this equivalence as follows (see
Corollary~3.13 of~\cite{GKS10}).  We define
$\Lambda' \subset \dT^*(M \times I)$ by
$$
\Lambda' = \Lambda_\Phi \circ \Lambda
= \reim{p_{13}}( \Lambda_\Phi \cap \opb{p_{2}} \Lambda^a) ,
$$
where $p_{13} \cl T^*(M^2 \times I) \to T^*(M \times I)$ and
$p_2 \cl T^*(M^2 \times I) \to T^*M$ are the projections to the factors
corresponding to the indices.  We have
$\Phi_t(\Lambda) = j_{t,d} (\opb{j_{t,\pi}} (\Lambda'))$ where
$j_t \cl M \times \{t\} \to M \times I$ is the inclusion.  We can deduce
from~\eqref{eq:comp=eq} that $\opb{j_t}$ gives an equivalence
\begin{equation}
\label{eq:restr=eq}
\opb{j_t} \cl \Derb_{\Lambda'}(\cor_{M}) \isoto \Derb_{\Phi_t(\Lambda)}(\cor_{M}) .
\end{equation}
Conversely, the composition of $\opb{j_t}$ and the inverse of $\opb{j_0}$ is
the equivalence $F \mapsto K_t \circ F$ of~\eqref{eq:comp=eq}.

In~\cite{GKS10} (see Corollary~1.7) it is also remarked that the global
sections of $K_t \circ F$ do not depend on $t$:
\begin{equation}
\label{eq:cohom_indpt-t}
\rsect(M; K_t \circ F ) \simeq \rsect(M; F)
\qquad \text{for all $t\in I$}.
\end{equation}

\begin{lemma}
\label{lem:qhisym}
In the situation of Theorem~\ref{thm:qhi} we assume that $\Phi_t$ is homogeneous
for the action of $\Rm$ (and not only $\Rp$) for all $t\in I$.  Then, for any $F
\in \Der(\cor_M)$ and any $t\in I$, there exists an isomorphism $\DD(K_t \circ
F) \simeq K_t \circ \DD(F)$.
\end{lemma}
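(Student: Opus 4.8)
The plan is to combine the general duality formula for sheaf kernels with a microsupport computation in which the $\Rm$-homogeneity of $\Phi_t$ is exactly what makes the argument work.

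First I would establish a kernel duality formula: for any $F\in\Der(\cor_M)$ and any $t\in I$,
\[
\DD_M(K_t\circ F)\simeq K_t^\vee\circ\DD_M(F),\qquad
K_t^\vee\eqdot\DD_{M^2}(K_t)\otimes\opb{q_1}\omega_M^{\otimes-1}.
\]
This follows from the identities $\DD_M\reim{q_2}\simeq\roim{q_2}\DD_{M^2}$ and $\DD_{M^2}(K_t\otimes\opb{q_1}F)\simeq\rhom(K_t,\DD_{M^2}(\opb{q_1}F))$, from $\omega_{M^2}\simeq\opb{q_1}\omega_M\otimes\opb{q_2}\omega_M$ together with the fact that $\opb{q_1}$ commutes with $\rhom$ because $q_1$ is a submersion (so $\DD_{M^2}(\opb{q_1}F)\simeq\opb{q_1}\DD_M(F)\otimes\opb{q_2}\omega_M$), from the projection formula for $q_2$, and — crucially — from the fact, recalled in the text, that $K_t$ is \emph{simple} along $\Lambda_{\Phi_t}$: being locally a constant sheaf on a submanifold, $K_t$ satisfies $\rhom(K_t,N)\simeq\DD'_{M^2}(K_t)\otimes N$ for every $N$. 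Finally $q_2$ is proper on $\supp(K_t)$ since $M=\sph$ is compact, so $\roim{q_2}$ may be replaced by $\reim{q_2}$; assembling everything and using $\DD'_{M^2}(K_t)\otimes\opb{q_2}\omega_M\simeq\DD_{M^2}(K_t)\otimes\opb{q_1}\omega_M^{\otimes-1}$ gives the claim. (As a sanity check, for $K_t=\cor_\Delta$ one has $\DD_{M^2}(\cor_\Delta)\simeq\omega_\Delta\simeq\cor_\Delta\otimes\opb{q_1}\omega_M$, whence $K_t^\vee\simeq\cor_\Delta$ and the formula is an identity.) Thus the lemma reduces to proving $K_t^\vee\simeq K_t$ for every $t$.

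The hypothesis enters here. Since $\dot\SSi(\DD_{M^2}G)=\dot\SSi(G)^a$ for all $G$ and tensoring by the rank-one local system $\opb{q_1}\omega_M$ leaves microsupports unchanged, $\dot\SSi(K_t^\vee)=\Lambda_{\Phi_t}^a$. But $\Lambda_{\Phi_t}$ is the graph of $\Phi_t$, and $\Rm$-homogeneity gives $\Phi_t(x;-\xi)=-\Phi_t(x;\xi)$, which with the sign convention used to form the graph says exactly that $\Lambda_{\Phi_t}$ is stable under the fibrewise antipodal involution of $\dT^*M^2$, i.e. $\Lambda_{\Phi_t}^a=\Lambda_{\Phi_t}$. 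Hence $\dot\SSi(K_t^\vee)=\Lambda_{\Phi_t}=\dot\SSi(K_t)$. The same remark applied slicewise shows that the conic Lagrangian $\Lambda_\Phi^a\subset\dT^*(M^2\times I)$ has the same slices $\Lambda_{\Phi_t}$ as $\Lambda_\Phi$ for every $t$, so by the uniqueness of $\Lambda_\Phi$ recalled before Theorem~\ref{thm:qhi} one gets $\Lambda_\Phi^a=\Lambda_\Phi$.

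Finally I would conclude with the uniqueness part of Theorem~\ref{thm:qhi}. Put $\mathcal K^\vee\eqdot\DD_{M^2\times I}(K)[-1]\otimes\opb{q_1}\omega_M^{\otimes-1}\in\Der(\cor_{M^2\times I})$ (with $q_1$ the first projection to $M$). Since $i_t\cl M^2\times\{t\}\hookrightarrow M^2\times I$ is a closed embedding of codimension one with trivial normal orientation we have $\epb{i_t}\simeq\opb{i_t}[-1]$, and with $\DD_{M^2}\opb{i_t}\simeq\epb{i_t}\DD_{M^2\times I}$ this gives $\opb{i_t}\mathcal K^\vee\simeq\DD_{M^2}(K_t)\otimes\opb{q_1}\omega_M^{\otimes-1}=K_t^\vee$ for every $t$. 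By the previous paragraph $\dot\SSi(\mathcal K^\vee)=(\dot\SSi K)^a=\Lambda_\Phi^a=\Lambda_\Phi$, and at $t=0$, using $\opb{i_0}K=\cor_\Delta$, $\DD_{M^2}(\cor_\Delta)\simeq\omega_\Delta$ and the projection formula for the diagonal embedding $M\hookrightarrow M^2$, one computes $\opb{i_0}\mathcal K^\vee\simeq\omega_\Delta\otimes\opb{q_1}\omega_M^{\otimes-1}\simeq\cor_\Delta$. Thus $\mathcal K^\vee$ satisfies the two properties characterising $K$ in Theorem~\ref{thm:qhi}, so $\mathcal K^\vee\simeq K$ and therefore $K_t^\vee=\opb{i_t}\mathcal K^\vee\simeq\opb{i_t}K=K_t$; together with the first paragraph this proves the lemma. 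The main difficulty in carrying this out will be keeping the orientation and dualising twists honest — one must check that the various copies of $\omega_M$ produced by $\DD_M$, $\DD_{M^2}$ and $\epb{i_t}$ cancel exactly, so that the normalisation $\opb{i_0}\mathcal K^\vee\simeq\cor_\Delta$ holds on the nose and Theorem~\ref{thm:qhi} can be invoked directly rather than through a version with a modified initial condition.
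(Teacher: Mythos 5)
Your route is genuinely different from the paper's. The paper never dualizes the kernel: it sets $\Lambda'=\Lambda_\Phi\circ\Lambda$, observes that $\Rm$-homogeneity gives $(\Lambda')^a=\Lambda_\Phi\circ\Lambda^a$, chooses $F'$ with $\dot\SSi(F')\subset\Lambda'$ and $G$ with $\dot\SSi(G)\subset(\Lambda')^a$ on $M\times I$ restricting at $t=0$ to $F$ and $\DD F$, uses that $j_t$ is non-characteristic for $\Lambda'$ so that $\DD(\opb{j_t}F')\simeq\opb{j_t}(\DD F')$ by Theorem~\ref{th:opboim}, and concludes $\DD F'\simeq G$ from the equivalence~\eqref{eq:restr=eq}; restricting at $t$ gives the lemma. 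You instead prove a twisted self-duality of the kernel, $\DD_{M^2\times I}(K)[-1]\otimes\opb{q_1}\omega_M^{\otimes-1}\simeq K$, by feeding it into the uniqueness of Theorem~\ref{thm:qhi}, the geometric input $\Lambda_\Phi^a=\Lambda_\Phi$ being the same use of $\Rm$-homogeneity. This is a natural and viable alternative (and your normalisation check at $t=0$ is correct), but it is heavier: it needs a duality--composition formula for kernels, hence properness of $q_2$ on the relevant support (you assume $M=\sph$ compact, while the lemma is stated in the generality of Theorem~\ref{thm:qhi}), and the points below.

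There is a genuine gap in your first paragraph. The claim that $K_t$, ``being locally a constant sheaf on a submanifold,'' satisfies $\rhom(K_t,N)\simeq\DD'_{M^2}(K_t)\otimes N$ for \emph{every} $N$ is false on both counts: a simple sheaf along $\Lambda_{\Phi_t}$ need not be locally a constant sheaf on a submanifold of $M^2$ (this already fails when the front of $\Lambda_{\Phi_t}$ has caustics), and the isomorphism fails for general $N$ even for $K_0=\cor_\Delta$, where $\rhom(\cor_\Delta,N)\simeq\rsect_\Delta(N)$ is not $\DD'(\cor_\Delta)\otimes N$ unless $N$ is non-characteristic for $\Delta$. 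What you actually need is the isomorphism for the single object $N=\opb{q_1}\DD F\otimes\opb{q_2}\omega_M$; this does hold, but the correct reason is microlocal: $\Lambda_{\Phi_t}\subset\dT^*M\times\dT^*M$, so $\dot\SSi(K_t)\cap\SSi(N)=\emptyset$ and the third term of Sato's triangle $\DD'K_t\otimes N\to\rhom(K_t,N)\to\roim{\dot\pi_{M^2}}(\muhom(K_t,N)|_{\dT^*M^2})$ vanishes. That triangle, as recalled in the paper, requires the first argument to be constructible (cohomologically constructible), a property of $K_t$ you neither state nor verify and which is not automatic since the isotopy is only $C^\infty$ and $F$ is arbitrary. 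Similarly, $\epb{i_t}\simeq\opb{i_t}[-1]$ is not an identity of functors; it applies to $\DD_{M^2\times I}(K)$ only because $i_t$ is non-characteristic for its microsupport (true here, since the $M^2$-components of the covectors in $\Lambda_\Phi$ are nonzero, but it must be invoked). These defects are repairable, but as written the key step rests on a false general principle; the paper's argument avoids the issue entirely by working with the family over $M\times I$ and never dualizing $K$.
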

\begin{proof}
  We set $\Lambda = \dot\SSi(F)$.  We have $\dot\SSi(\DD F) = \Lambda^a$ by
  Corollary~\ref{cor:opboim}.  We define $\Lambda' = \Lambda_\Phi \circ
  \Lambda$.  Since $\Phi_t$ is homogeneous for the action of $\Rm$ for all $t$,
  we find $(\Lambda')^a = \Lambda_\Phi \circ \Lambda^a$.

  Applying~\eqref{eq:restr=eq} with $\Lambda'$ of $(\Lambda')^a$ we find
  $F' \in \Derb_{\Lambda'}(\cor_{M})$ and
  $G \in \Derb_{(\Lambda')^a}(\cor_{M})$ such that $\opb{j_0}(F') \simeq F$ and
  $\opb{j_0}(G) \simeq \DD(F)$.

  For any $t\in I$, the map $j_t$ is non-characteristic for $\Lambda'$ and we
  have $\DD(\opb{j_t}(F')) \simeq \opb{j_t}(\DD (F'))$ by
  Theorem~\ref{th:opboim}.  In particular $\opb{j_0}(\DD (F')) \simeq \DD(F)$.
  Hence $G$ and $\DD(F') \in \Derb_{(\Lambda')^a}(\cor_{M})$ satisfy
  $\opb{j_0}(G) \simeq \opb{j_0}(\DD (F'))$.  Since $\opb{j_0}$
  in~\eqref{eq:restr=eq} is an equivalence, we deduce that $\DD(F') \simeq G$.
  Then
  $K_t \circ \DD(F) \simeq \opb{j_t}(G) \simeq \opb{j_t}(\DD(F')) \simeq
  \DD(\opb{j_t}(F')) \simeq \DD(K_t \circ F)$, as claimed.
\end{proof}

\begin{lemma}
\label{lem:autodual}
Let $G \in \Derb(\cor_\R)$ be a constructible sheaf.  We assume that $G$ has
compact support, that there exists an isomorphism $G \simeq \DD(G)$ and that
$\rsect(\R;G) \simeq \cor$.  Then there exist $x_0 \in \R$ and a decomposition
$G \simeq \cor_{\{x_0\}} \oplus \bigoplus_{a\in A} \cor^{n_a}_{I_a}[d_a]$ where the
$I_a$ are half-closed intervals.
\end{lemma}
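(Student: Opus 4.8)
The plan is to reduce the statement to the uniqueness of the decomposition of a compactly supported constructible sheaf on $\R$ into constant sheaves on intervals, combined with a short computation of how $\rsect(\R;-)$ and Verdier duality $\DD=\DD_\R$ act on such building blocks.

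First I would fix a decomposition of $G$. Since $\cor$ is a field and $\R$ has dimension $1$, Lemma~\ref{lem:faiscdim1_compl_scinde} gives $G\simeq\bigoplus_{i\in\Z}H^i(G)[-i]$, and each $H^i(G)$ is a constructible sheaf with compact support; so Corollary~\ref{cor:cons_sh_R} yields a finite decomposition $G\simeq\bigoplus_{a\in A}\cor^{n_a}_{I_a}[d_a]$ over bounded intervals $I_a$, with $n_a\in\N\setminus\{0\}$, $d_a\in\Z$ and the pairs $(I_a,d_a)$ pairwise distinct. Applying the uniqueness part of Corollary~\ref{cor:cons_sh_R} to each $H^i(G)\simeq\bigoplus_{d_a=i}\cor^{n_a}_{I_a}$, the family $\{(I_a,d_a,n_a)\}_{a\in A}$ is an invariant of $G$.

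Next I would record, for $d\in\Z$ and a bounded interval $I$, the two elementary computations I need. On one hand $\rsect(\R;\cor_I[d])$ is $\cor[d]$ if $I$ is closed (in particular if $I$ is a point), is $\cor[d-1]$ if $I$ is open, and is $0$ if $I$ is half-closed. On the other hand, using $\omega_\R\simeq\cor_\R[1]$ and the excision (equivalently, local cohomology) triangles, $\DD$ sends a constant sheaf on a half-closed interval to a constant sheaf on a half-closed interval; it sends $\cor_{\{x_0\}}[d]$ to $\cor_{\{x_0\}}[-d]$ (cf.\ Remark~\ref{rem:dualite}); it sends $\cor_{[a,b]}[0]$ with $a<b$ to $\cor_{]a,b[}[1]$; and it sends $\cor_{]a,b[}[1]$ with $a<b$ to $\cor_{[a,b]}[0]$. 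These are routine, and I expect keeping track of the shifts to be the only place requiring any care; the rest of the argument is purely formal.

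Finally I would combine the two ingredients. From the first computation, $\rsect(\R;G)\simeq\cor$ forces every summand $\cor_{I_a}[d_a]$ with $I_a$ a point or a genuine closed interval to have $d_a=0$, every summand with $I_a$ a genuine open interval to have $d_a=1$, and the total multiplicity of all such summands to be exactly $1$. Thus $G$ has a unique non-half-closed summand, of multiplicity one, which is either $\cor_{\{x_0\}}[0]$, or $\cor_{[a,b]}[0]$ with $a<b$, or $\cor_{]a,b[}[1]$ with $a<b$. In the last two cases $\DD G$ would have respectively $\cor_{]a,b[}[1]$ or $\cor_{[a,b]}[0]$ as its unique non-half-closed summand (and $\DD$ keeps the remaining summands half-closed), which is a different indecomposable; since $G\simeq\DD G$, this contradicts the uniqueness established above. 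Hence the unique non-half-closed summand of $G$ is $\cor_{\{x_0\}}$ for some $x_0\in\R$, and writing $A$ for the remaining (possibly empty) set of half-closed summands gives $G\simeq\cor_{\{x_0\}}\oplus\bigoplus_{a\in A}\cor^{n_a}_{I_a}[d_a]$ with all $I_a$ half-closed, as desired.
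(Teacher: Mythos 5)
Your proof is correct and follows essentially the same route as the paper: decompose $G$ via Lemma~\ref{lem:faiscdim1_compl_scinde} and Corollary~\ref{cor:cons_sh_R}, use $\rsect(\R;G)\simeq\cor$ to force all summands but one to be half-closed, and then use $G\simeq\DD(G)$ together with uniqueness of the decomposition to rule out a genuine closed or open interval. The only (immaterial) difference is how the final contradiction is phrased: the paper notes that the dual interval would also have to appear and contribute extra cohomology to $\rsect(\R;G)$, whereas you observe directly that duality would replace the unique non-half-closed summand by a different indecomposable.
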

\begin{proof}
  By Lemma~\ref{lem:faiscdim1_compl_scinde} and Corollary~\ref{cor:cons_sh_R}
  there exists a finite family $A$ of bounded intervals and integers such that
  $G_t \simeq \bigoplus_{a\in A} \cor^{n_a}_{I_a}[d_a]$. Then
  $\rsect(\R;G_t) \simeq \bigoplus_{a\in A} \rsect(\R;\cor_{I_a})^{n_a}[d_a]$.
  Since $\rsect(\R;G_t) \simeq \cor$ all the intervals $I_a$ but one, say
  $I_b$, have cohomology zero, which means that they are half-closed.  If the
  remaining interval $I_b$ is open or is closed with non empty interior, then
  $\ol{I_b}$ or $\Int(I_b)$ also appears in the family $I_a$, $a\in A$, because
  $\DD(G_t) \simeq G_t$ and the decomposition is unique.  But in this case
  $\ol{I_b}$ or $\Int(I_b)$ will also contribute to $\rsect(\R;G_t)$ and this
  is impossible.  Hence $I_b$ is reduced to one point, say $x_0$, and we obtain
  the lemma.
\end{proof}

Now we can prove the three cusps conjecture.  We denote by
$PT^*\sph = \dT^*\sph/\Rm$ the projectivized cotangent bundle of $\sph$.  Let
$I$ be an open interval containing $0$ and let
$\ol{\Phiproj} \cl PT^*\sph \times I \to PT^*\sph$ be an isotopy of $PT^*\sph$.
It lifts to a homogeneous Hamiltonian isotopy
$\Phiproj \cl \dT^*\sph \times I \to \dT^*\sph$ which satisfies
$\Phiproj_0 = \id$ and
$\Phiproj_t(x;\lambda\,\xi) = \lambda \cdot \Phiproj_t(x;\xi)$ for all
$\lambda \in \Rm$ (not only for $\lambda \in \Rp$) and all
$(x;\xi) \in \dT^*\sph$.  Let $P_0\in \sph$ be a given point.  We set
$\Lambda_0 = \dT^*_{P_0}\sph$, $\Lambda_t = \Phiproj_t(\Lambda_0)$ and
$C_t = \pi_\sph(\Lambda_t)$.

If $\Phiproj_t$ is generic, then the map $\Lambda_t/\Rp \to C_t$ is generically
$2{:}1$ and $C_t$ has an odd number of cusps.  We will apply
Theorem~\ref{thm:clisoDUVF} and we need to modify $\Lambda_t$ so that the
projection to the base is $1{:}1$.  Let $\Phistd \cl \dT^*\sph \times I \to
\dT^*\sph$ be a generic homogeneous Hamiltonian isotopy (homogeneous only for
the action of $\Rp$).  We set $\Lambda_{t,\varepsilon} =
\Phistd_\varepsilon(\Lambda_t)$ and $C_{t,\varepsilon} =
\pi_\sph(\Lambda_{t,\varepsilon})$. Then the map $\Lambda_{t,\varepsilon}/\Rp
\to C_{t,\varepsilon}$ is generically $1{:}1$ and, for $\varepsilon$ small,
$C_{t,\varepsilon}$ has twice more cusps than $C_t$.  The following lemma will
be used to check that we do not change the direct image of a sheaf when we apply
$\Phistd_\varepsilon$.

\begin{lemma}
\label{lem:proj_invariante}
Let $M$, $N$ be two manifolds and $I$ be an open interval containing $0$.  Let
$q \cl M\times N\times I \to N\times I$ be the projection.  Let $\Lambda \subset
\dT^*(M\times N \times I)$ be a closed conic subset such that
\begin{itemize}
\item [-] $\Lambda \cap ((M\times N) \times T^*_tI) = \emptyset$ for all $t\in
  I$,
\item [-] $\Lambda' = q_\pi( \Lambda \cap (M \times T^*(N\times I)))$ is a
  product $\Lambda' = \Lambda'' \times T^*_II$.
\end{itemize}
Let $F \in \Derb(\cor_{M\times N\times I})$ be such that $\dot\SSi(F) \subset
\Lambda$ and $q$ is proper on $\supp(F)$. Then $\roim{q'}(F_t) \simeq
\roim{q'}(F_0)$ for all $t\in I$, where $F_t = F|_{\times N \times \{t\}}$ and
$q' \cl M\times N \to N$ is the projection.
\end{lemma}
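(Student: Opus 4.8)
The plan is to show that the two hypotheses force $\roim{q}(F)$ to be locally constant in the $I$-direction — more precisely, pulled back from $N$ — and then to read off each $\roim{q'}(F_t)$ from $\roim{q}(F)$ by proper base change; since the pullback description of $\roim{q}(F)$ does not see $t$, all the $\roim{q'}(F_t)$ coincide.

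First I would bound the microsupport of the direct image. As $q$ is proper on $\supp(F)$, the complex $\roim{q}(F)=\reim{q}(F)$ is bounded and Theorem~\ref{th:opboim}~(ii) gives $\SSi(\roim{q}(F))\subset q_\pi\opb{q_d}\SSi(F)$. Splitting $\SSi(F)$ into its zero-section part (equal to $\supp(F)$) and its part off the zero section (contained in $\Lambda$ by hypothesis), the former contributes only the zero section of $T^*(N\times I)$ and the latter contributes $q_\pi(\Lambda\cap(M\times T^*(N\times I)))=\Lambda'$. By the second hypothesis $\Lambda'=\Lambda''\times T^*_I I$, so altogether $\SSi(\roim{q}(F))\subset T^*N\times T^*_I I$. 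Here the first hypothesis enters as the assertion that no covector of $\Lambda$ points purely in the $T^*I$-direction, i.e.\ that each slice $M\times N\times\{t\}$ is non-characteristic for $\Lambda$; this is what makes the restrictions $F_t$ and the base change below behave well. Since the interval $I$ is contractible, Corollary~\ref{cor:opbeqv} now yields an isomorphism $\roim{q}(F)\simeq\opb{p}\roim{p}\roim{q}(F)$, where $p\cl N\times I\to N$ is the projection; write $G=\roim{p}\roim{q}(F)\in\Derb(\cor_N)$.

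Finally I would use proper base change along the Cartesian square with vertical maps $q\cl M\times N\times I\to N\times I$ and $q'\cl M\times N\to N$ and with horizontal maps the inclusions $\iota_t\cl M\times N\times\{t\}\hookrightarrow M\times N\times I$ and $j_t\cl N\times\{t\}\hookrightarrow N\times I$, where we identify $N\times\{t\}$ with $N$. Since $q$ is proper on $\supp(F)$, so is $q'$ on $\supp(F_t)$, and base change gives $\roim{q'}(F_t)=\roim{q'}(\opb{\iota_t}F)\simeq\opb{j_t}\roim{q}(F)$. Combined with the previous step and the identity $p\circ j_t=\id_N$, this gives $\roim{q'}(F_t)\simeq\opb{j_t}\opb{p}G\simeq G$, which is independent of $t$; in particular $\roim{q'}(F_t)\simeq\roim{q'}(F_0)$. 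The argument is essentially formal once the microsupport bound is in place, so the one place that needs care — the ``main obstacle'', modest as it is — is the computation of the previous paragraph: correctly handling the zero-section contribution to $q_\pi\opb{q_d}\SSi(F)$, checking that $q_\pi\opb{q_d}$ of $\Lambda$ is exactly the set $\Lambda'$ on which the product hypothesis is imposed, and remembering to invoke properness of $q$ on $\supp(F)$ both for Theorem~\ref{th:opboim}~(ii) and for proper base change.
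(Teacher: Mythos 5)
Your proof is correct and takes essentially the same route as the paper: bound $\SSi(\roim{q}(F))$ by $q_\pi\opb{q_d}\SSi(F)\subset T^*N\times T^*_II$ via Theorem~\ref{th:opboim}~(ii) and the product hypothesis, deduce $\roim{q}(F)\simeq\opb{p}G$ from Corollary~\ref{cor:opbeqv}, and recover each $\roim{q'}(F_t)$ by proper base change. Only your aside about the first hypothesis is slightly off (proper base change needs only properness of $q$ on $\supp(F)$, not non-characteristicity of the slices), but this does not affect the argument.
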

\begin{proof}
  By Theorem~\ref{th:opboim} the hypothesis imply that $\dot\SSi(\roim{q}(F))
  \subset \Lambda'$.  Since $\Lambda'$ is a product the lemma follows from
  Corollary~\ref{cor:opbeqv}.
\end{proof}

\begin{theorem}
\label{thm:threecusps}
Let $t\in I$ be such that $C_t$ is a curve with only cusps and double points as
singularities. Then $C_t$ has at least three cusps.
\end{theorem}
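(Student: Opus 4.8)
The plan is to argue by contradiction, playing the countability statement of Corollary~\ref{cor:fais_sphere0} against the uncountability statement of Theorem~\ref{thm:clisoDUVF}, the bridge being the GKS equivalence~\eqref{eq:comp=eqSimp}. First, the Legendrian knot $\Lambda_t/\Rm\subset PT^*\sph$ is Legendrian isotopic to a cotangent fibre, and the parity of the number of cusps of a generic front is a Legendrian isotopy invariant, equal to $1$ for the fibre; hence if $C_t$ had fewer than three cusps it would have exactly one, and we assume this. Set $F_t=K_t\circ\corC_{\{P_0\}}$, which is simple along $\Lambda_t$. Since the ambient isotopy $\Psi$ is homogeneous for $\Rm$, Lemma~\ref{lem:qhisym} gives $\DD F_t\simeq K_t\circ\DD(\corC_{\{P_0\}})\simeq K_t\circ\corC_{\{P_0\}}=F_t$, so $F_t$ is self-dual. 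Fix a Morse function $q\cl\sph\to\R$ (a height function) with exactly two critical points, both off $C_t$, generic with respect to $C_t$, and identify $M'=\sph$ minus these two points with $\cer\times\R$ via $q$. As $q$ is proper on $\sph$, $\roim{q}(\DD F_t)\simeq\DD(\reim{q}F_t)=\DD(\roim{q}F_t)$, so $\roim{q}F_t$ is self-dual; and by~\eqref{eq:cohom_indpt-t}, $\rsect(\R;\roim{q}F_t)\simeq\rsect(\sph;F_t)\simeq\rsect(\sph;\corC_{\{P_0\}})\simeq\corC$. Lemma~\ref{lem:autodual} then gives $\roim{q}F_t\simeq\corC_{\{x_0\}}\oplus\bigoplus_a\corC^{n_a}_{I_a}[d_a]$ with all the $I_a$ half-closed; after translating, take $x_0=0$.

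\emph{The perturbation.} Apply a generic $\Rp$-homogeneous Hamiltonian isotopy $\Phi$ with $\Phi_0=\id$, for a small time $\varepsilon$, to obtain $\Lambda_{t,\varepsilon}=\Phi_\varepsilon(\Lambda_t)$ and $F_{t,\varepsilon}=K^\Phi_\varepsilon\circ F_t$, still simple along $\Lambda_{t,\varepsilon}$. Breaking the $\Rm$-symmetry makes the front map $\Lambda_{t,\varepsilon}/\Rp\to C_{t,\varepsilon}$ generically $1{:}1$, and the single cusp of $C_t$ splits into two cusps of $C_{t,\varepsilon}$; for $\Phi$ and $q$ generic, $C_{t,\varepsilon}$ has only cusps and ordinary double points, $\fib_0=\opb{q}(0)$ meets it as in~\eqref{eq:hypLambda}(a)--(d),(g),(h), and near the two tangency points of $C_{t,\varepsilon}$ with $\fib_0$ one has the one-sided parabola picture of~\eqref{eq:hypLambda}(e),(f). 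This last point, together with the fact that $\roim{q}F_{t,\varepsilon}$ still carries a skyscraper summand near $0$, is obtained by combining the self-dual skyscraper structure of $\roim{q}F_t$, the local description of direct images near a Morse tangency (Lemma~\ref{lem:bonne_im_dir} and Proposition~\ref{prop:mulink-projdim1}), and Lemma~\ref{lem:proj_invariante} applied to the microsupport of the family $K^\Phi\circ F_t$ over the $\varepsilon$-interval, so that the $q$-direct image is unchanged near $0$: for a small interval $J\ni 0$ avoiding the ends of the $I_a$ we get $(\roim{q}F_{t,\varepsilon})|_J\simeq(\roim{q}F_t)|_J\simeq\corC_{\{0\}}\oplus L_J$, with $L=\bigoplus_{a:\,I_a\supset J}\corC^{n_a}[d_a]$. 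Finally, since the two poles of $q$ lie off $C_{t,\varepsilon}=\dot\pi_M(\Lambda_{t,\varepsilon})$, the sheaf $F_{t,\varepsilon}$ has constant cohomology sheaves on punctured neighbourhoods of the poles. Thus $F_{t,\varepsilon}|_{M'}$ and $\Lambda_{t,\varepsilon}\cap\dT^*M'$ satisfy~\eqref{eq:hypLambda} and~\eqref{eq:hypFimdir_cst}.

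\emph{Conclusion.} Theorem~\ref{thm:clisoDUVF} now produces uncountably many isomorphism classes in $\Der(U,V;F_{t,\varepsilon})$ for the cover $U,V$ of $M'$ of that theorem. Each such $G$ is isomorphic to $F_{t,\varepsilon}$ over $U$ and over $V$, hence is simple along $\Lambda_{t,\varepsilon}$, has $\dot\SSi(G)\subset\Lambda_{t,\varepsilon}$, and --- agreeing with $F_{t,\varepsilon}$, which is constant, near the poles --- extends to a sheaf $\tilde G$ on $\sph$ lying in $\Dersf_{\Lambda_{t,\varepsilon}}(\corC_\sph)$; moreover $\tilde G\simeq\tilde G'$ on $\sph$ forces $G\simeq G'$ on $M'$, so $\Dersf_{\Lambda_{t,\varepsilon}}(\corC_\sph)$ has uncountably many isomorphism classes. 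But $\Lambda_{t,\varepsilon}$ is the image of $\dT^*_{P_0}\sph=\Lambda_0$ under a homogeneous Hamiltonian isotopy starting at the identity (concatenate $\Psi|_{[0,t]}$ with $\Phi|_{[0,\varepsilon]}$), so~\eqref{eq:comp=eqSimp} gives an equivalence $\Dersf_{\Lambda_0}(\corC_\sph)\isoto\Dersf_{\Lambda_{t,\varepsilon}}(\corC_\sph)$, while Corollary~\ref{cor:fais_sphere0} says $\Dersf_{\Lambda_0}(\corC_\sph)$ has only countably many isomorphism classes. This contradiction proves that $C_t$ has at least three cusps.

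\emph{Main obstacle.} The delicate point is the construction of the perturbation $\Phi$: it must break the $\Rm$-symmetry along all of $C_t$, so that the front becomes generically $1{:}1$ with exactly two cusps and satisfies~\eqref{eq:hypLambda}, while remaining tame enough near $\fib_0$ that $\roim{q}F_{t,\varepsilon}$ keeps the skyscraper summand $\corC_{\{0\}}$ and the local geometry at the two tangency points matches~\eqref{eq:hypLambda}(e),(f). This is exactly the place where the self-duality of $F_t$, the invariance of the direct image under suitable isotopies (Lemma~\ref{lem:proj_invariante}), and the local direct-image computations of Section~\ref{sec:exmiclinkpt} have to be brought together.
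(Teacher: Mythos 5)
Your proposal is correct and follows essentially the same route as the paper: self-duality of $F_t$ via Lemma~\ref{lem:qhisym}, the Morse-function direct image together with Lemma~\ref{lem:autodual} producing the skyscraper summand, a small $\Rp$-homogeneous perturbation splitting the single cusp into two while Lemma~\ref{lem:proj_invariante} keeps $\roim{q}$ unchanged, and then Theorem~\ref{thm:clisoDUVF} played against Corollary~\ref{cor:fais_sphere0} through the equivalence~\eqref{eq:comp=eqSimp}. The only point where the paper is more explicit is the one you flag as the main obstacle: the perturbation is not taken fully generic but is normalized by $\Phi_\varepsilon(0,0;0,\pm 1)=(\pm\varepsilon,0;0,\pm 1)$, so that both tangency points stay on the fiber $\opb{q}(0)$ with opposite conormals and the summand $\corC_{\{0\}}$ survives.
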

\begin{proof}
  (i) By~\eqref{eq:comp=eqSimp} the categories $\Dersf_{\Lambda_0}(\corC_\sph)$
  and $\Dersf_{\Lambda_t}(\corC_\sph)$ are equivalent.  By
  Corollary~\ref{cor:fais_sphere0} the category $\Dersf_{\Lambda_0}(\corC_\sph)$
  has countably many isomorphism classes.  Hence it is enough to prove that, if
  $C_t$ has only one cusp, then $\Dersf_{\Lambda_t}(\corC_\sph)$ has uncountably
  many isomorphism classes.

  \sui (ii) We recall that $P_0$ is our starting point
  ($\Lambda_0 = T^*_{P_0}\sph$). Let us set $F_0 = \corC_{P_0}[-1]$.  Let
  $K_t \in \Derb(\corC_{\sph\times\sph})$ be the sheaf associated with
  $\Phiproj_t$ by Theorem~\ref{thm:qhi}. We set $F_t = K_t \circ F_0$.  Then
  $\dot\SSi(F_t) = \Lambda_t$.  We have $\DD(F_0) \simeq F_0$ and by
  Lemma~\ref{lem:autodual} we deduce that $\DD(F_t) \simeq F_t$ for all
  $t\in I$.  By~\eqref{eq:cohom_indpt-t} we also have
  $\rsect(\sph; F_t) \simeq \rsect(\sph; F_0) \simeq \corC$ for all $t\in I$.

  \sui (iii) We choose a Morse function $q \cl \sph \to \R$ with only one
  minimum $P_-$ and one maximum $P_+$ such that $P_-, P_+ \not\in C_\Lambda$.

  We set $G_t = \roim{q}(F_t)$.  We have isomorphisms $\DD(G_t) \simeq
  \roim{q}(\DD F_t) \simeq G_t$ and $\rsect(\R;G_t) \simeq \rsect(\sph; F_t)
  \simeq \corC$.  By Lemma~\ref{lem:autodual} there exist $x_0 \in \R$ and a
  decomposition $G_t \simeq \corC_{\{x_0\}} \oplus \bigoplus_{a\in A}
  \corC^{n_a}_{I_a}[d_a]$ where the $I_a$ are half-closed intervals.  We set
  $x_\pm = q(P_\pm)$.  Since $F_t$ is constant near $P_-$, we have $G_t \simeq L
  \otimes (\corC_{[x_-,+\infty[} \oplus \corC_{]x_-,+\infty[}[-1])$ near $x_-$,
  where $L = (F_t)_{P_-}$.  The same holds near $x_+$.  Hence $x_0 \not= x_-$
  and $x_0 \not= x_+$.

  We set $M = \sph \setminus \{P_-, P_+\}$.  The restriction $F_t|_M$ is
  constant outside a compact set.  We identify $]x_-,x_+[$ with $\R$ and $M$
  with $\cer\times \R$ in such a way that $q$ is the projection $M\to \R$ and
  $x_0 = 0$.  For a generic choice of $q$, any fiber $\cer \times \{x\}$ of $q$
  contains at most one ``accident'' among: tangent point to $C_\Lambda$, double
  point or cusp.

  \sui (iv) We set $G'_t = \roim{(q|_M)}(F_t|_M)$.  Then $\corC_{\{0\}}$ is a
  direct summand of $G'_t$ and $T^*_0\R \subset \SSi(G'_t)$.  Hence
  $\Lambda \cap T^*_{\opb{q}(0)}\sph \not= \emptyset$.  By the assumption on
  $q$ it follows that $\opb{q}(0)$ contains no cusp or double point and that
  $\opb{q}(0)$ is tangent to $C_\Lambda$ at one point, say $(0,0)$.

  We choose another Hamiltonian isotopy $\Phistd$ such that
  $\Phistd_\varepsilon(\Lambda_t)$ is generic for $\varepsilon$ small and
  $\Phistd_\varepsilon(0,0;0,\pm 1) = (\pm\varepsilon,0;0, \pm 1)$.  We let
  $K'_\varepsilon$ be the sheaf associated with $\Phistd_\varepsilon$ and we set
  $F'_\varepsilon = K'_\varepsilon \circ F_t$.  By
  Lemma~\ref{lem:proj_invariante} we have $\roim{q}(F'_\varepsilon) \simeq G'_t$.

  \sui (v) Let us assume that $C_t$ has one cusp. Hence
  $\dot\pi_{\sph}(\Phistd_\varepsilon(\Lambda_t))$ has two cusps and the
  hypothesis of Theorem~\ref{thm:clisoDUVF} are satisfied for $\Lambda = \dT^*M
  \cap \Phistd_\varepsilon(\Lambda_t)$ and $F = F'_\varepsilon|_M$.  It follows
  that the category $\Der(U,V;F)$ contains uncountably many isomorphism classes,
  where $U = M \setminus \opb{q}(0)$ and $V = \opb{q}(]0,\eta[)$ for some
  $\eta>0$.  Since $F$ is constant outside some compact set and the objects of
  $\Der(U,V;F)$ are isomorphic to $F$ over $U$, they are also constant outside
  some compact set. Hence they extend to $\sph$ in such a way that the extension
  is constant near $P_-$ and $P_+$.  In particular their extensions belong to
  $\Dersf_{\Phistd_\varepsilon(\Lambda_t)}(\corC_\sph)$.  Hence the category
  $\Dersf_{\Phistd_\varepsilon(\Lambda_t)}(\corC_\sph)$ contains uncountably
  many objects.  Since it is equivalent to $\Derb_{\Lambda_t}(\corC_\sph)$, the
  theorem is proved.
\end{proof}

\providecommand{\bysame}{\leavevmode\hbox to3em{\hrulefill}\thinspace}

\vspace*{1cm}
\noindent
\parbox[t]{21em}
{\scriptsize{
\noindent
St{\'e}phane Guillermou\\
Institut Fourier, Universit{\'e} Grenoble I, \\
email: Stephane.Guillermou@ujf-grenoble.fr
}}

\end{document}